\documentclass[11pt]{article}
\usepackage[a4paper,margin=25mm,headheight=14pt]{geometry}
\usepackage{amsmath,amssymb,amsthm,mathtools,esint,booktabs,array}
\usepackage[T1]{fontenc}
\usepackage{lmodern,microtype,xcolor,needspace}
\usepackage[topmarks]{titleps}
\usepackage{hyperref}
\hypersetup{colorlinks=true,linkcolor=blue!40!black,citecolor=blue!40!black,
urlcolor=blue!40!black,pdftitle={Universal H\"older estimates for stable solutions to semilinear equations of fractional order},pdfauthor={},pdfsubject={Interior regularity of stable solutions and optimal dimension and order conditions}}

\newcommand{\ip}[2]{\langle #1,#2\rangle}
\newcommand{\Ree}{\operatorname{Re}}
\newcommand{\Imm}{\operatorname{Im}}
\newcommand{\ii}{\mathrm i}
\newcommand{\dd}{\,\mathrm d}

\newcommand{\R}{\mathbb R}
\newcommand{\Sph}{\mathbb S^{n-1}}
\newcommand{\Cyl}{\mathcal C}
\newcommand{\T}{\mathcal T}
\newtheorem{theorem}{Theorem}[section]
\newtheorem{proposition}[theorem]{Proposition}
\newtheorem{lemma}[theorem]{Lemma}
\newtheorem{corollary}[theorem]{Corollary}
\theoremstyle{remark}
\newtheorem{remark}[theorem]{Remark}
\numberwithin{equation}{section}
\allowdisplaybreaks[1]
\hypersetup{bookmarksdepth=2}
\newpagestyle{manuscript}{
  \settitlemarks{section}
  \sethead{\small\ifx\thesection\empty\else\thesection\quad\fi\sectiontitle}{}{}
  \setfoot{}{\thepage}{}
  \setheadrule{0.3pt}}
\makeatletter
\renewcommand{\@maketitle}{%
  \newpage\null\vskip 1em
  \begin{center}
    {\LARGE\@title\par}
    \ifx\@author\@empty\else\vskip 1em{\large\@author\par}\fi
    \vskip 1em{\large\@date\par}
  \end{center}\vskip 1em}
\makeatother
\title{Universal H\"older estimates for stable solutions\\[0.15em]
to semilinear equations of fractional order}
\author{Yiyang Xiao\thanks{School of Mathematical Sciences, Beijing Normal University, Beijing 100875, China. Email:
202531130042@mail.bnu.edu.cn}}
\date{17 October 2026}
\begin{document}
\maketitle
\begin{abstract}
We establish universal interior H\"older and fractional energy estimates
for stable solutions of $(-\Delta)^su=f(u)$, $0<s<1$, with
$f\in C^1_{\mathrm{loc}}(\R)$ nonnegative and convex. The bounds depend only on a weighted
$L^1$ norm of the solution and hold under optimal conditions on the
dimension and the fractional order. In particular, the half-Laplacian
estimate holds through dimension eight. The estimates extend to stable
finite-energy Dirichlet $L^1$-weak solutions without an a priori local
boundedness assumption, and an explicit logarithmic solution proves
optimality in this class. Applications include global bounds in convex
domains, bounded extremal solutions, and a Liouville theorem.
\end{abstract}

\smallskip
\noindent\textit{2020 Mathematics Subject Classification.}
35B45, 35B65, 35R11, 35J61.
\par\noindent\textit{Keywords.} Fractional Laplacian, stable solutions,
universal estimates, critical dimension, extremal solutions.

\par\medskip
\pdfbookmark[1]{Contents}{contents}
\begingroup
\small
\makeatletter
\c@tocdepth=1\relax
\renewcommand{\l@section}{\@dottedtocline{1}{0pt}{2.2em}}
\noindent\textbf{\normalsize\contentsname}\par\nobreak\smallskip
\@starttoc{toc}
\makeatother
\endgroup
\medskip

\section{Introduction}\label{reading-routes}

We consider stable solutions of
\begin{equation}\label{equation}
 (-\Delta)^s u=f(u)\qquad\text{in }B_2\subset\R^n,
 \qquad 0<s<1,
\end{equation}
where $f:\R\to[0,\infty)$ is convex and belongs to
$C^1_{\mathrm{loc}}(\R)$. All nonlinearities below are defined on the
whole real line with this local regularity. We normalize $(-\Delta)^s$
to have Fourier symbol $|\xi|^{2s}$ and write $L=(-\Delta)^s$.
Stability means that
\[
 \int_{B_2}f'(u)\xi^2\dd x
 \le \int_{\R^n}|(-\Delta)^{s/2}\xi|^2\dd x
 \qquad\text{for every }\xi\in C_c^\infty(B_2).
\]
The exterior values enter the estimates through the weighted norm
\[
 \|u\|_{L_s^1(\R^n)}
   =\int_{\R^n}\frac{|u(x)|}{1+|x|^{n+2s}}\dd x.
\]
The fractional energy and the solution conventions are fixed in
Section~\ref{preliminaries-section}.

For the classical Laplacian, Cabr\'e, Figalli, Ros-Oton, and Serra
\cite{CFRS} proved universal interior H\"older estimates through
dimension nine for nonnegative nonlinearities, without assuming
convexity. Their proof first extracts a partial control from stability
and then recovers the full estimate. Quantitative proofs and related
developments are presented in \cite{CabreQuant,CabreICM}.

For the half-Laplacian, Cabr\'e and Sanz-Perela \cite{CS} obtained
the corresponding interior H\"older estimate through dimension four
for nonnegative convex nonlinearities. Boundedness for $2\le n\le8$
was known for radially decreasing stable $H^{1/2}(\R^n)$ weak
solutions in the ball with zero exterior values
\cite[Theorem~1.2]{SP}. That result assumes a nondecreasing $C^2$
reaction, and its bound may depend on $f$ and $\|u\|_{L^1}$.
Boundedness was also known for exponential extremal solutions in
convex domains symmetric about each coordinate hyperplane \cite{RO};
see also \cite[Section~3.5]{CabreICM}.

The restriction on $n$ and $s$ is dictated by the logarithmic solution.
For $n>2s$, set
\begin{equation}\label{hardy-constants}
 A_{n,s}=4^s\frac{\Gamma(n/2)\Gamma(1+s)}{\Gamma(n/2-s)},
 \qquad
 H_{n,s}=4^s\frac{\Gamma((n+2s)/4)^2}{\Gamma((n-2s)/4)^2}.
\end{equation}
The function $u_*(x)=-2s\log|x|$ satisfies
$(-\Delta)^su_*=A_{n,s}e^{u_*}$, and $H_{n,s}$ is the sharp
fractional Hardy constant. Consequently $u_*$ is stable exactly when
$A_{n,s}\le H_{n,s}$. We prove interior regularity when
$A_{n,s}>H_{n,s}$.
These constants also arise in the fractional Gelfand problem:
Ros-Oton \cite{RO} identified the same condition for its extremal
solutions in convex domains symmetric about each coordinate hyperplane.
Our estimate is interior and
uniform over nonnegative convex nonlinearities and exterior data with
finite weighted $L^1$ norm.

\begin{theorem}[Universal interior estimate]\label{full}
Let $0<s<1$ and $n\ge1$. Assume
\begin{equation}\label{dimension-range}
 n=1\qquad\hbox{or}\qquad
 \bigl(n\ge2\ \hbox{and}\ A_{n,s}>H_{n,s}\bigr).
\end{equation}
Let $u\in L^\infty_{\mathrm{loc}}(B_2)\cap L_s^1(\R^n)$ be a stable
distributional solution of \eqref{equation}. Assume that $f$ is
nonnegative and convex on $\R$, with $f\in C^1_{\mathrm{loc}}(\R)$.
There are
$\alpha\in(0,\min\{1,2s\})$ and $C>0$, depending only on $n,s$, such that
\begin{equation}\label{full-bound}
 \|u\|_{C^\alpha(B_1)}+[u]_{H^s(B_1)}\le C \|u\|_{L_s^1(\R^n)}.
\end{equation}
The constants are independent of $f$ and the initial local bounds of $u$.
\end{theorem}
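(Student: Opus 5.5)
The proof follows the two-step scheme of \cite{CFRS}, adapted to the nonlocal setting: first extract from stability a partial radial control, then convert it into full H\"older regularity by a compactness and blow-up argument. Throughout we may assume $u$ is smooth enough to test the equation, by approximation using $u\in L^\infty_{\mathrm{loc}}(B_2)$ and $f\in C^1_{\mathrm{loc}}$. The constants must not depend on the local bound. By scaling and the dependence on $\|u\|_{L^1_s}$ alone, it suffices to work with normalized solutions and small balls $B_r(z)$, $z\in B_1$.

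\textbf{Step 1: a stability inequality with a radial test field.} Test stability with $\xi=(x\cdot\nabla u)\,|x|^{-a}\eta$, where $\eta$ is a cutoff and $a$ is a free exponent. The function $x\cdot\nabla u$ satisfies a linearized equation of the form $L(x\cdot\nabla u)=f'(u)\,x\cdot\nabla u+2s f(u)$. Since $f\ge0$, the last term has a favorable sign on the relevant set; convexity enters to control the commutator terms produced by testing with the sign-changing part. Expanding $\int|(-\Delta)^{s/2}\xi|^2$ with the nonlocal Leibniz formula, the main term becomes a quadratic form in $|x|^{-a/2}x\cdot\nabla u$. This form is governed by the Fourier multiplier of $(-\Delta)^s$ acting on homogeneous functions, i.e.\ a Mellin-type symbol on the cylinder $\R\times\Sph$. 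The goal is the weighted radial-derivative bound
\[
\int_{B_{1/2}}|x|^{-n+2s-\epsilon}\,|x\cdot\nabla u|^2\cdots\ \le\ C\!\int_{B_1\setminus B_{1/2}}(\cdots),
\]
that is, control of $u_r$ in a weighted $H^s$ sense on $B_{1/2}$ by annular and tail quantities. The condition $A_{n,s}>H_{n,s}$ is exactly what makes the optimal choice of $a$ give a positive coercivity constant. This is checked by comparing the Mellin symbol, expressed through $\Gamma$-ratios, with the Hardy symbol attaining $H_{n,s}$; the logarithmic solution $u_*$ saturates the equality case. For $n=1$ a separate direct computation is needed.

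\textbf{Step 2: from radial control to a Morrey/H\"older bound.} Combine Step~1 with a nonlocal Caccioppoli-type estimate, obtained by testing the equation with $u\eta^2$ and using $f\ge0$ together with superharmonicity-type bounds. Together these give $[u]_{H^s(B_{1/2})}\le C\|u\|_{L^1_s}$, which yields the energy part of the estimate. Next, argue by contradiction as in \cite{CFRS}: assume the radial energy does not decay geometrically in small balls, i.e.\ the energy ratio across scales stays close to $1$. Rescale and normalize to obtain a sequence of stable solutions converging in $H^s_{\mathrm{loc}}$ and in $L^1_s$; the convergence uses the uniform energy bound and $f\ge0$. The limit then has vanishing radial derivative weight, so it is $0$-homogeneous, and by the equation with $f\ge0$ it is constant. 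This contradicts the normalization. The result is a decay $\int_{B_r(z)}\cdots\le Cr^{n-2s+2\alpha}$, which Campanato's criterion turns into $C^\alpha$. The tail is handled through the $L^1_s$ term at each scale.

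\textbf{Main obstacle.} The hardest step is Step~1: obtaining a coercive inequality for $x\cdot\nabla u$ in the nonlocal setting. This requires handling the commutator $[L,\,|x|^{-a/2}\eta]$, whose nonlocal tail contributions must be absorbed using only $\|u\|_{L^1_s}$, and computing the sharp constant via Mellin transform so that coercivity holds precisely when $A_{n,s}>H_{n,s}$. I would first carry out the computation on exactly homogeneous functions on the cylinder, where the symbol is explicit. I would then treat the cutoff errors perturbatively with $\epsilon$-room in the exponent. The compactness step also needs care, namely closedness of stable solutions under weak $H^s$ limits with varying convex $f$. Here I would pass to the limit in the measures $f_k(u_k)$, which are nonnegative, and use lower semicontinuity in the stability inequality.
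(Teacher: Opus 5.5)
\textbf{The gap is in Step~1.} The test direction $x\cdot\nabla u$ cannot give coercivity on the whole range $A_{n,s}>H_{n,s}$, and the mechanism you sketch for it does not exist. The term $2s\int f(u)\,(x\cdot\nabla u)(\cdots)$ has no sign, because $x\cdot\nabla u$ changes sign. It has to be treated as a Pohozaev-type quadratic form in $u$.

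Carry this out on homogeneous modes $u=r^{\ii\tau}Y_j$, with the scale-invariant test $\xi=(x\cdot\nabla u)|x|^{-\beta}$, $\beta=(n-2s)/2$, and the identity $L(x\cdot\nabla u)=f'(u)\,x\cdot\nabla u+2sLu$. Stability then reduces to nonpositivity of a form with symbol
\[
 S_j(\tau)=\tau^2\bigl[\Ree c_j(-\ii\tau)-c_j(\beta-\ii\tau)\bigr]-2s\tau\Imm c_j(-\ii\tau),
\]
where $c_j$ is the coefficient in \eqref{power-multiplier}.
\begin{itemize}
\item For $j=0$, the Gamma identity $c_1(1-\ii\tau)=c_0(-\ii\tau)(2s-\ii\tau)/(-\ii\tau)$ gives $S_0(\tau)=\tau^2R_{0,s}(\tau)$. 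So your intuition is right for the radial mode: $A_{n,s}-H_{n,s}$ governs it.
\item For $j\ge1$, however, $S_j(\tau)=\tau^2\bigl[c_j(0)-c_j(\beta)+2sc_j'(0)\bigr]+O(\tau^4)$. Stirling's formula shows the bracket behaves like $s\beta(4s-\beta)c_j(\beta)j^{-2}$ as $j\to\infty$. For $s=1$ it equals the CFRS constant $\beta(4-\beta)=(n-2)(10-n)/4$ for every $j$.
\end{itemize}
Hence the form is indefinite as soon as $n>10s$. At $s=1/2$ this already excludes $n\ge5$, consistent with the dimension-four result of \cite{CS}, whereas the theorem covers $n\le8$. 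The ``$\epsilon$-room'' makes things worse, not better. For weights more singular than the critical one, the high angular modes are negative in every dimension. This is the same phenomenon as in the local CFRS identity with $\eta=r^{-a/2}$, $a>n-2$, where the tangential gradient gets the coefficient $n-2-a<0$.

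\textbf{What the paper does instead.} It replaces $x\cdot\nabla u$ by the corrected radial derivative $q=D_{\mathrm{har}}u=(\Lambda-\partial_t)u$.
\begin{itemize}
\item $q$ annihilates every regular homogeneous harmonic $r^jY_j$, not only constants, and satisfies $\T_{s,0}q=r^{2s}f'(u)q-B_0+E_0$.
\item Convexity enters exactly through the angular defect $B_0=r^{2s}\bigl(f'(u)\Lambda u-\Lambda f(u)\bigr)\ge0$.
\item The comparison $\Ree\T_{s,\vartheta}-\mathcal A_s$ gains a potential of size $\delta_\vartheta+2j$ in the nonradial modes. Only the radial mode is left for the Gamma-function analysis of Section~\ref{sp-appendix}.
\item Since $B_0$ has a sign and $q$ does not, stability is applied to $(r^{-\vartheta}q)_+$ with a positive weight $\vartheta$. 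The loss of cancellation in $H^{s-1}$ caused by taking a positive part is paid for by retaining the interaction $\mathcal I_\vartheta(q_+,q_-)$ (Lemma~\ref{rectification}).
\item Decay then comes without any compactness. Theorem~\ref{intro-poisson-source} converts the $H^{s-1}$ bound on $(q_\vartheta)_+$ into a weighted bound for the positive source average $F_s[f(u)]$. This gives Morrey decay of $\int_{B_\rho}f(u)$, H\"older continuity via Riesz potentials, and the energy bound via Lemma~\ref{energy-amplitude} plus an iteration.
\end{itemize}

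\textbf{Step~2 has its own unproved ingredients.}
\begin{itemize}
\item It needs strong $H^s_{\rm loc}$ compactness of stable solutions with varying $f$; in \cite{CFRS} the analogous step rests on a separate higher-integrability estimate.
\item It needs a rigidity step (vanishing radial derivative on an annulus plus $Lu\ge0$ forces constancy) that has no nonlocal counterpart. Exterior values can make $Lw\ge0$ on an annulus for nonconstant $0$-homogeneous profiles.
\item The bound $[u]_{H^s(B_{1/2})}\le C\|u\|_{L_s^1}$ is asserted without a mechanism.
\end{itemize}
Finally, for $n=1$ no separate computation is needed: the paper lifts $u$ constantly to $\R^2$ (Lemma~\ref{prelim-lifting}).
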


\begin{remark}
For $s=1/2$, the short proof of Theorem~\ref{explicit-exponents}
already gives the optimal dimension range $n\le8$.
\end{remark}

The local boundedness in this a priori statement only supplies the
qualitative regularity needed to justify the proof. It can be removed
in the finite-energy Dirichlet class introduced in Section~\ref{weak-section}.
Theorem~\ref{we-thm:upgrade} establishes the same estimate for stable
finite-energy Dirichlet $L^1$-weak solutions with exterior data
satisfying \eqref{we-eq:g}.

Proposition~\ref{we-prop:log} verifies that the original logarithmic
profile belongs to this class and is unbounded and stable whenever
$A_{n,s}\le H_{n,s}$. This proves optimality of the dimension and order
range for regularity in that weak class, including failure at equality.

A second result concerns classical harmonic replacement. For a smooth
compactly supported function $v$, the difference between the normal
derivatives of $v$ and its harmonic replacement is an average of
$-\Delta v$ over the ball. Theorem~\ref{intro-poisson-source}
constructs a positive average of this difference whose value is instead
an integral of $(-\Delta)^sv$ over the same ball. The two representations
follow from the boundary limits of the classical and fractional Green
functions. This identity turns the derivative estimate from stability
into an estimate for the nonnegative source $f(u)$.

The proof follows the partial-estimate strategy of \cite{CFRS}.
We apply stability to the corrected radial derivative from harmonic
replacement. Convexity makes the defect in its equation nonnegative,
but the derivative itself can change sign. The local energy controls
it on annuli only in the negative Sobolev norm $H^{s-1}$ of the
logarithmic cylinder, and taking a positive part can destroy
cancellation in that norm. We overcome this difficulty by keeping the
interaction between the positive and negative parts in the stability
inequality.

The corrected derivative intertwines the ordinary heat equation with
a positive operator in polar coordinates. Its heat semigroup controls
the retained interaction and the cost of taking a positive part.
The associated fractional extension provides the quadratic forms used
in the stability comparison and estimates the errors from radial
cutoffs. Thus the operator identity, sign argument, and localization
use the same linear construction.

The positive averaging identity \eqref{intro-fractional-source}
converts the derivative estimate into the weighted source bound of
Proposition~\ref{weighted-partial}. Since $f\ge0$, this bound gives
Morrey decay of the source mass. Riesz potentials then yield a
H\"older estimate in terms of local energy and the weighted tail.
An energy estimate and an iteration on concentric balls absorb the
remaining local energy, giving the universal interior bound.

The dimension restriction enters through the comparison of extension
energies. Under $A_{n,s}>H_{n,s}$, the higher angular modes are
controlled by a positive potential comparison. The radial mode
requires a more precise argument: its difference at zero frequency
is $A_{n,s}-H_{n,s}$. At low frequencies we retain
this lower bound by direct estimates of the Gamma ratios. At higher
frequencies a positive resolvent contribution from the first-order
radial term compensates for the loss in the real potential comparison.
The resulting bound is uniform in the angular degree and persists
under a small positive radial weight. This weight supplies the decay
exponent in the source estimate; the comparisons are proved in
Sections~\ref{gap-section} and~\ref{sp-appendix}.

For weak solutions, we construct bounded stable approximations whose
limit $v$ satisfies $v\le u$. The universal estimate makes $v$ locally
bounded, so it remains to transfer this property to $u$. Equality
in the stability and convexity inequalities forces $f$ to be affine
on the intervals between $v$ and $u$. Consequently, their difference
satisfies $(-\Delta)^s(u-v)=f'(v)(u-v)$. The coefficient $f'(v)$ is
locally bounded, and linear regularity gives local boundedness of $u$.
The universal interior estimate can then be applied directly to $u$;
see Section~\ref{weak-section}.

% MODULE common

\section{Preliminaries}\label{preliminaries-section}

We collect the notation, linear estimates, and cylinder symbols used below.

\subsection{Fractional energy and linear operators}\label{prelim-notation}\label{prelim-sphere}\label{prelim-cylinder}\label{prelim-semigroups}

Throughout, $n\ge1$ and $0<s<1$. We write
$B_R(a)=\{x\in\R^n:|x-a|<R\}$ and $B_R=B_R(0)$; a bar on a set denotes
closure, and $\Omega^c=\R^n\setminus\Omega$. The measure $dx$ is
Lebesgue measure. For $n\ge2$, $\Sph$ is the unit sphere, $d\omega$
is ordinary surface measure, and $|\Sph|$ is its total mass.
The notation $\fint$ denotes an integral divided by the measure of
its domain. For a bounded domain, $d_\Omega(x)=
\operatorname{dist}(x,\partial\Omega)$.

For a real function $v$, let $v_+=\max\{v,0\}$ and
$v_-=\max\{-v,0\}$, so $v=v_+-v_-$.

\Needspace{6\baselineskip}
For $0<\alpha<1$, we use
\[
 [v]_{C^\alpha(A)}=\sup_{\substack{x,y\in A\\x\ne y}}
 \frac{|v(x)-v(y)|}{|x-y|^\alpha},\qquad
 \|v\|_{C^\alpha(A)}=\|v\|_{L^\infty(A)}+[v]_{C^\alpha(A)}.
\]

At higher noninteger orders we use the usual H\"older norms of the
corresponding derivatives. Unspecified constants $C,c>0$ may change
from line to line; subscripts record additional dependence.
We write $A\asymp B$ when $cB\le A\le CB$. In universal estimates
the constants depend only on the parameters stated, never on the
nonlinearity or on a qualitative local bound for the solution.

The Euclidean Fourier transform is unitary:
\begin{equation}\label{prelim-euclidean-fourier}
 \widehat v(\xi)=(2\pi)^{-n/2}
 \int_{\R^n}e^{-\ii x\cdot\xi}v(x)\dd x.
\end{equation}

The operator $L=(-\Delta)^s$ is normalized to have
Fourier symbol $|\xi|^{2s}$. Thus its quadratic form is
\begin{equation}\label{energy-definition}
 [v]_{H^s(\R^n)}^2=\frac{c_{n,s}}2\iint_{\R^n\times\R^n}
       \frac{|v(x)-v(y)|^2}{|x-y|^{n+2s}}\dd x\dd y,
 \qquad
 c_{n,s}=\frac{4^s\Gamma(n/2+s)}{\pi^{n/2}|\Gamma(-s)|}.
\end{equation}
For a measurable set $A$, the seminorm $[v]_{H^s(A)}$ uses the
same normalization, with the integral restricted to $A\times A$.

We also use the weighted $L_s^1$ norm
\[
 \|u\|_{L_s^1(\R^n)}=\int_{\R^n}\frac{|u(x)|}{1+|x|^{n+2s}}\dd x,
 \qquad L_s^1(\R^n)=\{u:\|u\|_{L_s^1(\R^n)}<\infty\}.
\]
This weight is equivalent to the convention in \cite[(1.4)]{CS}.

A locally bounded function $u\in L_s^1(\R^n)$ is a distributional solution
of \eqref{equation} if
\[
 \int_{\R^n}u(-\Delta)^s\phi=\int_{B_2}f(u)\phi
 \qquad\text{for every }\phi\in C_c^\infty(B_2).
\]

It is stable if
\begin{equation}\label{stability}
 \int_{B_2}f'(u)\phi^2\le[\phi]_{H^s(\R^n)}^2
 \qquad\text{for every }\phi\in C_c^\infty(B_2).
\end{equation}
Only interior variations are involved, and the exterior values of $u$
are subject to the weighted $L^1$ norm condition above.

For $v\in H^s(A)$ the full local norm is
$\|v\|_{H^s(A)}^2=\|v\|_{L^2(A)}^2+[v]_{H^s(A)}^2$.
For real functions, polarization of the full-space energy gives
\begin{equation}\label{prelim-bilinear-energy}
 \mathcal E_s(v,w)=\frac{c_{n,s}}2\iint_{\R^n\times\R^n}
 \frac{(v(x)-v(y))(w(x)-w(y))}{|x-y|^{n+2s}}\dd x\dd y.
\end{equation}
Thus $\mathcal E_s(v,v)=[v]_{H^s(\R^n)}^2$ and, for smooth tests,
$\mathcal E_s(v,w)=\int vLw$. Plancherel gives
\[
 [v]_{H^s(\R^n)}^2=\int_{\R^n}|\xi|^{2s}|\widehat v(\xi)|^2\dd \xi.
\]

Under translation and dilation the weighted norm takes the form
\begin{equation}\label{rescaled-tail-definition}
 \|u\|_{L_s^1(a,R)}=R^{2s}\int_{\R^n}
 \frac{|u(y)|}{R^{n+2s}+|y-a|^{n+2s}}\dd y,
 \qquad \|u(a+R\,\cdot)\|_{L_s^1(\R^n)}=\|u\|_{L_s^1(a,R)}.
\end{equation}

For the spherical and cylinder constructions, assume $n\ge2$. We recall the
Dirichlet-to-Neumann operator of the ball. For $g\in C^\infty(\Sph)$, let $H_g$ solve
$\Delta H_g=0$ in $B_1$ with boundary value $g$, and define
\[
 \Lambda g=\partial_\nu H_g\big|_{\Sph}.
\]
Thus $\Lambda$ gives the outward normal derivative of the harmonic
replacement from its boundary values.

Let $\mathcal H_j$ be the restrictions to $\Sph$ of harmonic
homogeneous polynomials of degree $j$. With ordinary surface measure
$d\omega$, choose an orthonormal basis
$Y_{j,\ell}$, $1\le\ell\le\dim\mathcal H_j$. Orthogonality and
completeness give
\[
 L^2(\Sph)=\widehat{\bigoplus}_{j\ge0}\mathcal H_j,
 \qquad g=\sum_{j,\ell}g_{j,\ell}Y_{j,\ell},
 \qquad \|g\|_2^2=\sum_{j,\ell}|g_{j,\ell}|^2;
\]
see \cite[Proposition~5.9 and Theorem~5.12]{ABR}.

The polar-coordinate identity
\[
 \Delta=\partial_r^2+\frac{n-1}{r}\partial_r
                    +\frac1{r^2}\Delta_{\Sph}
\]
applied to the harmonic polynomial $r^jY_{j,\ell}(\omega)$ shows that
\[
 -\Delta_{\Sph}Y_{j,\ell}=j(j+n-2)Y_{j,\ell},\qquad
  H_{Y_{j,\ell}}(r\omega)=r^jY_{j,\ell}(\omega),\qquad
 \Lambda Y_{j,\ell}=jY_{j,\ell}.
\]
The polar formula and this eigenvalue computation are also recorded in
\cite[Lemma~4.1 and Theorem~4.5]{DX}. In particular,
\[
 -\Delta_{\Sph}=\Lambda(\Lambda+n-2),\qquad
 \Lambda=\sqrt{-\Delta_{\Sph}+(n-2)^2/4}-(n-2)/2.
\]
The square root is defined spectrally on this orthonormal basis.

For $0<\rho<1$, the Poisson formula gives
\begin{equation}\label{spherical-poisson}
 \begin{split}
 P_\rho g(\omega):=H_g(\rho\omega)
 &=\int_{\Sph}\frac{1-\rho^2}
 {|\Sph|\,|\omega-\rho\eta|^n}\,g(\eta)\dd \eta\\
 &=\sum_{j,\ell}\rho^j g_{j,\ell}Y_{j,\ell}(\omega).
 \end{split}
\end{equation}
The factor $|\Sph|^{-1}$ corresponds to our unnormalized surface
measure. The kernel is positive and has integral one, so $P_\rho$
preserves positivity and constants. Its spectral formula gives
$P_\rho P_\sigma=P_{\rho\sigma}$. In the time parameter
$h=-\log\rho>0$ we therefore write
\[
 \mathsf P_h=P_{e^{-h}}=e^{-h\Lambda}.
\]
These formulas, with the same surface-measure convention, appear in
\cite[Section~4, (4.3)--(4.5)]{DNS}; see also
\cite[(1.15)--(1.16), Theorem~1.17 and Proposition~1.20]{ABR}.

For example, on the circle ($n=2$), if
$g(\theta)=\sum_{k\in\mathbb Z}g_ke^{\ii k\theta}$, then
\[
 H_g(\rho,\theta)=\sum_{k\in\mathbb Z}\rho^{|k|}g_ke^{\ii k\theta},
 \qquad \Lambda g=\sum_{k\in\mathbb Z}|k|g_ke^{\ii k\theta}.
\]
The degree $j$ in higher dimensions plays the role of $|k|$ here.

Set $r=|x|$, $t=\log r$, and $x=r\omega$. The logarithmic cylinder is
$\Cyl=\R\times\Sph$, with points $X=(t,\omega)$ and measure
$dX=dt\dd \omega=r^{-n}dx$.
Its Laplacian is $\Delta_{\Cyl}=\partial_t^2+\Delta_{\Sph}$, and
$\partial_t=r\partial_r$. We reserve $h>0$ for semigroup time,
distinct from the spatial coordinate $t$.

For $v(t,\omega)=\sum_{j,\ell}v_{j,\ell}(t)Y_{j,\ell}(\omega)$,
\[
 \widehat v_{j,\ell}(\tau)=\frac1{\sqrt{2\pi}}
 \int_\R e^{-\ii t\tau}v_{j,\ell}(t)\dd t.
\]
This is the unitary Fourier transform in $t$, equivalently the Mellin
transform in $r$ with measure $dr/r$.

For every $\rho\in\R$, set
\begin{equation}\label{cylinder-bessel-norm}
 \|v\|_{H^\rho(\Cyl)}^2
 =\sum_{j,\ell}\int_\R
 (1+j(j+n-2)+\tau^2)^\rho
 |\widehat v_{j,\ell}(\tau)|^2\dd \tau.
\end{equation}
We abbreviate $\langle(j,\tau)\rangle=(1+j^2+\tau^2)^{1/2}$ and
$\langle a\rangle=(1+a^2)^{1/2}$; replacing $j(j+n-2)$ by $j^2$
in \eqref{cylinder-bessel-norm} gives an equivalent norm. More precisely,
with $C_n=\max\{1,n-1\}$,
\begin{equation}\label{cylinder-weight-comparison}
 1+j^2+\tau^2\le1+j(j+n-2)+\tau^2
                 \le C_n(1+j^2+\tau^2),\qquad j\ge0.
\end{equation}

The
$L^2$ inner product and its Sobolev-duality extension are denoted by
$\langle\cdot,\cdot\rangle$, using the convention
$\langle v,w\rangle=\int\overline v\,w$. Unless another domain is
displayed, norms and pairings in a cylinder argument use $\Cyl$.
We write $\sigma=1-s$, so $H^{s-1}=H^{-\sigma}$.

For an operator $\mathcal Q$, $\Ree\mathcal Q=
(\mathcal Q+\mathcal Q^*)/2$ in quadratic forms; for scalar symbols,
$\Ree$ and $\Imm$ have their ordinary complex meaning.

For a nonnegative selfadjoint operator $\mathcal Q$, the spectral
theorem and the Gamma integral give
\begin{align}
 \mathcal Q^sv
 &=c_s\int_0^\infty(v-e^{-h\mathcal Q}v)\frac{dh}{h^{1+s}},
 &c_s&=\frac{s}{\Gamma(1-s)},\label{prelim-positive-power}\\
 \mathcal Q^{-\rho}v
 &=\frac1{\Gamma(\rho)}\int_0^\infty
 h^{\rho-1}e^{-h\mathcal Q}v\dd h,
 &\rho&>0.\label{prelim-negative-power}
\end{align}
The first formula holds on the operator domain; the second is used
when $\mathcal Q$ is bounded below by a positive constant. If its heat
semigroup preserves nonnegativity, so do these inverse powers.
For nonnegative $v,w\in D(\mathcal Q^{s/2})$ with disjoint supports,
the fractional-power formula gives the form inequality
$\langle v,\mathcal Q^sw\rangle\le0$; the term $\langle v,w\rangle$
vanishes and each heat pairing is nonnegative. These formulas are
\cite[(1.11) and (2.3)]{ST}.

The Caffarelli--Silvestre extension \cite[\S3]{CaSi} solves
$\operatorname{div}(y^{1-2s}\nabla U)=0$ in $\R^{n+1}_+$ with trace
$U(x,0)=v(x)$. With our Fourier normalization,
\begin{equation}\label{prelim-extension-normalization}
 (-\Delta)^sv=-d_s\lim_{y\downarrow0}y^{1-2s}\partial_yU,
 \qquad d_s=2^{2s-1}\frac{\Gamma(s)}{\Gamma(1-s)};
\end{equation}
see \cite[Theorem~1.1, (1.8)]{ST}.

Finally, for $n>2s$, the sharp fractional Hardy inequality is
\begin{equation}\label{prelim-hardy-inequality}
 H_{n,s}\int_{\R^n}|x|^{-2s}\phi^2\dd x
 \le[\phi]_{H^s(\R^n)}^2\qquad(\phi\in C_c^\infty(\R^n)).
\end{equation}
Here $H_{n,s}$ is the constant in \eqref{hardy-constants}; see
\cite[(2.1)--(2.2) and Proposition~4.1]{Hardy}. Dilation preserves
the quotient, so the same optimal constant is obtained with tests
supported in any fixed ball centered at the origin.

\subsection{Localization and linear regularity}\label{prelim-linear}

\begin{lemma}[Fractional Poincar\'e and localization]
\label{prelim-localization}
Let $A\subset\R^n$ be bounded and open with positive measure. For $v\in H^s(A)$,
\[
 \|v\|_{H^s(A)}\le C_A\bigl([v]_{H^s(A)}+\|v\|_{L^1(A)}\bigr).
\]
If $\eta\in C_c^\infty(A)$, extend $\eta v$ by zero outside $A$.
Then $\|\eta v\|_{H^s(\R^n)}\le C_{A,\eta}\|v\|_{H^s(A)}$.
For functions supported in a fixed compact subannulus of
$\R^n\setminus\{0\}$, the change $x=e^t\omega$ identifies the
Euclidean and cylinder $H^s$ norms up to fixed constants. By duality,
the corresponding localized negative-order norms are equivalent too.
If $\varphi$ is a smooth bounded function with bounded first derivatives,
then, on either full space,
\begin{equation}\label{quantitative-multiplier}
 \|\varphi v\|_{H^\rho}
 \le C\bigl(\|\varphi\|_\infty+\|\nabla\varphi\|_\infty\bigr)
             \|v\|_{H^\rho},\qquad -1\le\rho\le1.
\end{equation}
For $0<s<1$, positive and negative parts preserve $H^s$ on either
space, with a norm bound depending only on $n,s$ and the chosen
equivalent norms.
Moreover, if $a,b\in L^2(\Cyl)$ are nonnegative and $\rho>0$, then
\begin{equation}\label{bessel-positive-pairing}
 \langle a,(1-\Delta_{\Cyl})^{-\rho}b\rangle\ge0.
\end{equation}
\end{lemma}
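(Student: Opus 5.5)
The lemma collects several standard facts. I would prove them one at a time, reducing each to a compactness argument, a commutator estimate, or a spectral/semigroup positivity statement.

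\emph{Poincar\'e inequality.} I would argue by contradiction in the usual way. Suppose there are $v_k$ with $\|v_k\|_{H^s(A)}=1$ while $[v_k]_{H^s(A)}+\|v_k\|_{L^1(A)}\to0$. Compactness of $H^s(A)\hookrightarrow L^2(A)$ gives an $L^2$-convergent subsequence. For a bounded open set that is not a Lipschitz extension domain, I would avoid extension theorems: cover $\overline A$ by finitely many balls $B$ and apply Fr\'echet--Kolmogorov on each $A\cap B$, using the kernel lower bound $|x-y|^{-n-2s}\ge c$ there. Alternatively I would prove the inequality directly. From $|v(x)-\bar v|\le\fint_A|v(x)-v(y)|\dd y$ we get
\[
\|v-\bar v\|_{L^2(A)}^2\le|A|^{-1}\iint_{A\times A}|v(x)-v(y)|^2\le C_A[v]_{H^s(A)}^2,
\]
since $|x-y|\le\operatorname{diam}A$, and $|\bar v|\le|A|^{-1}\|v\|_{L^1}$. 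This direct route is the one I would use.

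\emph{Localization.} For $\eta v$ extended by zero, I would split the double integral into three pieces. On $A\times A$, use $|\eta v(x)-\eta v(y)|\le|\eta(x)||v(x)-v(y)|+|v(y)||\eta(x)-\eta(y)|$ together with $\int_A|\eta(x)-\eta(y)|^2|x-y|^{-n-2s}\dd x\le C$. On $\operatorname{supp}\eta\times A^c$, use $|x-y|\ge\operatorname{dist}(\operatorname{supp}\eta,A^c)>0$, so this piece is bounded by $C\|v\|_{L^2}^2$.

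\emph{Annular equivalence, duality and the multiplier bound.} On a fixed compact annulus, $x=e^t\omega$ is a diffeomorphism with bounded derivatives and Jacobian $r^n$ bounded above and below. For $\rho=0,1$ the norms are therefore equivalent, and for $0<\rho<1$ this follows from interpolation or directly from the Gagliardo form. The negative orders follow by duality, testing against functions supported in a slightly larger annulus and inserting a cutoff via \eqref{quantitative-multiplier}.

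For \eqref{quantitative-multiplier} itself, the cases $\rho=0$ and $\rho=1$ are immediate from the product rule. Interpolation covers $0<\rho<1$, and duality covers $-1\le\rho<0$, since multiplication by real $\varphi$ is selfadjoint. On $\Cyl$ the same argument applies with the norm \eqref{cylinder-bessel-norm}; there the case $\rho=1$ is the norm $\|v\|_2^2+\|\nabla_\Cyl v\|_2^2$.

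\emph{Positive and negative parts.} These follow from $|v_\pm(x)-v_\pm(y)|\le|v(x)-v(y)|$ in the Gagliardo form. On $\Cyl$ I would use the heat-semigroup form of the $H^s$ seminorm,
\[
\langle v,(-\Delta_\Cyl)^s v\rangle=\frac{c_s}{2}\int_0^\infty\iint p_h(X,Y)|v(X)-v(Y)|^2\,\dd X\,\dd Y\,\frac{dh}{h^{1+s}},
\]
with a nonnegative heat kernel $p_h$. Equivalently, I would note that the $H^s(\Cyl)$ norm is equivalent to $\|v\|_2^2+{}$a Gagliardo seminorm with a locally comparable kernel. This step is the main technical point, and I would present it via the semigroup form.

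\emph{Positivity \eqref{bessel-positive-pairing}.} Use \eqref{prelim-negative-power} with $\mathcal Q=1-\Delta_\Cyl$, which is bounded below by $1$. The operator $e^{-h\mathcal Q}=e^{-h}e^{h\partial_t^2}e^{h\Delta_{\Sph}}$ is a product of commuting positivity-preserving heat semigroups (Gaussian in $t$, spherical heat kernel in $\omega$). Hence $\langle a,e^{-h\mathcal Q}b\rangle\ge0$ for every $h$, and integrating against $h^{\rho-1}/\Gamma(\rho)$ gives the claim.
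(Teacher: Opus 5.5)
Your proposal is correct. For the Poincar\'e inequality, the three-piece localization estimate, and the interpolation/duality proofs of the annular equivalence and of \eqref{quantitative-multiplier}, it follows the paper's argument. The direct mean-value route is the right choice for Poincar\'e. The compactness route you sketch first would need a compact embedding $H^s(A)\hookrightarrow L^2(A)$, and that can fail for irregular bounded open sets.

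Two steps are done differently. For positive and negative parts, the paper gives a single argument valid on both $\R^n$ and $\Cyl$, through the $K$-functional of $1-\Delta$. Given a real splitting $v=v_0+v_1$, it writes $v_+=\bigl(v_+-(v_1)_+\bigr)+(v_1)_+$. It bounds the first piece pointwise by $|v_0|$ and the second in $H^1$ by the chain rule, so $K(t,v_+)\le K(t,v)$, and \eqref{quadratic-interpolation} gives the bound. You instead use the Gagliardo form on $\R^n$ and the Beurling--Deny form $c_s\int_0^\infty\langle v,(I-e^{h\Delta_{\Cyl}})v\rangle h^{-1-s}\dd h$ on $\Cyl$. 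This is legitimate: the product heat kernel on $\R\times\Sph$ is symmetric, nonnegative and conservative. Moreover, the spectral identity holds in $[0,\infty]$ for every $v\in L^2$, so finiteness for $v_\pm$ is automatic.

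For \eqref{bessel-positive-pairing}, you factor $e^{-h(1-\Delta_{\Cyl})}=e^{-h}e^{h\partial_t^2}e^{h\Delta_{\Sph}}$ and invoke positivity of the Gaussian and spherical heat kernels. The paper proves positivity abstractly instead. The resolvent minimizer of $\int(|\nabla w|^2+(1+\lambda)w^2-2gw)$ with $g\ge0$ is unchanged by taking its positive part, and the semigroup is a limit of resolvents.

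Your versions are shorter once the classical kernel facts are granted. The paper's are kernel-free and reuse tools it needs anyway. The same $K$-functional identity drives its multiplier and annular bounds. The resolvent argument is repeated for the cone operator $\mathcal L$ and its radially killed version in Lemma~\ref{cone-positivity}, where no explicit heat kernel is available.
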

\begin{proof}
Write $v_A=\fint_Av$. The identity
$\int_A|v-v_A|^2=(2|A|)^{-1}\iint_{A\times A}|v(x)-v(y)|^2$
and $|x-y|\le\operatorname{diam}(A)$ bound the mean-zero part by the
fractional seminorm. The mean is controlled by the $L^1$ norm.

For localization, expand
$\eta(x)v(x)-\eta(y)v(y)$ into an increment of $v$ and an increment
of $\eta$. The latter is bounded by
$C\min\{|x-y|,1\}$, and its squared kernel is integrable because
$s<1$. Interactions with $A^c$ are bounded by the positive distance
from $\operatorname{supp}\eta$ to $A^c$. For cutoffs supported in a
fixed $K\Subset A$, this proof bounds the localization constant by
$C_{A,K}(\|\eta\|_\infty+\|\nabla\eta\|_\infty)$.

We prove the fractional norm assertions by one interpolation
calculation. Let $B$ be a positive selfadjoint operator on a Hilbert
space, bounded below by a positive constant, and put
\[
 K_B(t,f)^2=\inf_{f=f_0+f_1}
       \bigl(\|f_0\|_2^2+t^2\|B^{1/2}f_1\|_2^2\bigr),
 \qquad f_1\in D(B^{1/2}).
\]
Minimization at each spectral value $\lambda$ gives the multiplier
$t^2\lambda/(1+t^2\lambda)$. Substituting $u=t\sqrt\lambda$ and
using Tonelli therefore gives, for $0<\rho<1$,
\begin{equation}\label{quadratic-interpolation}
 \int_0^\infty t^{-2\rho}K_B(t,f)^2\frac{\dd t}{t}
   =c_\rho\|B^{\rho/2}f\|_2^2,
 \qquad
 c_\rho=\int_0^\infty\frac{u^{1-2\rho}}{1+u^2}\dd u\in(0,\infty).
\end{equation}
A linear map bounded on the two underlying $L^2$ spaces and their
form domains is bounded on the corresponding fractional domains:
apply it to each decomposition defining $K_B$, bound both terms by
the larger endpoint norm, and integrate this inequality.

For $B=1-\Delta$ on $\R^n$ or $\Cyl$, its form norm is the full
$H^1$ norm and its fractional norm is the spectral $H^\rho$ norm.
On $\R^n$ this is equivalent to our energy-plus-$L^2$ norm because
$(1+|\xi|^2)^\rho\asymp1+|\xi|^{2\rho}$.
The $L^2$ multiplier bound and the $H^1$ product rule, followed by
\eqref{quadratic-interpolation}, prove \eqref{quantitative-multiplier}
for $0\le\rho\le1$. Both endpoint bounds have first-degree
dependence on $\|\varphi\|_\infty+\|\nabla\varphi\|_\infty$;
duality gives the same dependence for negative orders.

For the coordinate change, use a smooth annular cutoff equal to one
on the prescribed compact support. Pullback by $x=e^t\omega$ with
this cutoff and the reverse coordinate map are bounded on $L^2$
and $H^1$: the Jacobian and $r,r^{-1}$ are bounded there, and
\[
 \partial_t u(e^t\omega)=r\omega\cdot\nabla u(r\omega),\qquad
 |\nabla_{\Sph}u(r\,\cdot)|\le r|\nabla u(r\omega)|.
\]
The reverse derivative formula has the factor $r^{-1}$.
Their compositions are the identity on functions with the stated
support. Equation~\eqref{quadratic-interpolation} proves the
annular $H^s$ equivalence in both directions. For negative orders,
pass to duality: the pulled-back test includes the smooth Jacobian
$r^n$ or its inverse. Multiplication by either localized Jacobian
is bounded by \eqref{quantitative-multiplier}, so the same norm
equivalence follows.

For real $v$, taking real parts does not increase either term in
the decomposition defining $K_{1-\Delta}$, so we may take $v_0,v_1$
real. For positive parts write
\[
 v_+=g_0+g_1,\qquad g_1=(v_1)_+,\qquad g_0=v_+-(v_1)_+.
\]
The scalar contraction and the $H^1$ chain rule give
$\|g_0\|_2\le\|v_0\|_2$ and $\|g_1\|_{H^1}\le\|v_1\|_{H^1}$.
Thus $K_{1-\Delta}(t,v_+)\le K_{1-\Delta}(t,v)$, and
\eqref{quadratic-interpolation} gives the asserted $H^s$ bound.
The proof for negative parts is identical.

Finally, the heat semigroup of $1-\Delta_{\Cyl}$ preserves
nonnegativity. Indeed, for $g\ge0$ the resolvent minimizer of
\[
 \int_{\Cyl}\bigl(|\nabla w|^2+(1+\lambda)|w|^2-2gw\bigr)\dd X,
 \qquad \lambda>0,
\]
is unchanged by taking its positive part: this operation decreases
the functional by the $H^1$ chain rule and $g\ge0$, and the minimizer
is unique. Resolvent approximation of the exponential gives the
stated positivity of the heat semigroup. Formula~\eqref{prelim-negative-power}
then proves \eqref{bessel-positive-pairing}. In particular the
$H^{-\rho}$ norm squared is increasing on nonnegative $L^2$ functions
under pointwise order.
\end{proof}

When $Lu=g$ and $\eta^2u$ is an admissible test, polarization gives
\begin{equation}\label{cutoff-energy-identity}
 [\eta u]_{H^s(\R^n)}^2=\int g\,u\eta^2
 +\frac{c_{n,s}}2\iint
  \frac{u(x)u(y)(\eta(x)-\eta(y))^2}{|x-y|^{n+2s}}\dd x\dd y.
\end{equation}
The identity extends by energy approximation. If $u$ has only a
weighted $L^1$ tail, one point in each nonlocal pairing lies in the
cutoff support, and the exterior value occurs to the first power.

\begin{lemma}[A local source mass bound]\label{source-mass}
If $Lu=g\ge0$ in $B_2$, with $g\in L^1_{\mathrm{loc}}(B_2)$ and
$u\in L_s^1(\R^n)$, then, for each fixed $R<2$,
\[
 \int_{B_R}g\le C_R \|u\|_{L_s^1(\R^n)}.
\]
\end{lemma}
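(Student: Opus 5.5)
We test the equation against a single fixed nonnegative cutoff. Fix $R<2$ and choose $\phi\in C_c^\infty(B_2)$ with $0\le\phi\le1$ and $\phi=1$ on $B_R$. Let $\phi$ be supported in $B_{R'}$, where $R'=(R+2)/2$. Its derivatives are then bounded by constants depending only on $n,s,R$. Since $g\ge0$ and $\phi\ge0$, we have
\[
 \int_{B_R}g\le\int_{B_2}g\phi=\int_{\R^n}u\,(-\Delta)^s\phi .
\]
The equality is the distributional formulation. When $g$ is only $L^1_{\mathrm{loc}}$, it is understood in the sense $\int u L\phi=\int g\phi$ for such tests, which is how we read the hypothesis $Lu=g$.

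It then suffices to prove a pointwise decay bound for $L\phi$:
\[
 |(-\Delta)^s\phi(x)|\le \frac{C_R}{1+|x|^{n+2s}}\qquad (x\in\R^n).
\]
With this bound,
\[
 \int_{\R^n}u\,(-\Delta)^s\phi\le C_R\int_{\R^n}\frac{|u|}{1+|x|^{n+2s}}\dd x=C_R\|u\|_{L_s^1(\R^n)},
\]
which gives the lemma.

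We split the decay bound into two regions.

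\emph{Bounded region $|x|\le 4$.} Use the singular-integral representation with the second-order difference,
\[
 (-\Delta)^s\phi(x)=\frac{c_{n,s}}2\int\frac{2\phi(x)-\phi(x+z)-\phi(x-z)}{|z|^{n+2s}}\dd z .
\]
Estimate the numerator by $\|D^2\phi\|_\infty|z|^2$ for $|z|<1$ and by $4\|\phi\|_\infty$ for $|z|\ge1$. This gives a uniform bound, which is valid because $0<s<1$.

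\emph{Far region $|x|>4$.} Here $\phi(x)=0$ and $\operatorname{supp}\phi\subset B_2$. The formula reduces to
\[
 (-\Delta)^s\phi(x)=-c_{n,s}\int_{B_2}\frac{\phi(y)}{|x-y|^{n+2s}}\dd y .
\]
Since $|x-y|\ge|x|/2$ for $y\in B_2$, its absolute value is at most $C|x|^{-n-2s}$.

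Combining the two regions gives the decay estimate with a constant depending on $n,s,R$ through $\phi$.

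There is no serious obstacle. The only point needing care is the justification that $u\,L\phi$ is integrable, so that the pairing $\int uL\phi$ is meaningful. This follows from the decay bound itself together with $u\in L^1_s(\R^n)$. One should also note that the sign of $L\phi$ in the far region is irrelevant, since we bound $|u|\,|L\phi|$.
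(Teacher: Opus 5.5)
Your proposal is correct and follows the paper's proof: test with a nonnegative cutoff in $C_c^\infty(B_2)$ equal to one on $B_R$, use $g\ge0$, and bound $|L\zeta(x)|\le C_R(1+|x|^{n+2s})^{-1}$. You also verify this decay bound (the second-difference formula near the support, the kernel bound far away), which the paper states without proof.
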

\begin{proof}
Choose $\zeta\in C_c^\infty(B_2)$ nonnegative and equal to one on
$B_R$. Testing the distributional equation gives
\[
 0\le\int_{B_R}g\le\int g\zeta=\int_{\R^n}uL\zeta
 \le C_R\|u\|_{L_s^1(\R^n)},
\]
because $|L\zeta(x)|\le C_R(1+|x|^{n+2s})^{-1}$.
\end{proof}

For $n>2s$, write the Riesz potential of a finite measure as
\begin{equation}\label{prelim-riesz-potential}
 I_{2s}\mu(x)=b_{n,s}\int_{\R^n}|x-y|^{2s-n}\dd \mu(y),\qquad
 b_{n,s}=\frac{\Gamma(n/2-s)}{4^s\pi^{n/2}\Gamma(s)}.
\end{equation}
Its Fourier multiplier is $|\xi|^{-2s}$, so $L(I_{2s}\mu)=\mu$
in distributions. Morrey growth gives the following H\"older estimate.

\begin{lemma}[Riesz potentials of Morrey measures]
\label{prelim-morrey-potential}
Let $n>2s$, $0<\alpha<\min\{1,2s\}$, and let $\mu\ge0$ be supported
in $B_1$. Suppose, for some $M<\infty$,
\[
 \mu(\R^n)\le M,\qquad
 \mu(B_\rho(a))\le M\rho^{n-2s+\alpha}
 \quad(a\in B_{1/2},\ 0<\rho\le1/4).
\]
Then $v=I_{2s}\mu$ satisfies
\[
 \|v\|_{C^\alpha(B_{1/4})}\le C_{n,s,\alpha}M,\qquad
 \|v\|_{L_s^1(\R^n)}\le C_{n,s}\mu(\R^n).
\]
\end{lemma}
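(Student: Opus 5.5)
The plan is to prove the two bounds separately, starting with the $L^1_s$ estimate because it is purely a tail computation. Write $v(x)=b_{n,s}\int|x-y|^{2s-n}\dd\mu(y)$ and integrate $|v|$ against $(1+|x|^{n+2s})^{-1}$, using Tonelli. For each fixed $y\in B_1$ one checks
\[
 \int_{\R^n}\frac{|x-y|^{2s-n}}{1+|x|^{n+2s}}\dd x\le C_{n,s}.
\]
Near $y$ the singularity $|x-y|^{2s-n}$ is integrable because $2s>0$. For $|x|\ge2$ the integrand is bounded by $C|x|^{2s-n}|x|^{-n-2s}=C|x|^{-2n}$, which is integrable. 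Integrating in $\mu$ then gives $\|v\|_{L^1_s}\le C\mu(\R^n)$.

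For the H\"older bound I will first prove the sup bound. Fix $x\in B_{1/4}$ and split $\mu$ into dyadic shells $A_k=\{2^{-k-1}\le|x-y|<2^{-k}\}$.
\begin{itemize}
\item For $k\ge2$, the shell lies in $B_{2^{-k}}(x)$ with radius at most $1/4$, and $x\in B_{1/2}$. The Morrey hypothesis gives a contribution of at most $C2^{-k(2s-n)}M2^{-k(n-2s+\alpha)}=CM2^{-k\alpha}$, which is summable.
\item The remaining part, where $|x-y|\ge 1/8$, contributes at most $C\mu(\R^n)\le CM$.
\end{itemize}
This yields $\|v\|_{L^\infty(B_{1/4})}\le CM$.

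For the seminorm, take $x,z\in B_{1/4}$ with $d=|x-z|$. If $d\ge1/16$, the sup bound suffices. Otherwise split $\mu$ into a near part on $B_{2d}(x)$ and a far part on its complement.

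The near part is controlled by shells around $x$ and around $z$ separately. Both centres lie in $B_{1/2}$, and $B_{2d}(x)\subset B_{3d}(z)$ with $3d\le1/4$ after adjusting the threshold. Summing shells of radius at most $3d$, as in the sup bound, gives a contribution of $CMd^\alpha$ from each point.

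The far part is the main computation. For $|y-x|\ge2d$ the mean value theorem gives
\[
 \bigl||x-y|^{2s-n}-|z-y|^{2s-n}\bigr|\le Cd|x-y|^{2s-n-1}.
\]
Sum over shells $2^{j}d\le|x-y|<2^{j+1}d$ up to radius $1/4$. Using the Morrey bound, each shell contributes $CdM(2^jd)^{2s-n-1}(2^jd)^{n-2s+\alpha}=CMd^\alpha 2^{j(\alpha-1)}$, which sums because $\alpha<1$. Beyond radius $1/4$ the kernel derivative is bounded, so that region contributes at most $Cd\,\mu(\R^n)\le CMd\le CMd^\alpha$.

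The main care point is this far-field summation. It requires exactly $\alpha<1$, while $\alpha<2s$ enters only through $n-2s+\alpha<n$, so that the Morrey hypothesis is nontrivial and consistent. One also has to verify that every ball used in the argument is centred in $B_{1/2}$ and has radius at most $1/4$. Collecting the sup bound and the seminorm bound gives $\|v\|_{C^\alpha(B_{1/4})}\le C_{n,s,\alpha}M$.
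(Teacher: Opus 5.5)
Your proposal is correct and follows essentially the same route as the paper. Both arguments use dyadic shells plus the total mass for the supremum bound, a near/far split at distance $2d$ with the mean-value estimate $Cd\,|x-y|^{2s-n-1}$ summed over shells up to radius $1/4$ (which is where $\alpha<1$ is needed) for the seminorm, and Fubini with a uniform bound on $\int_{\R^n}|x-y|^{2s-n}(1+|x|^{n+2s})^{-1}\dd x$ over $y\in B_1$ for the weighted $L^1$ norm.
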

\begin{proof}
For $x\in B_{1/2}$ and $r\le1/4$, dyadic shells give
\[
 \int_{|x-y|<r}|x-y|^{2s-n}\dd \mu(y)
 \le C\sum_{k\ge0}(2^{-k}r)^{2s-n}
                \mu(B_{2^{-k}r}(x))\le CMr^\alpha.
\]
The part outside that radius is bounded by the total mass. Thus
$\|v\|_{L^\infty(B_{1/2})}\le CM$.

For $x,x'\in B_{1/4}$ with
$h=|x-x'|<1/16$, the parts within $2h$ of either point cost
$CMh^\alpha$. On the complement the kernel difference is at most
$Ch|x-y|^{2s-n-1}$. Summing shells up to radius $1/4$ costs
\[
 CMh\sum_{2^kh\le1/4}(2^kh)^{\alpha-1}\le CMh^\alpha;
\]
the remaining part costs $Ch\mu(\R^n)$. The convergence uses
$\alpha<1$; larger $h$ follow from the supremum bound.

Finally, Fubini gives the weighted norm estimate because
\[
 \sup_{y\in B_1}\int_{\R^n}
 \frac{|x-y|^{2s-n}}{1+|x|^{n+2s}}\dd x<\infty.
\]
\end{proof}

We use the maximum and comparison principles for the fractional
Laplacian: a zero-exterior supersolution is nonnegative, and ordered
sources and exterior data give ordered solutions. In the $L^1$-weak
Dirichlet formulation these are the uniqueness and comparison results
of \cite[Propositions~C.1--C.2]{SPapprox}. For an $s$-harmonic
replacement in $B_R$, the exterior Poisson kernel is
\begin{equation}\label{prelim-fractional-poisson}
 \Pi_R(x,y)=p_{n,s}\frac{(R^2-|x|^2)^s}
 {(|y|^2-R^2)^s|x-y|^n}\mathbf1_{\{|y|>R\}},\qquad
 p_{n,s}=\frac{2\sin(\pi s)}{\pi|\mathbb S^{n-1}|},
\end{equation}
for $|x|<R$. Its mass in $y$ is one. The formula is
\cite[Theorem~2.10]{Bucur}; it is distinct from the classical
spherical Poisson operator in \eqref{spherical-poisson}.

\begin{lemma}[Interior regularity of distributionally harmonic functions]
\label{harmonic-weyl}
If $w\in L_s^1(\R^n)$ and $(-\Delta)^sw=0$ distributionally in $B_1$,
then $w$ has a smooth representative in $B_1$. For every nonnegative
integer $k$ and $0<a<1$,
\[
 \|w\|_{C^k(B_a)}\le C_{n,s,k,a}\|w\|_{L_s^1(\R^n)}.
\]
\end{lemma}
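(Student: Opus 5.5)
The plan is to reduce to the case of a function that is $s$-harmonic in a larger ball, using the fractional Poisson kernel \eqref{prelim-fractional-poisson}. Then all derivatives are estimated by differentiating that kernel under the integral sign. Fix $a<R<1$, say $R=(1+a)/2$.

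\emph{Step 1 (smooth representative).} We first show that $w$ agrees almost everywhere in $B_R$ with its $s$-harmonic replacement
\[
 \tilde w(x)=\int_{|y|>R}\Pi_R(x,y)w(y)\dd y\quad(|x|<R),\qquad \tilde w=w \text{ outside } B_R.
\]
Mollify: $w_\varepsilon=w*\varphi_\varepsilon$ is smooth, lies in $L^1_s$ uniformly in $\varepsilon$, and is $s$-harmonic in $B_{1-\varepsilon}$ in the classical pointwise sense, since convolution commutes with $(-\Delta)^s$ distributionally. For smooth $s$-harmonic functions with $L^1_s$ tails, the Poisson representation of \cite[Theorem~2.10]{Bucur} together with the comparison principle recalled above gives $w_\varepsilon=\int\Pi_R(\cdot,y)w_\varepsilon(y)\dd y$ in $B_R$. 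We then let $\varepsilon\to0$. For $x\in B_{R'}$ with $R'<R$, the kernel satisfies
\[
 |\Pi_R(x,y)|\le C(|y|^2-R^2)^{-s}(1+|y|)^{-n}\le C_{R'}\,(|y|^2-R^2)^{-s}(1+|y|^{n+2s})^{-1}|y|^{2s}.
\]
This bound is integrable against $|w|$ away from the singular sphere $|y|=R$. Near $|y|=R$, the singular factor $(|y|^2-R^2)^{-s}$ is integrable in the radial variable but $w$ is merely $L^1$. This is the main obstacle. We handle it by averaging the representation over $R\in(R_1,R_2)$: Fubini turns the averaged kernel $\int_{R_1}^{R_2}(|y|^2-R^2)_+^{-s}\dd R$ into a bounded function of $y$, since $s<1$. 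The averaged kernel $\bar\Pi(x,y)$ is then bounded by $C(1+|y|^{n+2s})^{-1}$ for $|x|<R_1$, uniformly in $x$ on compact subsets. Passing to the limit in $L^1_{\mathrm{loc}}$ gives $w(x)=\int\bar\Pi(x,y)w(y)\dd y$ for a.e. $x\in B_{R_1}$.

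\emph{Step 2 (derivative bounds).} For $|x|<a<R_1$, every $x$-derivative of $\Pi_R(x,y)$ can be computed directly. The factor $(R^2-|x|^2)^s$ is smooth with bounded derivatives on $B_a$. For $|y|>R>a$, the factor $|x-y|^{-n}$ has $k$-th derivatives bounded by $C_k|x-y|^{-n-k}\le C_{k,a}(1+|y|)^{-n}$. Hence
\[
 |\partial_x^\beta\bar\Pi(x,y)|\le C_{n,s,k,a}(1+|y|^{n+2s})^{-1}|y|^{2s}\cdot\text{(bounded averaged singular factor)},
\]
which is again bounded by $C(1+|y|^{n+2s})^{-1}$ after averaging.

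Differentiation under the integral sign is then justified by dominated convergence. It shows that the right-hand side is a $C^\infty$ function on $B_a$ and yields
\[
 |\partial^\beta w(x)|\le C_{n,s,k,a}\int\frac{|w(y)|}{1+|y|^{n+2s}}\dd y
\]
for $|\beta|\le k$. Since $a<1$ is arbitrary, this gives both the smooth representative on $B_1$ and the estimate $\|w\|_{C^k(B_a)}\le C_{n,s,k,a}\|w\|_{L^1_s(\R^n)}$.

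The only delicate point is the $L^1$ integrability near the sphere $|y|=R$ in Step 1. Averaging in $R$ removes it cleanly, because $s<1$.
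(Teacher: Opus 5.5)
Your proposal is correct and follows essentially the same route as the paper: mollify, use the fractional Poisson representation for the smooth $s$-harmonic mollifications, and average over the radius $R$ to remove the boundary singularity $(|y|-R)^{-s}$. The averaged kernel and its $x$-derivatives are then bounded by $C(1+|y|^{n+2s})^{-1}$. The only difference is that you pass to the limit $\varepsilon\to0$ in the averaged representation formula itself, whereas the paper passes the uniform $C^k$ bounds to the limit by compactness; both work.
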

\begin{proof}
First suppose $w$ is smooth on $\R^n$, and fix $a<b<1$.
The ball Poisson formula \eqref{prelim-fractional-poisson} reads
\[
 w(x)=\int_{|y|>R}\Pi_R(x,y)w(y)\dd y,\qquad |x|<R<b.
\]
For the smooth $w$ now under consideration, first cut off its exterior
data at large radii. For each $K\Subset B_R$ and multi-index $\gamma$,
differentiating the explicit kernel gives
\[
 \sup_{x\in K}|\partial_x^\gamma\Pi_R(x,y)|
 \le\frac{C_{K,\gamma,R}}{1+|y|^{n+2s}}
 \qquad(|y|>2R+1).
\]
Thus the $C^k(K)$ norm of the removed tail tends to zero with
$\int_{|y|>T}|w(y)|(1+|y|^{n+2s})^{-1}\dd y$ as $T\to\infty$.
Without derivatives, the same kernel gives a bound on all of $B_R$
by that tail integral times $C_R(R^2-|x|^2)^s$. Hence the
bounded-data Poisson solutions converge with all derivatives on
compact subsets and the removed contribution tends uniformly to zero.

Since $w$ is smooth near $\partial B_R$, the limit
has its continuous boundary trace, equals $w$ outside $B_R$, and is
bounded inside. The difference from the smooth $w$ is therefore a
bounded continuous $s$-harmonic function, zero outside, and vanishes
by the maximum principle. The general distributional case is handled
by mollification below.

Average the formula against a smooth probability density supported in
an interval $(b_0,b_1)\Subset(a,b)$. Every derivative in $x\in B_a$
of the averaged kernel is bounded in absolute value by
$C_k(1+|y|^{n+2s})^{-1}$. In the bounded $y$ region this follows from
the integrability of $(|y|-R)^{-s}$ in $R$; for large $y$ it follows
from the decay $|y|^{-n-2s}$. Hence the asserted estimates hold in the
smooth case.

For a distributional solution, set $w_\epsilon=\rho_\epsilon*w$ with
a compactly supported nonnegative mollifier. Convolution commutes
with $(-\Delta)^s$ in distributions, so $w_\epsilon$ is smooth and
harmonic in $B_{1-\epsilon}$, while
\[
 \|w_\epsilon\|_{L_s^1(\R^n)}\le C \|w\|_{L_s^1(\R^n)}
 \qquad(\epsilon<1/2).
\]
The preceding averaged formula gives uniform bounds of every order
on each smaller ball. Compactness and distributional convergence to
$w$ give its smooth representative and the asserted estimates.
\end{proof}

\begin{lemma}[Local linear H\"older gain]\label{prelim-local-schauder}
Let $n>2s$, $u\in L_s^1(\R^n)$, and $Lu=g$ distributionally in
$B_1$. For $0<a<1$ the following estimates hold.
If $g\in L^\infty(B_1)$, then, for every noninteger $0<q<2s$,
\[
 \|u\|_{C^q(B_a)}\le C\bigl(\|u\|_{L_s^1(\R^n)}
                            +\|g\|_{L^\infty(B_1)}\bigr).
\]
If $g\in C^t(B_1)$, $0<t<1$, and $q=t+2s$ is noninteger, then
\[
 \|u\|_{C^q(B_a)}\le C\bigl(\|u\|_{L_s^1(\R^n)}
                            +\|g\|_{C^t(B_1)}\bigr).
\]
Here $C^q=C^{m,q-m}$ when $m=\lfloor q\rfloor$, and $C$ depends
only on the displayed exponents, $n,s,a$. The estimates select the
corresponding regular representative of $u$.
\end{lemma}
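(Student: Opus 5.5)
The plan is to reduce to a compactly supported source plus an $s$-harmonic remainder, so that Lemma~\ref{harmonic-weyl} and Riesz potential estimates do all the work. Fix $a<b<1$ and a cutoff $\zeta\in C_c^\infty(B_1)$ with $\zeta=1$ on $B_b$. Set
\[
 w=I_{2s}(\zeta g),\qquad h=u-w .
\]
Since the symbol of $I_{2s}$ is $|\xi|^{-2s}$, we have $L w=\zeta g$ in distributions. Hence $Lh=(1-\zeta)g=0$ distributionally in $B_b$. Because $\zeta g$ is bounded and compactly supported, $|w(x)|\le C\|\zeta g\|_{L^\infty}(1+|x|)^{2s-n}$. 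This decay is integrable against the $L_s^1$ weight, so
\[
 \|w\|_{L_s^1(\R^n)}\le C\|g\|_{L^\infty(B_1)},
\]
the same Fubini bound used at the end of Lemma~\ref{prelim-morrey-potential}. Consequently $\|h\|_{L_s^1}\le \|u\|_{L_s^1}+C\|g\|$. Applying Lemma~\ref{harmonic-weyl} (rescaled to $B_b$) then gives every $C^k(B_a)$ norm of $h$ bounded by this quantity. It therefore suffices to estimate $w$ in $C^q(B_a)$.

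\textbf{Bounded source.} For $g\in L^\infty$ and noninteger $q<2s$, we estimate $w$ directly from the kernel $|x-y|^{2s-n}$. If $2s\le1$, the difference argument of Lemma~\ref{prelim-morrey-potential} applies with $\mu=\zeta g\,dx$. This measure satisfies $\mu(B_\rho)\le C\|g\|\rho^n$, so the Morrey hypothesis holds for every exponent $\alpha=q<2s$. The proof there is written for nonnegative $\mu$, but it only uses bounds on $|\mu|$, so it extends to the signed case. If $1<2s$ and $1<q<2s$, first differentiate under the integral: $\nabla w=b_{n,s}(2s-n)\int(x-y)|x-y|^{2s-n-2}\zeta g\,dy$. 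This kernel has homogeneity $2s-1-n>-n$, so it is locally integrable. Repeating the shell argument with $\alpha=q-1<2s-1$ then gives $[\nabla w]_{C^{q-1}}\le C\|g\|_\infty$.

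\textbf{H\"older source.} For $g\in C^t$, write $\zeta g=g(x_0)\zeta+\zeta(g-g(x_0))$ near a point $x_0$, or more cleanly treat $w$ through its derivatives of order $m=\lfloor t+2s\rfloor$. The kernel $\partial^\gamma|x-y|^{2s-n}$ with $|\gamma|=m$ has homogeneity $2s-n-m$. When $m\ge 2s$ this is singular; then we use the standard cancellation
\[
 \partial^\gamma w(x)=\int \partial^\gamma K(x-y)\bigl(\zeta g(y)-\zeta g(x)\bigr)\dd y+\zeta g(x)\,\partial^\gamma\!\int K(x-y)\dd y\,\text{-type term}.
\]
In the first integral, $C^t$ regularity supplies the factor $|x-y|^t$, which makes it absolutely convergent. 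The second term is smooth in $x$ near $B_a$, because $K*\zeta$ is smooth where $\zeta=1$ (there $L(K*\zeta)=1$); more precisely, $K*\zeta=K*1$ modulo a function smooth on $B_b$. The H\"older seminorm of order $q-m=t+2s-m$ of the first integral then follows from the classical Korn/Schauder-type splitting at scale $h=|x-x'|$. The near part costs $h^{t+2s-m}$. The far part uses one more derivative of the kernel, and the sum converges precisely because $q$ is noninteger.

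The main obstacle is this last step: organizing the principal-value cancellation and the regimes $m=0,1,2$ uniformly in $s$. It is only classical Calder\'on--Zygmund/Schauder bookkeeping. Alternatively, one can cite the known interior estimates of Ros-Oton--Serra or Silvestre for $(-\Delta)^s$ with $L^1_s$ tails, after the splitting above has reduced the problem to a compactly supported source. Finally, the continuous representative $w+h$ provides the regular representative of $u$ asserted in the statement.
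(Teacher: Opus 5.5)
Your reduction matches the paper's: the cutoff $\zeta$, the Riesz potential $w=I_{2s}(\zeta g)$ with $\|w\|_{L_s^1(\R^n)}\le C\|g\|_{L^\infty(B_1)}$, and Lemma~\ref{harmonic-weyl} for $u-w$ on $B_b$. The routes differ in how you prove regularity of $w$.

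The paper uses a Littlewood--Paley decomposition $w=w_{\mathrm{low}}+\sum_k K_k*G$, with $G=\zeta g$. Each dyadic kernel $D^\ell K_k$ has integral zero, so $D^\ell w_k(x)=\int D^\ell K_k(y)\bigl(G(x-y)-G(x)\bigr)\dd y$ for every order $\ell$. The single bound $\|D^\ell w_k\|_\infty\le C2^{(\ell-q)k}$, summed with a split at $2^k|h|=1$, then handles both source classes and all $m\in\{0,1,2\}$ at once.

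You use physical-space kernel bounds instead. For the bounded source this is fine. The shell argument of Lemma~\ref{prelim-morrey-potential} only uses $|\mu|$, and it also covers $q<1<2s$. The differentiated kernel, of homogeneity $2s-1-n$, handles $1<q<2s$.

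For the H\"older source, your derivative version works when $m\ge2s$. Then $q-m\le t$, so the term $\zeta g(x)\,\partial^\gamma(K*\zeta)(x)$ still lies in $C^{q-m}$, even though it is only $C^t$: the convolution is smooth, but it is multiplied by $\zeta g$. The Korn splitting then handles the other integral.

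It fails when $m<2s$, that is, whenever $t+2s<1$, and whenever $2s>1$ and $t+2s<2$. There $q-m>t$, so in general neither piece of your splitting lies in $C^{q-m}$; only their sum does. If instead you use no cancellation in this regime (you invoke it only in the singular case), the kernel bounds give merely $\partial^\gamma w\in C^{2s-m}$, and the H\"older regularity of $g$ is never used.

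Since $\partial^\gamma K$ is locally integrable in this regime, the repair is your first suggestion, applied to difference quotients. For $x,x'\in B_a$ with $|x-x'|=h$, write $\zeta g=g(x)\zeta+\zeta\,(g-g(x))$, with the same frozen value at both points. The first part contributes $g(x)\bigl[\partial^\gamma(K*\zeta)(x)-\partial^\gamma(K*\zeta)(x')\bigr]=O(h)$. For the second part, your near/far splitting with $|\zeta(y)(g(y)-g(x))|\le C|y-x|^t$ gives $Ch^{q-m}$; the far part converges because $q<m+1$.

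Two smaller corrections. Your remark that $K*\zeta$ equals $K*1$ modulo a smooth function is meaningless, because $|x|^{2s-n}$ is not integrable at infinity. Smoothness of $K*\zeta$ follows simply from $\partial^\gamma(K*\zeta)=K*\partial^\gamma\zeta$. For the same reason, the subtracted term in your cancellation formula must be $g(x)\zeta(y)$, not the constant $\zeta g(x)$.

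With these corrections your argument is a valid, more elementary alternative. The cost is case distinctions in $m$ and $2s$, plus the Korn correction terms when $m\ge2s$. The paper's dyadic argument builds the cancellation into the zero mean of the Littlewood--Paley pieces, so it needs no principal values or boundary terms. The noninteger hypothesis then enters only through the convergence of two geometric series.
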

\begin{proof}
Choose $a<b<1$ and $\chi\in C_c^\infty(B_1)$ equal to one on
$B_b$. Extend $G=\chi g$ by zero and set $v=I_{2s}G$.
The kernel in \eqref{prelim-riesz-potential} gives
$Lv=G$ and
$\|v\|_{L_s^1}\le C\|G\|_{L^1}$, by the Fubini estimate in
Lemma~\ref{prelim-morrey-potential}, applied to $|G|$.
Thus $u-v$ is smooth in $B_b$, with all interior derivatives
controlled by the stated right side, by Lemma~\ref{harmonic-weyl}.

The gain for $v$ follows from a dyadic decomposition.
Choose smooth Fourier cutoffs $\psi_0,\psi$ such that
\[
 1=\psi_0(\xi)+\sum_{k\ge0}\psi(2^{-k}\xi),
 \qquad \operatorname{supp}\psi\subset\{1/2<|\xi|<2\},
\]
with $\psi_0$ supported in a fixed ball. The convolution kernel $K$ of the Fourier multiplier
$\psi(\xi)|\xi|^{-2s}$ is a Schwartz function
with integral zero. The high-frequency pieces of $v$ are
\[
 v_k=K_k*G,\qquad K_k(x)=2^{(n-2s)k}K(2^kx).
\]
For every derivative order $\ell\ge0$,
\[
 \|D^\ell v_k\|_\infty
 \le C_\ell 2^{(\ell-2s)k}\|G\|_\infty.
\]
If $G\in C^t$, the zero integral of $D^\ell K_k$ allows us
to subtract $G(x)$ in the convolution. Hence
\[
 \|D^\ell v_k\|_\infty
 \le [G]_{C^t}\int |D^\ell K_k(y)|\,|y|^t\dd y
 \le C_{\ell,t}2^{(\ell-2s-t)k}[G]_{C^t}.
\]
The low-frequency piece is smooth with bounded derivatives:
$|\xi|^{-2s}\widehat G(\xi)$ is integrable on its Fourier
support because $n>2s$ and $|\widehat G|\le C\|G\|_1$.

In either case, for the asserted noninteger $q=m+\gamma$,
$0<\gamma<1$, the estimates give
$\|D^\ell v_k\|_\infty\le C2^{(\ell-q)k}$.
For a displacement $h$, split the sum at $2^k|h|=1$.
For the lower frequencies use
$|h|\|D^{m+1}v_k\|_\infty$, and for the higher frequencies
use $2\|D^m v_k\|_\infty$. The two geometric sums are bounded
by $C|h|^\gamma$. The derivatives through order $m$ converge
uniformly. This proves the claimed $C^q$ bound for $v$, and
the harmonic remainder completes the proof.
\end{proof}

Extending a one-dimensional solution constantly in a second variable
preserves its equation and stability, as follows.

\begin{lemma}\label{prelim-lifting}
Let $0<s<1$, $u\in L_s^1(\R)$, and $U(x_1,x_2)=u(x_1)$.
Then $\|U\|_{L_s^1(\R^2)}\le C_s\|u\|_{L_s^1(\R)}$.
If $(-\Delta_{\R})^su=g$ distributionally in an interval $I$, with
$g\in L^1_{\mathrm{loc}}(I)$, then
$(-\Delta_{\R^2})^sU=g(x_1)$ distributionally in $I\times\R$.
For $g=f(u)$, stability on $I$ implies stability of $U$ on this strip.
\end{lemma}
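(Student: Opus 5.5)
The plan is to check the three assertions of Lemma~\ref{prelim-lifting} one at a time, all from the integral formula
\[
 \int_{\R}\frac{dx_2}{(a^2+x_2^2)^{(2+2s)/2}}=\kappa_s\,a^{-1-2s},\qquad a>0,
\]
together with the Fourier description of $(-\Delta)^s$.

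\emph{Weighted norm.} Apply Tonelli with this formula, which is used here in the form $\int_\R(1+|x_1|^{2+2s}+|x_2|^{2+2s})^{-1}\dd x_2\le C_s(1+|x_1|)^{-1-2s}$. Since $1+|x|^{2+2s}\asymp 1+|x_1|^{2+2s}+|x_2|^{2+2s}$, this gives $\|U\|_{L_s^1(\R^2)}\le C_s\|u\|_{L_s^1(\R)}$.

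\emph{Equation.} First take $\Phi\in C_c^\infty(I\times\R)$ and set $\bar\Phi(x_1)=\int_\R\Phi(x_1,x_2)\dd x_2\in C_c^\infty(I)$. The key identity is
\[
 \int_\R\bigl((-\Delta_{\R^2})^s\Phi\bigr)(x_1,x_2)\dd x_2=(-\Delta_\R)^s\bar\Phi(x_1).
\]
To prove it, take the Fourier transform in $x_1$: integrating in $x_2$ evaluates the two-dimensional transform at $\xi_2=0$, where the symbol $|\xi|^{2s}$ restricts to $|\xi_1|^{2s}$. Next I will need integrability against $U$, which follows from the decay $|(-\Delta_{\R^2})^s\Phi(x)|\le C(1+|x|)^{-2-2s}$ combined with the first step. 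Fubini then gives
\[
 \int_{\R^2}U(-\Delta_{\R^2})^s\Phi=\int_\R u\,(-\Delta_\R)^s\bar\Phi=\int_I g\bar\Phi=\int_{I\times\R}g(x_1)\Phi.
\]
Thus $(-\Delta_{\R^2})^sU=g(x_1)$ holds distributionally in $I\times\R$.

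\emph{Stability.} Here the goal is to show $\int f'(u(x_1))\Phi^2\le[\Phi]_{H^s(\R^2)}^2$ for $\Phi\in C_c^\infty(I\times\R)$. I plan to use Plancherel in $x_2$ and write
\[
 [\Phi]_{H^s(\R^2)}^2=\int_\R\int_\R(\xi_1^2+\xi_2^2)^{s}|\widehat\Phi(\xi_1,\xi_2)|^2\dd\xi_1\dd\xi_2,
\]
where $\widehat\Phi(\cdot,\xi_2)$ is the two-dimensional transform. Since $(\xi_1^2+\xi_2^2)^s\ge|\xi_1|^{2s}$, this is at least
\[
 \int_\R\Bigl(\int_\R|\xi_1|^{2s}|\widehat{\phi_{\xi_2}}(\xi_1)|^2\dd\xi_1\Bigr)\dd\xi_2,
 \qquad \phi_{\xi_2}=\mathcal F_{x_2}\Phi(\cdot,\xi_2).
\]
Each slice $\phi_{\xi_2}$ is a complex-valued smooth function compactly supported in $I$, and its real and imaginary parts are real tests. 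Applying the one-dimensional stability inequality to those real and imaginary parts gives $\int f'(u)|\phi_{\xi_2}|^2\dd x_1\le\int|\xi_1|^{2s}|\widehat{\phi_{\xi_2}}|^2$. Integrating in $\xi_2$ and using Plancherel in $x_2$ on the left then yields $\int f'(u(x_1))\Phi^2\dd x$. This completes the stability step.

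The only delicate point is the justification of Fubini in the equation step, which needs the $L^1_s$ bound from the first step and the decay of $(-\Delta)^s\Phi$. I expect this to be routine.
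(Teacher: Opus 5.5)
Your proof is correct and has the same three-step structure as the paper: Tonelli for the tail, then the identity $\int_\R(-\Delta_{\R^2})^s\Phi\dd x_2=(-\Delta_\R)^s\bar\Phi$ followed by Fubini against $U$, then slicing with $|\xi_1|^{2s}\le|\xi|^{2s}$ for stability. Two steps are carried out differently.

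For the key identity, the paper works with the singular integral. It uses symmetric second differences to justify Fubini, integrates first in $x_2$ and then in the second increment, and matches the kernel constants through the beta integral $c_{2,s}\int_\R(a^2+z^2)^{-1-s}\dd z=c_{1,s}|a|^{-1-2s}$. Your Fourier argument needs no constants, since the normalization is fixed by the symbol. The one point you should state is that $(-\Delta_{\R^2})^s\Phi\in L^1(\R^2)$ by your decay bound. Its Fourier transform is then the continuous function $|\xi|^{2s}\widehat\Phi$, so restricting it to the null line $\xi_2=0$ is legitimate.

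For stability, the paper slices in physical space. Each $\Phi(\cdot,x_2)$ is already a real test in $C_c^\infty(I)$, so one-dimensional stability applies directly, and Plancherel in $x_2$ is used only on the energy side. Your slices $\phi_{\xi_2}$ are complex, so you need two small facts you leave implicit:
\begin{enumerate}
\item For real $a,b$, $\int|\xi_1|^{2s}|\widehat{a+\ii b}|^2\dd\xi_1=[a]_{H^s(\R)}^2+[b]_{H^s(\R)}^2$, because the cross term $2\Imm(\widehat a\,\overline{\widehat b})$ is odd in $\xi_1$.
\item You apply Plancherel on the potential side, which needs Fubini with the weight $f'(u(x_1))$. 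This weight is locally bounded on the compact $x_1$-support in the paper's setting.
\end{enumerate}
The paper's slicing is slightly more direct, while your proof of the identity is the more economical one.
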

\begin{proof}
The beta integral and the normalization in \eqref{energy-definition} give
\begin{equation}\label{prelim-lifting-identities}
 \begin{split}
 c_{2,s}\int_{\R}(a^2+z^2)^{-1-s}\dd z
   &=c_{1,s}|a|^{-1-2s}\qquad(a\ne0),\\
 \int_{\R}\frac{\dd x_2}{1+(x_1^2+x_2^2)^{1+s}}
   &\le\frac{C_s}{1+|x_1|^{1+2s}}.
 \end{split}
\end{equation}
For the second bound, scale when $|x_1|\ge1$ and use a fixed integrable
majorant when $|x_1|\le1$. It proves the tail estimate by Tonelli.

For $\phi\in C_c^\infty(I\times\R)$ set
$\Phi(x_1)=\int_\R\phi(x_1,x_2)\dd x_2$. Then
\begin{equation}\label{lifting-test-identity}
 \int_\R(-\Delta_{\R^2})^s\phi(x_1,x_2)\dd x_2
       =(-\Delta_{\R})^s\Phi(x_1).
\end{equation}
Indeed, use symmetric second differences in the singular integral.
For increments of length less than one their $L^1(dx_2)$ norm is
$O(|z|^2)$, and for larger increments it is bounded. Fubini is therefore
valid since $0<s<1$; integrating first in $x_2$ and then in the second
increment gives the first identity in \eqref{prelim-lifting-identities}.
Also $|(-\Delta_{\R^2})^s\phi(x)|\le C_\phi(1+|x|^{2+2s})^{-1}$.
The tail bound thus justifies pairing with $U$ and gives
\[
 \int_{\R^2}U(-\Delta_{\R^2})^s\phi
 =\int_\R u(-\Delta_{\R})^s\Phi
 =\int_I g\Phi=\int_{I\times\R}g(x_1)\phi.
\]

Finally apply one-dimensional stability to each slice of $\phi$.
Plancherel and $|\xi_1|^{2s}\le(\xi_1^2+\xi_2^2)^s$ give
\[
 \int_{\R}[\phi(\cdot,x_2)]_{H^s(\R)}^2\dd x_2
       \le[\phi]_{H^s(\R^2)}^2,
\]
which proves the stability assertion.
\end{proof}

The following bootstrap justifies the differentiations and energy tests below;
its local norms do not enter the universal constants.

\begin{lemma}\label{bootstrap}
Let $u\in L^\infty_{\mathrm{loc}}(B_2)\cap L_s^1(\R^n)$ solve
$(-\Delta)^su=f(u)$ distributionally, where
$f\in C^1_{\mathrm{loc}}(\R)$. Let $\varepsilon>0$ satisfy
$1+2s-\varepsilon>0$, and suppose this regularity order is noninteger. Then
\[
 u\in C^{1+2s-\varepsilon}_{\mathrm{loc}}(B_2).
\]
\end{lemma}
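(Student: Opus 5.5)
The plan is a standard bootstrap built on Lemma~\ref{prelim-local-schauder}, applied on a chain of slightly shrinking balls. Since the conclusion is local, I fix a ball $B_{r}(a)\Subset B_2$. After translating and dilating, I may assume it is $B_1$, with $u\in L_s^1(\R^n)$ preserved through \eqref{rescaled-tail-definition}. When $n=1\le2s$ the Riesz potential is unavailable, so I first lift to two dimensions with Lemma~\ref{prelim-lifting}. For $n=1$ and $s\ge1/2$ I lift twice if necessary, so that $n>2s$ holds in all cases. The Hölder regularity of $U$ in $x_1$ then transfers back to $u$.

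\textbf{Step 1.} Since $u\in L^\infty(B_1)$ and $f\in C^1_{\mathrm{loc}}$ is bounded on bounded sets, $g=f(u)\in L^\infty(B_1)$. The first part of Lemma~\ref{prelim-local-schauder} then gives $u\in C^{q_0}(B_{a_1})$ for every noninteger $q_0<2s$.

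\textbf{Step 2.} Take $t_0=\min\{q_0,1\}-\delta\in(0,1)$. Composition with a locally Lipschitz function preserves Hölder exponents at most one, so $f(u)\in C^{t_0}(B_{a_1})$. The second part of the lemma, after a further rescaling of $B_{a_1}$ to the unit ball, gives $u\in C^{t_0+2s}(B_{a_2})$.

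\textbf{Step 3.} Iterate Step 2 while the exponent is below one. Each pass raises the Hölder exponent by $2s$, minus an arbitrarily small loss that keeps $q=t+2s$ noninteger. After finitely many passes, depending only on $s$ and $\varepsilon$, the exponent reaches $\min\{1,\cdot\}$ in the sense that $u\in C^{t}$ with $t=1-\varepsilon/2$ (or any $t<1$). At that point $f(u)\in C^{1-\varepsilon'}$, because $f$ is only $C^1$ and composition with it gives at most Lipschitz regularity. One final application yields $u\in C^{1-\varepsilon'+2s}$. Choosing $\varepsilon'=\varepsilon$ gives the exponent $1+2s-\varepsilon$, which is noninteger by hypothesis, so the lemma applies. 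The shrinking radii $a_1>a_2>\dots$ are chosen in advance, finitely many and all above $1/2$. Covering compact subsets of $B_2$ by such balls finishes the proof.

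\textbf{Main obstacle.} The analytic content is already in Lemma~\ref{prelim-local-schauder}, so the main obstacle is bookkeeping. First, no intermediate exponent may be an integer; this is avoided by perturbing each $t$ slightly. Second, the $C^1$ regularity of $f$ must not be overused: $f(u)$ can never be better than $C^{0,1}$, which is exactly why the final exponent caps at $1+2s-\varepsilon$. Third, when an intermediate exponent exceeds one, I only need the $C^{t}$ norm of $f(u)$ for some $t<1$. That norm is controlled through the Lipschitz bound of $f$ on the range of $u$ over the ball, multiplied by $[u]_{C^t}$. The resulting local constants depend on $f$ and $\|u\|_{L^\infty}$, which is harmless here, since this lemma is qualitative.
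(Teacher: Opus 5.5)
Your proposal is correct and follows the paper's proof. Both use the same nested-ball bootstrap through Lemma~\ref{prelim-local-schauder}, with locally Lipschitz composition preserving $C^t$ for $t<1$, small losses at integer exponents, and a single lift via Lemma~\ref{prelim-lifting} when $n=1$; a second lift is never needed, since $2>2s$ for $s<1$, and the lemma only covers $\R\to\R^2$. The paper passes through $u\in C^{1+\eta}$ and the chain rule before the last step, whereas you stop at $u\in C^t$ with $t<1$ close to one, which works equally well and in fact uses only local Lipschitz continuity of $f$; the case $\varepsilon\ge1$ follows by inclusion.
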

\begin{proof}
Local boundedness of $u$ and $f\in C^1_{\mathrm{loc}}(\R)$ make
$f(u)$ locally bounded and $f$ Lipschitz on each compact value interval.

For $n>2s$, Lemma~\ref{prelim-local-schauder}, after
translation and dilation, supplies each local gain on nested balls.
A bounded source gives every noninteger order below $2s$;
a $C^t$ source, $0<t<1$, gives order $t+2s$ when noninteger.

For $0<t<1$, a locally Lipschitz composition preserves $C^t$.
Iterating this regularity gain on nested balls, with sufficiently small losses
at integer exponents, first gives $u\in C^{1+\eta}$ for some
$\eta>0$. To see that this takes finitely many steps, as long as the
available exponent is below one, the next exponent may be chosen
larger by any fixed number smaller than $2s$; the last source
exponent can be chosen below one and above $1-2s$.

Now $f(u)$ is $C^1$ by the ordinary chain rule, hence locally
$C^{1-\varepsilon}$ for every $0<\varepsilon<1$. One final application gives
the claimed regularity; lower orders follow by inclusion.

For the remaining case $n=1\le2s$, set $U(x_1,x_2)=u(x_1)$.
Lemma~\ref{prelim-lifting} gives the distributional equation and
finite tail in two dimensions.
Apply the preceding $n>2s$ argument to $U$ on interior balls
and restrict to $x_2=0$. This proves the stated regularity also
in this remaining case.
\end{proof}

\begin{lemma}[Iteration on nested radii]\label{prelim-iteration}
Let $\Phi$ be nonnegative and bounded on $[r_0,r_1]$. Suppose
\[
 \Phi(r)\le a\Phi(R)+B(R-r)^{-p}
 \quad(r_0\le r<R\le r_1),\qquad 0<a<2^{-p},\quad p>0.
\]
Then $\Phi(r_0)\le C_{a,p}B(r_1-r_0)^{-p}$.
\end{lemma}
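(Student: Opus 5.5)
We use the standard geometric-sequence iteration. Fix $\theta\in(0,1)$, to be chosen depending only on $a$ and $p$. Define radii
\[
 \rho_0=r_0,\qquad \rho_{k+1}=\rho_k+(1-\theta)\theta^k(r_1-r_0).
\]
These increase strictly and converge to $r_1$, with $\rho_k<r_1$ for every $k$. Applying the hypothesis with $r=\rho_k$ and $R=\rho_{k+1}$ gives
\[
 \Phi(\rho_k)\le a\Phi(\rho_{k+1})+B(1-\theta)^{-p}\theta^{-kp}(r_1-r_0)^{-p}.
\]

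Iterating this $N$ times, we obtain
\[
 \Phi(r_0)\le a^N\Phi(\rho_N)
 +B(1-\theta)^{-p}(r_1-r_0)^{-p}\sum_{k=0}^{N-1}(a\theta^{-p})^k .
\]
Since $a<2^{-p}$, we have $a^{1/p}<1/2<1$. We choose $\theta\in(a^{1/p},1)$, for instance $\theta=(1+a^{1/p})/2$. Then $a\theta^{-p}<1$, and the geometric series converges to $(1-a\theta^{-p})^{-1}$.

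Because $\Phi$ is bounded on $[r_0,r_1]$, the term $a^N\Phi(\rho_N)\le a^N\sup\Phi$ tends to zero as $N\to\infty$. Only boundedness is used here, and it is exactly where that qualitative assumption enters. Letting $N\to\infty$ yields
\[
 \Phi(r_0)\le C_{a,p}B(r_1-r_0)^{-p},\qquad
 C_{a,p}=(1-\theta)^{-p}(1-a\theta^{-p})^{-1}.
\]

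There is no real obstacle. The only care needed is to choose $\theta$ strictly between $a^{1/p}$ and $1$, and to keep every $\rho_k$ strictly inside $[r_0,r_1)$ so that the hypothesis applies at each step. The stated condition $a<2^{-p}$ is stronger than the $a<1$ this argument needs; it corresponds to the choice $\theta=1/2$, which also works.
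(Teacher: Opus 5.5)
Your proof is correct and is essentially the paper's argument. The paper takes the radii $r_k=r_1-(r_1-r_0)2^{-k}$, which is your construction with $\theta=1/2$, and sums the geometric series with ratio $a2^p<1$; allowing a general $\theta\in(a^{1/p},1)$, as you do, shows that $a<1$ already suffices.
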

\begin{proof}
Take $r_k=r_1-(r_1-r_0)2^{-k}$. Iteration gives a geometric series
with ratio $a2^p<1$; the remainder $a^k\Phi(r_k)$ tends to zero
by boundedness.
\end{proof}

\section{Harmonic replacement and fractional extensions}\label{part:pde}
The stability test will be built from the normal-derivative difference
under harmonic replacement. After positive averaging, control of this
derivative becomes a weighted estimate for $f(u)$, from which we will
deduce source-mass decay and H\"older continuity. We construct the
corresponding positive operator in polar coordinates and its fractional
extension; these provide the energy identities and cutoff estimates
needed for the stability argument. Throughout this section $n\ge2$.

\subsection{The corrected radial derivative}\label{calculus-section}\label{field-construction}
We use the ball Dirichlet-to-Neumann operator $\Lambda$ and its
Poisson semigroup from Section~\ref{prelim-sphere}. The radial derivative
measures the response to dilations, while
$\Lambda$ measures the same response for harmonic replacement.
Their difference defines the corrected radial derivative $D_{\mathrm{har}}$. Put
$r=|x|$, $t=\log r$, and
\[
 \partial_t=r\partial_r,\qquad D_{\mathrm{har}}=\Lambda-\partial_t.
\]
For one angular mode, a direct calculation yields
\[
 D_{\mathrm{har}}[\phi(r)Y_{j,\ell}]=(j\phi-r\phi')Y_{j,\ell},
 \qquad D_{\mathrm{har}}[r^jY_{j,\ell}]=0.
\]
Thus $D_{\mathrm{har}}$ removes every regular homogeneous harmonic mode; in
particular it annihilates constants and linear functions. On radial
functions $D_{\mathrm{har}}u=-ru_r$.

More generally, let $H_r$ be the classical
harmonic replacement of $u$ in $B_r$. Rescaling the unit-ball problem
gives $r\partial_\nu H_r(r\omega)=\Lambda[u(r\,\cdot)](\omega)$,
and hence
\begin{equation}\label{harmonic-replacement-field}
 D_{\mathrm{har}}u(r\omega)=r\bigl(\partial_\nu H_r(r\omega)
                         -\partial_\nu u(r\omega)\bigr).
\end{equation}
Thus $D_{\mathrm{har}}u$ is the normal-derivative difference between $u$ and its
harmonic replacement, multiplied by the radius. It is determined by
the values of $u$ on and inside the same ball: the derivative
$\partial_\nu u$ is the boundary limit taken from inside $B_r$.

For smooth $u$, Green's
formula expresses this difference as a weighted average of the
classical source:
\[
 D_{\mathrm{har}}u(x)=\frac1{|\Sph|}\int_{B_r}
       \frac{r^2-|y|^2}{|x-y|^n}(-\Delta u(y))\dd y,
 \qquad |x|=r.
\]
Indeed, the Dirichlet Green function of the ball satisfies
\[
 -\partial_{\nu_x}G_{B_r}(x,y)
 =\frac{r^2-|y|^2}{|\Sph|r|x-y|^n},\qquad |x|=r.
\]
Apply this identity to $u-H_r$, whose boundary values vanish, and
multiply its outward normal derivative by $-r$.
For the fractional equation, a positive average of this derivative
will give the corresponding average of the fractional source.

The Fourier transform in $t$ and the spherical harmonic decomposition
diagonalize $\partial_t$, $\Lambda$, and $D_{\mathrm{har}}$. With the
conventions of Section~\ref{prelim-cylinder}, for
$v\in C_c^\infty(\Cyl)$ we have
\[
 \begin{aligned}
 \widehat{\partial_t v}_{j,\ell}(\tau)
   &=\ii\tau\,\widehat v_{j,\ell}(\tau),\\
 \widehat{\Lambda v}_{j,\ell}(\tau)
   &=j\,\widehat v_{j,\ell}(\tau),\\
 \widehat{D_{\mathrm{har}}v}_{j,\ell}(\tau)
   &=(j-\ii\tau)\widehat v_{j,\ell}(\tau).
 \end{aligned}
\]
Since $|j-\ii\tau|^2\le1+j(j+n-2)+\tau^2$, these formulas give
$D_{\mathrm{har}}:H^\rho(\Cyl)\to H^{\rho-1}(\Cyl)$ for every $\rho\in\R$.

On a fixed annulus the Euclidean and cylinder Sobolev norms are
equivalent. The annular energy and weighted tail of $u$ therefore
control $D_{\mathrm{har}}u$ in the negative Sobolev space $H^{s-1}$,
since $s<1$.

\begin{lemma}\label{corrected-derivative-origin}
Let $v\in C_c^{1,\rho}(\R^n)$, where $s<\rho\le1$, and put
$q_v=D_{\mathrm{har}}v$. Then $q_v$ is continuous on $\Cyl$,
belongs to $H^s(\Cyl)$, and satisfies
\begin{equation}\label{corrected-derivative-decay}
 \|q_v\|_{H^s((\log r,\log(2r))\times\Sph)}
 \le C_{n,s,\rho}[\nabla v]_{C^\rho(B_1)}r^{1+\rho},
 \qquad 0<r\le\tfrac14.
\end{equation}
Smooth cutoffs at the inner cylinder end, with a fixed transition
profile, converge to $q_v$ in $H^s$.
For data with a common compact support, convergence in $C^{1,\rho}$
implies convergence of the corrected derivatives in $H^s(\Cyl)$.
\end{lemma}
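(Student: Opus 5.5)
The plan is to split $v$ near the origin into its first-order Taylor polynomial and a remainder. The corrected derivative kills the polynomial exactly, so only the remainder enters, and the remainder vanishes to order $1+\rho$ at the origin.

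\textbf{Reduction to the remainder.} Write $v=v(0)+\nabla v(0)\cdot x+w$, where $|w(x)|\le C[\nabla v]_{C^\rho}|x|^{1+\rho}$ and $|\nabla w(x)|\le C[\nabla v]_{C^\rho}|x|^\rho$ on $B_1$. Constants and linear functions are regular homogeneous harmonic modes, so $D_{\mathrm{har}}$ annihilates them and $q_v=D_{\mathrm{har}}w$ on every circle $|x|=r\le\tfrac12$. The operator $D_{\mathrm{har}}$ is determined ball by ball, so this identity holds pointwise.

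\textbf{Rescaling and the key estimate.} To estimate $q_v$ on the shell $(\log r,\log 2r)\times\Sph$, I would rescale to $w_r(y)=r^{-1-\rho}w(ry)$ on $B_4$. This function satisfies $\|w_r\|_{C^{1,\rho}(B_4)}\le C[\nabla v]_{C^\rho}$ uniformly in $r$. Since $D_{\mathrm{har}}$ commutes with dilations, and the cylinder shell is a translate of $(0,\log2)\times\Sph$, it suffices to prove a fixed-scale bound: $\|D_{\mathrm{har}}g\|_{H^s((0,\log 2)\times\Sph)}\le C\|g\|_{C^{1,\rho}(\overline{B_2})}$. By \eqref{harmonic-replacement-field}, $D_{\mathrm{har}}g(r\omega)=r(\partial_\nu H_r-\partial_\nu g)$ on spheres of radius $r\in[1,2]$. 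The term $r\partial_r g$ is $C^\rho$, hence lies in $H^s$ of the shell because $\rho>s$. The term $r\partial_\nu H_r$ equals $\Lambda[g(r\cdot)]$. By classical Schauder theory for the Dirichlet-to-Neumann map of the ball, $\Lambda$ maps $C^{1,\rho}(\Sph)$ to $C^\rho(\Sph)$ with norm bounded. Differentiating in $r$ does not lose regularity jointly, because $\Lambda$ commutes with dilation: the map $(r,\omega)\mapsto\Lambda[g(r\cdot)](\omega)$ is $C^\rho$ jointly, since increments in $r$ give $\Lambda[g(r\cdot)-g(r'\cdot)]$, bounded in $C^\rho$ by $|r-r'|^\rho$ times a $C^{1,\rho}$-type norm. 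One must be careful here: increments in $r$ only give $C^{0,\rho}$ control of $g(r\cdot)-g(r'\cdot)$ at size $|r-r'|^\rho$. A cleaner route is to interpolate $C^{1,\rho}\to C^{\rho'}$ for some $s<\rho'<\rho$, or to use the spectral bound $|j-\ii\tau|\lesssim\langle(j,\tau)\rangle$ after a smooth cutoff. This gives $H^s$ from $H^{1+s}$-type control of a cutoff of $g$, and $C^{1,\rho}$ with compact support embeds in $H^{1+s'}$ for $s'<\rho$. I would use this cutoff-plus-multiplier argument, with \eqref{quantitative-multiplier} and Lemma~\ref{prelim-localization} handling the localization commutator, and I expect that commutator to be the main obstacle. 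The difficulty is that $\Lambda$ is nonlocal in $\omega$ but local in $t$, so a cutoff in $t$ commutes with $\Lambda$ and only the $\partial_t$ commutator, which is bounded, appears.

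\textbf{Global membership and continuity.} Continuity of $q_v$ follows from the same Schauder bound on each sphere. Membership in $H^s(\Cyl)$ follows by summing the dyadic shell bounds, $\sum r^{2(1+\rho)}<\infty$, toward $t\to-\infty$, together with a fixed-scale bound on the compact range. For $t\to+\infty$, i.e.\ outside the support of $v$, $q_v=D_{\mathrm{har}}v$ equals $\Lambda$ of the data at radii beyond the support. There the data vanish, so $q_v$ vanishes, and the inner-ball harmonic replacement is zero since it is determined by boundary values.

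\textbf{Cutoff convergence and continuous dependence.} For cutoffs $\chi(t-T)q_v$ with $T\to-\infty$, the error is supported where $t<T+1$. Its $H^s$ norm is bounded by the shell sums $\sum_{r\lesssim e^T}r^{1+\rho}\to0$, where \eqref{quantitative-multiplier} controls the fixed-profile multiplier uniformly in $T$. Finally, the map $v\mapsto q_v$ is linear, and the estimates above bound $\|q_v\|_{H^s(\Cyl)}$ by $C_K\|v\|_{C^{1,\rho}}$ for supports in a fixed compact $K$. Convergence in $C^{1,\rho}$ therefore implies $H^s$ convergence of the corrected derivatives.
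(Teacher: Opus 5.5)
Your final route (Taylor subtraction, dilation to a fixed annulus, a radial cutoff that commutes with $\Lambda$ and equals one on the target slab, a cylinder $H^{1+s}$ bound from the $C^{1,\rho}$ control with $\rho>s$, the bound $D_{\mathrm{har}}:H^{1+s}(\Cyl)\to H^s(\Cyl)$, square-summable shell bounds, vanishing beyond the support, and continuity from the action of $\Lambda$ on $C^{1,\rho}$ sphere data) is essentially the paper's proof. The paper is only more explicit about two routine steps you compress: it obtains the cylinder $H^{1+s}$ norm through annular $H^s$ bounds on $v_r$ and its first $t$- and rotation derivatives, and it passes from slab bounds to a global $H^s$ bound by a finite-overlap partition of unity plus interpolation; also, for $\rho=1$ you should apply your Schauder bound with some $\rho'<1$.
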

\begin{proof}
The affine Taylor polynomial is annihilated by $D_{\mathrm{har}}$.
On $A=\{1/2<|z|<4\}$ set
\[
 v_r(z)=v(rz)-v(0)-r\nabla v(0)\cdot z.
\]
Taylor's theorem and the H\"older bound for $\nabla v$ give
\[
 \|v_r\|_{C^{1,\rho}(\overline A)}
 \le C[\nabla v]_{C^\rho(B_1)}r^{1+\rho}.
\]
For a first derivative the fractional difference integral on this
fixed annulus is bounded by a constant times
$\int_0^1 h^{2\rho-2s-1}\dd h<\infty$, also when $\rho=1$.
The annular norm equivalence in Lemma~\ref{prelim-localization}
therefore controls the cylinder $H^s$ norms of $v_r$ and its first
$t$ and rotation derivatives. These control its localized $H^{1+s}$ norm:
on the Fourier and spherical-harmonic side, taking the first $t$
and rotation derivatives adds the weight
$1+\tau^2+j(j+n-2)$. Using a fixed radial cutoff on the enlarged
annulus and the bound $D_{\mathrm{har}}:H^{1+s}\to H^s$ gives
\eqref{corrected-derivative-decay} on $1<|z|<2$ after dilation.

To sum these bounds, take a smooth partition $\chi_k$ in $t$ by
translates of a fixed compact cutoff, and wider cutoffs
$\widetilde\chi_k$ equal to one on $\operatorname{supp}\chi_k$.
Both families have finite overlap. The map
$(g_k)\mapsto\sum_k\widetilde\chi_k g_k$ is bounded from the square
sum of full-cylinder $L^2$, respectively $H^1$, spaces to the global
space, by finite overlap and the product rule.
Equation~\eqref{quadratic-interpolation}, applied to the direct sum
of $1-\Delta_{\Cyl}$, gives the same bound for $H^s$.

Apply it to $g_k=\chi_kq_v$. The square-summable powers
$r^{1+\rho}$ prove the
global $H^s$ assertion and convergence of the inner cutoff tails.
On the remaining finitely many slabs use the local $C^{1,\rho}$
norm; the outer end vanishes because $v$ is compactly supported.
This also proves continuity of the map from $C^{1,\rho}$ data with
common compact support to $H^s$ corrected derivatives.

For pointwise continuity, differentiation of the ball Poisson formula
gives, for $g\in C^{1,\rho}(\Sph)$,
\[
 \Lambda g(\omega)=\frac2{|\Sph|}\operatorname{PV}
  \int_{\Sph}\frac{g(\omega)-g(\eta)}{|\omega-\eta|^n}\dd\eta.
\]
Subtract the tangential linear Taylor term, whose integral vanishes
by reflection symmetry. The remaining integral over a neighborhood
of radius $\epsilon$ is bounded by
$C\|g\|_{C^{1,\rho}}\int_0^\epsilon h^{\rho-1}\dd h$ and tends
uniformly to zero. Away from that neighborhood the kernel is smooth.
This proves continuity of $\Lambda g$, locally uniformly for the
sphere restrictions of $v$. The radial derivative is continuous as
well, proving continuity of $q_v$ on compact cylinder annuli.
\end{proof}

The corrected radial derivative relates the Euclidean Laplacian to an
operator whose angular eigenvalues are those of a sphere in two more
dimensions. Put
\[
 \mathcal R=r^{-2}(\Lambda-r\partial_r),\qquad
 d\mu=r^{n+1}\dd r\dd\omega,
\]
and let $\mathcal L$ be the Friedrichs operator on $L^2(d\mu)$ associated
with the closure of
\begin{equation}\label{cone-form}
 \mathfrak a[w]=\int_0^\infty\!\int_{\Sph}
 \left(|\partial_r w|^2+r^{-2}|\nabla_{\Sph}w|^2
          +2r^{-2}\overline w\Lambda w\right)
                              r^{n+1}\dd\omega\dd r
\end{equation}
on smooth compact annular functions. On those functions its expression is
\[
 \mathcal L=-\partial_{rr}-\frac{n+1}{r}\partial_r
                   +r^{-2}\Lambda(\Lambda+n).
\]

\begin{lemma}\label{heat-intertwining-lemma}
For $v\in C_c^\infty(\R^n)$,
\begin{equation}\label{radial-derivative-isometry}
 \|\mathcal Rv\|_{L^2(d\mu)}=\|\nabla v\|_{L^2(\R^n)}.
\end{equation}
Moreover, $\mathcal Rv\in D(\mathcal L)$ and
\begin{align}
 \mathcal R e^{h\Delta}v
     &=e^{-h\mathcal L}\mathcal Rv,\qquad h\ge0,
       \label{heat-intertwining}\\
 \mathcal L^s\mathcal Rv
     &=\mathcal R(-\Delta)^sv.
       \label{heat-fractional-intertwining}
\end{align}
The last identity holds in $L^2(d\mu)$ and in distributions away
from the origin.
\end{lemma}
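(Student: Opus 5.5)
The plan is to diagonalize everything in spherical harmonics and prove the three assertions mode by mode, then pass to fractional powers via the spectral formula \eqref{prelim-positive-power}.

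\emph{Step 1: the intertwining on one mode.} Fix $v=\phi(r)Y_{j,\ell}(\omega)$ with $\phi$ smooth, compactly supported in $[0,\infty)$ and regular at the origin (as the restriction of a smooth compactly supported function); finite sums of such modes and density then suffice. Then $\mathcal Rv=r^{-2}(j\phi-r\phi')Y_{j,\ell}=:\psi Y_{j,\ell}$. On this mode, $-\Delta$ acts as $\mathcal D_j=-\partial_{rr}-\frac{n-1}r\partial_r+\frac{j(j+n-2)}{r^2}$, and $\mathcal L$ acts as $\mathcal L_j=-\partial_{rr}-\frac{n+1}r\partial_r+\frac{j(j+n)}{r^2}$. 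The key algebraic identity to check by direct computation is the commutation
\[
 \mathcal L_j\,T_j=T_j\,\mathcal D_j,\qquad T_j\phi=r^{-2}(j\phi-r\phi').
\]
A convenient way to do this is to factor $T_j=-r^{-1}\cdot r^{j}\partial_r r^{-j}$, i.e.\ $T_j\phi=-r^{j-1}(r^{-j}\phi)'$, and to note that $\mathcal D_j$ and $\mathcal L_j$ are the radial parts of Laplacians in dimensions $n$ and $n+2$ on degree-$j$ and degree-$j$ harmonics, so the identity reduces to the classical fact that $r^{-1}\partial_r$ maps radial Laplacians in dimension $d$ to those in dimension $d+2$ after conjugation by $r^j$. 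For the isometry \eqref{radial-derivative-isometry}, expand $\int|j\phi-r\phi'|^2r^{n-3}dr$ and integrate the cross term $-2j\int\phi\phi' r^{n-2}dr=j(n-2)\int\phi^2r^{n-3}dr$ by parts; this gives $\int(|\phi'|^2+j(j+n-2)r^{-2}\phi^2)r^{n-1}dr$, which is the $j$-mode of $\|\nabla v\|_2^2$. Summing over orthogonal modes yields \eqref{radial-derivative-isometry}.

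\emph{Step 2: domain and heat semigroup.} Show $\mathcal Rv\in D(\mathcal L)$: $\mathcal Rv$ is smooth away from the origin and, near $0$, $\psi=O(r^{j+?})$ behaves like $r^{j-1}\times$(smooth even function), which is exactly the regular behaviour of degree-$j$ harmonics in dimension $n+2$; so $\mathcal Rv$ is the restriction of a smooth function of Friedrichs-form-domain type and $\mathcal L\mathcal Rv=\mathcal R(-\Delta v)\in L^2(d\mu)$ by Step 1 and the isometry applied to $\Delta v$ (one may need $\|\mathcal R w\|_{L^2(d\mu)}=\|\nabla w\|_2$ for Schwartz $w$, by density). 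Then for $h\ge0$ set $W(h)=\mathcal Re^{h\Delta}v$; it lies in $D(\mathcal L)$, satisfies $W'=\mathcal R\Delta e^{h\Delta}v=-\mathcal LW$ with $W(0)=\mathcal Rv$, and is continuous in $L^2(d\mu)$ by the isometry. Uniqueness for the abstract Cauchy problem of the selfadjoint nonnegative $\mathcal L$ (differentiate $\|W(h)-e^{-h\mathcal L}\mathcal Rv\|^2$) gives \eqref{heat-intertwining}. The main obstacle is this step: justifying that the Friedrichs extension is the correct one, i.e.\ that $\mathcal Rv$ and $\mathcal Re^{h\Delta}v$ lie in $D(\mathcal L)$ despite the singular potential at $r=0$ (and the nonlocal $\Lambda$ term), which I would handle via the dimension-$(n+2)$ regularity picture, checking that these functions are form-limits of smooth annular functions with vanishing boundary terms at $0$.

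\emph{Step 3: fractional powers.} Apply \eqref{prelim-positive-power} to both $\mathcal L$ and $-\Delta$: $\mathcal L^s\mathcal Rv=c_s\int_0^\infty(\mathcal Rv-e^{-h\mathcal L}\mathcal Rv)h^{-1-s}dh=c_s\int_0^\infty\mathcal R(v-e^{h\Delta}v)h^{-1-s}dh$, and $\mathcal R$ commutes with the Bochner integral because, by the isometry, $\|\mathcal R(v-e^{h\Delta}v)\|=\|\nabla(v-e^{h\Delta}v)\|\le\min\{Ch,2\|\nabla v\|\}$, giving absolute convergence in $L^2(d\mu)$. The same integral converges to $(-\Delta)^sv$ in $\dot H^1$, so the result is $\mathcal R(-\Delta)^sv$ in $L^2(d\mu)$; the distributional identity away from the origin follows since on compact annuli $L^2(d\mu)$ convergence implies distributional convergence and $\mathcal R$ is a local-in-$r$ differential-plus-angular operator continuous on distributions there.
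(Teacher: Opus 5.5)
Your proposal follows essentially the same route as the paper's proof: the isometry by expanding the square and integrating the cross term by parts, the mode-wise identity $\mathcal L\mathcal R=\mathcal R(-\Delta)$, membership in $D(\mathcal L)$ from the regular behaviour at the origin together with form approximation by annular functions, uniqueness for the strong $L^2(d\mu)$ evolution, and the positive-power formula with the bound $\|\nabla(v-e^{h\Delta}v)\|_2\le\min\{Ch,2\|\nabla v\|_2\}$. One small correction: $T_j\bigl(r^jg(r^2)\bigr)=-2r^jg'(r^2)$, so near $0$ the degree-$j$ modes of $\mathcal Rv$ behave like $r^{j}$ times a smooth even function, not $r^{j-1}$, and this is exactly the regular degree-$j$ behaviour in dimension $n+2$ that your domain argument needs.
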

\begin{proof}
Expand the square in the left side of
\eqref{radial-derivative-isometry} and integrate its mixed term in $r$.
The identity $\Lambda(\Lambda+n-2)=-\Delta_{\Sph}$ gives
\[
 \begin{split}
 \|\mathcal Rv\|_{L^2(d\mu)}^2
 &=\int_0^\infty\!\int_{\Sph}|\partial_rv|^2r^{n-1}
                                      \dd\omega\dd r\\
 &\quad+\int_0^\infty r^{n-3}
       \langle v,\Lambda(\Lambda+n-2)v\rangle_{L^2(\Sph)}\dd r
 =\|\nabla v\|_2^2.
 \end{split}
\]
The boundary term at zero vanishes because $\Lambda$ annihilates
the constant Taylor term and the remaining angular modes are $O(r)$;
the boundary term at infinity vanishes by compact support.
Thus $\mathcal R:H^1(\R^n)\to L^2(d\mu)$ is continuous.

Direct differentiation in angular degree $j$ gives
\begin{equation}\label{local-heat-intertwining}
 \mathcal L\mathcal Rv=\mathcal R(-\Delta)v.
\end{equation}
We check the operator domain needed here. If $v$ is a Schwartz function,
its Taylor polynomial is a sum of terms $r^{j+2k}Y_j$, and
\[
 \mathcal R(r^{j+2k}Y_j)=-2k r^{j+2k-2}Y_j.
\]
The terms with $k=0$ disappear and all remaining terms are smooth at
zero. The bound $\Lambda:H^{m+1}(\Sph)\to H^m(\Sph)$, read directly
from its eigenvalues, controls the differentiated Taylor remainders.
Consequently both $\mathcal Rv$ and $\mathcal R(-\Delta)v$ are smooth
at zero and decrease rapidly at infinity. In particular,
$\mathfrak a[\mathcal Rv]<\infty$. Cutting off on $r<2\varepsilon$
costs $O(\varepsilon^n)$ in radial gradient energy; the remaining form
terms converge by integrability. An outer cutoff then proves membership
in the form domain. Equation~\eqref{local-heat-intertwining}, first
tested on compact annuli and then extended by form density, proves
$\mathcal Rv\in D(\mathcal L)$ with the asserted operator value.

For $v\in C_c^\infty$, put $Q(h)=\mathcal R e^{h\Delta}v$.
The isometry and the $H^1$ continuity of the Euclidean heat flow give
strong continuity in $L^2(d\mu)$ at $h=0$ and allow its differentiation.
The domain assertion just proved gives
$Q'=-\mathcal LQ$ and $Q(0)=\mathcal Rv$.
Uniqueness for this strong $L^2$ evolution proves
\eqref{heat-intertwining}.

Finally,
\[
 \|v-e^{h\Delta}v\|_{H^1}
 \le\min\{h\|\Delta v\|_{H^1},\,2\|v\|_{H^1}\}.
\]
The positive-power integral \eqref{prelim-positive-power} therefore
converges absolutely in $H^1(\R^n)$. Its Euclidean Fourier
transform identifies its value with $(-\Delta)^sv$ in that norm.
Apply the continuous map $\mathcal R$, use
\eqref{heat-intertwining}, and apply the same positive-power formula
to $\mathcal L$ to obtain \eqref{heat-fractional-intertwining}.
The continuous extension of $\mathcal R$ agrees with its differential
expression on every compact annulus, which also gives the
distributional identity there.
\end{proof}

\subsection{The weighted extension problem}

To express the fractional operators in cylinder coordinates, put
$\beta=(n-2s)/2>0$. An admissible radial weight means
\begin{equation}\label{weight-interval}
 2s-n<\vartheta<\min\{1,2s\}.
\end{equation}
Write $\gamma=\beta-1+\vartheta$. Then
\begin{equation}\label{extension-gamma-interval}
 -\beta-1<\gamma<\beta,\qquad |\gamma|<\beta+1.
\end{equation}

We define the cylinder operators through one-dimensional weighted
extension problems. Set
\[
 p(\varphi)=\sin^{1-2s}\varphi\cos^{n-1}\varphi,\qquad
 \lambda_m=m(m+n-2),\qquad 0<\varphi<\frac\pi2.
\]
Let $\mathcal V_m$ be the completion of smooth profiles, regular at the
axis $\varphi=\pi/2$, in the norm whose square is
\[
 \int_0^{\pi/2}p\left(|v'|^2+
           (1+\lambda_m\sec^2\varphi)|v|^2\right)\dd\varphi.
\]
Here a regular profile has the form
$(\cos\varphi)^m\psi(\cos^2\varphi)$ near the axis, with $\psi$ smooth.
The profiles are smooth up to $\varphi=0$, with no boundary condition
there. Write $\mathcal V_m^0=\{v\in\mathcal V_m:v(0)=0\}$ and
$\|v\|_p^2=\int_0^{\pi/2}p|v|^2$. For real $x$ define
\begin{equation}\label{dn-form-definition}
 E_{m,x}[v]=\int_0^{\pi/2}p
     \left(|v'|^2+(\lambda_m\sec^2\varphi+x)|v|^2\right)\dd\varphi,
\end{equation}
and denote the corresponding sesquilinear form by $E_{m,x}(v,w)$,
conjugate-linear in its first argument. The same convention is used
for $\langle v,w\rangle_p=\int p\overline v w$.

\begin{lemma}[The weighted angular form]\label{angular-groundstate}
Trace at zero is continuous on $\mathcal V_m$, and
$\mathcal V_{m+1}\subset\mathcal V_m$. More precisely, for $K\ge1$,
\begin{equation}\label{angular-scaled-trace}
 K^{2s}|v(0)|^2
 \le C_{n,s}\int_0^{1/(2K)}p\bigl(|v'|^2+K^2|v|^2\bigr)\dd\varphi.
\end{equation}
Put
$h_m(\varphi)=(\cos\varphi)^m$ and $q_m=m(m+n-2s)$.
For every real $x$ and every $v\in\mathcal V_m$,
\begin{equation}\label{angular-groundstate-identity}
 E_{m,x}[v]
 =\int_0^{\pi/2}p h_m^2
             \left|\left(\frac{v}{h_m}\right)'\right|^2\dd\varphi
      +(q_m+x)\|v\|_p^2.
\end{equation}
The quotient derivative is understood locally in $(0,\pi/2)$ and
then in the weighted energy on the right. In particular,
$E_{m,x}$ is coercive in the defining norm of $\mathcal V_m$ when
$q_m+x>0$.

For $m=j+1$, $x=\beta^2+\tau^2-\gamma^2$, and a weight in
\eqref{weight-interval}, one has the norm equivalence
\begin{equation}\label{dn-coercivity}
 E_{m,x}[v]\asymp\int_0^{\pi/2}p\left(|v'|^2+
       (1+\lambda_m\sec^2\varphi+\tau^2)|v|^2\right)\dd\varphi.
\end{equation}
The same bounds hold for $m=j$ and $x=\beta^2+\tau^2$.
The constants are independent of $j,\tau$ and locally uniform in
$s$ and in the admissible weight.
\end{lemma}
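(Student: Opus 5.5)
The plan is to work near the boundary point $\varphi=0$, where $p\sim\varphi^{1-2s}$, and then treat the axis $\varphi=\pi/2$ separately. The factor $\cos^{n-1}\varphi$ is comparable to one on $(0,\pi/4)$.

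\emph{Trace bound.} For \eqref{angular-scaled-trace}, take $v$ smooth and regular and set $\delta=1/(2K)$. For $\varphi\in(0,\delta)$ write
\[
 v(0)=v(\varphi)-\int_0^\varphi v'.
\]
Average this against the weight $\varphi^{1-2s}$ over $(0,\delta)$, whose total mass is $\asymp\delta^{2-2s}$. Cauchy--Schwarz then gives:
\begin{itemize}
\item the averaged $|v(\varphi)|^2$ term is at most $C\delta^{2s-2}\int_0^\delta p|v|^2=CK^{2-2s}\int_0^\delta p|v|^2$;
\item the derivative term uses $\int_0^\varphi|v'|\le(\int_0^\varphi p|v'|^2)^{1/2}(\int_0^\varphi\psi^{2s-1})^{1/2}$, and since $2s-1>-1$ the last factor is $C\varphi^{2s}\le C\delta^{2s}$.
\end{itemize}
Multiplying by $K^{2s}$ gives \eqref{angular-scaled-trace}. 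Taking $K=1$ gives continuity of the trace, and density extends everything to $\mathcal V_m$. The inclusion $\mathcal V_{m+1}\subset\mathcal V_m$ holds because $\lambda_m\le\lambda_{m+1}$, so the $\mathcal V_{m+1}$ norm dominates; regular profiles of degree $m+1$ lie in the closure of degree-$m$ ones, by multiplying by $\cos\varphi$ and approximating.

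\emph{Ground-state identity.} The key computation is that $h_m=\cos^m\varphi$ satisfies
\[
 -(p h_m')'+p\lambda_m\sec^2\varphi\, h_m=q_m\,p\,h_m.
\]
To check it, note $h_m'=-m\tan\varphi\,h_m$ and $p'/p=(1-2s)\cot\varphi-(n-1)\tan\varphi$. Then
\[
 -(ph_m')'/p=m\sec^2\varphi\,h_m+m\tan\varphi\,(p'/p)h_m-m^2\tan^2\varphi\, h_m .
\]
Collecting terms, the coefficient of $\tan^2\varphi$ is $m-m(n-1)-m^2=-\lambda_m$, which combines with $\lambda_m\sec^2\varphi$ to leave $\lambda_m$. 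The constant part is $m+m(1-2s)$. The total is $m(m+n-2)+2m-2sm=q_m$.

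Now write $v=h_mw$ and integrate by parts: $|v'|^2=h_m^2|w'|^2+2\Ree(h_mh_m'\bar w w')+h_m'^2|w|^2$, and the middle term integrates to $-\int(ph_mh_m')'|w|^2$ plus boundary terms. The boundary term at $\varphi=0$ vanishes because $p h_m'\sim\varphi^{1-2s}\cdot\varphi\to0$ for bounded $v$. At the axis, $p h_m h_m'|w|^2$ carries a factor $\cos^{n-1+2m}\varphi\cdot|w|^2$ with $w=\psi(\cos^2\varphi)$ bounded, so it also vanishes. The identity then follows for regular profiles. It extends to $\mathcal V_m$ by density, with Fatou or monotone convergence for the quotient term; this is the sense stated in the lemma.

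\emph{Coercivity.} If $q_m+x>0$, write $E_{m,x}=\theta E_{m,x}+(1-\theta)E_{m,x}$. The identity gives $E_{m,x}\ge(q_m+x)\|v\|_p^2$. Moreover, $E_{m,x}+|x|\,\|v\|_p^2$ dominates the defining norm minus $\|v\|_p^2$. Choosing $\theta$ small then gives coercivity.

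\emph{Uniform equivalence \eqref{dn-coercivity}.} The upper bound is immediate, since $x\le C(1+\tau^2)$ when $|\gamma|,\beta$ are bounded. For the lower bound, take $m=j+1$ and compute
\[
 q_m+x=(j+1)(j+1+n-2s)+\beta^2-\gamma^2+\tau^2 .
\]
By \eqref{extension-gamma-interval}, $\gamma^2<(\beta+1)^2$, so $q_m+x\ge(n-2s+1)+\beta^2-\gamma^2+\tau^2+\dots$. I expect this to be the main obstacle: one needs $q_m+x\ge c(1+j^2+\tau^2)$ with $c$ uniform. For $j=0$,
\[
 q_1+\beta^2-(\beta+1)^2=n-2s+1-2\beta-1=0
\]
at the extreme $\gamma=-\beta-1$. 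Strict admissibility $\vartheta>2s-n$ gives $\gamma>-\beta-1$ strictly, hence a positive margin, locally uniform in the weight. For large $j$ or $\tau$ the quadratic terms dominate.

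With $q_m+x\ge c(1+j^2+\tau^2)$ in hand, split $E=\epsilon E+(1-\epsilon)E$. The first part supplies $|v'|^2$ and $\lambda_m\sec^2\varphi|v|^2$, minus $\epsilon|x|\|v\|_p^2$. The second part supplies $(1-\epsilon)c(1+j^2+\tau^2)\|v\|_p^2$, which absorbs that loss and the $\tau^2$ term once $\epsilon$ is small, since $|x|\le C(1+\tau^2)$. The case $m=j$, $x=\beta^2+\tau^2$ is easier: $q_j+\beta^2\ge\beta^2>0$ and $q_j\gtrsim j^2$, so the same splitting applies.
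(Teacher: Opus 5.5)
Your proposal is correct and follows essentially the paper's argument: the same weighted Cauchy--Schwarz averaging near $\varphi=0$ for the trace bound, the ground-state equation for $h_m=\cos^m\varphi$ followed by integration by parts and density, and uniformity from $q_m+\beta^2=(m+\beta)^2$ together with $|\gamma|<\beta+1$. The only difference is that you use an $\epsilon$-splitting where the paper writes $E_{m,x}\ge(1-\gamma^2/(\beta+1)^2)E_{m,\beta^2+\tau^2}$ directly.

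Two small corrections. First, to get the identity as an equality on all of $\mathcal V_m$, apply it to differences of approximants, so that the quotient derivatives are Cauchy in $L^2(ph_m^2\,d\varphi)$; Fatou alone only yields $E_{m,x}[v]\ge\cdots$. Second, the axis boundary factor is $m\sin^{2-2s}\varphi\cos^{n+2m-2}\varphi$ rather than $\cos^{n-1+2m}\varphi$, and it still vanishes for $m\ge1$.
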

\begin{proof}
For $0<\varphi\le1/2$, weighted Cauchy--Schwarz gives
\[
 |v(\varphi)-v(0)|^2
 \le\left(\int_0^\varphi p^{-1}\right)
       \left(\int_0^\varphi p|v'|^2\right),\qquad
 \int_0^\varphi p^{-1}\le C\varphi^{2s}.
\]
Average $|v(0)|^2\le2|v(\varphi)|^2+
2|v(\varphi)-v(0)|^2$ over
$(4K)^{-1}<\varphi<(2K)^{-1}$ with weight $p$. Its mass is
comparable to $K^{2s-2}$, so multiplication by $K^{2s}$ gives
\eqref{angular-scaled-trace}. The case $K=1$ proves continuity
of the trace on $\mathcal V_m$; the same Cauchy--Schwarz bound
gives a continuous representative near zero.

We first justify approximation at the other endpoint. Write
$r=\pi/2-\varphi$. For a regular degree-$m$ profile, an ordinary cutoff
on $r<2\epsilon$ costs $O(\epsilon^{n+2m-2})$ in the derivative
part when $n+2m>2$; all other parts tend to zero by integrability.
When $n=2,m=0$, use a cutoff changing from zero to one on
$[\epsilon^2,\epsilon]$, with derivative bounded by
$C/(r|\log\epsilon|)$. Its derivative cost is at most
\[
 \frac{C}{|\log\epsilon|^2}
     \int_{\epsilon^2}^{\epsilon}\frac{\dd r}{r}
 =O(|\log\epsilon|^{-1}).
\]
Approximation first by regular profiles proves that profiles vanishing
near the axis are dense in every $\mathcal V_m$. Such a profile also
belongs to the preceding degree's domain, and
$\lambda_m\le\lambda_{m+1}$ proves the continuous inclusion
$\mathcal V_{m+1}\subset\mathcal V_m$.

Direct differentiation gives
\[
 -(p h_m')'+p\lambda_m\sec^2\varphi\,h_m=q_m p h_m.
\]
For a smooth profile vanishing near the axis, expand the derivative
of $v=h_m(v/h_m)$ and integrate the mixed term. The boundary term at
zero vanishes: $p h_m'/h_m=O(\varphi^{2-2s})$ there, and $s<1$.
There is no axis boundary term. This proves
\eqref{angular-groundstate-identity} on the dense class just described.
To pass to the full domain, first use $x=1$. The right-hand side is
then the square of a norm: $q_m+1>0$, and the displayed identity
makes a Cauchy sequence in $\mathcal V_m$ Cauchy also in that norm.
Its local limit is $v/h_m$, so its derivative is the weak derivative
appearing in the assertion. Passing to the limit proves the identity
for $x=1$, and adding $(x-1)\|v\|_p^2$ proves it for every real $x$.
In particular $\|v\|_p^2\le(q_m+x)^{-1}E_{m,x}[v]$ when $q_m+x>0$.
Since
\[
 E_{m,0}[v]=E_{m,x}[v]-x\|v\|_p^2
 \le\left(1+\frac{|x|}{q_m+x}\right)E_{m,x}[v],
\]
this also controls the full defining norm.

For the uniform assertion, write
$Q=E_{m,\beta^2+\tau^2}$. The identity gives
\[
 Q[v]\ge((m+\beta)^2+\tau^2)\|v\|_p^2.
\]
For $m=j+1\ge1$, \eqref{extension-gamma-interval} therefore yields
\[
 E_{m,\beta^2+\tau^2-\gamma^2}[v]
 =Q[v]-\gamma^2\|v\|_p^2
 \ge\left(1-\frac{\gamma^2}{(\beta+1)^2}\right)Q[v].
\]
The coefficient is strictly positive for every weight in the full
interval \eqref{weight-interval}. Since $\beta>0$, the integral
$Q$ controls the defining norm and the term $\tau^2\|v\|_p^2$;
the reverse bound is immediate. For $m=j$ and zero shift $\gamma=0$,
this last observation proves the assertion directly, including $j=0$.
All constants stay bounded on compact subsets of the parameter ranges.
\end{proof}

\begin{lemma}[Weighted extension bounds]\label{extension-symbol-bounds}
Put $q_m=m(m+n-2s)$. If $\Ree z>-q_m$, there is a unique
$\Phi_{m,z}\in\mathcal V_m$ with trace one satisfying
\begin{equation}\label{angular-DN-equation}
 -(p\Phi_{m,z}')'+p(\lambda_m\sec^2\varphi+z)\Phi_{m,z}=0.
\end{equation}
Its conormal derivative is well defined. With $d_s$ as in
\eqref{prelim-extension-normalization}, set
\begin{equation}\label{angular-DN-symbol}
 F_m(z)=-d_s\lim_{\varphi\downarrow0}p(\varphi)\Phi_{m,z}'(\varphi).
\end{equation}
Both $\Phi_{m,z}$ and $F_m(z)$ depend holomorphically on $z$ in this
half-plane, and $F_m(\overline z)=\overline{F_m(z)}$.
For every real $x>-q_m$ and $y\in\R$,
\begin{equation}\label{dn-positive-correction}
 \Ree F_m(x+\ii y)-F_m(x)
 =d_s E_{m,x}[\Phi_{m,x+\ii y}-\Phi_{m,x}]\ge0.
\end{equation}
For an admissible weight, define
\begin{equation}\label{shared-symbols}
 a_j(\tau)=F_j(\beta^2+\tau^2),\qquad
 t_{j,\vartheta}(\tau)
   =F_{j+1}(\beta^2+\tau^2-\gamma^2-2\ii\gamma\tau).
\end{equation}
Then, with $K=\langle(j,\tau)\rangle$,
\begin{equation}\label{uniform-real-symbol}
 0<a_j(\tau)\le CK^{2s},\qquad
 |t_{j,\vartheta}(\tau)|\le CK^{2s}.
\end{equation}

For either of the real parameters in \eqref{shared-symbols}, the
real trace-one minimizer $v=\Phi_{m,x}$ satisfies
\begin{equation}\label{dn-boundary-layer}
 E_{m,x}[v]\le CK^{2s},\qquad
 \|v\|_p^2\le CK^{2s-2},\qquad
 v(\varphi)\ge\tfrac12\quad(0\le\varphi\le\epsilon/K).
\end{equation}
For $m=j+1$, $x=\beta^2+\tau^2-\gamma^2$, and $|y|\le C_0K$,
the complex solutions satisfy
\begin{align}
 \|\Phi_{j+1,x+\ii y}\|_p^2&\le CK^{2s-2},
       \label{dn-extension-mass}\\
 E_{j+1,x}[\Phi_{j+1,x+\ii y}]&\le CK^{2s}.
       \label{dn-extension-energy}
\end{align}
Here $C_0$ is any fixed finite constant, and $C$ may depend on $C_0$.
In particular these estimates hold for $|y|\le2|\gamma\tau|$.
All constants are uniform in $j,\tau,y$ and locally uniform in
$s,\vartheta$ in their displayed ranges.
\end{lemma}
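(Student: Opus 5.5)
Existence and uniqueness come from Lax--Milgram, and everything quantitative from the ground-state identity \eqref{angular-groundstate-identity}. Fix $m$ and $\Ree z>-q_m$. Write $\Phi=h_\epsilon+w$ with $w\in\mathcal V_m^0$, where $h_\epsilon$ is a fixed smooth profile with trace one supported in $\varphi<1/2$. The equation \eqref{angular-DN-equation} with trace one becomes the problem of finding $w\in\mathcal V_m^0$ with
\[
E_{m,z}(\psi,w)=-E_{m,z}(\psi,h_\epsilon)\qquad\text{for all }\psi\in\mathcal V_m^0 .
\]
Coercivity follows from \eqref{angular-groundstate-identity}:
\[
\Ree E_{m,z}[v]=E_{m,\Ree z}[v]\ge(q_m+\Ree z)\|v\|_p^2 ,
\]
and the argument used at the end of the proof of Lemma~\ref{angular-groundstate} upgrades this to control of the full $\mathcal V_m$ norm. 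Lax--Milgram then gives existence and uniqueness. Since the data depend affinely on $z$, the solution depends holomorphically on $z$ (inverse of a holomorphic family of coercive operators). Conjugating the equation shows $F_m(\overline z)=\overline{F_m(z)}$.

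The conormal derivative and the link between $F_m$ and the form come from integrating the equation against a test $\psi\in\mathcal V_m$. This gives
\[
d_s^{-1}F_m(z)\,\overline{\psi(0)}=E_{m,z}(\psi,\Phi_{m,z})
\]
for the bilinear form; here the boundary limit exists because $p\Phi'$ is, by the equation, an integral of an $L^1$ quantity near $0$. Take $\psi=\Phi_{m,x}$ and $\psi=\Phi_{m,x+iy}$ and use the real minimizing property of $\Phi_{m,x}$ (it is $E_{m,x}$-orthogonal to $\mathcal V_m^0$). Expanding $E_{m,x}[\Phi_{m,x+iy}-\Phi_{m,x}]$ then leaves
\[
d_s^{-1}\bigl(\Ree E_{m,x}[\Phi_{m,x+iy}]-F_m(x)\bigr).
\]
Finally, $\Ree$ of the $z$-form on the diagonal equals $E_{m,x}$, since the $iy\|\cdot\|_p^2$ term is imaginary. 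This gives \eqref{dn-positive-correction}. Positivity $a_j>0$ is $F_m(x)=d_sE_{m,x}[\Phi_{m,x}]$, which is positive by coercivity.

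For the size bounds, take $K=\langle(j,\tau)\rangle$ and the test profile $\chi(K\varphi)$ with $\chi$ a cutoff equal to one near $0$. By \eqref{dn-coercivity} its energy is at most
\[
\int_0^{1/K}p\,(K^2+\lambda_m+\tau^2)\,d\varphi\lesssim K^{2-2s}K^{2s-2}\cdot K^{2s}=CK^{2s},
\]
using $\int_0^{1/K}p\asymp K^{2s-2}$ and $\lambda_m\sec^2\varphi\lesssim K^2$ there (note $m\le j+1\lesssim K$). Minimality gives $E_{m,x}[\Phi]\le CK^{2s}$, hence $0<a_j\le CK^{2s}$. The mass bound follows from \eqref{dn-coercivity}, which controls $K^2\|v\|_p^2$ by the energy, so $\|v\|_p^2\le CK^{2s-2}$.

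For the lower bound $v\ge1/2$ on $[0,\epsilon/K]$, I would use the Cauchy--Schwarz estimate from Lemma~\ref{angular-groundstate}:
\[
|v(\varphi)-1|^2\le C\varphi^{2s}\int p|v'|^2\le C\varphi^{2s}K^{2s},
\]
which is at most $1/4$ for $\varphi\le\epsilon/K$ with $\epsilon$ small.

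For complex $z=x+iy$ with $|y|\le C_0K$, I would bound $\Phi_{j+1,z}$ by comparison with the real solution. The difference $w=\Phi_z-\Phi_x\in\mathcal V^0$ solves
\[
E_{m,z}(\psi,w)=-iy\langle\psi,\Phi_x\rangle_p .
\]
Testing with $\psi=w$ and taking real parts gives
\[
E_{m,x}[w]\le |y|\,\|w\|_p\|\Phi_x\|_p\le C_0K\cdot CK^{-1}E_{m,x}[w]^{1/2}\cdot CK^{s-1}.
\]
Here $\|w\|_p\le CK^{-1}E_{m,x}[w]^{1/2}$ by \eqref{dn-coercivity}. Hence $E_{m,x}[w]\le CK^{2s-2}$, which is even better than needed, and \eqref{dn-extension-mass}--\eqref{dn-extension-energy} follow by the triangle inequality. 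The bound on $|t_{j,\vartheta}|$ then follows from the identity $F=d_sE_{m,z}(\Phi_x,\Phi_z)$ and Cauchy--Schwarz with these estimates, noting $|y|=2|\gamma\tau|\le CK$.

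The main obstacle is the uniformity in $j$ of \eqref{dn-coercivity} in the shifted case; that is supplied by Lemma~\ref{angular-groundstate}, so the real care goes into justifying the boundary integration by parts (conormal limit) on the completed space.
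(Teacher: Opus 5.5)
Your proposal is correct and follows essentially the same route as the paper: complex Lax--Milgram for the zero-trace correction of a fixed lift, and the flux--energy identity obtained by integrating the equation by parts. It then uses orthogonality of the real minimizer to $\mathcal V_m^0$ for \eqref{dn-positive-correction}, a boundary-layer competitor at scale $1/K$ with coercivity and the weighted trace estimate for \eqref{dn-boundary-layer}, and testing the difference equation with $w$ for the complex bounds.

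There are two minor points. Your expansion should read $E_{m,x}[\Phi_{m,x+\ii y}]-d_s^{-1}F_m(x)$ rather than $d_s^{-1}(\Ree E_{m,x}[\Phi_{m,x+\ii y}]-F_m(x))$. Also, holomorphy of $F_m$ itself, which you leave implicit, follows by pairing the holomorphic family $\Phi_{m,z}$ with the fixed lift in your flux identity.
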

\begin{proof}
The preceding lemma shows that the real part of
\[
 E_{m,\Ree z}(\psi,w)+\ii\Imm z\,\langle\psi,w\rangle_p
\]
is coercive on $\mathcal V_m^0$. Prescribe a smooth trace-one lift
supported away from the axis, and solve for its zero-trace correction
by the complex Lax--Milgram theorem. This constructs the unique
solution. Its continuous representative is bounded near zero, where
the potential is bounded and $p$ is integrable. The equation therefore
gives a finite limit of $p\Phi'$ there. The axis approximation in
Lemma~\ref{angular-groundstate} justifies integration by parts without
an axis boundary term. For $z=x+\ii y$ this gives
\begin{equation}\label{dn-flux-energy}
 F_m(z)=d_s\left(E_{m,x}[\Phi_{m,z}]
                         +\ii y\|\Phi_{m,z}\|_p^2\right).
\end{equation}
When $z=x$ is real, the solution is the real trace-one energy minimizer.
For $v=\Phi_{m,x}$ and $w=\Phi_{m,x+\ii y}-v\in\mathcal V_m^0$,
its minimizing property gives $E_{m,x}(v,w)=0$. Hence
$E_{m,x}[v+w]=E_{m,x}[v]+E_{m,x}[w]$. Taking real parts in
\eqref{dn-flux-energy} proves \eqref{dn-positive-correction},
including its sign by coercivity.
Complex conjugation and uniqueness give the stated conjugation law.
For holomorphic dependence, write the zero-trace equation with its
fixed lift as a bounded operator equation on $\mathcal V_m^0$.
At each parameter its inverse is locally a convergent Neumann series
in the parameter increment. The solution is therefore holomorphic in
$\mathcal V_m$. Pairing its equation against the fixed trace-one lift
expresses $F_m$ as a holomorphic form pairing, proving the assertion
for $F_m$ as well.

For the uniform bounds, \eqref{dn-coercivity} implies
\[
 E_{m,x}[v]\ge c\int_0^{\pi/2}p(|v'|^2+K^2|v|^2).
\]
A fixed smooth cutoff $\eta$, equal to one near zero and supported
in $[0,\pi/3)$, gives the trace-one competitor
$\eta(\varphi)e^{-K\varphi}$. Its energy is at most $CK^{2s}$,
since $p\asymp\varphi^{1-2s}$ and $\sec\varphi$ is bounded on its
support. Minimization and coercivity give the first two bounds in
\eqref{dn-boundary-layer}. The trace estimate in the preceding proof
then gives $|v(\varphi)-1|^2\le C(K\varphi)^{2s}$, proving the last
bound after choosing $\epsilon$ small.

For the complex parameter let $v=\Phi_{m,x}$ and
$w=\Phi_{m,x+\ii y}-v\in\mathcal V_m^0$. Subtracting the weak equations
gives, for every $\psi\in\mathcal V_m^0$,
\begin{equation}\label{dn-correction-equation}
 E_{m,x}(\psi,w)+\ii y\langle\psi,w\rangle_p
                         =-\ii y\langle\psi,v\rangle_p.
\end{equation}
Test with $w$, take real parts, and use coercivity to obtain
\[
 E_{m,x}[w]\le |y|\|v\|_p\|w\|_p
       \le CK^{-1}|y|\|v\|_p E_{m,x}[w]^{1/2}.
\]
Consequently
\[
 E_{m,x}[w]\le CK^{-2}y^2\|v\|_p^2,\qquad
 \|w\|_p\le CK^{-2}|y|\|v\|_p.
\]
Since $|y|\le C_0K$ and $E_{m,x}(v,w)=0$, the bounds for $v$ give
\eqref{dn-extension-mass}--\eqref{dn-extension-energy}.
Formula~\eqref{dn-flux-energy} proves the asserted symbol bound.
For $a_j$ the same formula and real minimization give
$0<a_j(\tau)\le CK^{2s}$.
\end{proof}

The multipliers $\mathcal A_s$ and $\T_{s,\vartheta}$ with symbols
$a_j$ and $t_{j,\vartheta}$ act on the original degree-$j$ spaces
$\mathcal H_j$ of the cylinder and map $H^s(\Cyl)$ boundedly to
$H^{-s}(\Cyl)$. Their symbols satisfy
$a_j(-\tau)=\overline{a_j(\tau)}$ and
$t_{j,\vartheta}(-\tau)=\overline{t_{j,\vartheta}(\tau)}$, so both
operators preserve real functions. These definitions use only the
extension problem; their explicit Gamma expressions will be identified
when the optimal dimension and order conditions are considered.

\subsection{Cylinder operators and their energies}

The one-dimensional extensions assemble into a boundary value problem
on the extended cylinder. For $\gamma=\beta-1+\vartheta$ and real functions
$W,Z$, define the real bilinear forms
\[
 \begin{aligned}
 \mathcal E_\gamma(W,Z)
 &=\int p\bigl(W_tZ_t+W_\varphi Z_\varphi
       +\sec^2\varphi\,\nabla_\omega W\cdot\nabla_\omega Z\bigr)\\
 &\quad+\int p\sec^2\varphi\,[2W\Lambda Z+(n-1)WZ]
                +(\beta^2-\gamma^2)\int pWZ,\\
 \mathcal B_\gamma(W,Z)&=\mathcal E_\gamma(W,Z)-2\gamma\int pW_tZ.
 \end{aligned}
\]
All integrals are over $\R\times(0,\pi/2)\times\Sph$.
The energy domain is the completion, in $\mathcal E_\gamma$, of
real finite angular sums with compact $t$ support and degree-$j$
profiles in $\mathcal V_{j+1}$; positivity follows from
\eqref{dn-coercivity}.

\begin{lemma}\label{cylinder-extension}
For real $v\in H^s(\Cyl)$ there is a unique real finite-energy solution
$P_\gamma v$ of
\begin{equation}\label{cylinder-DN-equation}
 \begin{aligned}
 &\bigl[-\partial_{tt}-2\gamma\partial_t
       -p^{-1}\partial_\varphi(p\partial_\varphi)
       +\beta^2-\gamma^2\\
 &\qquad+\sec^2\varphi\,(-\Delta_{\Sph}+2\Lambda+n-1)\bigr]W=0,
       \qquad W|_{\varphi=0}=v.
 \end{aligned}
\end{equation}
It satisfies
\begin{equation}\label{cylinder-extension-mass}
 \|P_\gamma v\|_{L^2(p)}\le C\|v\|_{H^{s-1}(\Cyl)}.
\end{equation}
The trace of every real energy function $Z$ belongs to $H^s(\Cyl)$,
with
\begin{equation}\label{cylinder-trace-bound}
 \|Z|_{\varphi=0}\|_{H^s(\Cyl)}^2\le C\mathcal E_\gamma[Z].
\end{equation}
For every real energy test $Z$, its boundary Green identity is
\begin{equation}\label{cylinder-extension-green}
 d_s\mathcal B_\gamma(P_\gamma v,Z)
       =\langle Z|_{\varphi=0},\T_{s,\vartheta}v\rangle.
\end{equation}
\end{lemma}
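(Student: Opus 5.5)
The plan is to reduce everything to the one-dimensional extension problems of Lemma~\ref{extension-symbol-bounds}, one for each angular mode and each Fourier frequency. Expand $v=\sum_{j,\ell}v_{j,\ell}(t)Y_{j,\ell}$ and take the Fourier transform in $t$. The operator in \eqref{cylinder-DN-equation} acts diagonally. On $\mathcal H_j$ one has $-\Delta_{\Sph}+2\Lambda+n-1=j(j+n-2)+2j+n-1=\lambda_{j+1}$, so the angular potential is $\lambda_{j+1}\sec^2\varphi$. The operator $-\partial_{tt}-2\gamma\partial_t$ has symbol $\tau^2-2\ii\gamma\tau$. Each mode therefore satisfies \eqref{angular-DN-equation} with $m=j+1$ and
\[
 z=\beta^2+\tau^2-\gamma^2-2\ii\gamma\tau .
\]
Here $\Ree z=\beta^2+\tau^2-\gamma^2>-q_{j+1}$ by \eqref{extension-gamma-interval}. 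This gives the candidate
\[
 \widehat{P_\gamma v}_{j,\ell}(\tau,\varphi)=\Phi_{j+1,z}(\varphi)\,\widehat v_{j,\ell}(\tau).
\]
Reality follows from $\Phi_{m,\bar z}=\overline{\Phi_{m,z}}$ and $\widehat v(-\tau)=\overline{\widehat v(\tau)}$.

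\textbf{Mass and energy bounds.} I take $y=-2\gamma\tau$, so $|y|\le2|\gamma|K$. Estimate \eqref{dn-extension-mass} then gives
\[
 \|P_\gamma v\|_{L^2(p)}^2\le C\sum\int K^{2s-2}|\widehat v|^2\,d\tau .
\]
This is \eqref{cylinder-extension-mass} by \eqref{cylinder-weight-comparison}. Plancherel turns $\mathcal E_\gamma[W]$ into $\sum\int E_{j+1,x}[\widehat W_{j,\ell}(\tau,\cdot)]\,d\tau$ with $x=\beta^2+\tau^2-\gamma^2$. The $\tau^2$ term comes from $W_t$, and $\sec^2\varphi(|\nabla_\omega W|^2+2W\Lambda W+(n-1)W^2)$ contributes $\lambda_{j+1}\sec^2\varphi$ per mode. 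Estimate \eqref{dn-extension-energy} then gives $\mathcal E_\gamma[P_\gamma v]\le C\|v\|_{H^s}^2$. So $P_\gamma v$ has finite energy, after approximating $v$ by finite angular sums with compactly supported $\widehat v$ and using the density of profiles in $\mathcal V_{j+1}$ from Lemma~\ref{angular-groundstate}.

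\textbf{Trace bound.} Apply \eqref{angular-scaled-trace} mode by mode with $K=\langle(j,\tau)\rangle$. Together with the coercivity \eqref{dn-coercivity}, this gives
\[
 K^{2s}|\widehat Z(0)|^2\le C\,E_{j+1,x}[\widehat Z].
\]
Summing and integrating proves \eqref{cylinder-trace-bound}.

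\textbf{Uniqueness and the Green identity.} For uniqueness, suppose $W$ is a finite-energy solution with zero trace. Each mode then lies in $\mathcal V^0_{j+1}$ and solves the homogeneous equation. The sesquilinear form $E_{m,x}+\ii y\langle\cdot,\cdot\rangle_p$ is coercive there, so the mode vanishes, and Plancherel gives $W=0$. For \eqref{cylinder-extension-green}, compute $\mathcal B_\gamma(P_\gamma v,Z)$ by Plancherel per mode. The term $-2\gamma\int pW_tZ$ becomes $2\ii\gamma\tau\langle\widehat Z,\widehat W\rangle_p$, up to conjugation conventions. Then integrate by parts in $\varphi$ using \eqref{angular-DN-equation}. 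There is no axis term, by the approximation in Lemma~\ref{angular-groundstate}, and the boundary flux at $\varphi=0$ is $F_{j+1}(z)\widehat v/d_s=t_{j,\vartheta}(\tau)\widehat v/d_s$. This yields $\langle Z|_0,\T_{s,\vartheta}v\rangle$ on a dense class, and both sides extend by continuity using the trace bound, the energy bound, and \eqref{uniform-real-symbol}.

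The main obstacle is sign bookkeeping. The first-order term $-2\gamma\partial_t$ is not symmetric, so I must check that the real form $\mathcal B_\gamma$, with its asymmetric term in the stated order, matches the complex parameter $z$ with $\Imm z=-2\gamma\tau$ rather than $+2\gamma\tau$. Otherwise the identity would produce $\overline{t_{j,\vartheta}}$, the adjoint symbol. I will fix this by testing a single real mode $\cos(\tau t)$ and tracking the conjugation convention.
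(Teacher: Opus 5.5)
Your proposal is correct and follows the paper's proof essentially step by step: the mode-by-mode extension by $\Phi_{j+1,\beta^2+\tau^2-\gamma^2-2\ii\gamma\tau}$, the mass and energy bounds from \eqref{dn-extension-mass}--\eqref{dn-extension-energy}, the trace bound from \eqref{angular-scaled-trace} and \eqref{dn-coercivity}, and the Green identity by integration by parts on a dense class. The paper gets uniqueness slightly more directly from $\mathcal B_\gamma(Z,Z)=\mathcal E_\gamma[Z]$, since the drift integrates to zero on the diagonal. Your sign worry also resolves immediately: testing the equation for $W$ against $Z$ gives exactly $-2\gamma\int pW_tZ$, with the derivative on the solution, and its Fourier image is $\ii y\langle\widehat Z,\widehat W\rangle_p$ with $y=-2\gamma\tau$, so the flux is $F_{j+1}(z)=t_{j,\vartheta}(\tau)$ and no adjoint symbol appears.
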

\begin{proof}
The angular operator $-\Delta_{\Sph}+2\Lambda+n-1$ has eigenvalue
$\lambda_j+2j+n-1=\lambda_{j+1}$ in the original degree-$j$ space.
Thus \eqref{dn-coercivity}, after Fourier transformation in $t$,
shows that $\mathcal E_\gamma$ is equivalent to the integral of
\[
 p\left(|W_t|^2+|W_\varphi|^2+|W|^2
  +\sec^2\varphi\,[|\nabla_\omega W|^2+2W\Lambda W+(n-1)W^2]\right).
\]
In this expression the angular term is understood by integration on
$\Sph$; its value is nonnegative. In particular the energy controls
mass and both $t$ and $\varphi$ derivatives, for every weight in the
full interval \eqref{weight-interval}, including its negative part.
The drift form is bounded on this energy domain.

For each degree and frequency, \eqref{angular-scaled-trace} and
\eqref{dn-coercivity}, with $K=(1+j^2+\tau^2)^{1/2}$, give
\[
 K^{2s}|z(0)|^2\le C E_{j+1,\,\beta^2+\tau^2-\gamma^2}[z].
\]
Summing in the angular modes and integrating in $\tau$ proves
\eqref{cylinder-trace-bound}.
Density extends the trace map to the energy domain.

Integration in $t$, also justified by density, gives
$\mathcal B_\gamma(Z,Z)=\mathcal E_\gamma[Z]$.
Testing the difference of two solutions proves uniqueness.

For existence, multiply the degree-$j$, frequency-$\tau$
coefficient of $v$ by
$\Phi_{j+1,\,\beta^2+\tau^2-\gamma^2-2\ii\gamma\tau}(\varphi)$.
The conjugation law shows that this extension is real for real data.
The energy bounds in Lemma~\ref{extension-symbol-bounds} give a
finite-energy weak solution with trace $v$. Its mass bound and
Plancherel give \eqref{cylinder-extension-mass}. One can construct this
solution first for finite angular sums and compactly supported Fourier
data, approximate each profile in its $\mathcal V_{j+1}$ norm, and
then pass to the energy limit. This also places the solution in the
completion defining the energy domain.

For those finite sums, \eqref{angular-DN-equation} gives the Green
identity by integration by parts. The trace-one lift calculation
used in \eqref{dn-flux-energy} identifies its boundary coefficient
as $F_{j+1}$. The energy, trace, and symbol bounds make both sides
continuous, so density gives \eqref{cylinder-extension-green} for
all the stated traces and tests.
\end{proof}

\begin{proposition}[Fractional operators in cylinder coordinates]\label{cylinder-realizations}
For $v\in C_c^\infty(\Cyl)$ and an admissible weight $\vartheta$,
\begin{align}
 \T_{s,\vartheta}v
   &=r^{2s+2-\vartheta}\mathcal L^s(r^{\vartheta-2}v),
       \label{cone}\\
 (-\Delta)^s(r^{-\beta}v)
   &=r^{-\beta-2s}\mathcal A_sv.
       \label{cylinder-laplacian-realization}
\end{align}
The right side of \eqref{cone} extends as a bounded map
$H^s(\Cyl)\to H^{-s}(\Cyl)$. In particular, on smooth compact
annular functions,
\begin{equation}\label{field-intertwining}
 \T_{s,0}D_{\mathrm{har}}=r^{2s}D_{\mathrm{har}}L.
\end{equation}
\end{proposition}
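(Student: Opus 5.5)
Both identities will be proved through the extension problems. In each case the plan is to realize the fractional operator on the right by a Caffarelli--Silvestre (or heat-semigroup) extension, change variables to the cylinder with the angle $\varphi$, and check that the extension equation becomes \eqref{cylinder-DN-equation}, or its $\gamma=0$ analogue for $\mathcal A_s$. The Dirichlet-to-Neumann symbol is then identified with $F_{j+1}$ or $F_j$ by the uniqueness in Lemma~\ref{extension-symbol-bounds}. Everything is diagonal in the angular degree $j$ and in the $t$-frequency $\tau$, so it suffices to verify the symbol identity mode by mode.

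\emph{Step 1: the identity \eqref{cylinder-laplacian-realization}.} Write $U(x,y)$ for the Caffarelli--Silvestre extension of $r^{-\beta}v$. Use the polar coordinates $x=\rho\cos\varphi\,\omega$, $y=\rho\sin\varphi$ of the half-space, with $\varphi\in(0,\pi/2)$, and $t=\log\rho$. Then $\operatorname{div}(y^{1-2s}\nabla U)=0$ becomes a cylinder equation with the weight $p(\varphi)=\sin^{1-2s}\varphi\cos^{n-1}\varphi$, which is exactly the measure factor. Conjugating by $\rho^{-\beta}$ with $\beta=(n-2s)/2$ removes the first-order $t$ term and produces the constant $\beta^2$. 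The angular part is $\sec^2\varphi\,(-\Delta_{\Sph})$, with eigenvalue $\lambda_j$ on $\mathcal H_j$. Hence each mode $e^{\ii t\tau}Y_{j,\ell}$ extends by $\Phi_{j,\beta^2+\tau^2}$. The conormal limit at $\varphi=0$ is $y^{1-2s}\partial_y$ evaluated at $y=0$, up to the factor $\rho^{-2s}$ from homogeneity. The normalization \eqref{prelim-extension-normalization} then gives the symbol $F_j(\beta^2+\tau^2)=a_j(\tau)$. Two points need checking: that the extension obtained this way has finite energy, so that it coincides with the Caffarelli--Silvestre solution, and that the decay at infinity is correct. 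Both follow from the energy bound \eqref{dn-boundary-layer} and uniqueness.

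\emph{Step 2: the identity \eqref{cone}.} Apply the same procedure to $\mathcal L$, which is the Laplacian-type operator with measure $r^{n+1}\dd r$ and angular part $\Lambda(\Lambda+n)$; on $\mathcal H_j$ this equals $\lambda_{j+1}$ in dimension $n+2$. The extension for $\mathcal L^s$ solves $\partial_y(y^{1-2s}\partial_yW)-y^{1-2s}\mathcal LW=0$. In the polar variables the angular operator becomes $\sec^2\varphi\,(\Lambda(\Lambda+n))$. This equals $\sec^2\varphi\,(-\Delta_{\Sph}+2\Lambda+n-1)$ plus a constant, and that constant has to be absorbed; I expect it to be absorbed when the radial factor $\cos^{n+1}\varphi$ is written as $\cos^{n-1}\varphi$ times the mode shift. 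The conjugation by $r^{\vartheta-2}$ and $r^{2s+2-\vartheta}$ leaves the first-order term $-2\gamma\partial_t$, with $\gamma=\beta-1+\vartheta$, and the constant $\beta^2-\gamma^2$. Frequency by frequency, $\partial_t\mapsto \ii\tau$ gives the parameter $\beta^2+\tau^2-\gamma^2-2\ii\gamma\tau$. Therefore the symbol is $F_{j+1}$ evaluated there, which is $t_{j,\vartheta}$. This agrees with the solution operator of Lemma~\ref{cylinder-extension}. Boundedness $H^s\to H^{-s}$ follows from \eqref{uniform-real-symbol}.

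\emph{Step 3: the identity \eqref{field-intertwining}.} Take $\vartheta=0$. Then $\T_{s,0}w=r^{2s+2}\mathcal L^s(r^{-2}w)$. Set $w=D_{\mathrm{har}}u=r^2\mathcal Ru$. By \eqref{heat-fractional-intertwining},
\[
 \T_{s,0}D_{\mathrm{har}}u=r^{2s+2}\mathcal L^s\mathcal Ru
 =r^{2s+2}\mathcal R Lu=r^{2s}D_{\mathrm{har}}Lu .
\]
For compact annular $u$, the identity \eqref{heat-fractional-intertwining} is available in distributions away from the origin, and the extension of $\T_{s,0}$ to $H^s$ justifies the equality.

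\emph{Main obstacle.} The main obstacle is the bookkeeping in Step 2. One must confirm that the $\varphi$-weight arising from the $(n+2)$-dimensional measure, together with the conjugations, reproduces exactly $p$, $\sec^2\varphi\,\lambda_{j+1}$ and the constant $\beta^2-\gamma^2$; the mode shift $j\to j+1$ is what absorbs the extra $\cos^2\varphi$. One must also confirm that the constant $d_s$ and the powers of $r$ match at the boundary. I would first check the case $j=0$, $\tau=0$ by hand, using an explicit power solution, and only then run the general computation.
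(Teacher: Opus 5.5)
Your Steps~1 and~3 follow the paper's proof. Step~1 uses polar coordinates in the extension variables, the profiles $\Phi_{j,\beta^2+\tau^2}$ and uniqueness; Step~3 uses $\mathcal R=r^{-2}D_{\mathrm{har}}$ with \eqref{heat-fractional-intertwining}. Step~2, however, has a genuine gap, and the bookkeeping you state there is wrong.

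On $\mathcal H_j$ the operator $\Lambda(\Lambda+n)$ has eigenvalue $j(j+n)=\lambda_{j+1}-(n-1)$. Its discrepancy with $-\Delta_{\Sph}+2\Lambda+n-1$ is therefore the nonconstant potential $-(n-1)\sec^2\varphi$, not a constant. Conjugating the $(n+2)$-dimensional extension by $R^{-b}$, $b=2-\vartheta$, does produce the drift $-2\gamma\partial_t$. But it leaves the constant $b(n+2-2s-b)$, not $\beta^2-\gamma^2$, and it leaves the weight $p_{n+2}=p\cos^2\varphi$.

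The missing idea is a further conjugation by $\cos\varphi$. Set $W=R^{2-\vartheta}\cos\varphi\,Q$, i.e.\ $V=\sec\varphi\,W$, and use
\[
 \cos\varphi\bigl[-p_{n+2}^{-1}\bigl(p_{n+2}(\sec\varphi\,W)_\varphi\bigr)_\varphi\bigr]
 =-p^{-1}(pW_\varphi)_\varphi+\bigl[(n-1)\sec^2\varphi-(n+1-2s)\bigr]W .
\]
This single identity does several things at once:
\begin{itemize}
 \item it turns $p_{n+2}$ into $p$, $j(j+n)\sec^2\varphi$ into $\lambda_{j+1}\sec^2\varphi$, and $b(n+2-2s-b)$ into $\beta^2-\gamma^2$;
 \item it keeps the trace equal to $v$, since $\cos0=1$, and leaves the conormal limit unchanged, since the extra terms are $O(\varphi^{2-2s})$;
 \item it maps axis profiles $\cos^j\varphi\,\psi(\cos^2\varphi)$ into $\mathcal V_{j+1}$, which is the actual source of the mode shift.
\end{itemize}

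Second, identifying the actual extension with $P_\gamma v$ cannot be done by \emph{the same procedure} as in Step~1. There the exponent $\beta$ makes the weighted half-space energy equal the drift-free cylinder energy. In Step~2, $n+2-2s-2b=2\gamma$, so the $(n+2)$-dimensional energy $\int y^{1-2s}|\nabla Q|^2$ becomes a cylinder energy carrying the factor $e^{2\gamma t}$. When $\gamma\ne0$ (for example $\vartheta=0$, $\gamma=\beta-1$), finiteness of $\mathcal E_\gamma$ and finiteness of the Euclidean energy are different conditions. Uniqueness in either class therefore does not identify the two solutions. Likewise, your substitution $\partial_t\mapsto\ii\tau$ presupposes that the actual extension is tempered in $t$ on the real line.

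The paper bridges this by showing directly, from the explicit Poisson kernel, that the transformed actual extension has finite unweighted energy. The angular energies are $O(R^{2(b+2s)})$ near $R=0$ and $O(R^{2(b-n-2)})$ for large $R$. Both are integrable in $\dd t$ because $0<b<n+2-2s$ for every admissible $\vartheta$. Local estimates near $\varphi=0$ and axis regularity then put $W$ in the energy domain, and the uniqueness in Lemma~\ref{cylinder-extension} gives $W=P_\gamma v$. A contour shift in $\tau$ using holomorphy of $F_{j+1}$ could replace this, but some such argument is required.

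Finally, to use a Euclidean Poisson extension at all, you must know that the Friedrichs operator $\mathcal L$ agrees on each angular degree with the Laplacian of $\R^{n+2}$, i.e.\ that compact annular functions form a form core. The paper checks this with the cutoff cost $O(\varepsilon^{n+2j})$.
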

\begin{proof}
By complex linearity it suffices to consider real $v$.
In degree $j$, the form of $\mathcal L$ is
\[
 \int_0^\infty
 \left(|w'|^2+j(j+n)r^{-2}|w|^2\right)r^{n+1}\dd r.
\]
Choose a normalized degree-$j$ harmonic $\widetilde Y_j$ on
$\mathbb S^{n+1}$. The map $w\mapsto w(r)\widetilde Y_j$ is unitary
onto that component of $L^2(\R^{n+2})$ and preserves the form.
Smooth coefficients there have the form $r^j\psi(r^2)$ at zero.
Cutting them off on $r<2\varepsilon$ costs
$O(\varepsilon^{n+2j})$ in radial gradient energy. An outer cutoff
and smooth approximation show that compact annular functions are
a form core. Thus the closed forms, their operators and their
fractional powers agree. We use this identification for each
original angular coefficient separately.

First let $v$ have finitely many angular components. For the cone
identity take the actual Poisson extension $Q$ of $r^{\vartheta-2}v$
in these dimension-$(n+2)$ components. Set
\[
 R=(r^2+y^2)^{1/2},\qquad t=\log R,\qquad
 r=R\cos\varphi,\quad y=R\sin\varphi,
\]
and define
\begin{equation}\label{actual-extension-transform}
 W(t,\varphi,\omega)
   =R^{2-\vartheta}\cos\varphi\,
                       Q(R\cos\varphi,\omega,R\sin\varphi).
\end{equation}
The trace of $W$ is $v$. We identify its equation, verify membership
in the energy domain, and then compute its boundary flux.

For a degree-$j$ Poisson extension in dimension $d$, write
$U=R^{-b}V(t,\varphi)Y_j$. Its extension equation becomes
\[
 \begin{aligned}
 &\bigl[-\partial_{tt}-(d-2s-2b)\partial_t
       -p_d^{-1}\partial_\varphi(p_d\partial_\varphi)\\
 &\qquad+j(j+d-2)\sec^2\varphi+b(d-2s-b)\bigr]V=0,\qquad
 p_d=\sin^{1-2s}\varphi\cos^{d-1}\varphi.
 \end{aligned}
\]
For $d=n+2$, $b=2-\vartheta$, one has $d-2s-2b=2\gamma$ and
$p_{n+2}=p\cos^2\varphi$. The change $V=\sec\varphi\,W$ uses
\[
 \cos\varphi\bigl[-p_{n+2}^{-1}
    (p_{n+2}(\sec\varphi\,W)_\varphi)_\varphi\bigr]
 =-p^{-1}(pW_\varphi)_\varphi
       +\bigl[(n-1)\sec^2\varphi-(n+1-2s)\bigr]W.
\]
Since $j(j+n)+n-1=\lambda_{j+1}$ and
$b(n+2-2s-b)-(n+1-2s)=\beta^2-\gamma^2$, this is precisely
\eqref{cylinder-DN-equation}.

The needed finite energy follows from the actual Euclidean Poisson
formula \cite[\S2.4]{CaSi},
\[
 P_y^{(d)}(x)=\kappa_{d,s}
       \frac{y^{2s}}{(|x|^2+y^2)^{(d+2s)/2}},\qquad
 U(x,y)=P_y^{(d)}*F(x),
\]
for the smooth compact annular datum $F$. We give the estimates at the
ends, where the change of variables introduces weights. If $R$ is
smaller than half the inner support radius, write $U=y^{2s}H$;
differentiation of the separated denominator gives
$|H|+R|\nabla_{x,y}H|\le C$. If $R$ is larger than twice the outer
support radius, it gives
\[
 |H|+R|\nabla_{x,y}H|\le CR^{-d-2s}.
\]
The same bounds hold after the finite number of angular derivatives
needed for a fixed finite harmonic sum.

The angular energy of $R^bU$, including its $t$ derivative, is therefore
bounded by $CR^{2(b+2s)}$ at zero and $CR^{2(b-d)}$ at infinity.
These bounds hold as well after multiplication by $\cos\varphi$ in
\eqref{actual-extension-transform}. To check angular integrability,
near $\varphi=0$ the possible singular derivative is
$O(\varphi^{2s-1})$, whose square times $p$ is integrable.
Near the axis the regular degree-$j$ profile is
$\cos^j\varphi\,\psi(\cos^2\varphi)$; multiplication by
$\cos\varphi$ gives the required degree-$j+1$ profile.
For the cone realization, $d=n+2$ and $b=2-\vartheta$; the full
weight interval implies
\[
 0<b<d-2s<d.
\]
For the Euclidean realization below, $d=n$ and $b=\beta\in(0,d)$.
Thus both end bounds are integrable in $\dd t=\dd R/R$.

On compact $R$ intervals the same energy is finite. Indeed, subtract
the affine Taylor polynomial of the smooth datum in a neighborhood
of the boundary point in the Poisson integral. Its remainder is
quadratic; differentiating it and its separated complement gives
$|U|+|\nabla_xU|\le C$ and $|U_y|\le Cy^{2s-1}$ for small $y$.
The resulting angular derivative is again square integrable against
$p$. At the axis the smooth degree-$j$ profile gives the integrability
just checked. This proves finite raw energy, and hence finite
$\mathcal E_\gamma$ energy by \eqref{dn-coercivity}.

Radial cutoffs and the axis approximation established for
$\mathcal V_{j+1}$ put $W$ in the energy domain of
$\mathcal B_\gamma$. Its equation and trace therefore imply
$W=P_\gamma v$ by uniqueness. At $\varphi=0$, differentiating at fixed
$x$ gives the same conormal derivative under
\eqref{actual-extension-transform}: terms differentiating the
radial weight or $\cos\varphi$ vanish as
$\varphi^{2-2s}$. The boundary Green formula and
\eqref{prelim-extension-normalization} consequently give
\[
 -d_s\lim_{\varphi\downarrow0}pW_\varphi
   =r^{2s+2-\vartheta}\mathcal L^s(r^{\vartheta-2}v)
   =\T_{s,\vartheta}v.
\]

For \eqref{cylinder-laplacian-realization}, take the actual
dimension-$n$ extension of $r^{-\beta}v$ and set $Z=R^\beta U$.
The displayed coordinate equation now has no $t$ drift and has
potential $\lambda_j\sec^2\varphi+\beta^2$. The same estimates,
with $b=\beta\in(0,n)$, put $Z$ in its energy domain. Fourier
transformation in $t$ and the uniqueness in
Lemma~\ref{extension-symbol-bounds} identify its angular components
with $\Phi_{j,\beta^2+\tau^2}$. Its boundary flux is therefore
$\mathcal A_sv$, proving the identity.

Finally, angular partial sums of a smooth annular function converge
with two derivatives on the fixed annulus. After multiplication by
$r^{\vartheta-2}$ or $r^{-\beta}$ they converge in the graph norm
of $\mathcal L$ or $-\Delta$. The spectral bound
$\lambda^{2s}\le1+\lambda^2$ gives convergence of their fractional
powers. The bounds in Lemma~\ref{extension-symbol-bounds} then
pass both identities to $v\in C_c^\infty(\Cyl)$ and give the
asserted continuous extension.

The identity $\mathcal R=r^{-2}D_{\mathrm{har}}$ and
\eqref{heat-fractional-intertwining} give
\[
 \T_{s,0}D_{\mathrm{har}}v
 =r^{2s+2}\mathcal L^s\mathcal Rv
 =r^{2s+2}\mathcal RLv
 =r^{2s}D_{\mathrm{har}}Lv,
\]
which proves \eqref{field-intertwining}.
\end{proof}

The cone realization also gives the weight conjugation directly:
\begin{equation}\label{cylinder-weight-conjugation}
 \T_{s,\vartheta}v
   =r^{-\vartheta}\T_{s,0}(r^\vartheta v),
 \qquad v\in C_c^\infty(\Cyl).
\end{equation}
For functions reaching
the inner cylinder end it passes to the limit whenever the weighted
and unweighted annular approximants converge in $H^s$; multiplication
by $r^{-\vartheta}$ is smooth on each compact cylinder set.

The realization of $\mathcal A_s$ also identifies the fractional
energy and the stability inequality in cylinder coordinates.

\begin{lemma}[Fractional energy in cylinder coordinates]
\label{cylinder-energy-forms}
Let $\beta=(n-2s)/2$, and let $\mathcal A_s$ be the multiplier
defined in \eqref{shared-symbols}. For $v\in C_c^\infty(\Cyl)$,
\begin{equation}\label{energy-identity}
 [r^{-\beta}v]_{H^s(\R^n)}^2=\langle v,\mathcal A_sv\rangle.
\end{equation}
The operator $\mathcal A_s$ is positive and selfadjoint.

Let $u$ have the solution and nonlinearity assumptions of
Theorem~\ref{full}. If $v$ is real and the physical test
$r^{-\beta}v$ is supported
in $B_2$, then
\begin{equation}\label{critical-stability}
 \int_{\R^n}f'(u)(r^{-\beta}v)^2\dd x
 =\int_{\Cyl}r^{2s}f'(u)v^2\dd t\dd\omega.
\end{equation}
By stability, the latter is at most
$\langle v,\mathcal A_sv\rangle$. These identities and this bound
extend to $v\in H^s(\Cyl)$ supported in $\{r\le R\}$, for each
fixed $R<2$.

Fix an admissible weight $\vartheta$. A pointwise lower bound
$\Ree t_{j,\vartheta}(\tau)-a_j(\tau)
\ge\kappa(1+j^2+\tau^2)^{s-1}$ implies
\begin{equation}\label{spectral-energy-gap}
 \Ree\langle v,\T_{s,\vartheta}v\rangle
 -\langle v,\mathcal A_sv\rangle
 \ge\kappa_0\|v\|_{H^{s-1}(\Cyl)}^2,
 \qquad v\in C_c^\infty(\Cyl),
\end{equation}
with $\kappa_0=\kappa$. Conversely, \eqref{spectral-energy-gap}
implies the pointwise bound with $\kappa=\kappa_0C_n^{s-1}$, where
$C_n$ is from \eqref{cylinder-weight-comparison}. The quadratic forms
and \eqref{spectral-energy-gap} extend continuously to $H^s(\Cyl)$.
\end{lemma}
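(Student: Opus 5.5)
The plan is to derive every assertion from the realization \eqref{cylinder-laplacian-realization} in Proposition~\ref{cylinder-realizations}, together with the change of measure $dx=r^n\dd t\dd\omega$. The energy identity comes first. For $v\in C_c^\infty(\Cyl)$ the function $\phi=r^{-\beta}v$ is smooth with compact support away from the origin, so $[\phi]_{H^s(\R^n)}^2=\int_{\R^n}\phi\,(-\Delta)^s\phi\dd x$. Substituting \eqref{cylinder-laplacian-realization} turns this into
\[
 \int r^{-\beta}v\,r^{-\beta-2s}\mathcal A_sv\,r^n\dd t\dd\omega=\langle v,\mathcal A_sv\rangle,
\]
since $2\beta+2s=n$; this is \eqref{energy-identity} for real $v$. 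Complex $v$ follows by sesquilinearity, because $\mathcal A_s$ preserves real functions. For positivity and selfadjointness I would read them off the symbol. Lemma~\ref{extension-symbol-bounds} gives real $x=\beta^2+\tau^2>-q_j$, so $a_j(\tau)=F_j(x)$ is real, and it is positive by \eqref{uniform-real-symbol}. A real multiplier that is diagonal on $\mathcal H_j\otimes L^2_\tau$ is selfadjoint, and a positive one is positive.

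Next, \eqref{critical-stability} is the same change of variables: $(r^{-\beta}v)^2r^n=r^{2s}v^2$. Stability \eqref{stability}, applied to $\phi=r^{-\beta}v$, then bounds the left side by $[\phi]^2_{H^s}=\langle v,\mathcal A_sv\rangle$. For an approximation with support in $B_2$, note that a smooth $v$ supported in $\{r\le R\}$ gives a test compactly supported in $B_2\setminus\{0\}$. Mollification on the cylinder keeps the support inside $\{r\le R'\}$ with $R'<2$.

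The extension to $v\in H^s(\Cyl)$ is where the work lies. Because $a_j\le CK^{2s}$, the form is continuous on $H^s(\Cyl)$. I would approximate $v$ by $C_c^\infty$ functions in $H^s(\Cyl)$, using inner-end cutoffs and mollification as in the proof of Lemma~\ref{corrected-derivative-origin}, with supports kept in $\{r\le R'\}$. The right sides of \eqref{energy-identity} then converge. Identity \eqref{energy-identity} shows the physical tests are Cauchy in $\dot H^s(\R^n)$, and they converge in $L^2_{\rm loc}$ after the weight $r^{-\beta}$, so the identity passes to the limit. For the stability term I would pass along a subsequence converging a.e. Fatou's lemma then works in the needed direction, since $f'(u)$ is bounded below on compacts where $u$ is locally bounded and is locally bounded above. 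More precisely, a negative part of $f'(u)$ is a bounded function on $B_{R'}$, and the weight $r^{2s}$ is bounded there, so that piece converges in $L^2$. This gives the inequality for general $v$, and \eqref{critical-stability} holds as an identity of possibly infinite nonnegative-part integrals.

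Finally, the gap equivalence is Plancherel. By \eqref{shared-symbols} and reality of $v$,
\[
 \Ree\langle v,\T_{s,\vartheta}v\rangle-\langle v,\mathcal A_sv\rangle
 =\sum_{j,\ell}\int(\Ree t_{j,\vartheta}-a_j)|\widehat v_{j,\ell}|^2\dd\tau .
\]
Comparing with \eqref{cylinder-bessel-norm} through \eqref{cylinder-weight-comparison} gives \eqref{spectral-energy-gap}. The factor $(1+j(j+n-2)+\tau^2)^{s-1}\le(1+j^2+\tau^2)^{s-1}$ is where the negative exponent enters, and this yields $\kappa_0=\kappa$. For the converse I would localize $\widehat v$ near a point $(j,\tau_0)$, using conjugate-symmetric pairs so that $v$ stays real. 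Continuity of the symbols in $\tau$, which follows from their holomorphy in $z$, then gives the pointwise bound with the constant $C_n^{s-1}$ from the reverse comparison. Continuous extension of all forms to $H^s(\Cyl)$ follows from the symbol bounds $|t|,a\le CK^{2s}$.
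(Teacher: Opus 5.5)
Your proposal is correct and follows essentially the paper's own proof. It pairs the realization \eqref{cylinder-laplacian-realization} against $r^{-\beta}v$ with $dx=r^n\dd t\dd\omega$, takes positivity of $a_j$ from the real minimization, proves density by inner cutoffs and smoothing in $\{r<R'\}$, and uses Plancherel with frequency-localized tests for both directions of the gap equivalence. Your Fatou argument for the potential term is harmless but unnecessary: local boundedness of $u$ and $f\in C^1_{\mathrm{loc}}(\R)$ make $r^{2s}f'(u)$ bounded on $\{r<R'\}$, so the potential integrals are finite and converge directly from $L^2(\Cyl)$ convergence of the approximants.
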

\begin{proof}

By \eqref{cylinder-laplacian-realization},
\[
 L(r^{-\beta}v)=r^{-\beta-2s}\mathcal A_sv.
\]
Pair this identity with $r^{-\beta}v$ and use
$dx=r^n\dd t\dd\omega$ and $n-2\beta-2s=0$ to obtain
\eqref{energy-identity}. The real extension minimization in
Lemma~\ref{extension-symbol-bounds} gives $a_j(\tau)>0$.
Multiplication by this real symbol on the Fourier and spherical-harmonic
side defines the positive selfadjoint realization of $\mathcal A_s$.
The potential identity follows from $n-2\beta=2s$, and stability
applies to the smooth physical test.

For the density assertion, take $v\in H^s(\Cyl)$ supported in
$\{r\le R\}$, choose $R<R'<2$, and approximate it in $H^s$ by
$v_m\in C_c^\infty(\{r<R'\})$. Such an approximation is obtained
by cutting off the inner cylinder end and then smoothing.
Put $\phi_m=r^{-\beta}v_m$. The symbol bounds in
Lemma~\ref{extension-symbol-bounds} and \eqref{energy-identity}
control the energy of $\phi_m-\phi_k$, while
\[
 \|\phi_m-\phi_k\|_{L^2(\R^n)}^2
 =\int_{\Cyl}r^{2s}|v_m-v_k|^2\dd t\dd\omega
 \le (R')^{2s}\|v_m-v_k\|_2^2.
\]
Thus $\phi_m\to r^{-\beta}v$ in Euclidean $H^s$.
The compact local range assumption gives a bound for $f'(u)$ on
$B_{R'}$. Consequently $r^{2s}f'(u)$ is bounded on the supports,
so the potential integrals converge as well. This proves the
extended identities and the stability bound.

For the form comparison, write
\[
 g_j(\tau)=\Ree t_{j,\vartheta}(\tau)-a_j(\tau),\qquad
 W_j(\tau)=1+j(j+n-2)+\tau^2.
\]
Plancherel identifies the left-hand side of
\eqref{spectral-energy-gap} with
$\sum_{j,\ell}\int g_j(\tau)|\widehat v_{j,\ell}(\tau)|^2\dd\tau$.
Since $s-1<0$ and $1+j^2+\tau^2\le W_j(\tau)$, the pointwise
bound implies \eqref{spectral-energy-gap} with $\kappa_0=\kappa$.

Conversely fix $j,\ell,\tau_0$, choose
$\chi\in C_c^\infty(\R)$ with $\|\chi\|_2=1$, and test the form
inequality with
\[
 v_R(t,\omega)=R^{-1/2}\chi(t/R)e^{\ii\tau_0t}Y_{j,\ell}(\omega).
\]
Its Fourier mass is $R|\widehat\chi(R(\tau-\tau_0))|^2$.
After the change $z=R(\tau-\tau_0)$, continuity of the coefficients
gives $g_j(\tau_0)\ge\kappa_0W_j(\tau_0)^{s-1}$ as $R\to\infty$.
Continuity follows from the parameter dependence in
Lemma~\ref{extension-symbol-bounds}; polynomial symbol growth and
the rapid decay of $\widehat\chi$ justify dominated convergence.
Since $W_j\le C_n(1+j^2+\tau^2)$ and $s-1<0$, this is the stated
pointwise bound. If the form inequality is initially written for
real tests, apply it separately to the real and imaginary parts:
both multipliers preserve real functions, so their real quadratic
forms add. Thus the same complex test is permissible.

The same symbol bounds give
$\T_{s,\vartheta},\mathcal A_s:H^s\to H^{-s}$, so density extends
these quadratic forms and their inequality to $H^s(\Cyl)$.
\end{proof}

\section{Poisson averaging and the localized equation}
\label{pde-contract-section}\label{poisson-source-section}
\label{linear-poisson-identity}

The corrected derivative need not have a sign. We obtain a positive
average that recovers the fractional source, then apply the derivative
to the localized semilinear equation. Convexity makes the resulting
defect nonnegative, while localization contributes a controlled tail error.

\subsection{A positive average of the source}

For the corrected derivative, values on larger spheres can be
combined with the positive ball Poisson operators. Define
\begin{equation}\label{intro-poisson-averaging}
 \mathcal S_sg(r\omega)=\frac{2^s}{\Gamma(1-s)}
 \int_0^\infty e^{-2sh}(1-e^{-2h})^{-s}
       P_{e^{-h}}[g(e^hr\,\cdot)](\omega)\dd h.
\end{equation}

For an admissible $\vartheta$, let $\mathcal S_{s,\vartheta}$ be given
by the same formula with $e^{-2sh}$ replaced by
$e^{-(2s-\vartheta)h}$. On compact annular data,
\[
 \mathcal S_{s,\vartheta}=r^{-\vartheta}\mathcal S_sr^\vartheta,
 \qquad \mathcal S_{s,0}=\mathcal S_s.
\]
Their Fourier and spherical-harmonic multipliers are expressed by
\begin{equation}\label{averaging-symbol-definition}
 m_s(z)=\frac{2^s}{\Gamma(1-s)}
       \int_0^\infty e^{-(z+2s)r}(1-e^{-2r})^{-s}\dd r,
 \qquad \Ree z>-2s.
\end{equation}
The symbol of $\mathcal S_{s,\vartheta}$ is
$m_s(j-\vartheta-\ii\tau)$.

\begin{lemma}[Averaging bound]\label{averaging-symbol-bound}
For every fixed real $c>-2s$,
\begin{equation}\label{uniform-smoothing-symbol}
 |m_s(j+c+\ii\tau)|\le C_{s,c}(1+j^2+\tau^2)^{(s-1)/2}.
\end{equation}
For every admissible weight, $\mathcal S_{s,\vartheta}$ preserves
nonnegativity on $L^2(\Cyl)$ and is bounded from $H^{s-1}(\Cyl)$
to $L^2(\Cyl)$. In particular,
\begin{equation}\label{intro-poisson-smoothing}
 \mathcal S_s:H^{s-1}(\Cyl)\longrightarrow L^2(\Cyl).
\end{equation}
\end{lemma}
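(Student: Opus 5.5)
The proof rests on the explicit Laplace-type integral \eqref{averaging-symbol-definition}. We obtain a Gamma-function expression for $m_s$ and read off its decay from Stirling's formula. Substituting $x=e^{-2r}$, so that $dr=-dx/(2x)$, gives
\[
 m_s(z)=\frac{2^{s-1}}{\Gamma(1-s)}\int_0^1 x^{(z+2s)/2-1}(1-x)^{-s}\dd x
 =2^{s-1}\frac{\Gamma\bigl((z+2s)/2\bigr)}{\Gamma\bigl((z+2s)/2+1-s\bigr)},
\]
valid for $\Ree z>-2s$. This is a ratio of Gamma functions whose arguments differ by $1-s>0$. For $w=(z+2s)/2$ with $\Ree w\ge c_0>0$, Stirling's formula in a sector, uniform as $|w|\to\infty$ with $\Ree w$ bounded below, gives
\[
 \left|\frac{\Gamma(w)}{\Gamma(w+1-s)}\right|\le C|w|^{s-1}.
\]
When $|w|$ stays bounded the ratio is continuous and nonvanishing on the closed region $\Ree w\ge c_0$, since $\Gamma$ has no zeros and no poles there. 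For $z=j+c+\ii\tau$ with $c>-2s$ fixed we have $\Ree w\ge(c+2s)/2>0$ and $|w|\asymp\langle(j,\tau)\rangle$, uniformly in $j\ge0$ and $\tau$. Together these give \eqref{uniform-smoothing-symbol}.

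\textbf{Positivity.} The kernel in \eqref{intro-poisson-averaging}, with $e^{-(2s-\vartheta)h}$ in place of $e^{-2sh}$, is nonnegative. The map $g\mapsto g(e^hr\,\cdot)$ is a translation in $t$, and $P_{e^{-h}}$ has a positive kernel by \eqref{spherical-poisson}. Hence $\mathcal S_{s,\vartheta}g\ge0$ for $g\ge0$. To make this rigorous on $L^2(\Cyl)$, we check that the $h$-integral converges absolutely in $L^2$. The operators $\mathsf P_h$ are contractions and translations are unitary, so the $L^2$ norm of the integrand is at most $e^{-(2s-\vartheta)h}(1-e^{-2h})^{-s}\|g\|_2$. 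This bound is integrable near $h=0$ because $s<1$, and near $h=\infty$ because $\vartheta<2s$ by \eqref{weight-interval}.

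\textbf{Symbol identification and boundedness.} On the degree-$j$, frequency-$\tau$ component, the operator $P_{e^{-h}}$ acts as $e^{-jh}$ and the translation acts as $e^{\ii\tau h}$. The integral then yields $m_s(j-\vartheta-\ii\tau)$, with the same normalization. The sign of $\tau$ is irrelevant because \eqref{uniform-smoothing-symbol} depends only on $|\tau|$. Taking $c=-\vartheta>-2s$, which is admissible since $\vartheta<2s$, the symbol is bounded by $C\langle(j,\tau)\rangle^{s-1}$. By Plancherel and \eqref{cylinder-weight-comparison}, this is exactly boundedness $H^{s-1}(\Cyl)\to L^2(\Cyl)$, and the case $\vartheta=0$ gives \eqref{intro-poisson-smoothing}. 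The stated conjugation $\mathcal S_{s,\vartheta}=r^{-\vartheta}\mathcal S_sr^\vartheta$ also follows directly from the integrand, since $r^\vartheta$ evaluated at $e^hr$ contributes $e^{\vartheta h}$.

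\textbf{Main obstacle.} The only delicate point is the uniformity of the Stirling bound near the boundary $\Ree w\downarrow0$ and for large imaginary parts. Restricting to $\Ree w\ge c_0>0$ handles this: the standard estimate $\Gamma(w+a)/\Gamma(w)\sim w^{a}$ holds uniformly on $|\arg w|\le\pi-\delta$, and that sector contains the half-plane $\Ree w\ge c_0$ away from a compact set. On that compact set, continuity and the absence of zeros of $\Gamma$ give the bound.
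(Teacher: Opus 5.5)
Your proof is correct. It differs from the paper's only in how you obtain the symbol bound \eqref{uniform-smoothing-symbol}. The positivity argument, the $L^2$ convergence of the $h$-integral, the identification of the symbol as $m_s(j-\vartheta-\ii\tau)$, and the Plancherel step with $c=-\vartheta$ are the same as in the paper.

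The paper does not evaluate $m_s$ at this point. It works directly with the positive, decreasing amplitude $k_j(r)=e^{-(j+c+2s)r}(1-e^{-2r})^{-s}$:
\begin{itemize}
\item Comparison with $e^{-(j+c+2s)r}r^{-s}$ gives $\int_0^\infty k_j\le C(1+j)^{s-1}$.
\item For $|\tau|\ge1$, it splits at $\rho=|\tau|^{-1}$. The piece on $(0,\rho)$ is bounded by $C\rho^{1-s}$. On $(\rho,\infty)$ it integrates by parts, and monotonicity gives $\int_\rho^\infty|k_j'|=k_j(\rho)$, hence a bound $C|\tau|^{s-1}$.
\item The minimum of the two bounds is the claim.
\end{itemize}
That argument is elementary. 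It uses only the $r^{-s}$ singularity, the monotonicity and the exponential decay of the amplitude, so it would survive changes of the kernel.

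Your route instead uses the beta-integral evaluation $m_s(z)=2^{s-1}\Gamma(z/2+s)/\Gamma(z/2+1)$, which the paper itself derives later as \eqref{gamma-averaging-symbol}. You combine it with Stirling's ratio asymptotics, uniform on $\Ree w\ge c_0>0$. This is shorter once that identity is available, and it gives more: the exact leading behaviour $m_s(z)\sim 2^{s-1}(z/2)^{s-1}$ and holomorphy on $\Ree z>-2s$. The cost is that it depends on the special structure of this kernel and on citing the sectorial Stirling estimate.
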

\begin{proof}
The scalar weight in the averaging integral is $O(h^{-s})$ at
zero and decays exponentially at infinity because $\vartheta<2s$.
The spherical Poisson operators and radial translations are positive
$L^2(\Cyl)$ contractions. The integral therefore defines a positive
bounded operator on $L^2$, and transforming it gives the stated symbol.

The amplitude $k_j(r)=e^{-(j+c+2s)r}(1-e^{-2r})^{-s}$ is positive
and decreasing. Comparison with $e^{-(j+c+2s)r}r^{-s}$ near zero,
and exponential decay away from zero, give
$\int_0^\infty k_j\le C_{s,c}(1+j)^{s-1}$.
For $|\tau|\ge1$, put $\rho=|\tau|^{-1}$.
The integral on $(0,\rho)$ is bounded by $C\rho^{1-s}$.
Integration by parts on $(\rho,\infty)$ gives
\[
 \left|\int_\rho^\infty e^{\ii\tau r}k_j(r)\dd r\right|
 \le\frac{k_j(\rho)+\int_\rho^\infty|k_j'(r)|\dd r}{|\tau|}
 =\frac{2k_j(\rho)}{|\tau|}\le C|\tau|^{s-1}.
\]
Combining the two estimates proves \eqref{uniform-smoothing-symbol}.
Take $c=-\vartheta$ and apply Plancherel for the mapping assertion.
\end{proof}

The corresponding average of a locally bounded source $g$ in $B_2$ is
\begin{equation}\label{source-definition}
 F_s[g](x)=\frac{2^{1-s}}{\Gamma(s)|\Sph|}
 \int_{B_{|x|}}\frac{(|x|^2-|y|^2)^s}{|x-y|^n}g(y)\dd y,
 \qquad 0<|x|<2.
\end{equation}
This linear operator allows either sign of $g$. Scaling $y=rz$ gives
\begin{equation}\label{source-average-bound}
 |F_s[g](r\omega)|\le C_{n,s}r^{2s}\|g\|_{L^\infty(B_r)},
 \qquad 0<r<2.
\end{equation}
Indeed, $1-|z|^2\le2|\omega-z|$ for $z\in B_1$, so the scaled
kernel is bounded by $C|\omega-z|^{s-n}$, which is integrable
because $s>0$. The same formula applies at any radius for which
$g$ is locally bounded on a neighborhood of the closed ball.
The source of the semilinear equation is $g=f(u)$, so its average is
$F_s[f(u)]$. Positivity of the kernel gives $F_s[g]\ge0$ whenever
$g\ge0$.

\begin{theorem}[Poisson averaging and the fractional source identity]
\label{intro-poisson-source}
Let $n\ge2$, $0<s<1$, and let $v\in C_c^{1+\rho}(\R^n)$ be
real-valued, where $s<\rho<1$.
If $H_r^v$ is its classical harmonic replacement in $B_r$, put
\[
 q_v(r\omega)
   =r\bigl(\partial_\nu H_r^v(r\omega)-\partial_\nu v(r\omega)\bigr).
\]
Then $q_v\in H^s(\Cyl)$ and, for every $x=r\omega\ne0$,
\begin{equation}\label{intro-fractional-source}
 \mathcal S_sq_v(x)=F_s[(-\Delta)^sv](x).
\end{equation}
In particular, if $(-\Delta)^sv\ge0$ in $B_r$, then
$\mathcal S_sq_v\ge0$ on $\partial B_r$.
\end{theorem}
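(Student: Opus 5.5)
The identity will follow from operator calculus. The first step is to write $\mathcal S_s$ as a fractional-power formula built from the semigroup $e^{-h\mathcal L}$. The second step is to feed in the intertwinings of Lemma~\ref{heat-intertwining-lemma} and Proposition~\ref{cylinder-realizations}. The last step is to identify the resulting kernel with the explicit average $F_s$.

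\textbf{Step 1: regularity.} The assertion $q_v\in H^s(\Cyl)$ is Lemma~\ref{corrected-derivative-origin}, since $q_v=D_{\mathrm{har}}v$ by \eqref{harmonic-replacement-field}. Both sides of \eqref{intro-fractional-source} are continuous in $v\in C^{1+\rho}_c$ with common support: the left side by \eqref{intro-poisson-smoothing} and $H^s\subset H^{s-1}$, and the right side by \eqref{source-average-bound}, using that $(-\Delta)^sv$ is locally bounded and continuous for $\rho>2s-1$. I would therefore prove the identity first for $v\in C_c^\infty$ and pass to the limit by mollification. Pointwise equality then comes from continuity of both sides. For the left side, continuity follows from the kernel representation \eqref{intro-poisson-averaging} and the continuity of $q_v$.

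\textbf{Step 2: kernel identification.} I would avoid semigroup manipulations and compute both sides on a single mode $v=\phi(r)Y_{j,\ell}$. Fix $x$ with $|x|=r$.

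On the right side, \eqref{source-definition} is linear in the source. Its kernel $(|x|^2-|y|^2)^s|x-y|^{-n}$ has an angular expansion in $\mathcal H_j$ via the Poisson kernel \eqref{spherical-poisson}. Writing $|y|=\rho r$,
\[
 \frac{(r^2-|y|^2)^s}{|x-y|^n}=r^{2s-n}(1-\rho^2)^{s-1}\cdot\frac{1-\rho^2}{|\omega-\rho\eta|^n}.
\]
So $F_s[g](r\omega)$ equals, up to constants,
\[
 r^{2s}\int_0^1(1-\rho^2)^{s-1}\rho^{n-1}\,P_\rho[g(\rho r\,\cdot)](\omega)\dd\rho .
\]
This is an average of the classical Poisson operator over inner spheres.

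On the left side, $\mathcal S_s$ averages $P_{e^{-h}}$ over outer spheres $e^hr$. To connect the two, I would substitute $g=(-\Delta)^sv$ and use the Green representation
\[
 q_v(x)=|\Sph|^{-1}\int_{B_r}\frac{r^2-|y|^2}{|x-y|^n}\,(-\Delta v)(y)\dd y
\]
stated after \eqref{harmonic-replacement-field}. The target then becomes a one-dimensional Mellin identity on each mode $j$.

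\textbf{Step 3: Mellin identity (main obstacle).} In Mellin/Fourier variables the left side has symbol $m_s(j-\ii\tau)\,(j-\ii\tau)$ applied to $\widehat v$. The right side is a Beta-type integral applied to the symbol of $r^{2s}(-\Delta)^s$ acting on the mode.

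The cleanest route I see goes through \eqref{field-intertwining}:
\[
 \T_{s,0}D_{\mathrm{har}}=r^{2s}D_{\mathrm{har}}L .
\]
Hence it suffices to show $\mathcal S_s\circ D_{\mathrm{har}}\circ r^{2s}=F_s$ on sources, using the classical Green formula applied to $L$ in place of $-\Delta$. I expect this symbol equation, with its Gamma functions, to be the main computational hurdle. Each side can be reduced to a Beta integral using
\[
 P_\rho Y_{j,\ell}=\rho^jY_{j,\ell},\qquad \int_0^1\rho^{a}(1-\rho^2)^{s-1}\dd\rho=\tfrac12 B\bigl(\tfrac{a+1}2,s\bigr),
\]
together with $\int_0^\infty e^{-ah}(1-e^{-2h})^{-s}\dd h=\tfrac12 B(a/2,1-s)$. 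The constants $2^s/\Gamma(1-s)$ and $2^{1-s}/(\Gamma(s)|\Sph|)$ should come out matching. I would verify them on radial $j=0$ modes first, to fix normalizations.

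The final positivity claim is immediate from the positivity of the $F_s$ kernel.
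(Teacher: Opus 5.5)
\textbf{There is a genuine gap in Steps~2--3.} Your Step~1 (regularity, density, pointwise continuity) matches the end of the paper's proof, apart from a small point: mollifiers converge only in $C^{1,\rho'}$ for $\rho'<\rho$, not in $C^{1+\rho}$. The problem is that the identity itself is never proved.

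First, the one concrete reduction you propose is false. Identity \eqref{field-intertwining} does not reduce the claim to $\mathcal S_s\circ D_{\mathrm{har}}\circ r^{2s}=F_s$. On a source $r^{-a-2s}Y_j$, the composition has multiplier $(j+a)m_s(j+a)=2^s\Gamma(\tfrac{j+a}2+s)/\Gamma(\tfrac{j+a}2)$. By contrast, $F_s$ has multiplier $2^{-s}\Gamma(\tfrac{n+j-a-2s}2)/\Gamma(\tfrac{n+j-a}2)$, which even depends on $n$. The intertwining concerns $\T_{s,0}$, which does not enter the averaging identity at all.

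Second, and more fundamentally, your Beta integrals evaluate $\mathcal S_s$ and $F_s$ on homogeneous modes. The decisive input, namely how $(-\Delta)^s$ acts on $r^{-a}Y_j$, is never identified; you postpone it as "the main hurdle". The detour through the classical Green representation of $q_v$ by $-\Delta v$ does not help. It only shifts the problem to relating $-\Delta v$ to $(-\Delta)^sv$, which needs the same missing multiplier.

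\textbf{How to complete your route.} The mode-by-mode argument can be finished, and it would be a genuinely different proof.

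By \eqref{power-multiplier}, $(-\Delta)^s(r^{-a}Y_j)=c_j(a)r^{-a-2s}Y_j$. The $F_s$ multiplier above cancels exactly the $n$-dependent quotient $\Gamma(\tfrac{j+n-a}2)/\Gamma(\tfrac{j+n-a-2s}2)$ in $c_j(a)$. What remains is $2^s\Gamma(\tfrac{j+a}2+s)/\Gamma(\tfrac{j+a}2)$. This equals $(j+a)m_s(j+a)$, by \eqref{gamma-averaging-symbol} and $D_{\mathrm{har}}(r^{-a}Y_j)=(j+a)r^{-a}Y_j$.

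For this you must work on a Mellin line inside $0<\Ree a<n-2s$, for instance $\Ree a=\beta$. The Fourier line $\Ree a=0$ that you wrote does not work, for two reasons. The degree-zero transform of $v$ is singular there, because $v\to v(0)$ at the origin. And for $j\ge1$, the formula for $c_j$ on that line lies outside the range proved in Lemma~\ref{homogeneous-fourier}. On $\Ree a=\beta$ all required integrals converge: $\Ree(j+a+2s)>0$ for $\mathcal S_s$ and $\Ree(n+j-a-2s)=\beta+j>0$ for $F_s$. You would still have to justify the superposition over modes and frequencies, as in Lemma~\ref{uniform-gamma-symbols}.

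\textbf{Comparison with the paper.} The paper needs no multiplier for $(-\Delta)^s$. It computes the boundary limit of $(v-H^{(s)})(\varrho\omega)/(1-\varrho^2)^s$ as $\varrho\uparrow1$, where $H^{(s)}$ is the fractional harmonic replacement of $v$ in $B_1$, in two ways. The ball Green function yields the interior average $F_s[(-\Delta)^sv]$. The exterior Poisson kernel yields an exterior integral, which the substitution $y=e^h\eta$ and an integration by parts in $h$ (using $\partial_hA_hv=-A_hD_{\mathrm{har}}v$) turn into $\mathcal S_sD_{\mathrm{har}}v$. Your repaired route is a pure symbol check. The paper's route is more geometric and explains where the kernel of $F_s$ comes from.
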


\begin{proof}
By \eqref{harmonic-replacement-field}, $q_v=D_{\mathrm{har}}v$.
We first prove the source identity for $v\in C_c^\infty(\R^n)$ at
radius one. Let $H^{(s)}$ be the fractional harmonic replacement of $v$
in $B_1$. We compute the boundary limit of
$(v-H^{(s)})(a\omega)/(1-a^2)^s$ in two ways: the Green
formula gives the interior source average, and the exterior Poisson
formula gives the average of the corrected derivative.
The exterior Poisson kernel is given in
\eqref{prelim-fractional-poisson}. Then $w=v-H^{(s)}$ is zero outside
$B_1$ and satisfies $(-\Delta)^sw=(-\Delta)^sv$ in $B_1$.
The ball Green formula \cite[Theorems~3.1--3.3]{Bucur} gives
\[
 w(x)=\int_{B_1}G_s(x,y)(-\Delta)^sv(y)\dd y,
\]
where
\begin{equation}\label{ball-green-source}
 \begin{split}
 G_s(x,y)&=\gamma_{n,s}|x-y|^{2s-n}
           \int_0^{R(x,y)}\frac{t^{s-1}}{(1+t)^{n/2}}\dd t,\\
 R(x,y)&=\frac{(1-|x|^2)(1-|y|^2)}{|x-y|^2},\qquad
 \gamma_{n,s}=\frac{2^{1-2s}}{|\Sph|\Gamma(s)^2}.
 \end{split}
\end{equation}

We compute the boundary limit of this representation. For
$\omega\in\Sph$ and fixed $y\in B_1$,
\[
 \lim_{a\uparrow1}\frac{G_s(a\omega,y)}{(1-a^2)^s}
 =\frac{\gamma_{n,s}}s
       \frac{(1-|y|^2)^s}{|\omega-y|^n}.
\]
The limit can be taken inside the integral. Indeed, the explicit
formula implies both
\[
 G_s(x,y)\le C|x-y|^{2s-n},\qquad
 G_s(x,y)\le C(1-|x|^2)^s(1-|y|^2)^s|x-y|^{-n}.
\]
Put $\delta=1-a$. After division by $(1-a^2)^s$, the first bound
integrates to $C\delta^s$ on $|y-\omega|<2\delta$.
On $2\delta<|y-\omega|<\epsilon$, use
$|a\omega-y|\ge|\omega-y|/2$ and
$1-|y|^2\le2|\omega-y|$ in the second bound. Its integral is at most
\[
 C\int_{|y-\omega|<\epsilon}|y-\omega|^{s-n}\dd y
 \le C\epsilon^s.
\]
These bounds prove uniform integrability at the boundary point; away
from it, dominated convergence applies. Since $(-\Delta)^sv$ is bounded,
we obtain
\begin{equation}\label{green-boundary-source}
 \lim_{a\uparrow1}\frac{v(a\omega)-H^{(s)}(a\omega)}{(1-a^2)^s}
 =\frac{\gamma_{n,s}}s
       \int_{B_1}\frac{(1-|y|^2)^s}{|\omega-y|^n}
                         (-\Delta)^sv(y)\dd y.
\end{equation}

The exterior Poisson formula gives another expression for the same
limit. Subtract $v(\omega)$ using the mass-one property of its kernel.
The term $v(a\omega)-v(\omega)$, divided by $(1-a^2)^s$, tends to zero.
For $|y|\ge1$, the identity
\[
 |a\omega-y|^2=a|\omega-y|^2+(1-a)(|y|^2-a)
                 \ge a|\omega-y|^2
\]
and the local Lipschitz bound for $v$ give the integrable majorant
$C(|y|^2-1)^{-s}|\omega-y|^{1-n}$ near $\omega$.
Its local integrability follows in boundary polar coordinates from
$s<1$. Away from that point the boundary singularity is
$(|y|-1)^{-s}$, and the far tail is $O(|y|^{-n-2s})$.
Dominated convergence therefore gives
\begin{equation}\label{poisson-boundary-source}
 \lim_{a\uparrow1}\frac{v(a\omega)-H^{(s)}(a\omega)}{(1-a^2)^s}
 =p_{n,s}\int_{|y|>1}
       \frac{v(\omega)-v(y)}{(|y|^2-1)^s|\omega-y|^n}\dd y.
\end{equation}

It remains to express this exterior integral by classical Poisson
averaging. Write
\[
 A_hv(\omega)=P_{e^{-h}}[v(e^h\,\cdot)](\omega),\qquad
 k(h)=(e^{2h}-1)^{-s}.
\]
The changes of variables $y=e^h\eta$ and
\eqref{spherical-poisson} give
\begin{equation}\label{exterior-source-average}
 \int_{|y|>1}\frac{v(\omega)-v(y)}{(|y|^2-1)^s|\omega-y|^n}\dd y
 =|\Sph|\int_0^\infty
   e^{-2sh}(1-e^{-2h})^{-s-1}[v(\omega)-A_hv(\omega)]\dd h.
\end{equation}
Differentiating the Poisson semigroup and the dilation yields
$\partial_hA_hv=-A_hD_{\mathrm{har}}v$. Moreover,
\[
 -k'(h)=2s e^{-2sh}(1-e^{-2h})^{-s-1}.
\]
Integration by parts in \eqref{intro-poisson-averaging}, with
$A_hv-v$ in place of $A_hv$, therefore gives
\[
 \mathcal S_sD_{\mathrm{har}}v(\omega)
 =\frac{2^{s+1}s}{\Gamma(1-s)}
  \int_0^\infty e^{-2sh}(1-e^{-2h})^{-s-1}
                  [v(\omega)-A_hv(\omega)]\dd h.
\]
There is no boundary term: $A_hv-v=O(h)$ at zero, while $k(h)$
decays exponentially at infinity. Combining this identity with
\eqref{green-boundary-source}--\eqref{exterior-source-average} gives
the source identity at radius one. Its constant is
\[
 \frac{2^{s+1}s}{\Gamma(1-s)|\Sph|p_{n,s}}
                  \frac{\gamma_{n,s}}s
 =\frac{2^{1-s}}{\Gamma(s)|\Sph|},
\]
using $\Gamma(s)\Gamma(1-s)=\pi/\sin(\pi s)$.
Applying the unit-radius identity to $v(r\,\cdot)$ proves
\eqref{intro-fractional-source} for every $r>0$.

For the stated regularity, Lemma~\ref{corrected-derivative-origin}
gives $q_v\in H^s(\Cyl)$ and continuity. Choose $s<\rho'<\rho$
and mollify $v$. Then $v_\epsilon\to v$ in $C^{1,\rho'}$ with a
common compact support, so the same lemma gives
$q_{v_\epsilon}\to q_v$ in $H^s(\Cyl)$.
Also $(-\Delta)^sv_\epsilon\to(-\Delta)^sv$ uniformly: the
singular-integral remainder is integrable since $1+\rho'>2s$.
The boundedness in \eqref{intro-poisson-smoothing} passes the left
side of \eqref{intro-fractional-source}, and the integrable ball kernel passes
the right side locally uniformly. Both sides are continuous for
$r>0$, by the integral formulas, so the identity holds at every
nonzero point. Positivity is immediate from its right-hand kernel.
\end{proof}

Thus the same fractional harmonic replacement links the positive
average of the corrected derivative to the interior fractional source.

\subsection{The equation for the corrected radial derivative}\label{regularity-appendix}\label{operator-inputs}

We now apply $D_{\mathrm{har}}$ to the semilinear equation. First we localize $u$
so that $D_{\mathrm{har}}$ acts on compactly supported data. The
nonlocal error created by this localization is smooth in the smaller
ball and is controlled by the tail of $u$. The qualitative bootstrap
in Lemma~\ref{bootstrap} justifies these differentiations.

Choose a smooth radial cutoff $0\le\chi_0\le1$,
equal to one on $B_{3/2}$ and supported in $B_{7/4}$, and put
$\bar u=\chi_0u$. In $B_1$,
\begin{equation}\label{tail-error}
 L\bar u=f(u)+\mathfrak h,\qquad
 \mathfrak h=L((\chi_0-1)u),\qquad
 \|\mathfrak h\|_{C^k(B_1)}\le C_k\|u\|_{L_s^1(\R^n)}.
\end{equation}
Indeed, the support of $(\chi_0-1)u$ is separated from $B_1$;
differentiating the kernel of $L$ gives an integrable bound
$C_k(1+|y|^{n+2s})^{-1}$ for each derivative.

Define
\[
 q=D_{\mathrm{har}}\bar u,
 \qquad q=D_{\mathrm{har}}u\quad\hbox{in }B_1.
\]
Applying
$\T_{s,0} D_{\mathrm{har}}=r^{2s}D_{\mathrm{har}}L$ to the localized equation produces a radial
derivative of $f(u)$ and an angular action of $\Lambda$ on $f(u)$.
The radial chain rule supplies the term $f'(u)\partial_tu$, while the failure
of an angular chain rule is kept as a separate defect. Thus
\begin{equation}\label{q-equation}
 \T_{s,0} q=Vq-B_0+E_0,\qquad
 V=r^{2s}f'(u),\quad E_0=r^{2s}D_{\mathrm{har}}\mathfrak h,\quad
 B_0=r^{2s}\bigl(f'(u)\Lambda u-\Lambda f(u)\bigr).
\end{equation}
The following lemma gives $B_0\ge0$.
\begin{lemma}[The angular convexity defect]\label{angular-defect}
Let $f\in C^1_{\mathrm{loc}}(\R)$ be convex and let
$v\in H^1(\Sph)\cap L^\infty(\Sph)$. Choose a compact interval
$J\subset\R$ containing the essential range of $v$. Then
\[
 B[v]=f'(v)\Lambda v-\Lambda f(v)\in L^2(\Sph),
 \qquad B[v]\ge0\quad\hbox{almost everywhere},
\]
and
\begin{equation}\label{defect-L2}
 \|B[v]\|_{L^2(\Sph)}
 \le 2\|f'\|_{L^\infty(J)}
           \|\nabla_{\Sph}v\|_{L^2(\Sph)}.
\end{equation}
\end{lemma}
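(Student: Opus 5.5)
The plan is to write $\Lambda$ as a positive singular integral with a nonnegative kernel and read convexity off pointwise. By the formula derived in the proof of Lemma~\ref{corrected-derivative-origin}, for smooth $g$ on $\Sph$,
\[
 \Lambda g(\omega)=\frac2{|\Sph|}\operatorname{PV}\int_{\Sph}\frac{g(\omega)-g(\eta)}{|\omega-\eta|^n}\dd\eta .
\]
A cleaner route that avoids principal values is the semigroup representation: $\mathsf P_h=e^{-h\Lambda}$ is positivity preserving and preserves constants. Hence, for $g$ in the domain,
\[
 \Lambda g=\lim_{h\downarrow0}h^{-1}(g-\mathsf P_hg).
\]
We will work with the difference quotients
\[
 B_h[v]=h^{-1}\bigl(f'(v)(v-\mathsf P_hv)-f(v)+\mathsf P_hf(v)\bigr).
\]

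First we prove nonnegativity. Since the kernel of $\mathsf P_h$ is a probability density, we have
\[
 \mathsf P_hf(v)(\omega)-f(v(\omega))=\int\bigl(f(v(\eta))-f(v(\omega))\bigr)\,k_h(\omega,\eta)\dd\eta .
\]
Convexity gives $f(v(\eta))-f(v(\omega))\ge f'(v(\omega))(v(\eta)-v(\omega))$. Integrating this against $k_h$ yields $\mathsf P_hf(v)-f(v)\ge f'(v)(\mathsf P_hv-v)$, that is, $B_h[v]\ge0$ pointwise.

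Next we show $B_h[v]\to B[v]$. Since $v\in H^1\cap L^\infty$ and $f$ is $C^1$ on $J$, the chain rule gives $f(v)\in H^1(\Sph)$ with $\|\nabla f(v)\|_2\le\|f'\|_{L^\infty(J)}\|\nabla v\|_2$. Now $\Lambda\le\sqrt{-\Delta_{\Sph}+(n-2)^2/4}$ maps $H^1$ to $L^2$. Also $h^{-1}(g-\mathsf P_hg)\to\Lambda g$ in $L^2$ for $g\in H^1$, since on each mode $h^{-1}(1-e^{-hj})\to j$ with $h^{-1}(1-e^{-hj})\le j$, so dominated convergence applies. Because $f'(v)$ is bounded, $f'(v)h^{-1}(v-\mathsf P_hv)\to f'(v)\Lambda v$ in $L^2$. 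Hence $B_h[v]\to B[v]$ in $L^2$. Nonnegativity passes to an a.e.\ limit along a subsequence, and $B[v]\in L^2$.

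Finally we prove the bound \eqref{defect-L2}. The triangle inequality gives $\|B[v]\|_2\le\|f'\|_\infty\|\Lambda v\|_2+\|\Lambda f(v)\|_2$. It remains to show $\|\Lambda g\|_2\le\|\nabla_{\Sph}g\|_2$ for real $g\in H^1$. On mode $j$ this reads $j^2\le j(j+n-2)$, which holds since $n\ge2$. Applying it to $g=v$ and $g=f(v)$, and using the chain-rule bound above, gives the factor $2$.

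The only delicate point is the convergence and domain step, namely that $\Lambda f(v)$ is meaningful in $L^2$ and equals the limit of the quotients. The spectral monotonicity of $h^{-1}(1-e^{-hj})$ handles it, so no principal-value estimate is needed.
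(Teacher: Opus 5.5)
Your proposal is correct and is essentially the paper's own proof. You get the $L^2$ bound from the chain rule and the mode-wise inequality $j^2\le j(j+n-2)$, and the sign from nonnegativity of $B_h[v]$, obtained by integrating the tangent-line inequality against the probability kernel of $\mathsf P_h$. You then pass to the limit via $h^{-1}(1-e^{-hj})\to j$ with the bound $j$ and dominated convergence; extracting an a.e.\ convergent subsequence, as you do, is equivalent to the paper's appeal to closedness of the nonnegative cone in $L^2$.
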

\begin{proof}
The Sobolev chain rule gives $f(v)\in H^1$ and
$\nabla_{\Sph}f(v)=f'(v)\nabla_{\Sph}v$.
Since $j^2\le j(j+n-2)$ for $n\ge2$, the eigenvalue formulas and
Parseval's identity give $\|\Lambda g\|_2\le\|\nabla_{\Sph}g\|_2$.
This proves membership in $L^2$ and \eqref{defect-L2}.

For the sign, use the positive probability kernel of $\mathsf P_h$.
The tangent-line inequality
$f(b)-f(a)-f'(a)(b-a)\ge0$, integrated with $a=v(\omega)$ and
$b=v(\eta)$, gives
\[
 B_h[v]=
 \frac{\mathsf P_h f(v)-f(v)+f'(v)(v-\mathsf P_h v)}h\ge0.
\]
For $g\in H^1$, the spectral expansion yields
$h^{-1}(I-\mathsf P_h)g\to\Lambda g$ in $L^2$:
use $(1-e^{-hj})/h\to j$, bounded by $j$, and dominated convergence
in the square-summable coefficients. Apply this to $v$ and $f(v)$.
The boundedness of $f'(v)$ gives $B_h[v]\to B[v]$ in $L^2$.
The cone of nonnegative functions is closed in $L^2$, proving the sign.
\end{proof}

Applying Lemma~\ref{angular-defect} on each sphere gives the nonnegative
$L^2_{\rm loc}$ term in \eqref{q-equation}.

The source estimate needed for H\"older continuity includes a
positive radial weight. We therefore conjugate the equation for $q$
by $r^{-\vartheta}$. Fix $\vartheta$ satisfying \eqref{weight-interval}
and set $q_\vartheta=r^{-\vartheta}q$. On smooth compact annular
functions, \eqref{cone} gives
\[
 \T_{s,\vartheta}=r^{-\vartheta}\T_{s,0}r^\vartheta.
\]
Using the same conjugation for the averaging operator defined above,
we obtain
\begin{align}
 \T_{s,\vartheta} q_\vartheta
 &=Vq_\vartheta-B_\vartheta+E_\vartheta,\qquad
 B_\vartheta=r^{-\vartheta}B_0\ge0,\quad
 E_\vartheta=r^{-\vartheta}E_0,\label{shifted-q-equation}\\
 r^{-\vartheta}\mathcal S_sq
 &=\mathcal S_{s,\vartheta}q_\vartheta.\label{shifted-smoothing}
\end{align}
We next control $q_\vartheta$ near $r=0$ to justify cutoff approximation
in $H^s(\Cyl)$, and estimate localized copies of $E_\vartheta$.
\begin{lemma}\label{field-admissibility}
Under the solution assumptions of Theorem~\ref{full}, with $n\ge2$
and without its dimension restriction, fix a weight in
\eqref{weight-interval}. Every smooth outer localization of
$q_\vartheta$ supported in $B_1$ belongs to
$H^s(\Cyl)\cap C(\Cyl)$ and is approximated in $H^s$ by cutoffs
away from $r=0$. On this half-cylinder $B_\vartheta\in L^2$.
For every smooth cutoff $\chi$ supported in $B_1$,
\begin{equation}\label{E-bound}
 \|\chi E_\vartheta\|_{H^{1-s}(\Cyl)}
 \le C_{n,s,\vartheta,\chi}\|u\|_{L_s^1(\R^n)}.
\end{equation}
\end{lemma}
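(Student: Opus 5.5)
The plan is to treat the three assertions separately. For each, reduce to facts already available: Lemma~\ref{bootstrap}, Lemma~\ref{corrected-derivative-origin}, Lemma~\ref{angular-defect} and the tail bound \eqref{tail-error}.

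\emph{Admissibility of $q_\vartheta$.} Fix a smooth outer localization $\chi$ supported in $B_1$. By Lemma~\ref{bootstrap}, with $\varepsilon$ small enough that $1+2s-\varepsilon>1+\rho$ for some $\rho\in(s,\min\{1,2s\})$, or $\rho\le1$ when $2s\ge1$, we get $\bar u\in C^{1,\rho}$ near $\overline{B_1}$. Since $s<1$ and $\rho>s$ is possible because $2s>s$, this regularity is available. Replace $\bar u$ by a compactly supported $C^{1,\rho}$ function $v$ agreeing with $u$ on $B_{5/4}$. Then $D_{\mathrm{har}}v=q$ on $B_1$, because $D_{\mathrm{har}}$ at radius $r$ only sees the ball $B_r$.

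Lemma~\ref{corrected-derivative-origin} gives $q\in C(\Cyl)$ and the slab bound $\|q\|_{H^s(\text{slab at }r)}\le Cr^{1+\rho}$. Multiplying by $r^{-\vartheta}$ and by $\chi$ is smooth with bounded derivatives on each slab, with constant comparable to $r^{-\vartheta}$, by \eqref{quantitative-multiplier}. The slab norms of $\chi q_\vartheta$ are therefore $O(r^{1+\rho-\vartheta})$. Since $\vartheta<1<1+\rho$, these are square-summable over dyadic slabs. The finite-overlap summation from the proof of Lemma~\ref{corrected-derivative-origin} then gives $\chi q_\vartheta\in H^s(\Cyl)$, and shows that inner cutoffs with a fixed transition profile converge in $H^s$. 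The tails are controlled by $\sum_{r_k<\delta}r_k^{2(1+\rho-\vartheta)}\to0$.

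\emph{$B_\vartheta\in L^2$ on the half-cylinder.} On each sphere $|x|=r<1$ apply Lemma~\ref{angular-defect} to $v=u(r\cdot)$, with $J$ the range of $u$ on $\overline{B_1}$. This gives
\[
\|B_0(r\cdot)\|_{L^2(\Sph)}\le 2r^{2s}\|f'\|_{L^\infty(J)}\|\nabla_{\Sph}u(r\cdot)\|_{L^2(\Sph)}.
\]
The defect annihilates affine functions, since $\Lambda$ kills constants and $f'(u)\Lambda c=0=\Lambda f(c)$. It is more convenient, though, to use the $C^1$ bound directly: $|\nabla_{\Sph}u(r\cdot)|\le r\|\nabla u\|_\infty$. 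Hence $\|B_\vartheta(t,\cdot)\|_{L^2(\Sph)}\le Cr^{2s+1-\vartheta}$, and $\int_{-\infty}^0 r^{2(2s+1-\vartheta)}\dd t<\infty$ because $\vartheta<1+2s$. Measurability in $t$ follows from continuity of $u$ and of the map $r\mapsto u(r\cdot)$ into $H^1(\Sph)$.

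\emph{Estimate \eqref{E-bound}.} By \eqref{tail-error}, $\mathfrak h$ is smooth on $B_1$ with every $C^k$ norm bounded by $C_k\|u\|_{L_s^1}$. We have $E_\vartheta=r^{2s-\vartheta}D_{\mathrm{har}}\mathfrak h$, but $D_{\mathrm{har}}\mathfrak h$ involves values of $\mathfrak h$ only on $B_r$, and we need a compactly supported datum. So apply Lemma~\ref{corrected-derivative-origin} (or its smooth analogue) to $\eta\mathfrak h$, with $\eta$ a cutoff equal to one on $\operatorname{supp}\chi$ and supported in $B_1$. This gives $D_{\mathrm{har}}(\eta\mathfrak h)=D_{\mathrm{har}}\mathfrak h$ on $\operatorname{supp}\chi$.

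For the $H^{1-s}$ norm we need one more derivative than in that lemma. Using a second-order Taylor expansion, the rescaled remainder $\mathfrak h_r$ from the lemma's proof has $C^{2}$ norm $O(r^2\|\mathfrak h\|_{C^3})$ on the annulus. Since $D_{\mathrm{har}}:H^{2-s}\to H^{1-s}$, each slab norm of $D_{\mathrm{har}}\mathfrak h$ in $H^{1-s}$ is $O(r^{1+\rho})\|u\|_{L_s^1}$, for any $\rho<1$. After the weight $r^{2s-\vartheta}$ (with $2s-\vartheta>0$) and the cutoff $\chi$, we sum the slabs as before. This yields \eqref{E-bound}, with dependence on $\chi$ through $\|\chi\|_{C^1}$.

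\emph{Main obstacle.} The step I expect to be hardest is the inner end $r\to0$. There the weight $r^{-\vartheta}$ could be singular, so the argument must show that the vanishing of $D_{\mathrm{har}}$ on affine Taylor polynomials supplies enough decay, namely $r^{1+\rho}$ against $r^{\vartheta}$ with $\vartheta<1$. It must also check that the multiplier constants in \eqref{quantitative-multiplier} are uniform across dyadic slabs after rescaling.
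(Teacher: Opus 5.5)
Your proposal is correct and follows the paper's proof step for step. You bootstrap to $\bar u\in C^{1,\rho}_c$ with $\rho>s$, sum the slab bound $r^{1+\rho-\vartheta}$ for $q_\vartheta$ as in Lemma~\ref{corrected-derivative-origin}, bound the defect by $Cr^{2s+1-\vartheta}$ on each sphere via \eqref{defect-L2}, and obtain an $O(r^2)$ bound for $D_{\mathrm{har}}\mathfrak h$ after subtracting its affine Taylor polynomial. For the last step the paper bounds $\chi E_\vartheta$ directly in $H^1(\Cyl)$ and embeds into $H^{1-s}$; your slab summation through $D_{\mathrm{har}}:H^{2-s}\to H^{1-s}$ is an equivalent variant.

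Two small remarks, neither affecting the argument. First, your aside that the defect annihilates affine functions is false: it kills constants but not linear functions (for $f(t)=t^2$ and $v=\omega_1$ one gets $B[v]=2/n$), though you never use it. Second, replacing $\bar u$ is unnecessary, since $\bar u=\chi_0u$ is already compactly supported and $C^{1,\rho}$.
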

\begin{proof}
Choose $0<\varepsilon<s$ with $a_0=2s-\varepsilon$ noninteger.
Then $a_0>s$, $a_0\ne1$, and Lemma~\ref{bootstrap} gives
$u\in C^{1+a_0}_{\mathrm{loc}}$. Fix $\vartheta$ in
\eqref{weight-interval}.

On $B_1$, let $M_u$ bound $|\nabla u|$ and let $M_f$ bound $|f'|$
on the compact range of $u$. Since
$|\nabla_{\Sph}u(r\omega)|\le rM_u$, \eqref{defect-L2} gives
\begin{equation}\label{defect-origin}
 \|B[u(r\,\cdot)]\|_{L^2(\Sph)}\le C M_fM_u r.
\end{equation}
Consequently, for every admissible weight $\vartheta<2s$,
\begin{equation}\label{weighted-defect-L2}
 \int_{-\infty}^{0}\!\|B_\vartheta(t,\cdot)\|_2^2\dd t
 \le C_u\int_0^1 r^{2(2s+1-\vartheta)}\,\frac{dr}{r}<\infty,
 \qquad B_\vartheta=r^{2s-\vartheta}B[u(r\,\cdot)].
\end{equation}

Put $\rho=\min\{a_0,1\}>s$. The localized function $\bar u$ is
$C_c^{1,\rho}$, so Lemma~\ref{corrected-derivative-origin} gives a
local cylinder $H^s$ bound $C_ur^{1+\rho}$ for
$q=D_{\mathrm{har}}\bar u$. On each rescaled annulus the factor
$|z|^{-\vartheta}$ is a fixed smooth multiplier. Consequently
\begin{equation}\label{field-origin-Hs}
 \|q_\vartheta\|_{H^s((\log r,\log(2r))\times\Sph)}
       \le C_u r^{1+\rho-\vartheta}.
\end{equation}
Since $1+\rho-\vartheta>0$, the slab summation used in that lemma
proves $H^s$ membership of every outer localization of
$q_\vartheta$ and strong convergence of its inner cutoffs.
Continuity follows from the same lemma and the smooth radial weight.

We finally bound the localization error using \eqref{tail-error}.
Subtract the affine Taylor polynomial of $\mathfrak h$ at zero.
Since $D_{\mathrm{har}}$ kills that polynomial, its remainder and the smooth bounds
in \eqref{tail-error} imply
\[
 \|D_{\mathrm{har}}\mathfrak h(r\,\cdot)\|_{H^1(\Sph)}
 +\|\partial_t D_{\mathrm{har}}\mathfrak h(r\,\cdot)\|_{L^2(\Sph)}
 \le Cr^2\|u\|_{L_s^1(\R^n)}.
\]
More explicitly, if $R_{\mathfrak h}$ is the Taylor remainder, the
function $\omega\mapsto R_{\mathfrak h}(r\omega)$ and its angular
derivatives through order two are
$O(r^2\|u\|_{L_s^1(\R^n)})$.
The same estimate holds after applying $r\partial_r$.
The multiplier bound
$\Lambda:H^2(\Sph)\to H^1(\Sph)$ gives the displayed estimate.

Thus $\chi E_\vartheta=\chi r^{2s-\vartheta}D_{\mathrm{har}}\mathfrak h$ has
$H^1(\Cyl)$ norm at most $C_\chi \|u\|_{L_s^1(\R^n)}$, because
$2s+2-\vartheta>0$. The embedding $H^1\subset H^{1-s}$ proves
\eqref{E-bound}.
\end{proof}

The bounds for the defect and for $q_\vartheta$ may depend on the
fixed solution; only their finiteness is used. The error bound
\eqref{E-bound} is universal.
\begin{proposition}[Localized derivative equation and source average]\label{mellin-domain}
For $\bar u=\chi_0u$ and $q=D_{\mathrm{har}}\bar u$ constructed above,
the equations \eqref{q-equation} and \eqref{shifted-q-equation} hold
in distributions on $0<r<1$. The source identity
\eqref{intro-fractional-source} holds with $v=\bar u$ and $q_v=q$. In particular,
\[
 |\mathcal S_sq-F_s[f(u)]|
 \le Cr^{2s}\|u\|_{L_s^1(\R^n)},\qquad 0<r<1.
\]
\end{proposition}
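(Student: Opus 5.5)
Three things must be checked for Proposition~\ref{mellin-domain}: the derivative equation \eqref{q-equation}, its weighted form \eqref{shifted-q-equation}, and the source identity with its error bound.

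\textbf{Regularity and approximation.} By Lemma~\ref{bootstrap}, $u\in C^{1+a_0}_{\mathrm{loc}}(B_2)$ for some noninteger $a_0>s$. Hence $\bar u=\chi_0u\in C_c^{1+\rho}(\R^n)$ for some $s<\rho<1$, after lowering $\rho$ slightly if needed. This is exactly the class in Theorem~\ref{intro-poisson-source}, and that theorem gives the source identity directly:
\[
 \mathcal S_sq=F_s[L\bar u]\quad\text{at every } x\neq0 .
\]
The equation $L\bar u=f(u)+\mathfrak h$ holds distributionally in $B_1$. Both sides are continuous there, by Lemma~\ref{prelim-local-schauder} and \eqref{tail-error}, so it holds pointwise. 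Since $F_s$ at radius $r<1$ only sees $B_r\subset B_1$, linearity gives
\[
 \mathcal S_sq=F_s[f(u)]+F_s[\mathfrak h].
\]
The bound \eqref{source-average-bound} together with \eqref{tail-error} gives
\[
 |F_s[\mathfrak h]|\le Cr^{2s}\|\mathfrak h\|_{L^\infty(B_1)}\le Cr^{2s}\|u\|_{L_s^1},
\]
which is the displayed estimate.

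\textbf{The equation \eqref{q-equation}.} Mollify $\bar u_\epsilon=\rho_\epsilon*\bar u$. This is smooth and compactly supported, and it converges to $\bar u$ in $C^{1,\rho'}$ with common support for each $s<\rho'<\rho$. For a test $\psi\in C_c^\infty(\{0<r<1\}\subset\Cyl)$, the intertwining \eqref{field-intertwining} extends from compact annular functions to $C_c^\infty(\R^n)$ data in the weak sense. The extension is done by multiplying by an annular cutoff equal to one near $\operatorname{supp}\psi$, using the continuity in Lemma~\ref{corrected-derivative-origin} and the boundedness $\T_{s,0}:H^s\to H^{-s}$. This gives
\[
 \langle\psi,\T_{s,0}D_{\mathrm{har}}\bar u_\epsilon\rangle=\langle\psi,r^{2s}D_{\mathrm{har}}L\bar u_\epsilon\rangle .
\]
As $\epsilon\to0$, the left side converges by Lemma~\ref{corrected-derivative-origin}. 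On the right, $L\bar u_\epsilon=\rho_\epsilon*L\bar u$ converges to $f(u)+\mathfrak h$ in $C^{t}$ locally in $B_1$ for some $t>0$. Here I transfer $D_{\mathrm{har}}=\Lambda-\partial_t$ onto $\psi$ by duality; its adjoint is a first-order operator of the same type. Only $L^2_{\mathrm{loc}}$, or even $H^{-1}$, convergence of $L\bar u_\epsilon$ is then needed, so the limit is $r^{2s}D_{\mathrm{har}}(f(u)+\mathfrak h)$ in distributions.

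It remains to expand $D_{\mathrm{har}}f(u)$. The radial part uses the classical chain rule, valid since $u\in C^1$:
\[
 -\partial_tf(u)=f'(u)(-\partial_tu).
\]
The angular part is rearranged as
\[
 \Lambda f(u)=f'(u)\Lambda u-B[u(r\cdot)]
\]
on each sphere, with all terms in $L^2_{\mathrm{loc}}$ by Lemma~\ref{angular-defect}. Together these give
\[
 r^{2s}D_{\mathrm{har}}f(u)=Vq-B_0 ,
\]
which is \eqref{q-equation}.

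\textbf{The weighted equation \eqref{shifted-q-equation}.} Multiply \eqref{q-equation} by the smooth factor $r^{-\vartheta}$, which is harmless on the test support. The identity $r^{-\vartheta}\T_{s,0}q=\T_{s,\vartheta}q_\vartheta$ comes from \eqref{cylinder-weight-conjugation}, whose passage to the limit is justified by the inner-cutoff approximation in Lemma~\ref{field-admissibility}.

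\textbf{Main obstacle.} I expect the delicate step to be the distributional extension of the intertwining \eqref{field-intertwining} from compact annular data to $\bar u$, whose corrected derivative reaches $r=0$. The nonlocal operator $\T_{s,0}$ does not commute with the cutoffs. I plan to handle this through the $H^s(\Cyl)$ convergence of the inner cutoffs (Lemma~\ref{field-admissibility}) combined with the $H^s\to H^{-s}$ continuity, rather than through pointwise arguments.
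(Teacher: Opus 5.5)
Your handling of the source identity and its error bound matches the paper. So do the nonlinear expansion $r^{2s}D_{\mathrm{har}}f(u)=Vq-B_0$ (moving $D_{\mathrm{har}}$ onto the test by duality is a harmless variant of the paper's mollification step) and the weighted equation. For the weight conjugation, cutting off $q$ itself is exactly right, since \eqref{cylinder-weight-conjugation} holds for compactly supported cylinder functions.

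The gap is in the step you flag as the main obstacle: extending \eqref{field-intertwining} to data $v\in\{\bar u,\bar u_\epsilon\}$ that do not vanish at the origin. The mechanism you propose does not justify it.

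The annular identity applies only to a cutoff of the \emph{physical} function, $\eta v$. The inner cutoffs in Lemma~\ref{corrected-derivative-origin} and Lemma~\ref{field-admissibility} act on the corrected derivative instead. In general $\eta D_{\mathrm{har}}v$ is not $D_{\mathrm{har}}$ of any annular function. Because $D_{\mathrm{har}}$ annihilates the regular modes $r^jY_{j,\ell}$, its compactly supported preimage is a nonzero harmonic function below the support of $\eta$, with radial term close to $v(0)$. A single cutoff equal to one near $\operatorname{supp}\psi$ cannot work either, since $\T_{s,0}$ and $L$ are both nonlocal. You need a family whose inner transition moves to $t=-\infty$, together with convergence of both sides.

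For such a family,
\[
 D_{\mathrm{har}}(\eta_Lv)=\eta_LD_{\mathrm{har}}v-(\partial_t\eta_L)v,
\]
and the last term contains the constant mode $v(0)\,\partial_t\eta_L$, which is nonzero in general. With a fixed transition profile its $H^s(\Cyl)$ norm does not decay, and none of the strong-convergence statements you cite controls it.

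The paper stretches the transition over $-2L<t<-L$ with $|\partial_t^k\eta_L|\le C_kL^{-k}$. Then this constant mode is $O(L^{-1/2})$ in $H^s$, while $(\partial_t\eta_L)(v-v(0))$ decays exponentially. Hence $D_{\mathrm{har}}(\eta_Lv)\to D_{\mathrm{har}}v$ in $H^s$, and the left side converges in $H^{-s}$. A fixed profile could also be rescued, since the translated term tends to zero weakly in $H^s$ and $\T_{s,0}^*\psi\in H^{-s}$, but some such argument is needed.

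On the right side you must also show $L(\eta_Lv)\to Lv$ with derivatives on the fixed annulus containing $\operatorname{supp}\psi$. This follows from the ordinary kernel of $(-\Delta)^s$, because $(1-\eta_L)v$ is supported in a shrinking ball and its $L^1$ norm tends to zero. With these two points supplied, the rest of your argument goes through.
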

\begin{proof}
Choose the noninteger $a_0>s$ from the proof of
Lemma~\ref{field-admissibility}. The localized function
$\bar u$ belongs to $C_c^{1+a_0}(B_{7/4})$.
Choose $s<\rho<\min\{a_0,1\}$. Theorem~\ref{intro-poisson-source}
then gives $q\in H^s(\Cyl)$ and the source identity for $\bar u$.
The function $\bar u$ itself need not be in $L^2(\Cyl)$ because
it tends to $u(0)$ as $t\to-\infty$.

Choose a smooth $\eta_L$ equal to zero for $t\le-2L$ and
one for $t\ge-L$, with $|\partial_t^k\eta_L|\le C_kL^{-k}$,
and put $\bar u_L=\eta_L\bar u$. Then
\[
 D_{\mathrm{har}}\bar u_L=\eta_Lq-\eta_L'\bar u
       \longrightarrow q\quad\hbox{in }H^s(\Cyl).
\]
The first term converges by the origin estimate. In the second,
the constant mode has $H^s$ norm $O(L^{-1/2})$; after subtracting
$u(0)$, the remaining terms decay exponentially as $t\to-\infty$.
For fixed $L$, smooth approximation on compact annuli in $H^{1+s}$
justifies the multiplier identities for $\bar u_L$.

For the unweighted intertwining identity, the order-$2s$ bound gives
\[
 \T_{s,0}D_{\mathrm{har}}\bar u_L
       \longrightarrow\T_{s,0}q\quad\hbox{in }H^{-s}(\Cyl).
\]
On each fixed compact annulus, $\bar u_L-\bar u$ is supported
near the origin for all sufficiently large $L$, and its Euclidean
$L^1$ norm tends to zero. The ordinary kernel of $(-\Delta)^s$
therefore gives $L\bar u_L-L\bar u\to0$ there with every fixed
derivative. Applying $D_{\mathrm{har}}$ and multiplying by $r^{2s}$
proves that \eqref{field-intertwining} passes to $\bar u$ in
distributions. This linear argument applies to every smooth compact
physical function, including functions that do not vanish at the origin.
To justify its nonlinear right-hand side,
mollify $\bar u$. On every smaller ball, convolution commutes with
$L$ and
\[
 L(\rho_\epsilon*\bar u)
       =\rho_\epsilon*(L\bar u)
       \longrightarrow f(u)+\mathfrak h\quad\hbox{in }C^1.
\]
The radial chain rule is therefore valid in the limit. On each sphere
the convergence also holds in $H^1$, so application of
$\Lambda:H^1\to L^2$ and Lemma~\ref{angular-defect} gives
\[
 D_{\mathrm{har}}(L\bar u)
 =f'(u)D_{\mathrm{har}}u-B[u]+D_{\mathrm{har}}\mathfrak h.
\]
Multiplication by $r^{2s}$ proves \eqref{q-equation}.

For the weighted identity, use the compact annular approximants
$\eta_Lq$ and $\eta_Lq_\vartheta$.
The origin estimate \eqref{field-origin-Hs} gives convergence to
$q$ and $q_\vartheta$, respectively, in $H^s(\Cyl)$; on the fixed
outer annulus, $C_c^{1+a_0}$ regularity gives the same convergence.

For these annular approximants, \eqref{cone} gives directly
\[
 \T_{s,\vartheta}(r^{-\vartheta}v)
          =r^{-\vartheta}\T_{s,0}v.
\]

The bounds
$\T_{s,0},\T_{s,\vartheta}:H^s\to H^{-s}$ allow passage to the limit,
and multiplication by $r^{-\vartheta}$ is smooth on every compact
cylinder set. This proves \eqref{shifted-q-equation}.

Finally, $L\bar u=f(u)+\mathfrak h$ on $B_1$, with
$\|\mathfrak h\|_{L^\infty(B_1)}\le C\|u\|_{L_s^1(\R^n)}$.
The bound \eqref{source-average-bound} and linearity of $F_s$ in
\eqref{intro-fractional-source} give the asserted source error estimate.
\end{proof}

Theorem~\ref{intro-poisson-source} and Proposition~\ref{mellin-domain}
express $\mathcal S_sq$ as the source average up to the controlled
tail error. The map
$\mathcal S_s:H^{s-1}(\Cyl)\to L^2(\Cyl)$ converts a negative
Sobolev bound for $q$ into an $L^2$ bound for this average.

% energy-identity, weight-interval, and field-intertwining.
\section{Kernel and localization estimates}\label{kernel-appendix}
We use the positive heat semigroup of the cone operator to control
interactions between opposite signs. Radially separated actions are
estimated by a differentiated Riesz kernel, and radial localization
is controlled by the extension energy.

Throughout this section, $n\ge2$, $0<s<1$, and
\[
 2s-n<\vartheta<\min\{1,2s\}.
\]
Constants may depend on $n,s,\vartheta$ and the fixed cutoff profiles.
We use $d\mu=r^{n+1}\dd r\dd\omega$ and the cone operator
$\mathcal L$ from \eqref{cone-form}.

\subsection{Positive heat interactions}\label{positive-kernel-proof}
\begin{lemma}[Positivity and radial restriction]\label{cone-positivity}

The heat semigroup of $\mathcal L$ preserves positivity and is
contractive on $L^\infty$.

For an interval $I\Subset(0,\infty)$, let $\mathcal L_I$ be the
realization of the same form with zero radial boundary values on $I$.
Its heat semigroup, extended by zero, satisfies
\begin{equation}\label{killed-heat-domination}
 0\le e^{-h\mathcal L_I}g\le e^{-h\mathcal L}g,
 \qquad g\ge0\text{ supported in }I\times\Sph,
 \quad h>0.
\end{equation}
\end{lemma}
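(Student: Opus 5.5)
The plan is to exploit the unitary identification already used in the proof of Proposition~\ref{cylinder-realizations}. On the angular component of degree $j$ of $\Sph$, the form \eqref{cone-form} becomes the radial form of the Laplacian on the degree-$j$ component of $L^2(\R^{n+2})$, via $w\mapsto w(r)\widetilde Y_j$. This identification alone will not prove positivity, because it mixes different $\Sph$ modes with different $\widetilde Y_j$. I will therefore prove positivity directly, by the Beurling--Deny criteria for the closed form $\mathfrak a$ on $L^2(d\mu)$.

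\textbf{Positivity.} We must show that $w\mapsto|w|$ (equivalently $w\mapsto w_+$ for real $w$) maps the form domain into itself and does not increase $\mathfrak a$.

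The radial and tangential gradient terms are handled by the chain rule. For the nonlocal angular term $2r^{-2}\int\overline w\Lambda w$, write $\Lambda$ through the Poisson semigroup:
\[
\langle g,\Lambda g\rangle=\lim_{h\downarrow0}h^{-1}\langle g,(I-\mathsf P_h)g\rangle .
\]
Since $\mathsf P_h$ has a positive symmetric kernel of mass one, this equals
\[
\lim_{h\downarrow0}\frac1{2h}\iint K_h(\omega,\eta)\,|g(\omega)-g(\eta)|^2\dd\omega\dd\eta ,
\]
which is a Dirichlet form on $\Sph$. Normal contractions $g\mapsto g_+$ and $g\mapsto\min\{g,1\}$ do not increase it.

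Hence $\mathfrak a$ is a Dirichlet form, except that the $L^\infty$ contraction requires handling the truncation $\min\{w,1\}$. Truncation does not increase $|\partial_rw|$ or $|\nabla_{\Sph}w|$ pointwise, and the $\Lambda$ part is handled as above. Since there is no zeroth-order term, $\mathfrak a[\min\{w,1\}]\le\mathfrak a[w]$. The remaining technical point is that truncation preserves the form domain. I would verify this on smooth compact annular functions, using the closure and the lower semicontinuity of the closed form. Beurling--Deny then gives a positivity-preserving, $L^\infty$-contractive semigroup (submarkovian).

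\textbf{Domination \eqref{killed-heat-domination}.} The form of $\mathcal L_I$ is the restriction of $\mathfrak a$ to the closure of functions supported in $I\times\Sph$. That domain is an ideal of the domain of $\mathfrak a$: if $0\le|v|\le|w|$ with $w$ in the small domain and $v$ in the large one, then $v$ lies in the small domain. This follows from the zero boundary values on $I$ by approximation. Moreover, for nonnegative $v,w$ we have $\mathfrak a(v,w)\ge$ the corresponding Ouhabaz-type inequality: in the local parts the forms coincide, and in the $\Lambda$ part
\[
\langle v,\Lambda w\rangle\ge\ldots
\]
reduces to the same kernel representation. Ouhabaz's domination criterion, in the form "positive semigroups with ideal form domains and agreeing forms", then gives $0\le e^{-h\mathcal L_I}g\le e^{-h\mathcal L}g$ for $g\ge0$.

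An alternative for this step is a resolvent/minimization comparison like the one in Lemma~\ref{prelim-localization}: for $g\ge0$, compare the minimizers of $\mathfrak a[w]+\lambda\|w\|^2-2\langle g,w\rangle$ over the two domains. Using $\min\{w_I,w\}$ as a competitor and the submodularity $\mathfrak a[\min]+\mathfrak a[\max]\le\mathfrak a[w_I]+\mathfrak a[w]$ (valid for Dirichlet forms with a positive jump kernel) shows $w_I\le w$. Resolvent approximation of the exponential then yields the semigroup inequality. I expect this lattice argument to be the cleanest route.

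\textbf{Main obstacle.} The main obstacle is justifying the lattice properties on the actual form domain near $r=0$ and at infinity, where the weight $r^{n+1}$ and the $r^{-2}$ factors interact. I would first prove every inequality on smooth compact annular functions, a form core, and then pass to the closure using the cutoff estimates already established in the proof of Lemma~\ref{heat-intertwining-lemma}.
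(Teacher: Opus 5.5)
Your proposal is correct and follows essentially the paper's route: the paper also obtains the contraction inequalities for the $\Lambda$-form from the symmetric probability kernel of $I-\mathsf P_a$, and then proves the Beurling--Deny conclusions by hand (taking the positive part, respectively truncating to $[0,1]$, decreases the resolvent functional). For the domination, the paper tests the difference of the two resolvent equations with $(u_I-u)_+$ and uses that the form pairing of disjointly supported nonnegative functions is nonpositive, which is the same mechanism as your $\min\{w_I,w\}$ competitor with submodularity; in your Ouhabaz variant the form inequality is needed only on the smaller domain, where the two forms coincide, so the unfinished display $\langle v,\Lambda w\rangle\ge\ldots$ can be dropped and only the ideal property carries content.
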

\begin{proof}

If $C:\R\to\R$ is a contraction with
$C(0)=0$, the probability kernel of the spherical Poisson semigroup
gives
\[
 \langle C(h),(I-\mathsf P_a)C(h)\rangle
 \le \langle h,(I-\mathsf P_a)h\rangle.
\]
Indeed, each side is half the integral of the squared difference
of its two function values against that symmetric probability
kernel. Divide by $a$ and let $a\downarrow0$ to obtain the same
contraction inequality for the form of $\Lambda$. The Sobolev chain
rule gives it for both gradient terms in \eqref{cone-form}.
It passes to the closed form by approximation and lower
semicontinuity.

In particular, replacing a function by its positive part decreases
the functional
$\mathfrak a[h]+\lambda\|h-g\|_{L^2(d\mu)}^2$ when $g\ge0$.
The resolvent minimizer is therefore nonnegative. Truncation to
$[0,1]$ gives the analogous assertion for data in $[0,1]$.
The resolvent approximation of the exponential shows that the heat
semigroups of $\mathcal L$ and $\mathcal L_I$ preserve positivity
and do not increase the $L^\infty$ norm.

For the domination, let $u$ and $u_I$ solve the full and radial
Dirichlet resolvent equations with the same nonnegative data,
extended by zero outside $I$. Their difference has positive part
$w=(u_I-u)_+$ with zero radial trace on $\partial I$, since
$u\ge0$. Thus $w$ is an admissible test in both equations. The
gradient terms and the positive Poisson form give
\[
 \mathfrak a[u_I-u,w]\ge\mathfrak a[w],
\]
because the form pairing of two nonnegative functions with
disjoint supports is nonpositive. Subtracting the equations yields
$\mathfrak a[w]+\lambda\|w\|_2^2\le0$, hence $u_I\le u$.
The same comparison holds if the full equation has additional
nonnegative data outside $I$. Iterate this resolvent inequality
and pass to the exponential to obtain
\eqref{killed-heat-domination}.

\end{proof}

For nonnegative functions $a,b$ on the cylinder, define
\begin{equation}\label{positive-heat-interaction}
 \mathcal I_\vartheta(a,b)
  =c_s\int_0^\infty
   \left\langle r^{2s-n-\vartheta}a,
       e^{-h\mathcal L}(r^{\vartheta-2}b)\right\rangle_{L^2(d\mu)}
                           \frac{\dd h}{h^{1+s}}.
\end{equation}
Initially $a,b$ are bounded and supported in compact annuli.
For arbitrary nonnegative functions, the right side means the
increasing limit obtained by replacing each function $a$ by
$\min\{a,k\}\mathbf1_{\{e^{-k}<r<e^k\}}$, and similarly for $b$.
Positivity and the $L^2$ continuity of the heat operators show that
this definition is independent of the increasing bounded annular
approximation: take its pointwise minimum with each fixed bounded
annular truncation, pass to the limit in $L^2$ at each heat
time, and then use monotone convergence in the nonnegative time
integral. It is additive and monotone in each argument, with
the value $+\infty$ allowed. In particular,
\begin{equation}\label{interaction-order}
 0\le a\le a',\quad 0\le b\le b'
 \quad\Longrightarrow\quad
 \mathcal I_\vartheta(a,b)\le\mathcal I_\vartheta(a',b').
\end{equation}
The order of the two arguments is kept fixed throughout.

\begin{lemma}[Cross interactions and admissible tests]\label{cross-testing}
Let $v\in H^s(\Cyl)$ be real, vanish for all sufficiently
large $t$, and put $w=v_+$, $z=v_-$. For every smooth
$0\le\chi\le1$ which vanishes for large $t$ and has bounded derivatives,
\begin{equation}\label{cross-finite}
 -\langle\chi^2w,\T_{s,\vartheta}z\rangle
       =\mathcal I_\vartheta(\chi^2w,z)<\infty.
\end{equation}
This quantity is nonnegative. If the physical support of $\chi v$
is compactly contained in $B_2$, the stability test
$r^{-\beta}\chi w$ is admissible by approximation.
\end{lemma}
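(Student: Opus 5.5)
The plan is to reduce the cross pairing to the heat-semigroup formula for $\mathcal L^s$ via the cone realization \eqref{cone}, using that the two functions have disjoint supports. I first treat bounded compact annular data and then pass to general $v$ by truncation.

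\emph{Step 1: compact annular data.} Take $v\in C_c^\infty(\Cyl)$, or more generally $v\in H^s$ with compact support and bounded. Put $a=\chi^2w$ and $b=z$. By \eqref{cone},
\[
 \langle a,\T_{s,\vartheta}b\rangle=\langle r^{2s+2-\vartheta}a,\mathcal L^s(r^{\vartheta-2}b)\rangle_{L^2(dX)}.
\]
Since $dX=r^{-n}dx$ and $d\mu=r^{n+1}dr\,d\omega=r^{n+2}dX$, this equals $\langle r^{2s-n-\vartheta}a,\mathcal L^s(r^{\vartheta-2}b)\rangle_{L^2(d\mu)}$. The functions $a$ and $b$ have disjoint supports, so \eqref{prelim-positive-power} applied to $\mathcal Q=\mathcal L$ gives
\[
 \langle\alpha,\mathcal L^s\beta\rangle=-c_s\int_0^\infty\langle\alpha,e^{-h\mathcal L}\beta\rangle\,\frac{dh}{h^{1+s}},
\]
because the term $\langle\alpha,\beta\rangle$ vanishes. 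The integrand is nonnegative by Lemma~\ref{cone-positivity}. This proves \eqref{cross-finite} in this case. Finiteness holds because the left side is a finite $H^s$--$H^{-s}$ pairing: $\chi^2w\in H^s$ by Lemma~\ref{prelim-localization}, and $\T_{s,\vartheta}:H^s\to H^{-s}$. One point needs care. The formula in \eqref{prelim-positive-power} requires the operator domain, while $r^{\vartheta-2}b$ lies only in the form domain. I would handle this by the form version, pairing $\mathcal L^{s/2}$ with itself, or by first mollifying $b$ while keeping the supports separated. Another option is to use the form inequality stated after \eqref{prelim-negative-power} together with polarization.

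\emph{Step 2: general $v$.} Now let $v$ reach the inner end $t\to-\infty$. Approximate $w$ and $z$ by the truncations $w_k=\min\{w,k\}\mathbf 1_{\{|t|<k\}}$, and similarly $z_k$, smoothing the radial indicators by cutoffs in $t$ so that they remain in $H^s$. On the right side of \eqref{cross-finite}, monotone convergence follows from the definition of $\mathcal I_\vartheta$ and \eqref{interaction-order}. On the left side, I need $\chi^2 w_k\to\chi^2 w$ and $z_k\to z$ in $H^s(\Cyl)$. Positive parts preserve $H^s$ by Lemma~\ref{prelim-localization}, truncation at level $k$ converges by the contraction argument, and cutoffs at the inner end converge by dominated convergence on the Fourier side. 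The pairing is continuous, so the finite limit of the left side identifies the increasing limit on the right, giving finiteness. I expect this \emph{step to be the main obstacle}. The truncations must converge in $H^s$ while staying nonnegative and supported in $\{w>0\}$ and $\{z>0\}$, so that disjointness is preserved at every stage. I would use $(v-1/k)_+\wedge k$ times inner cutoffs $\eta_k$, which keeps disjointness exactly.

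\emph{Step 3: admissibility.} If the physical support of $\chi v$ lies in some $B_R$ with $R<2$, then $\chi w=(\chi v)_+\in H^s(\Cyl)$ is supported in $\{r\le R\}$. The density part of Lemma~\ref{cylinder-energy-forms} then shows that $r^{-\beta}\chi w$ is an admissible stability test, with \eqref{critical-stability} holding in the extended sense.
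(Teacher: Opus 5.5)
Your proposal is correct and follows essentially the paper's proof: the compact case comes from the form version of \eqref{prelim-positive-power} with disjoint supports and positivity of $e^{-h\mathcal L}$, the general case from inner cutoffs (monotone convergence of $\mathcal I_\vartheta$ played against continuity of the $H^s$--$H^{-s}$ pairing), and admissibility from the density extension in Lemma~\ref{cylinder-energy-forms}. Two things to adjust. First, you still need the fact that $H^s$ functions $g$ supported in a fixed compact annulus lie in $D(\mathcal L^{s/2})$ with $\|\mathcal L^{s/2}g\|_{L^2(d\mu)}\le C\|g\|_{H^s}$; the paper gets this from \eqref{cone} at $\vartheta=0$, and it both justifies the form version and identifies it with the cylinder pairing by smooth approximation. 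Second, the level truncation and the shift by $1/k$ are unnecessary, because multiplying $v_\pm$ by inner cutoffs already preserves disjointness, whereas mollifying $b$ would destroy it since the supports of $v_+$ and $v_-$ generally touch.
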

\phantomsection\label{cross-testing-details}
\begin{proof}
First suppose $v$ has compact cylinder support. Every $H^s$ function
$g$ supported in a fixed compact annulus belongs to
$D(\mathcal L^{s/2})$. Indeed, for smooth such $g$, \eqref{cone}
with $\vartheta=0$ gives
\[
 \|\mathcal L^{s/2}g\|_{L^2(d\mu)}^2
 =\langle r^{n-2s}g,\T_{s,0}(r^2g)\rangle
 \le C\|g\|_{H^s(\Cyl)}^2.
\]
Smooth annular approximation and closedness of the fractional form
prove the assertion. Since positive and negative parts preserve
$H^s$ for $0<s<1$, the functions
\[
 a=r^{2s-n-\vartheta}\chi^2w,
 \qquad b=r^{\vartheta-2}z
\]
belong to that form domain and satisfy $a,b\ge0$, $ab=0$.
The spectral form version of \eqref{prelim-positive-power} gives
\[
 -\langle a,\mathcal L^sb\rangle_{d\mu}
 =c_s\int_0^\infty
    \langle a,e^{-h\mathcal L}b\rangle_{d\mu}\frac{\dd h}{h^{1+s}}.
\]
Indeed, Cauchy--Schwarz for $I-e^{-h\mathcal L}$ and then for its
time integral bounds the absolute integral of its form pairing by
the product of the finite fractional energies of $a,b$.
The left side agrees with the cylinder pairing in
\eqref{cross-finite}, by smooth approximation and \eqref{cone}.
The right side is \eqref{positive-heat-interaction}: truncation
converges in $L^2$ for each heat time and then monotonically in the
time integral. This proves the identity and finiteness.

For general $v$, take smooth $0\le\rho_m\uparrow1$ with
$\rho_m=0$ on $t\le-m-1$ and $\rho_m=1$ on $t\ge-m$.
Then $\rho_mv\to v$ in $H^s$, and
$(\rho_mv)_\pm=\rho_mv_\pm\to v_\pm$ in $H^s$.
Apply the compact argument to $\rho_mv$. The interactions
$\mathcal I_\vartheta(\chi^2\rho_mw,\rho_mz)$ increase to
$\mathcal I_\vartheta(\chi^2w,z)$, whereas the cylinder pairings
converge by the boundedness of $\T_{s,\vartheta}$. This proves
\eqref{cross-finite} at the infinite inner end.

Finally let $a=\chi w$. The functions $r^{-\beta}\rho_ma$ have
compact physical support away from the origin and admit smooth
approximations in energy. By \eqref{energy-identity}, their energies
converge to $\langle a,\mathcal A_sa\rangle$; their physical $L^2$
norms converge because $r^{2s}$ is bounded on their common outer
support. The compact local range hypothesis bounds $f'(u)$ there,
so the potential terms converge as well. Stability therefore passes
to $r^{-\beta}a$.
\end{proof}

\subsection{Separated radial actions}\label{separated-kernel-section}

For separated radial supports, the differentiated Riesz kernel gives
both exponential decay and Sobolev smoothing. We use this formula
only between separated radii.

\begin{lemma}[Estimates between separated radial supports]\label{radial-kernel-details}
Put $D=D_{\mathrm{har}}$ and let
\[
 I_{2-2s}(x)=c_{n,2-2s}|x|^{-n+2-2s}
\]
be normalized so that convolution with $I_{2-2s}$ is
$(-\Delta)^{s-1}$. For radially separated arguments, the kernel of
$\T_{s,\vartheta}$ relative to cylinder measure is
\begin{equation}\label{radial-kernel-series}
 K_\vartheta(X,Y)
 =r_X^{2s-\vartheta}r_Y^{n+\vartheta-2}
       D_XD_Y I_{2-2s}(r_X\omega_X-r_Y\omega_Y).
\end{equation}
This formula defines the separated action for every
$-n<\vartheta<2s+2$. For each fixed number of cylinder derivatives
and every $0<h\le1$, there are $C,M<\infty$ such that, whenever
$|t_X-t_Y|\ge h$,
\begin{equation}\label{separated-kernel-bound}
 |\nabla_X^k\nabla_Y^\ell K_\vartheta(X,Y)|
 \le Ch^{-M}
 \begin{cases}
 e^{-(2s+2-\vartheta)(t_Y-t_X)},&t_X<t_Y,\\
 e^{-(n+\vartheta)(t_X-t_Y)},&t_Y<t_X.
 \end{cases}
\end{equation}
Here derivatives may be taken in a fixed finite collection of
angular charts. The transpose is the kernel with weight
$\vartheta^*=2s+2-n-\vartheta$. In particular both the operator and
its adjoint map $H^{-1}$ to $H^1$ between radially separated sets,
with polynomial dependence on the inverse separation. The same
assertions hold when the inner set reaches $t=-\infty$.
\end{lemma}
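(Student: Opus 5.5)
To identify the kernel, I would start from the unweighted intertwining \eqref{field-intertwining}, $\T_{s,0}D=r^{2s}DL$, together with the weight conjugation \eqref{cylinder-weight-conjugation}, $\T_{s,\vartheta}=r^{-\vartheta}\T_{s,0}r^\vartheta$. For $v\in C_c^\infty(\Cyl)$, the plan is to write $v=Dg$ with $g$ compactly supported physically. Since $D$ has symbol $j-\ii\tau$, it is invertible on functions without the $(j,\tau)=(0,0)$ obstruction; alternatively, one works directly with the pairing.

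The pairing formulation avoids inverting $D$. Take $w,v$ with separated radial supports, cylinder measure $dX=r^{-n}dx$. Then
\[
\langle w,\T_{s,\vartheta}v\rangle=\langle r^{2s-\vartheta}w, D L (D^{-1}r^\vartheta v)\rangle .
\]
I would move one $D$ onto $w$ as its Euclidean adjoint. Note that $D=\Lambda-\partial_t$, and the adjoint of $\partial_t=r\partial_r$ relative to $dx$ is $-\partial_t-n$. Then I would write $L D^{-1}=(-\Delta)^{s-1}\circ(-\Delta)D^{-1}$. Using the intertwining \eqref{local-heat-intertwining} at the level of $\mathcal R=r^{-2}D$, one has $(-\Delta)D^{-1}$ conjugate to $r^{2}\mathcal L r^{-2}$ composed back. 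Unwinding everything should leave $D_X D_Y$ acting on the Riesz kernel $I_{2-2s}(x-y)$, with the powers $r_X^{2s-\vartheta}$ and $r_Y^{n+\vartheta-2}$ coming from the conjugation weights and the measure $r_Y^n$.

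The cleanest route is to check the formula on a dense class of test pairs and avoid the inverse. Take $v=D\phi$ and $w$ arbitrary with $\phi\in C_c^\infty$ supported in an annulus separated from $\operatorname{supp} w$. Then $\T_{s,0}D\phi=r^{2s}D(-\Delta)^{s-1}(-\Delta\phi)$. Convolution with $I_{2-2s}$ against $-\Delta\phi$, followed by integration by parts in $y$ (legitimate off the diagonal by separation), must be shown to agree with the adjoint of $D_Y$ acting on $D\phi$. This reduces to the classical identity $\mathcal R\Delta=\ldots$ from Lemma~\ref{heat-intertwining-lemma}, applied to the fundamental-type kernel. The range of $\phi$ (namely $D\phi$) is dense among annular functions orthogonal to nothing problematic off $(0,0)$, and both sides are continuous in the $H^{-1}\to H^1$ separated sense. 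Together these give \eqref{radial-kernel-series}; the weight $\vartheta$ then enters only through the multiplicative powers. The formula makes sense for $-n<\vartheta<2s+2$ because those are exactly the exponents for which the decay estimates below are integrable.

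For the bounds \eqref{separated-kernel-bound}, I would expand $I_{2-2s}(r_X\omega_X-r_Y\omega_Y)$ in Gegenbauer series in the regime $r_X<r_Y$, that is, in powers $(r_X/r_Y)^j$ times $r_Y^{2-2s-n}$. The operator $D_X$ kills the leading harmonic piece $r_X^jY_j$ of each mode, so only the subleading terms $r_X^{j+2k}$ with $k\ge1$ survive. This gives the decay $r_X^2 r_Y^{-n-2s}$. Multiplied by the weights, the result is $(r_X/r_Y)^{2s+2-\vartheta}$, which is the first case. In the regime $r_X>r_Y$, the operator $D_Y$ acts on the regular part in $y$ and again removes the harmonic terms, giving $r_Y^2 r_X^{-n-2s}$ times the weights, i.e. $(r_Y/r_X)^{n+\vartheta}$. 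The polynomial loss $h^{-M}$ arises near $|t_X-t_Y|\sim h$, where the series is replaced by direct differentiation of $|x-y|^{2-2s-n}$ with $|x-y|\gtrsim h\max r$.

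The transpose claim follows by swapping $X,Y$ in the symmetric expression: $r_Y^{2s-\vartheta^*}=r_Y^{n+\vartheta-2}$ exactly when $\vartheta^*=2s+2-n-\vartheta$. Smoothness and exponential decay then give $H^{-1}\to H^1$ bounds via Schur's test applied to the derivatives. The main obstacle I expect is the rigorous kernel identification, namely justifying the removal of $D^{-1}$ and the behavior at the $(0,0)$ mode, rather than the kernel estimates themselves.
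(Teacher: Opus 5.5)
Your decay estimates, the transpose identity and the Schur/$H^{-1}\to H^1$ step are essentially the paper's argument. A Gegenbauer series in place of the paper's Taylor remainder is fine, provided you control the nonlocal $\Lambda_X,\Lambda_Y$ on whole spheres in the regime where you abandon the series, e.g.\ by $\|\Lambda g\|_{C^k(\Sph)}\le C_k\|g\|_{C^{k+2}(\Sph)}$.

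The gap is the kernel identification, and the obstruction is not only at the $(0,0)$ mode. If $\phi$ is smooth and compactly supported on the cylinder, then for every $(j,\ell)$
\[
 \int_\R e^{-jt}(D\phi)_{j,\ell}(t)\dd t
 =\int_\R e^{-jt}\bigl(j\phi_{j,\ell}-\phi_{j,\ell}'\bigr)\dd t=0 .
\]
This holds because the symbol $j-\ii\tau$ vanishes at the complex frequency $\tau=-\ii j$. Each such moment is a continuous functional on $H^{-1}$ of a fixed annulus. So the functions $D\phi$ with annular $\phi$ lie in a closed subspace of infinite codimension, and continuity plus your claimed density cannot reach a general $v$.

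Worse, suppose $t_X$ lies below $\operatorname{supp}v$. The only physically compactly supported $\phi$ with $D\phi=v$ equals $\sum_{j,\ell}c_{j,\ell}r^jY_{j,\ell}$ below $\operatorname{supp}v$, and the $c_{j,\ell}$ are exactly these moments. Hence your requirement that $\operatorname{supp}\phi$ be radially separated from $X$ fails for every $v$ with a nonzero moment.

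The paper closes this gap with the explicit right inverse
\[
 D^{-1}F(r\omega)=\int_r^\infty P_{r/\rho}[F(\rho\,\cdot)](\omega)\,\frac{\dd\rho}{\rho}.
\]
It sends $F\in C_c^\infty(\Cyl)$ to $V\in C_c^\infty(\R^n)$, which is zero above $\operatorname{supp}F$ and harmonic, but not zero, below it. Physically the $(0,0)$ mode is harmless: it only contributes a constant near the origin. Two facts then replace your density step:
\begin{enumerate}
 \item The intertwining \eqref{field-intertwining} extends to such non-annular $V$, by the inner-cutoff argument of Proposition~\ref{mellin-domain}.
 \item The factorization $-r^2\Delta=(\partial_t+\Lambda+n-2)(\Lambda-\partial_t)$ gives $-\Delta V=r^{-2}(\partial_t+\Lambda+n-2)F$. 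This is supported in $\operatorname{supp}F$ even though $V$ is not.
\end{enumerate}
Hence $(-\Delta)^sV=I_{2-2s}*(-\Delta V)$ is an integral over $\operatorname{supp}F$ only. Integrating by parts in $t_Y$, the derivative of $r_Y^{n-2}$ cancels the constant $n-2$, and this yields $r_Y^{n-2}D_YI_{2-2s}$ for $X$ on either side of the support. What must be radially separated from $X$ is $\operatorname{supp}(-\Delta V)$, not $\operatorname{supp}V$.

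This factorization, equivalently the polarized form of \eqref{radial-derivative-isometry}, is the identity you actually need. The heat intertwining $\mathcal L\mathcal R=\mathcal R(-\Delta)$ you cite does not by itself show that $(-\Delta)D^{-1}$ is a local first-order operator.
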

\begin{proof}
For $F\in C_c^\infty(\Cyl)$ define the right inverse
\[
 D^{-1}F(r\omega)
   =\int_r^\infty P_{r/\rho}[F(\rho\,\cdot)](\omega)
                         \frac{d\rho}{\rho}.
\]
Differentiating the Poisson semigroup gives $DD^{-1}F=F$.
Below the radial support of $F$ this function is harmonic, by
the semigroup law for the ball Poisson operators; above that
support it is zero. Its harmonic expansion in the inner ball
and differentiation on the intervening annulus show that
$V=D^{-1}F$ belongs to $C_c^\infty(\R^n)$.

The factorization of the ordinary polar Laplacian gives
\[
 -\Delta V
 =r^{-2}(\partial_t+\Lambda+n-2)DV
 =r^{-2}(\partial_t+\Lambda+n-2)F.
\]
Since $n\ge2>2-2s$, the Riesz potential above is locally
integrable and its usual Fourier identity yields
\[
 (-\Delta)^sV=I_{2-2s}*(-\Delta V).
\]
The intertwining identity for the corrected derivative, with
$F=r^\vartheta v$, therefore gives
\[
 \T_{s,\vartheta}v
 =r^{2s-\vartheta}D(-\Delta)^sD^{-1}(r^\vartheta v).
\]
The inner-cutoff argument in Proposition~\ref{mellin-domain}
extends this identity to smooth compact physical functions, including
the function $V$ above, which need not vanish at the origin. We first use the
identity at the admissible weights. The wider interval in the
statement refers to the separated kernel and its transpose.
For the separated action every kernel in this expression is
smooth on the entire pair of spheres. Write
$dy=r_Y^n\,dt_Y\,d\omega_Y$. Integration by parts in $t_Y$
and selfadjointness of $\Lambda_Y$ give
\begin{align*}
 &\int I_{2-2s}(x-y)r_Y^{-2}
       (\partial_{t_Y}+\Lambda_Y+n-2)F(Y)\,dy\\
 &\qquad=\int_\Cyl r_Y^{n-2}
       (\Lambda_Y-\partial_{t_Y})I_{2-2s}(x-y)F(Y)\,dY.
\end{align*}
The derivative of $r_Y^{n-2}$ cancels the term $n-2$.
There is no radial boundary term because $F$ has compact radial
support. Applying $D_X$ proves \eqref{radial-kernel-series}.

We give the derivative and decay estimates explicitly. For a
smooth function on the sphere,
\[
 \|\Lambda g\|_{C^k(\Sph)}\le C_k\|g\|_{C^{k+2}(\Sph)}.
\]
Indeed, differentiate the ball Poisson formula at the boundary
after subtracting the value and tangential linear Taylor term
at the observation point. The remainder is quadratic, hence
integrable against the resulting kernel of order
$|\omega-\eta|^{-n}$; the linear term integrates to zero by
reflection symmetry. Applying the same argument to rotation
derivatives proves the displayed estimate for every $k$.
Consequently, on two radially separated spheres, every fixed
number of applications of $D_X,D_Y$ and cylinder derivatives
is bounded by finitely many ordinary derivatives of the Riesz
kernel. On $h\le|t_X-t_Y|\le\log2$ these derivatives cost at
most a fixed power of $h^{-1}$, since
$|r_X\omega_X-r_Y\omega_Y|\ge|r_X-r_Y|$.

For $r_X\le r_Y/2$, expand the Riesz kernel in $x$ at zero.
The constant and linear Taylor terms are harmonic polynomials,
and $D_X$ annihilates them. Its remainder, after any fixed
number of normalized radial or angular derivatives, is bounded by
\[
 C r_X^2r_Y^{-n-2s}.
\]
The same estimate holds after applying $D_Y$: differentiation
in the outer variable preserves the homogeneity, and the
preceding spherical estimate controls its angular action.
Multiplication by the powers in \eqref{radial-kernel-series}
gives $(r_X/r_Y)^{2s+2-\vartheta}$.
For $r_Y\le r_X/2$, expand instead in $y$. The factor $D_Y$
removes the constant and linear terms, leaving
$Cr_Y^2r_X^{-n-2s}$; the resulting bound is
$(r_Y/r_X)^{n+\vartheta}$. These Taylor arguments also apply
after all prescribed cylinder derivatives. Homogeneity removes
the common radial scale and proves \eqref{separated-kernel-bound}.
Both exponents are positive precisely in the stated weight
interval. Symmetry of $I_{2-2s}(x-y)$ and commutation of $D_X,D_Y$
give $K_\vartheta(Y,X)=K_{\vartheta^*}(X,Y)$ directly.

Let $K$ include separated smooth cutoff factors, whose derivatives
cost powers of a width $h_*$. The decay just proved implies, for
$a,b\in\{0,1\}$,
\begin{equation}\label{separated-kernel-Schur}
 \sup_X\int|\nabla_X^a\nabla_Y^bK(X,Y)|\,dY
 +\sup_Y\int|\nabla_X^a\nabla_Y^bK(X,Y)|\,dX
 \le Ch_*^{-M}.
\end{equation}
Angular integration is over the compact sphere and radial
integration is bounded by the two exponential decays. This
also proves the assertion at the infinite inner end.
Write $v=f-\operatorname{div}F$, where
$f=(1-\Delta_\Cyl)^{-1}v$ and $F=\nabla f$. Then
\[
 \|f\|_2+\|F\|_2\le C\|v\|_{H^{-1}}.
\]
Integrating by parts in $Y$ and using the $L^2$ Schur estimate
from \eqref{separated-kernel-Schur}, also after one $X$
derivative, gives
\[
 \|Kv\|_{H^1}\le Ch_*^{-M}\|v\|_{H^{-1}},\qquad
 \|Kv\|_{H^{1-s}}\le Ch_*^{-M}\|v\|_{H^{s-1}}.
\]
The second bound uses $H^{s-1}\subset H^{-1}$ and
$H^1\subset H^{1-s}$. Density completes the proof.
\end{proof}

\subsection{Cutoffs and separated tails}\label{radial-cutoff-proof}\label{separated-tails-proof}

Radial localization is controlled directly by the extension energy.
For a real radial cutoff $\chi$ and a real $v\in H^s(\Cyl)$, write
\[
 \mathfrak C_\chi[v]
 =\Ree\langle\chi v,\T_{s,\vartheta}(\chi v)\rangle
       -\Ree\langle\chi^2v,\T_{s,\vartheta}v\rangle.
\]

\begin{lemma}[Radial localization and commutator estimates]\label{cutoff}
Let $1/2\le R<R'\le3/4$ and let $v\in H^s(\Cyl)$ be real-valued
and vanish for $r\ge R'$. Choose smooth radial cutoffs
$0\le\chi,\psi\le1$, with $\chi=1$ for $r\le R$,
$\operatorname{supp}\chi\subset\{r<R'\}$, $\psi=1$ for $r\le1/4$,
and $\operatorname{supp}\psi\subset\{r<3/8\}$.
Choose $\chi$ from a fixed smooth transition profile, so that
$|\partial_t^k\chi|\le C_k(R'-R)^{-k}$ for every required derivative,
with the profile constants $C_k$ fixed; choose $\psi$ once and for all.
Then, for every $0<\varepsilon\le1$,
\begin{equation}\label{C-bound}
 |\mathfrak C_\chi[v]|
 \le\varepsilon\|\chi v\|_{H^{-\sigma}}^2
   +C_\varepsilon(R'-R)^{-p}
                 \|(1-\psi)v\|_{H^{-\sigma}}^2.
\end{equation}
The finite exponent $p$ is independent of $\varepsilon$. All constants
grow at most polynomially in $(R'-R)^{-1}$; $C_\varepsilon$ may also
depend on the fixed weight.
\end{lemma}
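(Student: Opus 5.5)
We would realize the commutator through the extension of Lemma~\ref{cylinder-extension}. By density (Lemma~\ref{cylinder-energy-forms}) it suffices to treat real $v$ with compact cylinder support. Put $W=P_\gamma v$. Because $\chi$ depends only on $t$, the function $\chi W$ is an energy test with trace $\chi v$. The Green identity \eqref{cylinder-extension-green} with $Z=\chi^2W$ gives
\[
 \Ree\langle\chi^2v,\T_{s,\vartheta}v\rangle=d_s\mathcal B_\gamma(W,\chi^2W).
\]
Comparing $\mathcal B_\gamma(\chi W,\chi W)=\mathcal E_\gamma[\chi W]$ with $\mathcal B_\gamma(W,\chi^2W)$, the angular, $\varphi$-derivative and potential terms cancel exactly, since $\chi$ is radial. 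What remains is the classical commutator
\[
 \int p\,\chi'^2W^2-\gamma\int p\,(\chi^2)'W^2 ,
\]
obtained by expanding $(\chi W)_t^2$ and integrating the drift term by parts in $t$.

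However, the quantity $\langle\chi v,\T_{s,\vartheta}(\chi v)\rangle$ is not $d_s\mathcal E_\gamma[\chi W]$, because $\chi W$ is not the extension of $\chi v$. The Green identity with test $Z=\chi W$ gives instead $d_s\mathcal B_\gamma(P_\gamma(\chi v),\chi W)$. We therefore write $P_\gamma(\chi v)=\chi W+Y$, where $Y$ has zero trace and solves the extension equation with the commutator source $[\,\partial_{tt}+2\gamma\partial_t,\chi]W=\chi''W+2\chi'W_t+2\gamma\chi'W$. Testing with $\chi W$ yields an exact formula for $\mathfrak C_\chi[v]$:
\[
 d_s\Bigl(\int p\,\chi'^2W^2-\gamma\int p(\chi^2)'W^2
 +\mathcal B_\gamma(Y,\chi W)\Bigr).
\]
Every term carries a factor $\chi'$ or $\chi''$, and these are supported in the transition layer $R\le r\le R'$ with sizes $(R'-R)^{-k}$.

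The first two terms are bounded by $C(R'-R)^{-2}\|\mathbf 1_{\{R\le r\le R'\}}W\|_{L^2(p)}^2$. The $Y$-term is controlled by coercivity of $\mathcal E_\gamma$ on zero-trace functions (Lemma~\ref{angular-groundstate}) together with the bounded drift. After moving the $t$-derivative off $W_t$ onto the test, its source lies in the dual of the zero-trace energy space with norm $C(R'-R)^{-2}\|\mathbf 1_{\text{layer}}W\|_{L^2(p)}$. Hence everything reduces to an $L^2(p)$ bound for the extension on the layer. The mass bound \eqref{cylinder-extension-mass} gives $\|W\|_{L^2(p)}\le C\|v\|_{H^{-\sigma}}$, but this involves all of $v$ rather than the two pieces appearing in \eqref{C-bound}.

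This splitting is the main obstacle. We decompose $v=\psi v+(1-\psi)v$. Since $P_\gamma$ is linear, the contribution of $(1-\psi)v$ is bounded by $C(R'-R)^{-p}\|(1-\psi)v\|_{H^{-\sigma}}^2$ via \eqref{cylinder-extension-mass}. The inner piece $\psi v$ is supported in $r<3/8$, radially separated from the layer $r\ge1/2$. Its extension restricted there should obey an off-diagonal bound in which the energy decays exponentially in $t$-distance, in the spirit of Lemma~\ref{radial-kernel-details}. We would obtain this by a Caccioppoli/Agmon argument for the extension equation with an exponential radial weight, using the positive gap in \eqref{dn-coercivity} to absorb the drift. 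The result is a bound by $C(R'-R)^{-M}\|\psi v\|_{H^{-\sigma}}$. Cross terms are then handled by Young's inequality.

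Finally, $\|\psi v\|_{H^{-\sigma}}$ is not a term on the right of \eqref{C-bound}. Since $\chi=1$ on $\operatorname{supp}\psi$, we have $\psi v=\psi\chi v$, and \eqref{quantitative-multiplier} gives $\|\psi v\|_{H^{-\sigma}}\le C\|\chi v\|_{H^{-\sigma}}$. The same reasoning, with the layer contribution of $\chi v$ split in two, produces the absorbable term $\varepsilon\|\chi v\|_{H^{-\sigma}}^2$; the constant $C_\varepsilon$ comes from Young's inequality with the polynomial layer factors. The delicate point is ensuring that the inner-piece interaction appears only through $\varepsilon\|\chi v\|^2$, not with an $O(1)$ coefficient. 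For this we would use the exponential separation gain (fixed separation $\ge\log(4/3)$) to make the corresponding constant small, if necessary after replacing $\psi$ by a cutoff at a smaller radius and treating the intermediate shell as part of $(1-\psi)v$.
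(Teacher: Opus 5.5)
There is a genuine gap, and it sits exactly at the point you flagged as delicate. Write $a=\psi v$, $b=(1-\psi)v$ and $W=W_a+W_b$. Your plan bounds the terms of the exact formula separately. The pieces quadratic in $W_a$ then survive: for instance $\int p\,\chi'^2W_a^2$, and the $Y_a$-part of $\mathcal B_\gamma(Y,\chi W)$. Each is of size up to $C(R'-R)^{-M}\|\psi v\|_{H^{-\sigma}}^2$, even granting an Agmon-type off-diagonal decay.

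The separation between $\{r<3/8\}$ and $\{r\ge1/2\}$ is the fixed number $\log(4/3)$, and the factors $\chi',\chi''$ cost powers of $(R'-R)^{-1}$. So the coefficient of this pure-inner term can be neither made smaller than an arbitrary $\varepsilon$ nor kept independent of $R'-R$. Young's inequality only helps with cross terms. Shrinking $\psi$ is not available either: $\psi$ is fixed in the statement. Moreover, in Proposition~\ref{weighted-partial} the piece $(1-\psi)v$ must live in a fixed annulus away from the origin so that \eqref{signed-q-energy} applies.

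What your route actually yields is
\[
 |\mathfrak C_\chi[v]|\le C(R'-R)^{-M}\bigl(\|\chi v\|_{H^{-\sigma}}^2+\|(1-\psi)v\|_{H^{-\sigma}}^2\bigr).
\]
This cannot be absorbed into $\kappa_\vartheta\|\chi w\|_{H^{s-1}}^2$ later. Separately, your reduction to layer norms of $W$ is inaccurate. One has $\mathcal B_\gamma(Y,\chi W)=-\mathcal E_\gamma[Y]-4\gamma\int p\,\chi W\,Y_t$, and the last term involves $W$ on all of $\{r<R'\}$, not only on the transition layer.

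The missing idea is to polarize the quadratic form $\mathfrak C_\chi$ itself, rather than the extension on the layer. Since $\chi\psi=\chi^2\psi=\psi$, one has exactly $\mathfrak C_\chi[\psi v]=0$, and
\[
 \mathfrak C_\chi[v]=\mathfrak C_\chi[b]
 +\Ree\langle a,\T_{s,\vartheta}((\chi-1)b)\rangle
 +\Ree\langle(\chi-\chi^2)b,\T_{s,\vartheta}a\rangle .
\]
For $\mathfrak C_\chi[b]$ your extension computation is enough, provided you bound every term by the global mass $\|P_\gamma b\|_{L^2(p)}\le C\|b\|_{H^{-\sigma}}$ with no localization at all. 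This is the paper's auxiliary estimate \eqref{quantitative-commutator}.

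The two cross terms pair functions with radially separated supports: $r<3/8$ against $r>R\ge1/2$. Lemma~\ref{radial-kernel-details} shows the separated action maps $H^{-1}\to H^1$, hence $H^{-\sigma}\to H^{\sigma}$, with fixed constants. Together with $a=\psi\chi v$ and \eqref{quantitative-multiplier}, this bounds the cross terms by $C(R'-R)^{-M}\|\chi v\|_{H^{-\sigma}}\|(1-\psi)v\|_{H^{-\sigma}}$. Each cross term carries exactly one outer factor, so Young's inequality gives \eqref{C-bound}. No decay estimate for the extension $P_\gamma(\psi v)$ is needed.
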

\begin{proof}
We first establish the auxiliary estimate
\begin{equation}\label{quantitative-commutator}
 |\mathfrak C_\chi[v]|
 \le C_{n,s,\vartheta}\|\chi\|_{W^{2,\infty}(\R)}^2
                         \|v\|_{H^{s-1}(\Cyl)}^2
\end{equation}
for $v\in H^s(\Cyl)$ and every bounded smooth real cutoff whose
first two derivatives are bounded and have compact support.
Use the extension $P_\gamma$ and the forms $\mathcal E_\gamma,\mathcal B_\gamma$
from Lemma~\ref{cylinder-extension}, with $\gamma=\beta-1+\vartheta$.
Multiplication by the present cutoffs preserves their energy domain
and its zero-trace subspace.

Set $W=P_\gamma v$, $U=P_\gamma(\chi v)$, and $Z=U-\chi W$. The function $Z$
has zero trace. For every zero-trace test $\psi$,
$\mathcal B_\gamma(W,\chi\psi)=\mathcal B_\gamma(U,\psi)=0$.
Expanding these identities and integrating the term containing $W_t$
by parts in $t$ gives
\[
 \mathcal B_\gamma(Z,\psi)
 =-\int pW\,[2\chi'\psi_t+(\chi''-2\gamma \chi')\psi].
\]
Taking $\psi=Z$ and using coercivity proves
\[
 \mathcal E_\gamma[Z]
 \le C(\|\chi'\|_\infty+\|\chi''\|_\infty)^2
                         \|W\|_{L^2(p)}^2.
\]

The identity $\mathcal B_\gamma(U,Z)=0$ also gives
\[
 \mathcal E_\gamma[U]-\mathcal E_\gamma[\chi W]
   =-\mathcal E_\gamma[Z]+4\gamma \int p(\chi W)_tZ
   =-\mathcal E_\gamma[Z]-4\gamma \int p\chi WZ_t.
\]
On the other hand, expansion of $\mathcal B_\gamma(W,\chi^2W)$ yields
\[
 \mathcal E_\gamma[\chi W]-\mathcal B_\gamma(W,\chi^2W)
       =\int p(\chi'^2-2\gamma \chi\chi')W^2.
\]
Combining these two equalities with the boundary Green identity gives
\[
 d_s^{-1}\mathfrak C_\chi[v]
   =\int p(\chi'^2-2\gamma \chi\chi')W^2
      -\mathcal E_\gamma[Z]-4\gamma \int p\chi WZ_t.
\]
Finally, the extension mass estimate \eqref{cylinder-extension-mass} gives
$\|W\|_{L^2(p)}\le C\|v\|_{H^{s-1}}$.
Together with coercivity and the bound for $Z$, this proves
\eqref{quantitative-commutator}.

The argument starts with smooth compact cylinder traces. Bounded
cutoff multiplication and approximation in the energy domain justify
the zero-trace tests and the integrations by parts. Approximation of
$v$ in $H^s$ then proves the estimate as stated. These arguments also
allow $\chi$ to tend to one at $t=-\infty$. Finally, the first two
derivatives of a fixed transition profile have polynomial bounds in
$(R'-R)^{-1}$, giving the asserted polynomial cutoff dependence.

To obtain \eqref{C-bound}, put $a=\psi v$ and $b=(1-\psi)v$. Since $\chi a=a$ and
$\chi^2a=a$, one has $\mathfrak C_\chi[a]=0$. Estimate~\eqref{quantitative-commutator}
bounds $\mathfrak C_\chi[b]$ by
$C(R'-R)^{-M}\|b\|_{H^{-\sigma}}^2$.
The remaining mixed expression is exactly
\[
 \Ree\langle a,\T_{s,\vartheta}((\chi-1)b)\rangle
       +\Ree\langle(\chi-\chi^2)b,\T_{s,\vartheta}a\rangle.
\]
Each nonzero term has one radius at most $3/8$ and the other
greater than $R\ge1/2$, hence a radial separation of at least $1/8$.
The separated estimates \eqref{separated-kernel-Schur} for the operator
and its adjoint, together with smooth cutoff multiplication, bound it by
\[
 C(R'-R)^{-M}\|\chi v\|_{H^{-\sigma}}
                         \|(1-\psi)v\|_{H^{-\sigma}}.
\]
Here $a=\psi\chi v$, and the fixed multiplication by $\psi$ is bounded
on $H^{-\sigma}$. More generally, each cutoff multiplication used
above has norm on $H^{\pm\sigma}$ bounded by a fixed power of its
$W^{1,\infty}$ norm: use the product rule on $H^1$, the $L^2$ bound,
interpolation for $0<\sigma<1$, and duality for the negative order.
Young's inequality proves \eqref{C-bound}, with
an exponent independent of $\varepsilon$.
\end{proof}

\begin{lemma}[Tail estimates for separated radial supports]
Let $\bar u=\chi_0u$ be the localization in
Section~\ref{regularity-appendix}, and fix $\vartheta$ in
\eqref{weight-interval}.\label{separated-source-tails} Let $1/2\le R<R'\le3/4$ and set
$h_*=R'-R$. Choose smooth radial cutoffs $0\le\chi,\eta\le1$ such that
\[
 \begin{split}
 &\chi=1\text{ on }r\le R,\qquad
   \operatorname{supp}\chi\subset\{r<R+h_*/3\},\\
 &\eta=1\text{ on }r\le R+2h_*/3,\qquad
   \operatorname{supp}\eta\subset\{r<R'\},
 \end{split}
\]
with $k$th cylinder derivatives bounded by $C_kh_*^{-k}$.
Then, for some finite $p>0$,
\begin{equation}\label{separated-field-tail}
 \bigl\|\chi\T_{s,\vartheta}
       [ (\eta-1)r^{-\vartheta}D_{\mathrm{har}}\bar u]\bigr\|_{H^{1-s}(\Cyl)}
 \le Ch_*^{-p}\|u\|_{L_s^1(\R^n)}.
\end{equation}
If $\chi_{\rm src}$ is a fixed smooth radial cutoff equal to one on
$r\le1/4$ and supported in $r<1/2$, then
\begin{equation}\label{separated-smoothing-tail}
 \left|r^{-\vartheta}\mathcal S_s
       [(1-\chi_{\rm src})D_{\mathrm{har}}\bar u](r\omega)\right|
 \le Cr^{2s-\vartheta}\|u\|_{L_s^1(\R^n)},\qquad 0<r<1/8.
\end{equation}
The constants depend only on $n,s,\vartheta$ and the cutoff profiles.
\end{lemma}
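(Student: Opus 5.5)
Both estimates come from the same mechanism. The function to which the operator is applied is supported in a region radially separated from where we measure. On that separated piece, $D_{\mathrm{har}}\bar u$ equals $D_{\mathrm{har}}u$ up to the region where $\chi_0$ cuts off, and it has no smoothness cost beyond the weighted tail. The plan is to use Lemma~\ref{radial-kernel-details} for \eqref{separated-field-tail} and the explicit averaging formula \eqref{intro-poisson-averaging} for \eqref{separated-smoothing-tail}.

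\emph{First estimate.} Write $g=(\eta-1)r^{-\vartheta}D_{\mathrm{har}}\bar u$. It is supported in $r\ge R+2h_*/3$, while $\chi$ lives in $r<R+h_*/3$. The supports are therefore separated by $h_*/3$ in $t$, up to a factor bounded by $C$ since $r\le2$. The function $\bar u$ has compact support in $B_{7/4}$, so $g$ has compact cylinder support in $\log(R+2h_*/3)\le t\le\log(7/4)$.

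By Lemma~\ref{radial-kernel-details}, $\chi\T_{s,\vartheta}$ maps $H^{-1}$ of that outer set to $H^1\subset H^{1-s}$ with norm $Ch_*^{-M}$. It therefore suffices to bound $\|(\eta-1)r^{-\vartheta}D_{\mathrm{har}}\bar u\|_{H^{-1}(\Cyl)}$ by $C\|u\|_{L_s^1}$. I do not expect this to hold directly, since $D_{\mathrm{har}}\bar u$ involves a derivative of $u$, controlled only by local energy and not by the tail.

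The better route is to put the derivatives onto the kernel. In \eqref{radial-kernel-series}, the factor $D_Y$ acting on $I_{2-2s}$ can be transposed onto the datum and then back. Concretely, $\T_{s,\vartheta}$ applied to $r^{-\vartheta}D_{\mathrm{har}}\bar u$ is, by \eqref{field-intertwining} and \eqref{cylinder-weight-conjugation}, equal to $r^{2s-\vartheta}D_{\mathrm{har}}L(\cdot)$. For the separated part one then pairs the kernel $D_XD_Y I_{2-2s}$ against $\bar u$ after integrating $D_Y$ by parts, which moves $D_Y^*=-\partial_t+\Lambda+(\text{const})$ onto the smooth kernel. This leaves an integral of a smooth kernel against $\bar u$ over the separated region, with derivative bounds $Ch_*^{-M}$ in $X$. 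Because $\bar u$ is compactly supported in $B_{7/4}$, where the weight $1+|y|^{n+2s}$ is bounded, we get $\int|\bar u|\le C\|u\|_{L_s^1}$. The cutoff $\eta-1$ also contributes one derivative under the integration by parts, which costs $h_*^{-1}$.

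This yields pointwise $C^1$ bounds of $\chi\T_{s,\vartheta}g$ on the compact-in-$t$ region up to $r<R'$. On the inner end $t\to-\infty$ we need decay. That comes from \eqref{separated-kernel-bound}, which gives $e^{-(2s+2-\vartheta)(t_Y-t_X)}$ with positive exponent, hence $L^2$ and $H^1$ integrability. This closes with $p=M+1$.

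\emph{Second estimate.} Write $g=(1-\chi_{\rm src})D_{\mathrm{har}}\bar u$, supported in $r\ge1/4$, and evaluate at $r<1/8$. In \eqref{intro-poisson-averaging}, only $h$ with $e^hr\ge1/4$ contribute, so $h\ge\log(1/(4r))\ge\log2$. There the weight $(1-e^{-2h})^{-s}$ is bounded and $e^{-2sh}\le(4r)^{2s}$. I would not estimate $P_{e^{-h}}g$ directly by the size of $g$, because $g$ carries a derivative.

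Instead, substitute $\rho=e^hr$ and write $D_{\mathrm{har}}=\Lambda-\partial_t$ acting at radius $\rho$. The term $\partial_t=\rho\partial_\rho$ is integrated by parts in $\rho$; at $\rho=\infty$ there is no boundary term because $\bar u$ has compact support. The term $\Lambda$ combined with $P_{r/\rho}$ becomes the smooth operator $\Lambda P_{r/\rho}$, whose kernel is bounded since $r/\rho\le1/2$. The derivative of the smooth weight $(1-\chi_{\rm src})\,e^{-2sh}(1-e^{-2h})^{-s}$ is likewise bounded by $C\rho^{-2s}r^{2s}$. The result is a bound of the form $Cr^{2s}\int_{|y|>1/4}|\bar u(y)|\,|y|^{-n-2s}dy\le Cr^{2s}\|u\|_{L_s^1}$. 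Multiplying by $r^{-\vartheta}$ gives \eqref{separated-smoothing-tail}.

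The main obstacle is the first estimate. We must check that transposing $D_Y$ in the kernel representation is legitimate for the non-smooth $\bar u$, and that the cutoff commutator terms do not reintroduce local derivatives of $u$. I would justify both by mollifying $\bar u$ and passing to the limit, using that the final bound involves only $\int|\bar u|$.
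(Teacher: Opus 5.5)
Your proposal is correct and is essentially the paper's argument. In both estimates you move the derivative off $\bar u$ onto a smooth kernel between radially separated sets, pay $h_*^{-1}$ for the cutoff derivative, keep the inner-end decay supplied by $D_X$, and are left with $\int_{\{1/4<|y|<2\}}|\bar u|\dd y\le C\|u\|_{L_s^1(\R^n)}$. The paper only packages these integrations by parts differently: it uses the product rule $aD_{\mathrm{har}}\bar u=D_{\mathrm{har}}(a\bar u)+(\partial_t a)\bar u$ (with $a=\eta-1$, respectively $a=1-\chi_{\rm src}$), combined with the intertwining identity \eqref{field-intertwining}, respectively the source identity \eqref{intro-fractional-source}, so that the main terms are read off from the explicit separated kernel of $(-\Delta)^s$. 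Two harmless slips in your version: the transpose is $D_Y^*=\Lambda_Y+\partial_{t_Y}$ up to zeroth-order weight terms, not $-\partial_t+\Lambda$; and in the second estimate the $\rho$-derivative also falls on $P_{r/\rho}$, producing $-\Lambda P_{r/\rho}$, which is bounded for $r/\rho\le1/2$ and in fact cancels your $\Lambda$ term.
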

\begin{proof}
Write $D=D_{\mathrm{har}}$, $a=\eta-1$, and $a'=\partial_ta$.
The product rule $aD\bar u=D(a\bar u)+a'\bar u$ and the
intertwining identity give
\begin{equation}\label{tail-kernel-transfer}
 \chi\T_{s,\vartheta}(r^{-\vartheta}aD\bar u)
 =\chi r^{2s-\vartheta}D(-\Delta)^s(a\bar u)
       +\chi\T_{s,\vartheta}(r^{-\vartheta}a'\bar u).
\end{equation}
Thus the estimates require no derivative of $u$ on the outer
annulus. Both source factors are supported in the fixed annulus
$A_0=\{1/4<r<2\}$, at radial distance at least $h_*/3$ from
$\operatorname{supp}\chi$.

For the first term, $a\bar u$ vanishes near every observation
sphere. Hence the ordinary fractional Laplacian is given there by
\[
 (-\Delta)^s(a\bar u)(x)
 =-c_{n,s}\int_{A_0}\frac{a(y)\bar u(y)}{|x-y|^{n+2s}}\,dy.
\]
The derivative bounds for this separated Euclidean kernel cost
only powers of $h_*^{-1}$. Applying $D_X$ uses the spherical
$C^k$ estimate in the preceding proof. Near $x=0$ it removes
the constant and linear Taylor terms, so the resulting kernel
and each required cylinder derivative are bounded by
$Ch_*^{-M}r_X^2$, uniformly for $y\in A_0$.
After multiplication by $r_X^{2s-\vartheta}$ and $\chi$, their
$L^2(dX)$ norms are at most $Ch_*^{-M}$, because
$2s+2-\vartheta>0$.

For the second term of \eqref{tail-kernel-transfer}, use
\eqref{radial-kernel-series}. Estimate
\eqref{separated-kernel-bound} gives, uniformly for $Y\in A_0$,
an $H^1(dX)$ bound $Ch_*^{-M}$ for
$\chi(X)K_\vartheta(X,Y)$. In the source factor,
$|r_Y^{-\vartheta}a'(Y)|\le Ch_*^{-1}$.
Minkowski's inequality in these two kernel representations yields
\begin{equation}\label{tail-kernel-H1}
 \|\chi\T_{s,\vartheta}(r^{-\vartheta}aD\bar u)\|_{H^1(\Cyl)}
 \le Ch_*^{-p}\int_{A_0}|\bar u(y)|\,dy
 \le Ch_*^{-p}\|u\|_{L_s^1(\R^n)}.
\end{equation}
Cylinder and Euclidean measures, as well as the tail weight,
are comparable on $A_0$. The inclusion $H^1\subset H^{1-s}$ proves
\eqref{separated-field-tail}.

For the second assertion put $b=1-\chi_{\rm src}$ and
$b'=\partial_tb$. The same product rule and the positive-source
identity \eqref{intro-fractional-source} give
\[
 \mathcal S_s(bD\bar u)
   =F_s[(-\Delta)^s(b\bar u)]+\mathcal S_s(b'\bar u).
\]
Since $b\bar u$ vanishes on $B_{1/4}$, the preceding separated
Euclidean kernel formula gives
\[
 \|(-\Delta)^s(b\bar u)\|_{L^\infty(B_{1/8})}
 \le C\|u\|_{L_s^1}.
\]
By \eqref{source-average-bound}, the first term is bounded by
$Cr^{2s}\|u\|_{L_s^1}$.

In the second term, $b'$ is supported in
$1/4\le r_Y\le1/2$. For $r<1/8$ the ratio
$\rho=r/r_Y$ is at most $1/2$. The Poisson kernel is uniformly
bounded for these ratios, and the scalar weight in
\eqref{intro-poisson-averaging} is bounded by $C\rho^{2s}$.
Changing variables from $h$ to $t_Y$ therefore gives
\[
 |\mathcal S_s(b'\bar u)(r\omega)|
 \le Cr^{2s}\int_{A_0}|\bar u(Y)|\,dY
 \le Cr^{2s}\|u\|_{L_s^1}.
\]
Multiplication by $r^{-\vartheta}$ proves
\eqref{separated-smoothing-tail}. All identities were proved first
for smooth outer data. Approximation on the fixed annulus,
the qualitative regularity of $\bar u$, and the displayed estimates
justify passage to the localized solution.
\end{proof}

\section{Stability and interior regularity}\label{localization-section}
Throughout this section $n\ge2$. A lower bound for the difference of the
quadratic forms of $\T_{s,\vartheta}$ and $\mathcal A_s$, combined with
stability, yields a weighted bound for $F_s[f(u)]$.

For the source estimate and the reduction below, set
\begin{equation}\label{annular-energy-definition}
 A_*=B_{15/8}\setminus\overline B_{1/8},\qquad
 M_s(u)^2=[u]_{H^s(A_*)}^2+\|u\|_{L_s^1(\R^n)}^2.
\end{equation}
\begin{proposition}[Weighted source estimate]\label{weighted-partial}
Let $n\ge2$, $0<s<1$, and let $u$ have the solution and nonlinearity
assumptions of Theorem~\ref{full}, without a dimension restriction.
Suppose $\vartheta$ satisfies \eqref{weight-interval} and
\begin{equation}\label{weighted-gap}
 \Ree t_{j,\vartheta}(\tau)-a_j(\tau)
 \ge\kappa_\vartheta(1+j^2+\tau^2)^{s-1}
 \qquad(j\ge0,\ \tau\in\R),\qquad\kappa_\vartheta>0.
\end{equation}
With $M_s(u)$ defined in \eqref{annular-energy-definition}, one has
\begin{equation}\label{general-partial}
 \int_{B_{1/8}}|x|^{-n-2\vartheta}|F_s[f(u)](x)|^2\dd x
 \le C M_s(u)^2,
\end{equation}
where $C$ depends only on $n,s,\vartheta,\kappa_\vartheta$.
\end{proposition}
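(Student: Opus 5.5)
The plan is to test stability with the positive part of a localized weighted corrected derivative. The derivative equation \eqref{shifted-q-equation} and the spectral gap \eqref{spectral-energy-gap} then give an $H^{-\sigma}$ bound for $q_\vartheta$. The averaging map $\mathcal S_{s,\vartheta}$ and the source identity turn this into \eqref{general-partial}.

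\textbf{Step 1: the test and its cross term.} Set $v=\chi q_\vartheta$, with $\chi$ a radial cutoff as in Lemma~\ref{cutoff}, equal to one for $r\le1/2$ and supported in $r<3/4$. By Lemma~\ref{field-admissibility}, $v\in H^s(\Cyl)$ and is approximated by inner cutoffs. Write $w=v_+$, $z=v_-$. Lemma~\ref{cross-testing} shows that $r^{-\beta}w$ is admissible in the stability inequality. Stability in the form \eqref{critical-stability} then gives
\[
 \int r^{2s}f'(u)w^2\le\langle w,\mathcal A_sw\rangle.
\]
Next pair \eqref{shifted-q-equation} with $\chi^2w$. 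Since $Vq_\vartheta\chi^2w=Vw^2$ and $B_\vartheta\ge0$, we get
\[
 \Ree\langle\chi^2w,\T_{s,\vartheta}q_\vartheta\rangle
 \le\int r^{2s}f'(u)w^2+\langle\chi^2w,E_\vartheta\rangle.
\]
To handle the left side, split $q_\vartheta=\chi q_\vartheta+(1-\chi)q_\vartheta$. The outer piece is controlled through the product rule for $D_{\mathrm{har}}$ and the tail estimate \eqref{separated-field-tail}. Lemma~\ref{cutoff} handles the commutator $\mathfrak C_\chi$. In the main piece $\langle w,\T_{s,\vartheta}(w-z)\rangle$, the cross term $-\langle w,\T_{s,\vartheta}z\rangle=\mathcal I_\vartheta(w,z)\ge0$ has a favorable sign. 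Combining with stability yields
\[
 \Ree\langle w,\T_{s,\vartheta}w\rangle-\langle w,\mathcal A_sw\rangle
 +\mathcal I_\vartheta(w,z)
 \le\varepsilon\|v\|_{H^{-\sigma}}^2+C_\varepsilon M_s(u)^2 .
\]
Here the right side uses three facts. First, $\|(1-\psi)q_\vartheta\|_{H^{-\sigma}}\le CM_s(u)$, since the annular energy controls $D_{\mathrm{har}}u$ in $H^{s-1}$ on annuli. Second, $\|\chi E_\vartheta\|_{H^{1-s}}$ is bounded by \eqref{E-bound}. Third, duality pairs this against $\|w\|_{H^{s-1}}$. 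The gap \eqref{spectral-energy-gap} then bounds $\kappa_0\|w\|_{H^{-\sigma}}^2+\mathcal I_\vartheta(w,z)$ by the same right side.

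\textbf{Step 2: the negative part (the main obstacle).} The bound on $w$ alone does not control $\|v\|_{H^{-\sigma}}$, because taking positive parts can lose cancellation. The retained interaction $\mathcal I_\vartheta(w,z)$ and the positive heat semigroup of $\mathcal L$ must pay for $z$. What I would try first is to use that the average $\mathcal S_{s,\vartheta}q_\vartheta$ is, up to tails, the nonnegative quantity $r^{-\vartheta}F_s[f(u)]\ge0$ (Proposition~\ref{mellin-domain}). Hence the averaged negative part $\mathcal S_{s,\vartheta}z$ is dominated by $\mathcal S_{s,\vartheta}w$ plus tail errors of size \eqref{separated-smoothing-tail}. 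In effect, a nonnegative average forces the negative part to be compensated by the positive part. I expect this to require a comparison between the Poisson averaging kernel and the heat kernel in $\mathcal I_\vartheta$, with domination \eqref{killed-heat-domination} used to localize. The end result should be $\|\mathcal S_{s,\vartheta}z\|_{L^2}\lesssim\|w\|_{H^{-\sigma}}+\mathcal I_\vartheta(w,z)^{1/2}+$ tails. This is the delicate step.

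\textbf{Step 3: conclusion.} By Lemma~\ref{averaging-symbol-bound}, $\|\mathcal S_{s,\vartheta}w\|_{L^2}\le C\|w\|_{H^{-\sigma}}$. With Step~2 this gives an $L^2(\Cyl)$ bound for $\mathcal S_{s,\vartheta}v$. The $\varepsilon\|v\|_{H^{-\sigma}}^2$ term must be absorbed. I would either bound $\|v\|_{H^{-\sigma}}$ by $\|w\|_{H^{-\sigma}}+\|z\|_{H^{-\sigma}}$, with $z$ controlled as in Step~2, or run the argument at nested radii and absorb via Lemma~\ref{prelim-iteration}, using the polynomial cutoff dependence. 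On $r<1/8$ the cutoff equals one, so \eqref{shifted-smoothing} and Proposition~\ref{mellin-domain} identify $\mathcal S_{s,\vartheta}q_\vartheta$ with $r^{-\vartheta}F_s[f(u)]$, up to errors $Cr^{2s-\vartheta}\|u\|_{L_s^1}$ that are square integrable in $dt$ since $\vartheta<2s$. Finally, $dX=r^{-n}dx$ turns $\int_{r<1/8}|r^{-\vartheta}F_s|^2\,dX$ into the left side of \eqref{general-partial}.
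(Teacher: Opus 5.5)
Your proposal has a genuine gap, and it is in Step~1, not where you place it. Stability can only be tested with a positive part, so the commutator that arises is $\mathfrak C_\chi$ of the localized positive part $(q_\vartheta)_+$, not of the signed function. Lemma~\ref{cutoff} then leaves the error
\[
 C_\varepsilon(R'-R)^{-p}\|(1-\psi)\eta(q_\vartheta)_+\|_{H^{-\sigma}}^2 ,
\]
which is the negative-order norm of the \emph{positive part} on the transition annulus. The annular energy controls only the signed quantity $\|(1-\psi)\eta q_\vartheta\|_{H^{-\sigma}}$, as in \eqref{signed-q-energy}. Positive parts are not bounded in $H^{-\sigma}$; this is the oscillating example preceding Lemma~\ref{rectification}. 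So your claimed right-hand side $\varepsilon\|v\|_{H^{-\sigma}}^2+C_\varepsilon M_s(u)^2$ does not follow.

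The missing idea is to apply Lemma~\ref{rectification} on that fixed outer annulus to $v=(1-\psi)\eta q_\vartheta$. It bounds the positive-part norm by $C\varepsilon^{-\sigma/s}$ times the signed norm plus $\varepsilon\,\mathcal I_\vartheta(v_+,v_-)$. That interaction is in turn dominated by the interaction retained at the \emph{larger} radius. This is why the paper introduces the monotone quantity $\Phi(R)=\|\mathbf 1_{\{r<R\}}w\|_{H^{-\sigma}}^2+\mathcal I_\vartheta(\mathbf 1_{\{r<R\}}w,\mathbf 1_{\{r<R\}}z)$. It then proves $\Phi(R)\le a\Phi(R')+C_a(R'-R)^{-P}M_s(u)^2$, using the polynomial dependence of every cutoff constant on $R'-R$, and closes with Lemma~\ref{prelim-iteration}. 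Thus the retained interaction pays for rectification on the outer annulus, not for $z$ near the origin.

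Relatedly, \eqref{separated-field-tail} needs two nested cutoffs, with $\eta=1$ on $\operatorname{supp}\chi$ and $q_\vartheta$ localized by $\eta$ before taking positive parts. With your single $\chi$, the pieces $(1-\chi)q_\vartheta$ and $\chi^2w$ overlap on the transition region, so there is no radial separation.

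Your Step~2 is unnecessary, because the negative part near the origin is never needed. Since $q_\vartheta\le w$ and $\mathcal S_{s,\vartheta}$ preserves positivity, one has
\[
 0\le r^{-\vartheta}F_s[f(u)]\le\mathcal S_{s,\vartheta}(\chi_{\rm src}w)+\mathcal S_{s,\vartheta}((1-\chi_{\rm src})q_\vartheta)+Cr^{2s-\vartheta}\|u\|_{L_s^1(\R^n)} .
\]
The first term is bounded in $L^2$ by $C\|\chi_{\rm src}w\|_{H^{-\sigma}}\le C\Phi(1/2)^{1/2}$, and an upper bound for this nonnegative quantity is all that \eqref{general-partial} asks for.

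Likewise, the small term in \eqref{C-bound} is $\varepsilon\|\chi w\|_{H^{-\sigma}}^2$, which the gap \eqref{weighted-gap} absorbs directly. The problem of absorbing $\varepsilon\|v\|_{H^{-\sigma}}^2$ for the signed $v$ in your Step~3 therefore does not arise. Your proposed way around it, controlling $\|v\|_{H^{-\sigma}}$ through $z$, would be circular in any case. Sign information for $\mathcal S_{s,\vartheta}q_\vartheta$ is available only on $r<1/8$, while $z$ would have to be controlled on all of $\operatorname{supp}\chi$.
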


\subsection{The weighted source estimate}

Positivity of $\mathcal S_{s,\vartheta}$ allows us to bound $F_s[f(u)]$ using
the positive part of $q_\vartheta$. The annular energy, however, controls
$q_\vartheta=r^{-\vartheta}q$ only in $H^{s-1}$. Passing to its
positive part can destroy cancellation, and this operation is not
bounded in a negative Sobolev norm.

For example, let $0\le\eta\in C_c^\infty(\R)$ be nonzero and set
\[
 v_m(t,\omega)=\eta(t)\sin(mt).
\]
Recall $\sigma=1-s\in(0,1)$.
The Fourier transform in $t$ is a difference of translates of
$\widehat\eta$ centered at $\pm m$. Splitting into neighborhoods of
these two frequencies and their complement, where $\widehat\eta$
decays rapidly, gives $\|v_m\|_{H^{-\sigma}(\Cyl)}\to0$.
On the other hand, the $2\pi$-periodic function $(\sin t)_+$ has
mean $1/\pi$. Its nonconstant Fourier modes tend to zero in
distributions after replacing $t$ by $mt$, so
$(v_m)_+\rightharpoonup\eta/\pi$. Pairing with $\eta$ shows that
their $H^{-\sigma}$ norms do not tend to zero.
The positive part in this example has acquired a nonzero
low-frequency component. In the proof, its cost is controlled by
the interaction between the positive and negative parts. That
interaction has a favorable sign in the tested equation for $q_\vartheta$,
and we retain it when applying stability.

Fix henceforth a weight $\vartheta$ in \eqref{weight-interval}.
Quadratic forms use $\Ree\T_{s,\vartheta}$; the tested equation uses
the full operator $\T_{s,\vartheta}$.

\begin{lemma}[Positive parts in a negative Sobolev space]\label{rectification}
Let $v\in H^s(\Cyl)$ be real-valued and supported in a fixed
compact cylinder annulus $Q$. For every $0<\varepsilon\le1$,
\begin{equation}\label{rectification-bound}
 \|v_+\|_{H^{-\sigma}}^2
 \le C\varepsilon^{-\sigma/s}\|v\|_{H^{-\sigma}}^2
       +\varepsilon\mathcal I_\vartheta(v_+,v_-).
\end{equation}
Here $\sigma=1-s$, and the constants may depend on $Q$ and the
fixed weight. The interaction is finite by Lemma~\ref{cross-testing}.
\end{lemma}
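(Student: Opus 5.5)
The plan is to compare $\|v_+\|_{H^{-\sigma}}^2$ with the retained interaction by expanding one factor as $v_+=v+v_-$.

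\textbf{Step 1 (splitting).} Write $G=(1-\Delta_{\Cyl})^{-\sigma}$, so that $\|v_+\|_{H^{-\sigma}}^2=\langle v_+,Gv_+\rangle$. Then
\[
 \|v_+\|_{H^{-\sigma}}^2=\langle v_+,Gv\rangle+\langle v_+,Gv_-\rangle .
\]
Cauchy--Schwarz in $H^{-\sigma}$ bounds the first term by $\tfrac12\|v_+\|_{H^{-\sigma}}^2+\tfrac12\|v\|_{H^{-\sigma}}^2$, and the first half is absorbed on the left. The second term is a pairing of nonnegative functions with disjoint supports. By \eqref{bessel-positive-pairing} and \eqref{prelim-negative-power} it is a nonnegative heat pairing,
\[
 \langle v_+,Gv_-\rangle=\frac1{\Gamma(\sigma)}\int_0^\infty h^{\sigma-1}e^{-h}
 \langle v_+,e^{h\Delta_{\Cyl}}v_-\rangle\dd h .
\]
This term is the one to bound by the interaction.

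\textbf{Step 2 (target bound).} The exponent $\varepsilon^{-\sigma/s}$ is exactly what Young's inequality with exponents $1/\sigma$ and $1/s$ produces from
\[
 \langle v_+,Gv_-\rangle\le C\,\mathcal I_\vartheta(v_+,v_-)^{\sigma}\,
 \bigl(\|v_+\|_{H^{-\sigma}}+\|v\|_{H^{-\sigma}}\bigr)^{2s}.
\]
Any $\|v_+\|_{H^{-\sigma}}^{2s}$ factor that appears is absorbed in the same way as in Step 1. To prove this bound, split the time integral at $\delta\in(0,1]$ and aim for a small-time part at most $C\delta\,\mathcal I_\vartheta(v_+,v_-)$ and a large-time part at most $C\delta^{-\sigma/s}$ times the squared norms. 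Optimizing in $\delta$ then gives the displayed product and hence \eqref{rectification-bound}.

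\textbf{Step 3 (small times).} On the fixed compact annulus $Q$ the weights $r^{2s-n-\vartheta}$, $r^{\vartheta-2}$ and the density of $d\mu$ are bounded above and below. The next step is a local short-time comparison of heat kernels for disjointly supported nonnegative data, $e^{h\Delta_{\Cyl}}(X,Y)\le Ce^{-ch\mathcal L}(X,Y)$ for $X,Y\in Q$ and $h\le1$, possibly after a fixed rescaling of time. It would follow from Gaussian bounds: both operators are uniformly elliptic second order on $Q$, and $\mathcal L$ has an extra nonnegative jump part coming from $\Lambda$ and positive killing only away from $Q$. Lemma~\ref{cone-positivity}, with the killed semigroup domination \eqref{killed-heat-domination} on an interval slightly larger than the radial projection of $Q$, reduces the comparison to a fixed interval. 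Since $h^{\sigma-1}=h^{-1-s}h^{\sigma+s}=h^{-1-s}h$, the part $\int_0^\delta$ is at most $C\delta$ times the time integral in \eqref{positive-heat-interaction}, that is, at most $C\delta\,\mathcal I_\vartheta(v_+,v_-)$.

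\textbf{Step 4 (large times; main obstacle).} For $h\ge\delta$ the kernel $e^{-h}e^{h\Delta_{\Cyl}}$ is smooth, and I would bound $\int_\delta^\infty h^{\sigma-1}e^{-h}\langle v_+,e^{h\Delta}v_-\rangle\dd h$ by $C\delta^{-\sigma/s}(\|v_+\|_{H^{-\sigma}}^2+\|v\|_{H^{-\sigma}}^2)$. The naive substitution $v_-=v_+-v$ reproduces $\langle v_+,G_{\ge\delta}v_+\rangle$ with coefficient essentially one, which cannot be absorbed, so that route fails. What I would try first is to transfer part of the interaction as well. For $h\in[\delta,\delta^{-\sigma/s}]$ I would use the semigroup property $e^{h\Delta}=e^{(h-\delta')\Delta}e^{\delta'\Delta}$ together with a Nash-type bound on $Q$, namely that $e^{\delta'\Delta}$ maps $H^{-\sigma}$ to $L^\infty$ with norm a power of $\delta'^{-1}$. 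For the remaining very large times I would use the uniform smoothing of $e^{h\Delta}$. The target is an inequality with a factor strictly less than one in front of $\|v_+\|_{H^{-\sigma}}^2$.

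If this direct route stalls, the fallback is to prove the displayed interpolation inequality of Step 2 by a frequency cutoff at $\lambda\sim\delta^{-1}$ instead of a time cutoff. The high-frequency part of $\langle v_+,Gv_-\rangle$ would then be estimated through the short-time representation of Step 3, and the low-frequency part through the identity $v_+=v+v_-$ combined with smoothing. I expect this last balancing, keeping the coefficient of $\|v_+\|^2$ below one while losing only $\delta^{-\sigma/s}$, to be the genuine difficulty.
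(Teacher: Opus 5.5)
Your Step 1 is correct. It is exactly the paper's positivity inequality $\langle w,Gw\rangle\le\langle v,Gv\rangle+2\langle w,Gz\rangle$ with $w=v_+$, $z=v_-$. The proof has a genuine gap where you flag it: nothing controls the large-time part $\int_\delta^\infty h^{\sigma-1}e^{-h}\langle v_+,e^{h\Delta_{\Cyl}}v_-\rangle\dd h$. For $v=\eta(t)\sin(mt)$ this part is asymptotically $\|v_+\|_{H^{-\sigma}}^2-O(\delta^\sigma)$, while $\|v\|_{H^{-\sigma}}\to0$. So it can never be bounded by $C\delta^{-\sigma/s}\|v\|_{H^{-\sigma}}^2$ plus an absorbable multiple of $\|v_+\|_{H^{-\sigma}}^2$. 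Neither the Nash-type bound nor the frequency-split fallback supplies a way to charge it to the interaction instead.

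Step 2 is also misstated. A bound $\langle v_+,Gv_-\rangle\le C\mathcal I_\vartheta^{\sigma}(\|v_+\|+\|v\|)^{2s}$ does not give \eqref{rectification-bound}. Once Young's inequality makes the coefficient of $\|v_+\|^2$ smaller than one, the coefficient of $\mathcal I_\vartheta$ is bounded below by a fixed constant. With $\|v\|$ negligible you only get $\|v_+\|^2\le C\mathcal I_\vartheta$. The lemma is equivalent to $\|v_+\|^2\le C(\|v\|^2+\mathcal I_\vartheta^{\sigma}\|v\|^{2s})$, with no $v_+$ on the right. The arbitrarily small $\varepsilon$ is essential later, where it becomes the hole-filling coefficient $a<2^{-P}$.

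The missing idea is to shift the spectral parameter instead of cutting in time. For $M\ge1$ put $G_M=(M^2+A)^{-\sigma}$ with a positive operator $A\ge c>0$. Your Step 1 inequality holds verbatim for $G_M$. In its heat representation
\[
 h^{-s}e^{-M^2h}=h^{-1-s}\,he^{-M^2h}\le (eM^2)^{-1}h^{-1-s}\qquad(h>0),
\]
so the entire cross term is at most $CM^{-2}$ times the heat interaction $-\langle w,A^sz\rangle$. There is no large-time remainder at all. The only loss is the spectral comparison $G_M\le A^{-\sigma}\le CM^{2\sigma}G_M$. It places $M^{2\sigma}$ on $\|v\|^2$ and turns $M^{-2}$ into $M^{2\sigma-2}=M^{-2s}$. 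Choosing $M\sim\varepsilon^{-1/(2s)}$ then gives exactly $\varepsilon^{-\sigma/s}$ and $\varepsilon$.

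The paper takes $A=\mathcal L_I$, the cone operator with zero radial boundary values on an interval $I$ containing the radial projection of $Q$. Interpolation and duality from the $H^1$ form equivalence give $\|a\|^2_{H^{-\sigma}(\Cyl)}\asymp\langle a,A^{-\sigma}a\rangle_{L^2(d\mu)}$ for $a$ supported in $Q$. Then \eqref{killed-heat-domination}, together with the bounded weights on $Q$, gives $-\langle v_+,A^sv_-\rangle\le C_Q\mathcal I_\vartheta(v_+,v_-)$ for all times at once.

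This also makes your Step 3 unnecessary. Its Gaussian comparison between $e^{h\Delta_{\Cyl}}$ and $e^{-ch\mathcal L}$ is asserted rather than proved, must handle the nonlocal angular term $2r^{-2}\Lambda$ of $\mathcal L$, and covers only $h\le1$. If you want to keep $G=(1-\Delta_{\Cyl})^{-\sigma}$, use $G_M=(M^2+1-\Delta_{\Cyl})^{-\sigma}$ and $G\le(1+M^2)^{\sigma}G_M$. You would then still have to compare the cylinder heat interaction with $\mathcal I_\vartheta$, including times $h\ge1$.
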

\begin{proof}
Choose a radial interval $I\Subset(0,\infty)$ whose interior
contains $Q$, and let $A=\mathcal L_I$ on $L^2(I\times\Sph,d\mu)$.
Its form domain has zero radial trace. On this fixed interval the
weights in \eqref{cone-form} are bounded above and below. Also
\[
 0\le\langle h,\Lambda h\rangle
   =\sum_{j,\ell}j|h_{j,\ell}|^2
   \le C\bigl(\|\nabla_{\Sph}h\|_2^2+\|h\|_2^2\bigr).
\]
The radial Poincare inequality consequently gives
\begin{equation}\label{annular-form-equivalence}
 c\|h\|_{H^1(I\times\Sph)}^2
  \le\langle h,Ah\rangle
  \le C\|h\|_{H^1(I\times\Sph)}^2,
 \qquad h\in H^1_0(I;L^2(\Sph))\cap L^2(I;H^1(\Sph)).
\end{equation}
In particular $A\ge c>0$.

This form comparison also controls the negative norm. For $0<\rho<1$,
by
\eqref{quadratic-interpolation}, a map bounded on $L^2$ and on the
form domains is bounded on the fractional domains. Apply this to
zero extension from $I\times\Sph$ to
$\Cyl$, and to restriction after multiplication by a fixed smooth
cutoff supported in $I$ and equal to one on $Q$. Both maps are
bounded on $L^2$ and on the form domains by
\eqref{annular-form-equivalence}. Thus zero extension is bounded
from $D(A^{\rho/2})$ to $H^\rho(\Cyl)$, and cutoff restriction
is bounded in the reverse direction. These assertions also account
for the smooth bounded change between $d\mu$ and cylinder measure.
Duality, with the cutoff equal to one on the support, now gives
\begin{equation}\label{annular-negative-equivalence}
 \|a\|_{H^{-\rho}(\Cyl)}^2
  \asymp\langle a,A^{-\rho}a\rangle_{L^2(d\mu)},
 \qquad \operatorname{supp}a\subset Q.
\end{equation}
More explicitly, the dual pairing on the right integrates against
$r^{n+2}\dd t\dd\omega$. Multiplication by this weight and its
inverse, localized to $I$, is bounded on $H^\rho$ and its dual by
the same endpoint argument. It therefore does not change the
equivalence. The preceding argument with $\rho=s$ also shows that
the positive and negative parts of $v$ belong to $D(A^{s/2})$.

Put $w=v_+$, $z=v_-$ and $G_M=(M^2+A)^{-\sigma}$ for $M\ge1$.
All pairings until the last step use $d\mu$. Since the heat
semigroup of $A$ is positive and $wz=0$, the fractional-power
formulas give
\begin{align*}
 \langle w,G_Mz\rangle
  &=\frac1{\Gamma(\sigma)}\int_0^\infty
       h^{-s}e^{-M^2h}\langle w,e^{-hA}z\rangle\dd h,\\
 I_A(w,z):=-\langle w,A^sz\rangle
  &=c_s\int_0^\infty
       h^{-1-s}\langle w,e^{-hA}z\rangle\dd h\ge0.
\end{align*}
The second pairing is a form pairing and is finite. The identities
follow first by restricting the heat integrals to a compact time
interval and then by monotone convergence and the spectral form
formula. The elementary bound $he^{-M^2h}\le(eM^2)^{-1}$ yields
\begin{equation}\label{mass-domination}
 \langle w,G_Mz\rangle\le CM^{-2}I_A(w,z).
\end{equation}

Positivity and $v=w-z$ imply
\[
 \langle w,G_Mw\rangle
 \le\langle v,G_Mv\rangle+2\langle w,G_Mz\rangle.
\]
Since $A\ge c>0$, comparison at each spectral value gives
$G_M\le A^{-\sigma}\le CM^{2\sigma}G_M$. Therefore
\[
 \langle w,A^{-\sigma}w\rangle
 \le CM^{2\sigma}\langle v,A^{-\sigma}v\rangle
                  +CM^{-2s}I_A(w,z).
\]

Finally, the two weights $r^{2s-n-\vartheta}$ and
$r^{\vartheta-2}$ have positive lower bounds on $Q$. Positivity
and \eqref{killed-heat-domination} therefore give, for every $h>0$,
\[
 \langle w,e^{-hA}z\rangle_{d\mu}
 \le \langle w,e^{-h\mathcal L}z\rangle_{d\mu}
 \le C_Q\left\langle r^{2s-n-\vartheta}w,
       e^{-h\mathcal L}(r^{\vartheta-2}z)\right\rangle_{d\mu}.
\]
Integrating in $h$ proves
\[
 I_A(w,z)\le C_Q\mathcal I_\vartheta(w,z).
\]
Use \eqref{annular-negative-equivalence} with $\rho=\sigma$ and
choose $M$ to be a sufficiently large fixed multiple of
$\varepsilon^{-1/(2s)}$. The interaction coefficient is then at
most $\varepsilon$, while $M^{2\sigma}$ has the required size
$C\varepsilon^{-\sigma/s}$. This proves
\eqref{rectification-bound}.
\end{proof}

We now test the equation for $q_\vartheta$ against $(q_\vartheta)_+$ and
use stability to remove the potential term. The coercivity estimate controls the negative Sobolev norm,
while the retained interaction controls the cost of taking positive
parts on the outer annulus. The cutoff and tail estimates then close
an iteration between concentric annuli.

\begin{proof}[Proof of Proposition~\ref{weighted-partial}]
Recall $q_\vartheta=r^{-\vartheta}q$ and set
\[
 w=(q_\vartheta)_+,\qquad z=(q_\vartheta)_-.
\]
Equation~\eqref{shifted-q-equation} reads
\[
 \T_{s,\vartheta}q_\vartheta
   =Vq_\vartheta-B_\vartheta+E_\vartheta,
\]
with
$V=r^{2s}f'(u)$ and $B_\vartheta\ge0$.

We first bound $q_\vartheta$ on the annulus where the cutoffs
vary. Fix a compact annulus $K\Subset A_*$ containing
$\{1/4\le r\le3/4\}$, and let the smooth radial cutoff $\varphi$
be supported in $K$. Put $b=r^{-\vartheta}\varphi$. The product rule gives
\[
 \varphi q_\vartheta=D_{\mathrm{har}}(b\bar u)
                              +(\partial_t b)\bar u.
\]
Here $D_{\mathrm{har}}:H^s\to H^{s-1}$ by
$|j-\ii\tau|\le C\langle(j,\tau)\rangle$, and $L^2\subset H^{s-1}$.
The quantitative localization in Lemma~\ref{prelim-localization},
including the fixed cutoff defining $\bar u$, therefore proves
\begin{equation}\label{signed-q-energy}
 \begin{aligned}
 \|\varphi q_\vartheta\|_{H^{s-1}(\Cyl)}^2
 &\le C\bigl(\|\varphi\|_\infty+\|\partial_t\varphi\|_\infty\bigr)^2
         \|u\|_{H^s(A_*)}^2\\
 &\le C\bigl(\|\varphi\|_\infty+\|\partial_t\varphi\|_\infty\bigr)^2M_s(u)^2.
 \end{aligned}
\end{equation}
The constants depend on $n,s,\vartheta,K$.
In particular a cutoff with transition width $h_*$ and first derivative
bounded by $C/h_*$ costs at most $Ch_*^{-2}$ in this squared estimate.

For $1/2\le R<R'\le3/4$, put $h_*=R'-R$.
Take $\chi=1$ on $r\le R$, supported in $r<R+h_*/3$, and
$\eta=1$ on $r\le R+2h_*/3$, supported in $r<R'$, with values in
$[0,1]$. Localize $q_\vartheta$ before taking its positive and negative parts:
\[
 q_\eta=\eta q_\vartheta,\qquad
 w_\eta=\eta w,\qquad z_\eta=\eta z.
\]
On $\operatorname{supp}\chi$,
\begin{equation}\label{truncated-q-equation}
 \T_{s,\vartheta} q_\eta =V q_\eta -B_\vartheta+E_\eta,
 \qquad E_\eta=E_\vartheta+\T_{s,\vartheta}((\eta-1) q_\vartheta).
\end{equation}
Apply \eqref{separated-field-tail} with
$q_\vartheta =r^{-\vartheta}D_{\mathrm{har}}\bar u$. Together with \eqref{E-bound}, it gives
\begin{equation}\label{new-tail-error}
 \|\chi E_\eta\|_{H^{1-s}}\le C h_*^{-p}\|u\|_{L_s^1(\R^n)}.
\end{equation}

Local $C^1$ regularity of $f$ suffices for the following tests.
Lemma~\ref{field-admissibility} gives $q_\eta\in H^s(\Cyl)\cap C(\Cyl)$,
so its positive and negative parts belong to $H^s$.
Lemma~\ref{cross-testing} justifies the finite interaction and the
physical stability test, including the cutoff limit at the origin.
The potential $V=r^{2s}f'(u)$ is bounded on the support of the outer
cutoff. By \eqref{weighted-defect-L2}, $B_\vartheta$ is in $L^2$ there;
consequently
\[
 \langle\chi^2w_\eta,B_\vartheta\rangle
 =\int_{\Cyl}\chi^2w_\eta B_\vartheta
 \quad\hbox{is finite and nonnegative}.
\]
To pair \eqref{truncated-q-equation} with $\chi^2w_\eta$, approximate
$w_\eta$ by origin cutoffs followed by nonnegative smooth functions
on compact cylinders, and multiply the tests by $\chi^2$.
Convergence in $H^s$ handles $\T_{s,\vartheta}$, and convergence
in $L^2$ handles $Vq_\eta$ and $B_\vartheta$.
For the error, pair $\chi E_\eta\in H^{1-s}\subset L^2$, by
\eqref{new-tail-error}, with the convergent factor $\chi w_\eta$.
These solution-dependent bounds justify the testing limits.

Apply stability to $r^{-\beta}\chi w_\eta$, using
\eqref{energy-identity} and \eqref{critical-stability}, and pair
\eqref{truncated-q-equation} with
$\chi^2w_\eta$. Since $q_\eta =w_\eta-z_\eta$,
the pairing with $\T_{s,\vartheta}$ is exactly
\[
 \Ree\langle\chi^2w_\eta,\T_{s,\vartheta} q_\eta \rangle
 =\langle\chi w_\eta,\Ree\T_{s,\vartheta}\chi w_\eta\rangle
   -\mathfrak C_\chi[w_\eta]+I_{\chi,\eta}.
\]
The stability test bounds the potential pairing above by
$\langle\chi w_\eta,\mathcal A_s\chi w_\eta\rangle$. Hence
\begin{align}
 &\langle\chi w_\eta,(\Ree\T_{s,\vartheta}-\mathcal A_s)
                        \chi w_\eta\rangle
 +\langle\chi^2w_\eta,B_\vartheta\rangle+I_{\chi,\eta}\notag\\
 &\hspace{25mm}\le\mathfrak C_\chi[w_\eta]
                      +\langle\chi^2w_\eta,E_\eta\rangle,\label{retained-identity}\\
 I_{\chi,\eta}
 &:=-\langle\chi^2w_\eta,\T_{s,\vartheta}z_\eta\rangle
   =\mathcal I_\vartheta(\chi^2\eta w,\eta z)\ge0.
 \notag
\end{align}
We retain $I_{\chi,\eta}$ for Lemma~\ref{rectification} and discard
the nonnegative defect term.

Since $\eta=1$ on $\operatorname{supp}\chi$, we have
$\chi w_\eta=\chi w$. Lemma~\ref{cutoff} gives, for $0<\varepsilon\le1$,
\[
 |\mathfrak C_\chi[w_\eta]|
 \le\varepsilon\|\chi w\|_{H^{s-1}}^2
 +C_\varepsilon h_*^{-p_1}
                 \|(1-\psi)\eta w\|_{H^{s-1}}^2.
\]
For the other error, Sobolev duality and \eqref{new-tail-error} give
\begin{align*}
 |\langle\chi^2w_\eta,E_\eta\rangle|
 &=|\langle\chi w,\chi E_\eta\rangle|\\
 &\le\|\chi w\|_{H^{s-1}}\|\chi E_\eta\|_{H^{1-s}}\\
 &\le\delta\|\chi w\|_{H^{s-1}}^2
       +C_\delta h_*^{-2p_2}\|u\|_{L_s^1(\R^n)}^2,
       \qquad 0<\delta\le1.
\end{align*}
The exponents $p_1,p_2$ are fixed independently of
$\varepsilon,\delta$ and of the solution.

The coercivity estimate \eqref{weighted-gap} bounds the first term
in \eqref{retained-identity} below by
$\kappa_\vartheta\|\chi w\|_{H^{s-1}}^2$.
Choose $\varepsilon+\delta\le\kappa_\vartheta/2$ and absorb both
small terms into this bound. Increasing the constant and taking
$p=\max\{p_1,2p_2\}$ gives
\begin{equation}\label{pre-fill}
 \|\chi w\|_{H^{s-1}}^2+I_{\chi,\eta}
 \le C h_*^{-p}\|(1-\psi)\eta w\|_{H^{s-1}}^2
                         +C h_*^{-p} M_s(u)^2.
\end{equation}

Define the monotone function
\begin{equation}\label{monotone-F}
 \Phi(R)=\|\mathbf1_{\{r<R\}}w\|_{H^{s-1}}^2
       +\mathcal I_\vartheta(\mathbf1_{\{r<R\}}w,
                             \mathbf1_{\{r<R\}}z).
\end{equation}
For $0\le a\le b$, the positivity in \eqref{bessel-positive-pairing} gives
\[
 \|b\|_{H^{-\sigma}}^2-\|a\|_{H^{-\sigma}}^2
 =\langle b-a,(1-\Delta_{\Cyl})^{-\sigma}(b+a)\rangle\ge0.
\]
Thus the first term is monotone; the second is monotone by
\eqref{interaction-order}.

To check finiteness, choose a fixed smooth $0\le\zeta\le1$, equal to
one on $r\le3/4$ and supported in $r<1$. Lemma~\ref{field-admissibility}
gives $\zeta q_\vartheta\in H^s\cap C$; apply
Lemma~\ref{cross-testing} to $\zeta q_\vartheta$ with a test cutoff equal to
one on its support. Positivity bounds the interaction in $\Phi(3/4)$
by that finite pairing. The norm term is finite because
$\mathbf1_{\{r<3/4\}}w\in L^2\subset H^{s-1}$.
Only finiteness for the individual solution is used at this point.

Since $\chi=\eta=1$ on $r<R$, the left side of \eqref{pre-fill}
dominates $\Phi(R)$. Apply Lemma~\ref{rectification} to
$v=(1-\psi)\eta q_\vartheta$, which is supported in a fixed compact cylinder
annulus. Its interaction is at most the second term of $\Phi(R')$.
Its signed norm is bounded by \eqref{signed-q-energy}, applied to
$\varphi=(1-\psi)\eta$. Thus its square is at most
$Ch_*^{-p_0}M_s(u)^2$ with the fixed choice $p_0=2$.

For any prescribed $0<a<1$, choose the parameter in
Lemma~\ref{rectification} to be
\[
 \varepsilon=c a h_*^p,
\]
where $c$ is small compared with the constant in \eqref{pre-fill}.
The sign-interaction term then has coefficient at most $a$, while the
signed-norm term costs at most
\[
 C_a h_*^{-p-p\sigma/s-p_0}M_s(u)^2.
\]
Thus
\begin{equation}\label{hole-fill}
 \Phi(R)\le a\Phi(R')+C_a(R'-R)^{-P} M_s(u)^2.
\end{equation}
For the fixed weight $\vartheta$, the exponent $P=P(n,s,\vartheta)$
is finite and independent of $a$: the cutoff constants have polynomial
growth, and \eqref{rectification-bound} costs only a fixed power of
its epsilon.

Choose $a<2^{-P}$ and apply Lemma~\ref{prelim-iteration} to
\eqref{hole-fill} on $[1/2,3/4]$. Monotonicity and $\Phi(3/4)<\infty$
verify its boundedness hypothesis for the fixed solution. Hence
\begin{equation}\label{native-q}
 \Phi(1/2)\le C M_s(u)^2.
\end{equation}
Thus $(q_\vartheta)_+$ is controlled near the origin
by the energy and weighted tail of $u$.

We finally apply the positive averaging operator to recover the source.
Choose $0\le\chi_{\rm src}\le1$ equal to one for
$r\le1/4$ and supported in $r<1/2$. Positivity of the averaging operator gives
\[
 \mathcal S_{s,\vartheta} q_\vartheta 
 \le \mathcal S_{s,\vartheta}(\chi_{\rm src}w)
       +\mathcal S_{s,\vartheta}((1-\chi_{\rm src}) q_\vartheta).
\]
The symbol bound \eqref{uniform-smoothing-symbol} and Plancherel
give $\mathcal S_{s,\vartheta}:H^{s-1}(\Cyl)\to L^2(\Cyl)$.
Multiplication by the fixed smooth cutoff $\chi_{\rm src}$ is bounded
on $H^{s-1}(\Cyl)$, by smooth localization and duality. Since
$\chi_{\rm src}w=\chi_{\rm src}\mathbf1_{\{r<1/2\}}w$, we obtain
\[
 \begin{aligned}
 \|\mathcal S_{s,\vartheta}(\chi_{\rm src}w)\|_{L^2}
 &\le C\|\chi_{\rm src}w\|_{H^{s-1}}
 \le C\|\mathbf1_{\{r<1/2\}}w\|_{H^{s-1}}\\
 &\le C \Phi(1/2)^{1/2}\le C M_s(u),
 \end{aligned}
\]
where the last inequality is \eqref{native-q}.
For $r<1/8$ the second term is exactly
$r^{-\vartheta}\mathcal S_s((1-\chi_{\rm src})q)$ and is bounded in absolute
value by $Cr^{2s-\vartheta}\|u\|_{L_s^1(\R^n)}$, by
\eqref{separated-smoothing-tail}.

The localization error $\mathfrak h$ in
\eqref{tail-error} has the same bound after applying the source-averaging formula. Hence
\[
 0\le r^{-\vartheta}F_s[f(u)]
 \le \mathcal S_{s,\vartheta}(\chi_{\rm src}w)+Cr^{2s-\vartheta}\|u\|_{L_s^1(\R^n)}.
\]
Since $\vartheta<2s$, the last term belongs to $L^2(dt\dd \omega)$ on
$r<1/8$. This proves \eqref{general-partial}.
\end{proof}

\subsection{Source decay and interior regularity}\label{full-section}\label{source-input-subsection}\label{source-reduction-subsection}

For $0<\alpha<\min\{1,2s\}$ satisfying
\begin{equation}\label{pde-input}
 \Ree t_{j,\alpha}(\tau)-a_j(\tau)
 \ge\kappa(1+j^2+\tau^2)^{s-1},\qquad j\ge0,\quad\tau\in\R,
\end{equation}
Proposition~\ref{weighted-partial} applies with $\vartheta=\alpha$.
Thus its estimate reads
\begin{equation}\label{main-bound}
 \int_{B_{1/8}}|x|^{-n-2\alpha}|F_s[f(u)](x)|^2\dd x
 \le C_0 M_s(u)^2,\qquad C_0=C_0(n,s,\alpha,\kappa).
\end{equation}
It is uniform under interior translations and dilations: if
$v(z)=u(a+Rz)$, then $Lv=R^{2s}f(v)$, with a nonnegative convex
reaction, and the change of variables preserves stability.

More explicitly, let $F_{s,a}[g]$ denote
\eqref{source-definition} centered at $a$: replace $B_{|x|}$ by
$B_{|x-a|}(a)$ and $|x|^2-|y|^2$ by
$|x-a|^2-|y-a|^2$. For the rescaled source
$g_{a,R}(z)=R^{2s}g(a+Rz)$, a change of variables gives
\[
 F_s[g_{a,R}](z)=F_{s,a}[g](a+Rz).
\]
Taking $g=f(u)$ makes the source on the left $R^{2s}f(v)$.
Thus the assertion is equivalently
\begin{equation}\label{scaled-source-input}
 R^{2\alpha}\int_{B_{R/8}(a)}
 \frac{|F_{s,a}[f(u)](x)|^2}{|x-a|^{n+2\alpha}}\dd x
 \le C_0\left(R^{2s-n}[u]_{H^s(a+RA_*)}^2+\|u\|_{L_s^1(a,R)}^2\right).
\end{equation}
For a nonnegative source, the weighted bound for its Poisson average
yields decay of its mass on balls.

\begin{lemma}\label{source-average-morrey}
Let $n\ge2$, $0<s<1$, $0<\alpha<\min\{1,2s\}$, and let
$g\ge0$ be locally bounded in $B_2$. If, for some $K\ge0$,
\begin{equation}\label{generic-source-bound}
 \int_{B_{1/8}}|x|^{-n-2\alpha}|F_s[g](x)|^2\dd x\le K^2,
\end{equation}
then
\[
 \int_{B_\rho}g\le C_{n,s,\alpha}K\rho^{n-2s+\alpha}
 \qquad(0<\rho\le1/16).
\]
\end{lemma}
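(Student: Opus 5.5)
The plan is to bound $F_s[g]$ from below pointwise on an annulus by the mass of $g$ in a smaller ball, and then average the weighted bound \eqref{generic-source-bound} over that annulus. Fix $0<\rho\le1/16$ and take $x$ with $2\rho\le|x|\le4\rho$, which lies inside $B_{1/8}$ since $4\rho\le1/4$. I will apply the kernel bound only where $g$ is supported in the restriction, and drop the rest using $g\ge0$.

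For $y\in B_\rho$ and such $x$ we have $|x|^2-|y|^2\ge|x|^2-\rho^2\ge\tfrac34|x|^2$ and $|x-y|\le|x|+\rho\le\tfrac32|x|$. Dropping the part of the integral over $B_{|x|}\setminus B_\rho$, which is legitimate since the kernel is positive and $g\ge0$, gives
\[
 F_s[g](x)\ge c_{n,s}\,|x|^{2s-n}\int_{B_\rho}g\ge c_{n,s}\rho^{2s-n}\int_{B_\rho}g,
\qquad 2\rho\le|x|\le4\rho .
\]

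Squaring and integrating against $|x|^{-n-2\alpha}$ over this annulus, whose $|x|^{-n}\,dx$ measure is $|\Sph|\log2$, I get
\[
 K^2\ge c\,\rho^{-2\alpha}\rho^{4s-2n}\Bigl(\int_{B_\rho}g\Bigr)^2 .
\]
Taking square roots yields $\int_{B_\rho}g\le CK\rho^{n-2s+\alpha}$, which is the claim.

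The only point requiring care is the first step: the pointwise lower bound must use the restriction $y\in B_\rho\subset B_{|x|}$ together with the explicit kernel bounds above. That step is elementary, so I expect no real obstacle. Local boundedness of $g$ enters only to make $F_s[g]$ well defined.
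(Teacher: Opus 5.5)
\textbf{Gap: the annulus is not inside $B_{1/8}$.} You claim that $\{2\rho\le|x|\le4\rho\}$ lies in $B_{1/8}$ because $4\rho\le1/4$. But $1/4>1/8$, so the inference is false.
\begin{itemize}
\item For $1/32<\rho\le1/16$ the annulus leaves $B_{1/8}$.
\item For $\rho=1/16$ it is exactly $\{1/8\le|x|\le1/4\}$, where \eqref{generic-source-bound} gives no information at all.
\end{itemize}
As written, your argument only proves the estimate for $\rho\le1/32$.

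Monotonicity cannot recover the larger radii, since it runs the wrong way. You also cannot simply lower the annulus to $\{\rho<|x|<2\rho\}$. Your pointwise kernel bound degenerates as $|x|\downarrow\rho$, because $(|x|^2-|y|^2)^s\to0$ for $|y|$ near $\rho$.

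\textbf{Repair.} Use an annulus of ratio less than two lying strictly above $\rho$, for instance $\{\tfrac32\rho\le|x|\le2\rho\}\subset\overline{B_{1/8}}$. For $y\in B_\rho$ and such $x$ you get $|x|^2-|y|^2\ge\tfrac59|x|^2$ and $|x-y|\le\tfrac53|x|$. Hence $F_s[g](x)\ge c_{n,s}\rho^{2s-n}\int_{B_\rho}g$ there. The integral of $|x|^{-n-2\alpha}$ over this annulus is still at least $c\rho^{-2\alpha}$, with $\log(4/3)$ replacing $\log2$. The rest of your argument then goes through unchanged.

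\textbf{Comparison with the paper.} Once repaired, your route differs from the paper's. The paper averages $F_s[g]$ over the whole sphere $|x|=r$ using the mass-one identity $\fint_{\Sph}|r\omega-y|^{-n}\dd\omega=r^{2-n}/(r^2-|y|^2)$. This turns the kernel into $(r^2-|y|^2)^{s-1}\ge r^{2s-2}$. Since $s-1<0$, the degeneration at $|y|\to r$ becomes a favorable singularity. This gives $\fint_{\Sph}F_s[g](r\,\cdot)\ge c\,r^{2s-n}\int_{B_r}g$ with the full ball mass at every radius $r$. Jensen then converts the hypothesis into a weighted $L^2$ bound in $r$ for $r^{2s-n}\int_{B_r}g$, and monotonicity on $[\rho,2\rho]\subset(0,1/8]$ finishes.

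Your pointwise version avoids the Poisson identity and Jensen. The cost is that you must keep $|x|$ separated from the support radius $\rho$, which is exactly where the slip occurred.
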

\begin{proof}
The mass-one identity for the ordinary ball Poisson kernel
\eqref{spherical-poisson}, after scaling, gives
\[
 \fint_{\Sph}|r\omega-y|^{-n}\dd\omega
 =\frac{r^{2-n}}{r^2-|y|^2},\qquad |y|<r.
\]
Consequently, averaging \eqref{source-definition} yields
\[
 \fint_{\Sph}F_s[g](r\omega)\dd\omega
 =\frac{2^{1-s}r^{2-n}}{\Gamma(s)|\Sph|}
   \int_{B_r}(r^2-|y|^2)^{s-1}g(y)\dd y
 \ge c_{n,s}r^{2s-n}\int_{B_r}g.
\]
The inequality uses $s-1<0$ and $g\ge0$. Jensen's inequality and
\eqref{generic-source-bound} imply
\begin{equation}\label{power-square}
 \int_0^{1/8}\left(r^{2s-n}\int_{B_r}g\right)^2
       \frac{dr}{r^{1+2\alpha}}\le CK^2.
\end{equation}
Integrating over $[\rho,2\rho]$ and using monotonicity of the
nonnegative ball mass gives
\[
 \left(\int_{B_\rho}g\right)^2\rho^{4s-2n-2\alpha}\le CK^2.
\]
Taking square roots proves the claim.
\end{proof}

We now deduce the interior H\"older and energy estimates from source-mass
decay, combining Riesz potentials with an energy iteration.

The following energy estimate is valid also in dimension one.
\begin{lemma}[Energy bounded by amplitude times tail]\label{energy-amplitude}
If $u\in L_s^1(\R^n)\cap H^s_{\rm loc}(B_2)\cap L^\infty_{\rm loc}(B_2)$
and $Lu=g\ge0$ distributionally in $B_2$, with
$g\in L^\infty_{\rm loc}(B_2)$, then
\begin{equation}\label{energy-amplitude-bound}
 [u]_{H^s(B_{3/2})}^2\le C\|u\|_{L_s^1(\R^n)}\|u\|_{L^\infty(B_{15/8})}.
\end{equation}
\end{lemma}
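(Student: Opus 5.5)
We test the equation with $\eta^2u$ for a cutoff and use the polarized energy identity \eqref{cutoff-energy-identity}. Fix $\eta\in C_c^\infty(B_{15/8})$ with $0\le\eta\le1$ and $\eta=1$ on $B_{3/2}$. By the qualitative assumptions ($u\in H^s_{\rm loc}\cap L^\infty_{\rm loc}$, $g\in L^\infty_{\rm loc}$), the function $\eta^2u$ is an admissible test after mollification and energy approximation. Identity \eqref{cutoff-energy-identity} then gives
\[
 [\eta u]_{H^s(\R^n)}^2=\int g\,u\eta^2
 +\frac{c_{n,s}}2\iint
  \frac{u(x)u(y)(\eta(x)-\eta(y))^2}{|x-y|^{n+2s}}\dd x\dd y .
\]
Since $[u]_{H^s(B_{3/2})}\le[\eta u]_{H^s(\R^n)}$, it suffices to bound both terms on the right by $C\|u\|_{L_s^1}\|u\|_{L^\infty(B_{15/8})}$.

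For the source term we use $g\ge0$:
\[
 \Bigl|\int g u\eta^2\Bigr|\le\|u\|_{L^\infty(B_{15/8})}\int_{B_{15/8}}g .
\]
Lemma~\ref{source-mass} with $R=15/8<2$ bounds $\int_{B_{15/8}}g$ by $C\|u\|_{L_s^1(\R^n)}$. Here positivity of $g$ is essential: it converts the distributional equation into an $L^1$ mass bound that involves only the tail.

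For the nonlocal term, note that the integrand vanishes unless at least one of $x,y$ lies in $\operatorname{supp}\eta\subset B_{15/8}$. By symmetry we may take $x\in B_{15/8}$, so that $|u(x)|\le\|u\|_{L^\infty(B_{15/8})}$. The remaining integral is
\[
 \int_{B_{15/8}}\int_{\R^n}\frac{|u(y)|(\eta(x)-\eta(y))^2}{|x-y|^{n+2s}}\dd y\dd x .
\]
We bound it by splitting in $y$.
\begin{itemize}
\item If $|y|\le 2$, we use $(\eta(x)-\eta(y))^2\le C|x-y|^2$. Then $\int_{B_{15/8}}|x-y|^{2-n-2s}\dd x\le C$ uniformly in $y$ (since $s<1$), and the $y$-integral is controlled by $\int_{B_2}|u|\le C\|u\|_{L_s^1}$.
\item If $|y|>2$, then $\eta(y)=0$ and $|x-y|\ge c(1+|y|)$. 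The integral is therefore bounded by $C\int|u(y)|(1+|y|)^{-n-2s}\dd y\le C\|u\|_{L_s^1}$.
\end{itemize}
In both regions the exterior values of $u$ appear only to the first power, which gives exactly the product bound \eqref{energy-amplitude-bound}.

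The only delicate point is justifying the test $\eta^2u$ and the identity in the weak setting. With $u\in H^s_{\rm loc}$, we mollify, pass to the limit using the local energy and the tail integrability, and use local boundedness of $g$ for the source pairing. For $n=1$ nothing changes, since the argument never uses $n>2s$.
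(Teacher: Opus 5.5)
Your proposal is correct and follows the paper's proof essentially step for step. You test with $\eta^2u$ via the cutoff energy identity, bound the source term using $g\ge0$ and Lemma~\ref{source-mass}, and split the nonlocal term into a near part (Lipschitz bound on $\eta$ and the local $L^1$ norm of $u$) and a separated far part (kernel bounded by the tail weight); the only differences are cosmetic, namely your cutoff supported in $B_{15/8}$ instead of $B_{7/4}$ and your split at $|y|=2$ instead of at $\partial B_{15/8}$.
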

\begin{proof}
Choose $\eta=1$ on $B_{3/2}$, supported in $B_{7/4}$, with values
in $[0,1]$. The test $\eta^2u$ is admissible under the stated local
energy assumption. Indeed, choose $\zeta\in C_c^\infty(B_2)$ equal
to one on $B_{15/8}$. Then $\zeta u\in H^s(\R^n)$ by Lemma~\ref{prelim-localization}, while
$L((1-\zeta)u)$ is smooth on $B_{7/4}$ and bounded there by
$C\|u\|_{L_s^1}$. Approximate $\eta^2u$ in $H^s$ by smooth tests
supported in $B_{7/4}$. The pairing with $\zeta u$ converges by energy
duality, the remaining pairing by the smooth tail bound, and the
source pairing by local boundedness of $g$ and $L^2$ convergence.

Thus testing with $\eta^2u$ gives the localized energy identity
\eqref{cutoff-energy-identity} in the form
\begin{align*}
 [\eta u]_{H^s(\R^n)}^2
 &=\int_{B_{7/4}}g\,u\eta^2\\
 &\quad+\frac{c_{n,s}}2\iint_{\R^n\times\R^n}
 \frac{u(x)u(y)(\eta(x)-\eta(y))^2}{|x-y|^{n+2s}}\dd x\dd y.
\end{align*}
Its exterior integrals contain one value of $u$ in the cutoff
support and the other only to the first power.

Put $M_\infty=\|u\|_{L^\infty(B_{15/8})}$. Lemma~\ref{source-mass} bounds
the first term by $CM_\infty\|u\|_{L_s^1(\R^n)}$.

In the double integral over
$B_{15/8}\times B_{15/8}$ bound one factor by $M_\infty$ and use
\[
 \sup_y\int\frac{(\eta(x)-\eta(y))^2}{|x-y|^{n+2s}}\dd x\le C.
\]
This follows from the Lipschitz bound at distance less than one
and from the boundedness of $\eta$ at larger distances. The other
factor has local $L^1$ norm at most $C\|u\|_{L_s^1(\R^n)}$.

If one point is
outside $B_{15/8}$, the other must lie in $B_{7/4}$; their separation
is positive, and the kernel is bounded by
$C(1+|y|^{n+2s})^{-1}$ at the exterior point $y$. This part also has
absolute value at most $CM_\infty\|u\|_{L_s^1(\R^n)}$. Finally,
\[
 [u]_{H^s(B_{3/2})}^2\le[\eta u]_{H^s(\R^n)}^2.
\]
\end{proof}

\begin{proposition}\label{source-to-full}
Let $n\ge2$, $0<s<1$, and $0<\alpha<\min\{1,2s\}$. Suppose
\[
 u\in L_s^1(\R^n)\cap H^s_{\rm loc}(B_2)\cap L^\infty_{\rm loc}(B_2),
 \qquad Lu=g\ge0\quad\hbox{distributionally in }B_2,
\]
where $g\in L^\infty_{\rm loc}(B_2)$. For every $R>0$ and $a$ with
$B_{2R}(a)\subseteq B_2$, write $u_{a,R}(z)=u(a+Rz)$ on $\R^n$.
Assume, with the same constant $C_0$ for all these centers and scales,
\begin{equation}\label{morrey-input}
 \int_{B_\rho(a)}g\le C_0 R^{-\alpha}M_s(u_{a,R})
              \rho^{n-2s+\alpha}
 \qquad(0<\rho\le R/16).
\end{equation}
Then $u$ has a representative satisfying
\[
 \|u\|_{C^\alpha(B_1)}+[u]_{H^s(B_1)}
 \le C\|u\|_{L_s^1(\R^n)},\qquad C=C(n,s,\alpha,C_0).
\]
\end{proposition}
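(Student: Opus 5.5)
The plan has three steps: a H\"older bound in terms of $M_s$, an energy bound in terms of the amplitude, and an iteration on concentric balls that absorbs the local energy. Fix a center $a$ and a scale $R$ with $B_{2R}(a)\subseteq B_2$. By translation and dilation we may work with $v=u_{a,R}$, which solves $Lv=g_{a,R}:=R^{2s}g(a+R\,\cdot)\ge0$ in $B_2$. Hypothesis \eqref{morrey-input} becomes
\[
 \int_{B_\rho}g_{a,R}\le C_0M_s(v)\rho^{n-2s+\alpha},\qquad 0<\rho\le1/16.
\]

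\emph{Step 1: H\"older bound via $M_s$.} Split $v=I_{2s}\mu+w$, where $\mu=\chi g_{a,R}\,dx$ and $\chi$ is a cutoff equal to one on $B_{1/32}$ and supported in $B_{1/16}$. Lemma~\ref{source-mass} bounds the total mass $\mu(\R^n)$ by $C\|v\|_{L_s^1}$. The Morrey decay gives $\mu(B_\rho(b))\le CM_s(v)\rho^{n-2s+\alpha}$ for balls centred in a smaller ball. For $\rho$ comparable to the support radius, or for centres away from the origin, we use the total mass instead. After rescaling, Lemma~\ref{prelim-morrey-potential} yields
\[
 \|I_{2s}\mu\|_{C^\alpha(B_{c})}\le CM_s(v),\qquad \|I_{2s}\mu\|_{L_s^1}\le C\|v\|_{L_s^1}.
\]
The remainder $w$ is $s$-harmonic in $B_{1/32}$. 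Lemma~\ref{harmonic-weyl} bounds it in $C^1(B_c)$ by $\|w\|_{L_s^1}\le C\|v\|_{L_s^1}$. Hence $\|v\|_{C^\alpha(B_c)}\le CM_s(v)$ for a fixed $c$. The case $n=1$ is excluded, since $n\ge2>2s$ holds here. Undoing the scaling gives, for every admissible $(a,R)$,
\begin{equation*}
 |u(a)|+R^\alpha[u]_{C^\alpha(B_{cR}(a))}\le C\bigl(R^{s-n/2}[u]_{H^s(a+RA_*)}+\|u\|_{L_s^1(a,R)}\bigr).
\end{equation*}

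\emph{Step 2: energy bounded by amplitude.} Lemma~\ref{energy-amplitude}, applied after translation and dilation to balls inside $B_2$, bounds the annular energy by the product of the tail and the amplitude. The annular energy is $[u_{a,R}]_{H^s(A_*)}^2$, and the annulus $a+RA_*$ is covered by finitely many balls of radius comparable to $R$. This gives
\[
 R^{2s-n}[u]_{H^s(a+RA_*)}^2\le C\|u\|_{L_s^1(a,2R)}\|u\|_{L^\infty(B_{2R}(a))}.
\]

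\emph{Step 3: iteration, the main obstacle.} Combining the two steps with Young's inequality gives, for $\delta>0$,
\[
 \|u\|_{L^\infty(B_r)}\le\delta\|u\|_{L^\infty(B_{r'})}+C_\delta(r'-r)^{-P}\|u\|_{L_s^1(\R^n)}
\]
for $1\le r<r'\le 15/8$. We obtain it by taking centres $a\in B_r$ and radius $R\asymp r'-r$. The rescaled tails satisfy $\|u\|_{L_s^1(a,R)}\le CR^{-n}\|u\|_{L_s^1(\R^n)}$ for $|a|<2$. Local boundedness makes $\Phi(r)=\|u\|_{L^\infty(B_r)}$ finite on $[1,15/8]$, where it is used only qualitatively. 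Choosing $\delta<2^{-P}$, Lemma~\ref{prelim-iteration} gives $\|u\|_{L^\infty(B_{1})}\le C\|u\|_{L_s^1}$, and the same bound on a slightly larger ball, say $B_{5/4}$. Feeding this back into Step 2 bounds the energy on $B_1$ through a finite cover, and Step 1 then bounds $[u]_{C^\alpha(B_1)}$ by a covering argument with a fixed $R$.

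The delicate point is the bookkeeping of Step 3. Steps 1 and 2 must involve comparable balls so that a small factor appears in front of the larger-ball amplitude. We get this factor from Young's inequality on the square root $\sqrt{\text{tail}\cdot\text{amplitude}}$, with the growing tail constants accounted for. The continuous representative comes from Step 1, since the Riesz part and the harmonic part are both continuous.
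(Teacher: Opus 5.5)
Your three-step plan follows the paper's structure. You split the solution into a Riesz potential and an $s$-harmonic remainder to get a H\"older bound in terms of energy and tail. You use Lemma~\ref{energy-amplitude} to trade energy for $\sqrt{\text{tail}\cdot\text{amplitude}}$, and you close with Young's inequality and Lemma~\ref{prelim-iteration} on concentric balls.

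However, Step~1 has a genuine gap. After rescaling, \eqref{morrey-input} at $(a,R)$ controls only balls centred at the origin. Lemma~\ref{prelim-morrey-potential} instead needs $\mu(B_\rho(b))\le CM\rho^{n-2s+\alpha}$ for every centre $b$ in a neighbourhood and every small $\rho$. The total mass does not supply the factor $\rho^{n-2s+\alpha}$. Centred decay alone does not stop $\mu$ from concentrating near some $b\ne0$, which would make $I_{2s}\mu(b)$ arbitrarily large.

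Off-centre decay can only come from \eqref{morrey-input} at the centre $a+Rb$ and some scale $R'$. Its constant is $M_s(u_{a+Rb,R'})$, which involves $[u]_{H^s(a+Rb+R'A_*)}$. This is not controlled by $M_s(u_{a,R})$. For $0<|b|<1/16$, no set $b+\lambda A_*$ is contained in $A_*$: it either contains the origin, lies inside $B_{1/8}$, or would require $1+8|b|\le\lambda\le1-8|b|/15$. Hence your assertion $\mu(B_\rho(b))\le CM_s(v)\rho^{n-2s+\alpha}$ is not proved. Neither is the displayed Step~1 inequality with the annular energy $[u]_{H^s(a+RA_*)}$. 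That inequality does hold a posteriori, because the proposition applied to $u_{a,R}$ implies it, but it cannot serve as an input.

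The repair is to measure the energy on a ball containing all the annuli you need, as the paper does. Set $M=[u]_{H^s(B_{3/2})}+\|u\|_{L_s^1(\R^n)}$ and apply \eqref{morrey-input} with scale $1/4$ at every centre $a\in B_{1/2}$. Each annulus $a+A_*/4$ lies in $B_{3/2}$, and the rescaled tails are uniformly bounded. This gives decay with constant $CM$ for $\rho\le1/64$, and Lemma~\ref{source-mass} handles $1/64\le\rho\le1/4$. The result is $\|u\|_{C^\alpha(B_{1/4})}\le CM$, and by covariance the same bound at every admissible centre and scale.

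Step~2 then reduces to Lemma~\ref{energy-amplitude} applied directly on a ball, so you no longer need to cover an annulus by small balls. As written, that covering also omits pairs of points lying in different covering balls. Those pairs would need $|u(x)-u(y)|^2\le2\|u\|_{L^\infty}(|u(x)|+|u(y)|)$ to keep the form tail times amplitude. With this correction, your Step~3 is exactly the paper's iteration.
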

\begin{proof}[Proof of Proposition~\ref{source-to-full}]
All constants below may depend on $n,s,\alpha,C_0$. Put
$M=[u]_{H^s(B_{3/2})}+\|u\|_{L_s^1(\R^n)}$. We first derive
\begin{equation}\label{morrey-mass}
 \int_{B_\rho(a)}g\le CM\rho^{n-2s+\alpha}
 \qquad(a\in B_{1/2},\ 0<\rho\le1/4).
\end{equation}
Apply \eqref{morrey-input} with $R=1/4$ and $a\in B_{1/2}$.
The annulus $a+A_*/4$ is contained in $B_{3/2}$, so its rescaled
energy is bounded by $C[u]_{H^s(B_{3/2})}^2$. Also
\[
 \|u(a+\tfrac14\,\cdot)\|_{L_s^1(\R^n)}
 \le C\|u\|_{L_s^1(\R^n)}
\]
uniformly in $a$: on bounded sets the two weights are comparable,
and at infinity they have the same decay. This proves
\eqref{morrey-mass} for $\rho\le1/64$. For the remaining radii, Lemma~\ref{source-mass} gives
$\int_{B_\rho(a)}g\le C\|u\|_{L_s^1(\R^n)}\le CM$; since
$\rho\ge1/64$, this also gives the required power of $\rho$.

Riesz potentials next give
\begin{equation}\label{holder-energy}
 \|u\|_{C^\alpha(B_{1/4})}\le CM.
\end{equation}
Choose $0\le\chi\le1$ supported in $B_1$ and equal to one on
$B_{3/4}$, and put $\mu=\chi g\dd x$. Lemma~\ref{source-mass} and \eqref{morrey-mass} give
\[
 \mu(\R^n)\le CM,\qquad
 \mu(B_\rho(a))\le CM\rho^{n-2s+\alpha}
 \quad(a\in B_{1/2},\ 0<\rho\le1/4).
\]

Apply Lemma~\ref{prelim-morrey-potential} to
$v=I_{2s}\mu$. It gives $Lv=\chi g$,
$\|v\|_{C^\alpha(B_{1/4})}\le CM$, and, using the sharper total-mass
bound from Lemma~\ref{source-mass},
$\|v\|_{L_s^1(\R^n)}\le C\|u\|_{L_s^1(\R^n)}$.

Thus $w=u-v$ is distributionally $s$-harmonic in $B_{3/4}$ and
$\|w\|_{L_s^1(\R^n)}\le C\|u\|_{L_s^1(\R^n)}$.
Lemma~\ref{harmonic-weyl}, after a fixed rescaling, yields
\begin{equation}\label{harmonic-tail}
 \|w\|_{C^1(B_{1/4})}\le C\|w\|_{L_s^1(\R^n)}.
\end{equation}
Combining the bounds for $v$ and $w$ proves \eqref{holder-energy}.

For the iteration below, we verify that the hypothesis has the same
constant after rescaling. Both factors in \eqref{morrey-input} scale explicitly:
\begin{align*}
 M_s(u_{a,R})^2
 &=R^{2s-n}[u]_{H^s(a+RA_*)}^2+\|u\|_{L_s^1(a,R)}^2,\\
 \int_{B_r}g_{a,R}(z)\dd z
 &=R^{2s-n}\int_{B_{Rr}(a)}g(y)\dd y,
 \qquad g_{a,R}(z)=R^{2s}g(a+Rz).
\end{align*}
Thus \eqref{morrey-input} is equivalent to
$\int_{B_r}g_{a,R}\le C_0M_s(u_{a,R})r^{n-2s+\alpha}$ for
$0<r\le1/16$. The factor $R^{-\alpha}$ follows from
$R^{n-2s}(\rho/R)^{n-2s+\alpha}$.
Since
\[
 (u_{a,R})_{b,T}=u_{a+Rb,RT},\qquad
 (g_{a,R})_{b,T}=g_{a+Rb,RT},
\]
the same Morrey bound, with the same $C_0$, holds for every interior
translation and dilation of $u$.

Lemma~\ref{energy-amplitude} bounds the energy in \eqref{holder-energy} by
\[
 [u]_{H^s(B_{3/2})}^2
 \le C\|u\|_{L_s^1(\R^n)}\|u\|_{L^\infty(B_{15/8})}.
\]
Combining the two bounds gives
\begin{equation}\label{one-step}
 \|u\|_{L^\infty(B_{1/4})}
 \le C\left(\|u\|_{L_s^1(\R^n)}^{1/2}\|u\|_{L^\infty(B_{15/8})}^{1/2}+\|u\|_{L_s^1(\R^n)}\right).
\end{equation}

We remove the outer amplitude by an iteration on balls. For
$1\le r<R\le3/2$, let $\rho=(R-r)/4$. For each $a\in B_r$, apply
\eqref{one-step} to $v(z)=u(a+\rho z)$. Its equation is
$Lv=g_{a,\rho}\ge0$. The covariance just established gives the
same hypothesis for $v$, with the same constant. Local boundedness
and local $H^s$ membership are also
preserved. The ball $a+\rho B_2$ lies in $B_R$.
Moreover,
\begin{equation}\label{rescaled-tail}
 \|v\|_{L_s^1(\R^n)}=\rho^{2s}\int_{\R^n}
  \frac{|u(y)|}{\rho^{n+2s}+|y-a|^{n+2s}}\dd y
 \le C\rho^{-n}\|u\|_{L_s^1(\R^n)},
 \quad a\in B_2,\quad 0<\rho\le1.
\end{equation}
For bounded $y$ use the denominator $\rho^{n+2s}$; for large $y$
use $|y-a|\asymp|y|$.

Taking the supremum over $a$ and applying
Young's inequality gives, with $M_\infty(r)=\|u\|_{L^\infty(B_r)}$,
\begin{equation}\label{sup-iteration}
 M_\infty(r)\le\varepsilon M_\infty(R)
       +C\varepsilon^{-1}(R-r)^{-n}\|u\|_{L_s^1(\R^n)},
 \qquad 0<\varepsilon<1.
\end{equation}
Fix $\varepsilon<2^{-n}$ and apply Lemma~\ref{prelim-iteration}
to \eqref{sup-iteration}. Its boundedness hypothesis holds since
$M_\infty(3/2)<\infty$ for the individual locally bounded solution.
Thus $M_\infty(1)\le C\|u\|_{L_s^1(\R^n)}$.

Applying this argument to a finite cover by fixed interior balls
gives $\|u\|_{L^\infty(B_R)}\le C_R\|u\|_{L_s^1(\R^n)}$ for every fixed $R<2$.
The tails of the translated and rescaled solutions are controlled
by \eqref{rescaled-tail}.

In particular, Lemma~\ref{energy-amplitude} now yields
\[
 [u]_{H^s(B_{3/2})}\le C\|u\|_{L_s^1(\R^n)}.
\]
Apply \eqref{holder-energy} on
finitely many fixed interior balls covering $\overline B_1$ to
obtain its H\"older seminorm. The corresponding local energies are
bounded by the same energy-amplitude argument. Pairs of points
outside a common small ball are controlled by the supremum bound.
All constants depend only on $n,s,\alpha,C_0$.
\end{proof}

\begin{theorem}[Interior estimate from a fixed positive weight]
\label{pde-interface}
Fix $n\ge2$, $0<s<1$, $0<\alpha<\min\{1,2s\}$ and $\kappa>0$.
Assume the comparison \eqref{pde-input}.
Let $u\in L_s^1(\R^n)\cap L^\infty_{\rm loc}(B_2)$ be a stable
distributional solution of $Lu=f(u)$ in $B_2$, where $f\ge0$ is
convex on $\R$ and belongs to $C^1_{\mathrm{loc}}(\R)$. Then
\[
 \|u\|_{C^\alpha(B_1)}+[u]_{H^s(B_1)}
 \le C\|u\|_{L_s^1(\R^n)},\qquad C=C(n,s,\alpha,\kappa).
\]
The same estimate holds after every interior translation and
dilation, with the rescaled tail norm \eqref{rescaled-tail-definition}.
\end{theorem}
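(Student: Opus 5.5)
The plan is to chain the three results already in place: Proposition~\ref{weighted-partial} with $\vartheta=\alpha$, Lemma~\ref{source-average-morrey}, and Proposition~\ref{source-to-full}. The comparison \eqref{pde-input} is exactly the gap hypothesis \eqref{weighted-gap} with $\kappa_\vartheta=\kappa$. The weight $\alpha\in(0,\min\{1,2s\})$ lies in the admissible interval \eqref{weight-interval}, since $2s-n<0<\alpha$. Hence \eqref{main-bound} holds with $C_0=C_0(n,s,\alpha,\kappa)$. Apply Lemma~\ref{source-average-morrey} with $g=f(u)$, which is nonnegative and locally bounded because $u$ is locally bounded and $f$ is continuous, and with $K=C_0^{1/2}M_s(u)$. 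This gives
\[
 \int_{B_\rho}f(u)\le CM_s(u)\rho^{n-2s+\alpha},\qquad 0<\rho\le1/16.
\]

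Next, obtain the same bound at every center and scale, in the form \eqref{morrey-input}. Take $B_{2R}(a)\subseteq B_2$ and set $v=u_{a,R}$. Then $Lv=R^{2s}f(v)$ in $B_2$. The reaction $R^{2s}f$ is again nonnegative, convex and $C^1_{\mathrm{loc}}$. Stability is preserved by the change of variables, because both sides of \eqref{stability} scale by the same factor $R^{2s-n}$ once $f'$ is multiplied by $R^{2s}$. Local boundedness is preserved, and the tail is finite by \eqref{rescaled-tail-definition}. Since the constants in Proposition~\ref{weighted-partial} do not depend on the nonlinearity, the previous step applies to $v$. Undoing the scaling, as in the computation preceding Proposition~\ref{source-to-full}, gives
\[
 \int_{B_\rho(a)}f(u)\le C_0'R^{-\alpha}M_s(u_{a,R})\rho^{n-2s+\alpha},\qquad 0<\rho\le R/16,
\]
with one constant $C_0'$ for all $a,R$. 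This is \eqref{morrey-input}.

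The remaining hypothesis of Proposition~\ref{source-to-full} is $u\in H^s_{\rm loc}(B_2)$. Lemma~\ref{bootstrap} gives $u\in C^{1+2s-\varepsilon}_{\rm loc}$, so $u$ is locally Lipschitz and therefore in $H^s_{\rm loc}$. This is only a qualitative input and does not enter the constants. Proposition~\ref{source-to-full} then yields $\|u\|_{C^\alpha(B_1)}+[u]_{H^s(B_1)}\le C\|u\|_{L_s^1(\R^n)}$ with $C=C(n,s,\alpha,\kappa)$. The final sentence of the theorem is the same argument applied to $u(a+R\,\cdot)$, whose tail norm is $\|u\|_{L_s^1(a,R)}$.

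I expect the main obstacle to be checking that the constant is genuinely uniform across scales. Proposition~\ref{weighted-partial} must be independent of the local bounds of the rescaled solution and of the rescaled nonlinearity. This holds because those bounds enter its proof only through finiteness: $\Phi(3/4)<\infty$, admissibility of the tests, and the $L^2$ defect bound. The universal constants come only from \eqref{E-bound}, the cutoff lemmas, and the symbol gap. I would verify this once and then cite the covariance argument already carried out inside Proposition~\ref{source-to-full}.
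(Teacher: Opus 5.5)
Your proposal is correct and follows the paper's own proof: Proposition~\ref{weighted-partial} with $\vartheta=\alpha$ at every center and scale (this is \eqref{scaled-source-input}), then Lemma~\ref{source-average-morrey} with $K=C_0^{1/2}M_s(u_{a,R})$ and a change of variables to get \eqref{morrey-input}, then Lemma~\ref{bootstrap} for the qualitative $H^s_{\rm loc}$ membership, and finally Proposition~\ref{source-to-full}. Your additional checks are points the paper leaves implicit: admissibility of the weight, covariance of stability under the reaction $R^{2s}f$, and the fact that the solution-dependent bounds in Proposition~\ref{weighted-partial} enter only through finiteness.
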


\begin{proof}[Proof of Theorem~\ref{pde-interface}]
Proposition~\ref{weighted-partial} and \eqref{scaled-source-input} give the source estimate
for every $u_{a,R}$, with source $g_{a,R}=R^{2s}f(u_{a,R})$.
Apply Lemma~\ref{source-average-morrey} with
$K=C_0^{1/2}M_s(u_{a,R})$, then change variables as above. This
is precisely \eqref{morrey-input}, with a constant depending only
on $n,s,\alpha,\kappa$. Lemma~\ref{bootstrap} supplies local $H^s$
membership, and local boundedness is assumed. Proposition~\ref{source-to-full}
therefore applies to $g=f(u)\ge0$. Its constant depends only on
$n,s,\alpha,\kappa$. The same argument after translation and
dilation proves the stated scaled estimate.
\end{proof}

\section{Extension-energy comparison}\label{part:spectral}\label{gap-section}

We compare the two boundary operators using the one-dimensional
extension forms of Lemma~\ref{extension-symbol-bounds}. Put
\begin{equation}\label{delta-definition}
 \beta=\frac{n-2s}{2},\qquad d=\beta-1,\qquad
 \delta=n-1-d^2.
\end{equation}
For $n\ge2$, the inequality $\delta\ge0$ is equivalent to
$n\le2\bigl(s+2+\sqrt{2s+2}\bigr)$. Write
\[
 R_{j,s}(\tau)=\Ree t_{j,0}(\tau)-a_j(\tau).
\]
The unweighted comparison that will be needed below is
\begin{equation}\label{gap-hypothesis}
 R_{j,s}(\tau)\ge\kappa(1+j^2+\tau^2)^{s-1}
 \qquad(j\ge0,\ \tau\in\R),\qquad\kappa>0.
\end{equation}

This section proves the comparison for $\delta\ge0$ and yields the
explicit radical dimension bound. The remaining parameter pairs will
require a finer analysis of the radial mode.

\subsection{Comparison of extension energies}\label{extension-comparison}

Increasing the angular degree adds a positive potential to the
extension energy. The change of radial homogeneity subtracts a
constant. When $\delta>0$, this difference remains positive for
the following explicit weights.

\begin{proposition}[Positive potential comparison]
\label{analytic-gap}\label{explicit-weight-gap}
Let $n\ge2$, $0<s<1$, and let $\vartheta$ satisfy
\eqref{weight-interval}. Put
$\delta_\vartheta=n-1-(d+\vartheta)^2$. There is
$\kappa_{n,s,\vartheta}>0$ such that
\begin{equation}\label{analytic-extension-gap}
 \Ree t_{j,\vartheta}(\tau)-a_j(\tau)
 \ge\kappa_{n,s,\vartheta}(\delta_\vartheta+2j)
                         (1+j^2+\tau^2)^{s-1}
\end{equation}
for all $j\ge0$ with $\delta_\vartheta+2j>0$, and all $\tau\in\R$.
In particular, if $\delta>0$, every
\[
 0\le\alpha<\min\{1,2s,\sqrt{n-1}-d\}
\]
gives $\delta_\alpha>0$ and the bound holds with
$\vartheta=\alpha$ for all $j\ge0$.
\end{proposition}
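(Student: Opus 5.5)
We compare the two symbols through the variational characterization in Lemma~\ref{extension-symbol-bounds}. Fix $j,\tau$ and put $K=\langle(j,\tau)\rangle$, $m=j+1$, $x=\beta^2+\tau^2-\gamma^2$ with $\gamma=\beta-1+\vartheta=d+\vartheta$. By \eqref{dn-positive-correction}, $\Ree t_{j,\vartheta}(\tau)\ge F_{j+1}(x)$, since the imaginary part $-2\gamma\tau$ only adds the nonnegative term $d_sE_{m,x}[\Phi_{m,x+\ii y}-\Phi_{m,x}]$. It therefore suffices to bound from below
\[
 F_{j+1}(\beta^2+\tau^2-\gamma^2)-F_j(\beta^2+\tau^2),
\]
a difference of two real Dirichlet-to-Neumann values. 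Both are $d_s$ times minimal energies of trace-one profiles, by \eqref{dn-flux-energy}. The lower bound will come from testing the minimization for $F_j$ with a competitor built from the minimizer $v=\Phi_{j+1,x}$.

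Since $\mathcal V_{j+1}\subset\mathcal V_j$ (Lemma~\ref{angular-groundstate}), $v$ is admissible for the degree-$j$ problem. Minimality then gives
\[
 d_s^{-1}a_j(\tau)\le E_{j,\beta^2+\tau^2}[v].
\]
Subtracting this from $d_s^{-1}F_{j+1}(x)=E_{j+1,x}[v]$ yields
\[
 d_s^{-1}\bigl(F_{j+1}(x)-a_j\bigr)\ge\int_0^{\pi/2}p\bigl((\lambda_{j+1}-\lambda_j)\sec^2\varphi-\gamma^2\bigr)v^2\dd\varphi .
\]
Here $\lambda_{j+1}-\lambda_j=2j+n-1$. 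Using $\sec^2\varphi\ge1$, the weight is at least $2j+n-1-\gamma^2=\delta_\vartheta+2j$. Hence
\[
 \Ree t_{j,\vartheta}(\tau)-a_j(\tau)\ge d_s(\delta_\vartheta+2j)\|v\|_p^2 .
\]
This explains the role of $\delta_\vartheta+2j>0$.

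It remains to bound $\|\Phi_{j+1,x}\|_p^2\ge cK^{2s-2}$ uniformly. This lower mass bound is the step I expect to be the main obstacle, since the earlier lemmas only record upper bounds. The boundary layer information in \eqref{dn-boundary-layer} gives $v\ge1/2$ on $[0,\epsilon/K]$, with $\epsilon$ uniform in $j,\tau$. Since $p\asymp\varphi^{1-2s}$ near zero,
\[
 \|v\|_p^2\ge\tfrac14\int_0^{\epsilon/K}p\,\dd\varphi\ge c(\epsilon/K)^{2-2s}=cK^{2s-2}.
\]
I would check that the uniformity of $\epsilon$ in \eqref{dn-boundary-layer} really holds for $m=j+1$ and the shifted $x$. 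It does: that estimate came from the coercivity \eqref{dn-coercivity}, valid on the full weight interval, together with the scaled trace inequality \eqref{angular-scaled-trace}. This gives \eqref{analytic-extension-gap} with $\kappa_{n,s,\vartheta}=c\,d_s$, using $K^{2s-2}\asymp(1+j^2+\tau^2)^{s-1}$.

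For the final assertion, observe that $\delta_\alpha>0$ means $|d+\alpha|<\sqrt{n-1}$. If $\delta>0$ then $|d|<\sqrt{n-1}$, so $d>-\sqrt{n-1}$, and for $0\le\alpha<\sqrt{n-1}-d$ we get $-\sqrt{n-1}<d\le d+\alpha<\sqrt{n-1}$. The condition $\alpha<\min\{1,2s\}$ together with $\alpha\ge0>2s-n$ places $\alpha$ in \eqref{weight-interval}. Since $\delta_\alpha+2j\ge\delta_\alpha>0$ for all $j\ge0$, the bound applies for every $j$.
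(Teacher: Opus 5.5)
Your proposal is correct and follows the paper's own proof essentially step for step. You discard the nonnegative drift correction via \eqref{dn-positive-correction}, and you use the real degree-$(j+1)$ minimizer as a competitor in the degree-$j$ problem through $\mathcal V_{j+1}\subset\mathcal V_j$, which gives the potential difference $(2j+n-1)\sec^2\varphi-(d+\vartheta)^2\ge\delta_\vartheta+2j$. You then bound the minimizer's mass below by $cK^{2s-2}$ using the boundary-layer estimate \eqref{dn-boundary-layer}, and you check the admissibility of $\alpha$ exactly as the paper does.
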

\begin{proof}
Using the notation of \eqref{dn-form-definition}, set
\[
 Q_j=\lambda_j\sec^2\varphi+\beta^2+\tau^2,\qquad
 W_{j,\vartheta}=(2j+n-1)\sec^2\varphi-(d+\vartheta)^2.
\]
The real extension energy for $a_j(\tau)$ has potential $Q_j$,
whereas that for $t_{j,\vartheta}(\tau)$ has potential
$Q_j+W_{j,\vartheta}-2\ii\tau(d+\vartheta)$. In particular,
\begin{equation}\label{Wj-positive}
 W_{j,\vartheta}\ge\delta_\vartheta+2j>0.
\end{equation}

Let $v_A$ and $v_T$ minimize the real energies with potentials $Q_j$
and $Q_j+W_{j,\vartheta}$, respectively, among profiles with trace one.
Their domains are $\mathcal V_j$ and $\mathcal V_{j+1}$, with the latter
contained in the former. Let $\Phi_T$ solve the second, complex
boundary-value problem. The real part of its boundary energy is at
least the real minimum. Thus
\begin{equation}\label{exact-variational-gap}
 \begin{split}
 &\Ree t_{j,\vartheta}(\tau)-a_j(\tau)\\
 &\quad\ge d_s\bigl(E_T[v_T]-E_A[v_A]\bigr)
 \ge d_s\int_0^{\pi/2}pW_{j,\vartheta}|v_T|^2\dd\varphi.
 \end{split}
\end{equation}
Indeed, $E_A[v_A]\le E_A[v_T]$, and the imaginary part of a
trace-one profile has trace zero and contributes a nonnegative
energy.

By \eqref{dn-boundary-layer}, with $K=\langle(j,\tau)\rangle$,
$v_T\ge1/2$ on $[0,\epsilon/K]$. Consequently
\[
 \|v_T\|_p^2\ge c\int_0^{\epsilon/K}\varphi^{1-2s}\dd\varphi
                 \ge cK^{2s-2}.
\]
Together with \eqref{Wj-positive} and \eqref{exact-variational-gap},
this proves the bound. Finally, $\delta>0$ gives
$-\sqrt{n-1}<d<\sqrt{n-1}$, so the stated interval for $\alpha$
implies $-\sqrt{n-1}<d+\alpha<\sqrt{n-1}$ and hence
$\delta_\alpha>0$.
\end{proof}

At $\delta=0$, the radial mode has no positive constant in
\eqref{Wj-positive}. The imaginary part of the extension equation
supplies an additional positive energy. Keeping this energy gives
the endpoint estimate; a small positive weight will then follow from
continuity.

\begin{corollary}[Coercivity at the endpoint]\label{endpoint}
For $\delta\ge0$ there is $\kappa_{n,s}>0$ such that
\begin{equation}\label{general-gap}
 R_{j,s}(\tau)\ge\kappa_{n,s}(1+j^2+\tau^2)^{s-1}
 \qquad(j\ge0,\ \tau\in\R).
\end{equation}
\end{corollary}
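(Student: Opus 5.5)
The plan is to reduce the corollary to Proposition~\ref{analytic-gap} with $\vartheta=0$ for every mode except the radial one at $\delta=0$, and to treat that remaining case by keeping the imaginary-part energy discarded in \eqref{exact-variational-gap}.

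\emph{Cases already covered.} At $\vartheta=0$ we have $\delta_0=\delta$. If $\delta>0$, Proposition~\ref{analytic-gap} gives \eqref{general-gap} for all $j\ge0$ with constant $\kappa_{n,s,0}\delta$. If $\delta=0$ and $j\ge1$, then $\delta_0+2j=2j\ge2$, so the same proposition gives the bound with constant $2\kappa_{n,s,0}$. It therefore remains to prove the estimate for $j=0$ when $\delta=0$.

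\emph{The radial mode at $\delta=0$.} I will not discard the complex correction in \eqref{exact-variational-gap}. Set $m=1$ and $x=\beta^2+\tau^2-d^2$ (here $\gamma=d$), let $v=\Phi_{1,x}$ be the real minimizer, and let $y=-2d\tau$. By \eqref{dn-positive-correction},
\[
 \Ree t_{0,0}(\tau)=F_1(x)+d_sE_{1,x}[w],\qquad w=\Phi_{1,x+\ii y}-v\in\mathcal V_1^0 .
\]
As in the proof of Proposition~\ref{analytic-gap}, $F_1(x)-a_0(\tau)\ge d_s\int pW_{0,0}v^2$ with $W_{0,0}=(n-1)(\sec^2\varphi-1)\ge0$. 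The work is to bound $E_{1,x}[w]$ from below. Testing \eqref{dn-correction-equation} with $\psi=v$ eliminates the left side except for the term $\ii y\langle v,w\rangle_p$; to handle it I will test instead with a fixed zero-trace profile $\psi$. A natural choice is $\psi=v-\zeta$, where $\zeta$ is a trace-one cutoff concentrated in the boundary layer $\varphi\lesssim 1/K$. This gives
\[
 |y|\,|\langle\psi,v\rangle_p|\le E_{1,x}[\psi]^{1/2}E_{1,x}[w]^{1/2}+|y|\,\|\psi\|_p\|w\|_p .
\]
Using $\|w\|_p\le CK^{-1}E_{1,x}[w]^{1/2}$ together with \eqref{dn-boundary-layer}, both terms on the right are $\lesssim K^{s}E_{1,x}[w]^{1/2}$. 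If $\psi$ can be chosen so that $|\langle\psi,v\rangle_p|\gtrsim K^{2s-2}$, this yields $E_{1,x}[w]\gtrsim y^2K^{2s-4}$. That is comparable to $K^{2s-2}$ when $|\tau|\gtrsim K$, i.e.\ at high frequencies, since $j=0$ forces $K\asymp\langle\tau\rangle$.

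\emph{Low frequencies and the main obstacle.} For bounded $|\tau|$ the imaginary correction is too small, and the lower bound must come from the term $\int pW_{0,0}v^2$. Because $W_{0,0}\asymp\varphi^2$ near $\varphi=0$ and $\sim\sec^2\varphi$ near the axis, this term is a positive continuous function of $\tau$ on compact sets: $v$ is not identically zero, and it depends continuously on $\tau$ by the holomorphy in Lemma~\ref{extension-symbol-bounds}. Compactness then gives a positive lower bound on $|\tau|\le T$. The step I expect to be hardest is the degenerate case $d=0$ (so $n=1$ and $\delta=0$, which is excluded since $n\ge2$). More genuinely, the difficulty is arranging the overlap estimate $|\langle\psi,v\rangle_p|\gtrsim K^{2s-2}$ uniformly in large $\tau$. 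My first attempt is to take $\psi=\varphi(K)\,\partial_x v$-type correction, or more simply to use the exact identity $\langle v,w\rangle_p$ from testing with $w$ itself, combined with the mass bound $\|v\|_p^2\asymp K^{2s-2}$ from \eqref{dn-boundary-layer}. Finally, I will combine the two regimes, choosing $T$ large depending only on $n,s$.
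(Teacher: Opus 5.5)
Your overall structure is the paper's own argument. You reduce to $j=0$ at $\delta=0$ via Proposition~\ref{analytic-gap}. You keep the drift energy $d_sE_{1,x}[w]$ through \eqref{dn-positive-correction}. You use the real comparison $F_1(x)-a_0(\tau)\ge d_s\int pW_{0,0}v^2$ with $W_{0,0}=(n-1)\tan^2\varphi$. And you aim to test \eqref{dn-correction-equation} with a zero-trace $\psi$ satisfying $E_{1,x}[\psi]\lesssim K^{2s}$, $\|\psi\|_p\lesssim K^{s-1}$ and $|\langle\psi,v\rangle_p|\gtrsim K^{2s-2}$. Your compactness treatment of $|\tau|\le T$ is also valid. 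The paper instead gets the explicit bound $(n-1)\int p\tan^2\varphi\,v^2\ge cK^{2s-4}$ from $v\ge1/2$ on $[\epsilon/(2K),\epsilon/K]$ and $\tan\varphi\ge\varphi$, which makes the split unnecessary.

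\textbf{The gap.} The step that carries the corollary, the construction of $\psi$ with the overlap lower bound, is left open, and your fallbacks do not supply it. Testing with $w$ itself gives only
\[
 E_{1,x}[w]=y\,\Imm\langle w,v\rangle_p,\qquad \|w\|_p^2=-\Ree\langle w,v\rangle_p .
\]
These identities reproduce the upper bounds of Lemma~\ref{extension-symbol-bounds} and can never force $w$ to be large. The $\partial_xv$ suggestion needs a lower bound for $E_{1,x}[\partial_xv]$, which is a concavity estimate of the same difficulty. Your candidate $v-\zeta$ is not justified as stated. For a generic trace-one layer cutoff, both $\|v\|_p^2$ and $\langle\zeta,v\rangle_p$ are of order $K^{2s-2}$, and nothing controls the size or sign of their difference. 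It can be rescued only by taking $\zeta$ supported in $[0,\epsilon'/K]$ with $\epsilon'\ll\epsilon$. Then $\langle\zeta,v\rangle_p\le C\epsilon'^{2-2s}K^{2s-2}$, which is dominated by $\|v\|_p^2\ge c\,\epsilon^{2-2s}K^{2s-2}$. You did not give this argument.

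\textbf{The fix.} Localize the test inside the boundary layer where \eqref{dn-boundary-layer} gives $v\ge1/2$. Take $\psi=h_K(\varphi)=\chi(K\varphi)$ with $0\le\chi\in C_c^\infty((\epsilon/2,\epsilon))$ nonzero.
\begin{itemize}
\item Its support avoids $\varphi=0$, so it has zero trace and no subtraction is needed.
\item By positivity, $\langle h_K,v\rangle_p\ge\frac12\int p\,h_K\ge cK^{2s-2}$.
\item By scaling, $E_{1,x}[h_K]\le CK^{2s}$ and $\|h_K\|_p\le CK^{s-1}$.
\end{itemize}
Your inequality then yields $E_{1,x}[w]\ge c\,y^2K^{2s-4}=c\,d^2\tau^2K^{2s-4}$, which completes the high-frequency regime.
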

\begin{proof}
Proposition~\ref{analytic-gap} treats $\delta>0$. Suppose $\delta=0$.
Since $d>-1$ and $d^2=n-1$, one has $d=\sqrt{n-1}>0$.
At weight zero,
\[
 W_{j,0}=2j\sec^2\varphi+(n-1)\tan^2\varphi.
\]
For $j\ge1$, this is at least $2j$, so the preceding comparison gives
\eqref{general-gap} for these modes.

It remains to consider $j=0$. Write $K=(1+\tau^2)^{1/2}$ and
\[
 E_0[v]=\int_0^{\pi/2}p\left(|v'|^2+
 [\tau^2+\beta^2+(n-1)\tan^2\varphi]|v|^2\right)\dd\varphi.
\]
Then $E_0[v]\ge c\int p(|v'|^2+K^2|v|^2)$.
Let $v_0$ be its real trace-one minimizer and $\Phi_0$ the
corresponding profile with the additional potential $-\ii b$,
where $b=2d\tau$. In \eqref{dn-positive-correction}, these quantities
are $m=1$, $x=\beta^2+\tau^2-d^2$, $y=-2d\tau$,
$E_{m,x}=E_0$, $v_0=\Phi_{1,x}$, and $\Phi_0=\Phi_{1,x+\ii y}$.
Here $x=\tau^2+2d+1>0$, so the identity applies.
Thus that identity gives
\begin{equation}\label{endpoint-drift-energy}
 \Ree t_{0,0}(\tau)
  =d_s\bigl(E_0[v_0]+E_0[w_0]\bigr),
 \qquad w_0=\Phi_0-v_0.
\end{equation}
By \eqref{dn-correction-equation}, for every zero-trace $\psi$,
\[
 E_0(\psi,w_0)-\ii b\langle\psi,w_0\rangle_p
                         =\ii b\langle\psi,v_0\rangle_p.
\]

We estimate the added energy from below. Choose a nonzero smooth
$\chi\ge0$, supported in $(\epsilon/2,\epsilon)$, where $\epsilon$
is supplied by \eqref{dn-boundary-layer}, and let
$h_K(\varphi)=\chi(K\varphi)$. The boundary-layer estimate gives
\[
 \int_0^{\pi/2}pv_0h_K\dd\varphi\ge cK^{2s-2},\qquad
 E_0[h_K]\le CK^{2s},\qquad \|h_K\|_p\le CK^{s-1}.
\]
Testing the correction equation with $h_K$ and using coercivity yields
\[
 \begin{split}
 |b|\int pv_0h_K
 &\le E_0[w_0]^{1/2}E_0[h_K]^{1/2}
             +|b|\|w_0\|_p\|h_K\|_p\\
 &\le CK^sE_0[w_0]^{1/2},
 \end{split}
\]
because $|b|\le CK$ and $\|w_0\|_p\le CK^{-1}E_0[w_0]^{1/2}$.
It follows that
\begin{equation}\label{endpoint-drift-lower}
 E_0[w_0]\ge cb^2K^{2s-4}.
\end{equation}

Let $E_A$ have potential $\tau^2+\beta^2$, with real trace-one
minimizer $v_A\in\mathcal V_0$. Since
$\mathcal V_1\subset\mathcal V_0$,
\[
 E_0[v_0]-E_A[v_A]
 \ge(n-1)\int_0^{\pi/2}p\tan^2\varphi\,v_0^2\dd\varphi.
\]
On $[\epsilon/(2K),\epsilon/K]$, one has $v_0\ge1/2$ and
$\tan\varphi\ge\varphi$. Hence
\[
 (n-1)\int_0^{\pi/2}p\tan^2\varphi\,v_0^2\dd\varphi
 \ge c\int_{\epsilon/(2K)}^{\epsilon/K}\varphi^{3-2s}\dd\varphi
 \ge cK^{2s-4}.
\]
Together with \eqref{endpoint-drift-energy},
\eqref{endpoint-drift-lower}, and $b^2=4d^2\tau^2$, this gives
\[
 \Ree t_{0,0}(\tau)-a_0(\tau)
 \ge c(1+\tau^2)K^{2s-4}=cK^{2s-2}.
\]
The angular potential controls bounded frequencies and the added
energy controls large frequencies. This proves \eqref{general-gap}.
\end{proof}

\subsection{Positive weights and the radical range}

The unweighted comparison remains coercive for a small positive
weight even when the real potential difference is not positive.
The following argument will also be used when $\delta<0$.

\begin{lemma}[Continuity with respect to the weight]\label{shift-gap}
Assume \eqref{gap-hypothesis}. There is
$\alpha\in(0,\min\{1,2s\})$ such that
\begin{equation}\label{shifted-gap}
 \Ree t_{j,\alpha}(\tau)-a_j(\tau)
 \ge\frac\kappa2(1+j^2+\tau^2)^{s-1}
 \qquad(j\ge0,\ \tau\in\R).
\end{equation}
\end{lemma}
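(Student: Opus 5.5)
The plan is to prove a quantitative continuity estimate for the symbol $t_{j,\vartheta}(\tau)$ in the weight $\vartheta$. The unweighted gap \eqref{gap-hypothesis} can then absorb the perturbation. Concretely, we aim for
\[
 |t_{j,\vartheta}(\tau)-t_{j,0}(\tau)|
 \le C|\vartheta|\,(1+j^2+\tau^2)^{s-1}
 \qquad(|\vartheta|\le\vartheta_0),
\]
uniformly in $j,\tau$. Choosing $\alpha\le\min\{\vartheta_0,\kappa/(2C)\}$, also below $\min\{1,2s\}$, gives \eqref{shifted-gap}, since $a_j$ does not depend on the weight. A bound with $K^{2s}$ in place of $K^{2s-2}$ would be useless, because the gap is of size $K^{2s-2}$. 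The point is therefore to see that the change of weight enters only through lower-order terms, damped by the extension mass $\|\Phi\|_p^2\sim K^{2s-2}$.

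Write $m=j+1$. With $\gamma=d+\vartheta$ we have
\[
 t_{j,\vartheta}(\tau)=F_m(z(\gamma)),\qquad
 z(\gamma)=\beta^2+\tau^2-\gamma^2-2\ii\gamma\tau .
\]
Hence
\[
 \frac{d}{d\gamma}t_{j,\vartheta}(\tau)
 =F_m'(z)\,(-2\gamma-2\ii\tau).
\]
The $z$-derivative of $F_m$ follows from \eqref{dn-flux-energy}. Differentiating the weak equation in $z$ shows $\partial_z\Phi_{m,z}\in\mathcal V_m^0$ solves the correction equation with source $-\Phi_{m,z}$. Pairing then gives
\[
 F_m'(z)=-d_s\int_0^{\pi/2}p\,\Phi_{m,z}^2\dd\varphi,
\]
the bilinear, unconjugated square, by the usual Hellmann--Feynman symmetry of the complex-symmetric form. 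This yields
\[
 |F_m'(z)|\le d_s\|\Phi_{m,z}\|_p^2\le CK^{2s-2}
\]
by \eqref{dn-extension-mass}. That estimate needs $z=x+\ii y$ with $x=\beta^2+\tau^2-\gamma^2$ and $|y|=2|\gamma\tau|\le C_0K$. It holds uniformly for $\gamma$ in a compact subinterval of $(-\beta-1,\beta)$, which is guaranteed for $\vartheta\in[0,\vartheta_0]$ with $\vartheta_0$ small, since $d\in(-\beta-1,\beta)$. The factor $|2\gamma+2\ii\tau|\le CK$ then gives only
\[
 \bigl|\partial_\gamma t_{j,\vartheta}\bigr|\le CK^{2s-1}.
\]

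\textbf{Main obstacle.} This first bound loses one power of $K$. The $\Imm$ part $-2\tau\,\Imm F_m'$ is dangerous, while the real-part contribution from $-2\gamma$ is fine. We need $\Ree\bigl[F_m'(z)(-2\gamma-2\ii\tau)\bigr]$, and
\[
 \Ree\bigl(-2\ii\tau F_m'\bigr)=2\tau\,\Imm F_m'(z),
\qquad
 \Imm F_m'=-d_s\Imm\!\int p\,\Phi^2 .
\]
Writing $\Phi=v+w$ as in the proof of Lemma~\ref{extension-symbol-bounds}, with $v$ real, we get
\[
 \Imm\!\int p\Phi^2=2\int p\,v\,\Imm w+\Imm\!\int p w^2 .
\]
That proof gives $\|w\|_p\le CK^{-2}|y|\,\|v\|_p$, so
\[
 \Bigl|\Imm\!\int p\Phi^2\Bigr|\le CK^{-1}\|v\|_p^2\le CK^{2s-3}.
\]
Then $2|\tau|\,|\Imm F_m'|\le CK^{2s-2}$, as required. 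Only the real part of the symbol matters in \eqref{shifted-gap}, so
\[
 |\partial_\gamma\Ree t_{j,\vartheta}(\tau)|\le CK^{2s-2}.
\]
Integrating from $\gamma=d$ to $d+\alpha$ gives the displayed continuity estimate for real parts, which suffices.

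Finally, justify the differentiation in $z$ by the holomorphy of $\Phi_{m,z}$ in $\mathcal V_m$ from Lemma~\ref{extension-symbol-bounds}, and check that all constants are uniform for $\vartheta\in[0,\vartheta_0]$ by the local uniformity stated there.
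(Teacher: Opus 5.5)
Your proposal is correct and follows essentially the paper's proof. The shared steps are the chain rule in $\gamma$, the formula $F_m'(z)=d_s\int_0^{\pi/2}p\,\Phi_{m,z}^2\dd\varphi$, and the observation that only $\Ree t_{j,\vartheta}$ is needed, so the term $2\tau\Imm F_m'$ must be shown to be $O(K^{2s-2})$; note that the formula carries a plus sign, since $F_m$ increases on the real axis, though the sign does not affect your bounds. The only difference is that the paper gets $|\Imm F_m'(x+\ii y)|\le C|y|K^{2s-4}$ by bounding $F_m''$ and integrating along the vertical segment from the real point $x$, whereas you read the same bound off directly from $\Phi=v+w$ and the estimate $\|w\|_p\le CK^{-2}|y|\,\|v\|_p$ proved in Lemma~\ref{extension-symbol-bounds}, which is the integrated form of the paper's derivative bound.
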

\begin{proof}\label{weight-shift-proof}
We first estimate derivatives of the Dirichlet-to-Neumann function
$F_m$ from Lemma~\ref{extension-symbol-bounds}. Let $m=j+1$,
$K=(1+j^2+\tau^2)^{1/2}$, and
\[
 z=x+\ii y,\qquad x=\beta^2+\tau^2-\gamma^2,
 \qquad |\gamma-d|\le\eta,\quad |y|\le2|\gamma\tau|,
\]
where $\eta>0$ is fixed small enough that these weights lie in
\eqref{weight-interval}. The coercivity and mass bounds in that
lemma hold uniformly on this set, including each vertical segment
from $x$ to $x+\ii y$; in particular,
$\|\Phi_{m,x+\ii y}\|_p^2\le CK^{2s-2}$.

Let $\Phi_z$ be the trace-one profile. Holomorphic dependence in the
form norm follows by inverting the coercive zero-trace operator;
its derivative $\dot\Phi_z$ has trace zero and satisfies
\[
 E_{m,x}(\psi,\dot\Phi_z)+\ii y\langle\psi,\dot\Phi_z\rangle_p
   =-\langle\psi,\Phi_z\rangle_p
 \qquad(\psi\in\mathcal V_m^0).
\]
Testing with $\dot\Phi_z$ and taking real parts gives
\[
 \|\dot\Phi_z\|_p\le CK^{-2}\|\Phi_z\|_p.
\]
In the complex bilinear form of the boundary-energy identity,
the terms containing $\dot\Phi_z$ vanish because it has trace zero.
Differentiation therefore gives
\[
 F_m'(z)=d_s\int_0^{\pi/2}p\Phi_z^2\dd\varphi,
 \qquad
 F_m''(z)=2d_s\int_0^{\pi/2}p\Phi_z\dot\Phi_z\dd\varphi.
\]
These are bilinear identities; no complex conjugates occur in the
integrands. The mass bound for $\Phi_z$ now implies
\begin{equation}\label{dn-parameter-derivatives}
 |F_m'(z)|\le CK^{2s-2},\qquad
 |F_m''(z)|\le CK^{2s-4}.
\end{equation}

For real $x$ the derivative $F_m'(x)$ is real. Integrating the second
bound along a vertical segment gives
\[
 |\Imm F_m'(x+\ii y)|\le C|y|K^{2s-4}.
\]
Set $z_\gamma=\beta^2+\tau^2-\gamma^2-2\ii\gamma\tau$.
Using \eqref{dn-parameter-derivatives}, we obtain
\[
 \begin{split}
 \left|\frac{\dd}{\dd\gamma}\Ree F_m(z_\gamma)\right|
 &=\bigl|-2\gamma\Ree F_m'(z_\gamma)
                       +2\tau\Imm F_m'(z_\gamma)\bigr|\\
 &\le CK^{2s-2}.
 \end{split}
\]
The constants are uniform in $j,\tau$ and in the chosen compact
interval of $\gamma$. Since $t_{j,\alpha}(\tau)=F_{j+1}(z_{d+\alpha})$,
integration from $d$ to $d+\alpha$ yields
\[
 |\Ree t_{j,\alpha}(\tau)-\Ree t_{j,0}(\tau)|
 \le C\alpha(1+j^2+\tau^2)^{s-1}.
\]
Choose $0<\alpha<\min\{\eta,1,2s,\kappa/(2C)\}$ and use
\eqref{gap-hypothesis}.
\end{proof}

Combining these comparisons with Theorem~\ref{pde-interface} gives
the following estimate. The strict dimension condition appeared
in the radial decreasing, zero-exterior estimates of Sanz-Perela
\cite{SP}.

\begin{theorem}[Interior estimates in the radical range]\label{explicit-exponents}
Let $0<s<1$ and let $n\ge1$ satisfy
\begin{equation}\label{explicit-dimension-range}
 n\le2\bigl(s+2+\sqrt{2s+2}\bigr).
\end{equation}
Let $u$ have the solution, stability, and nonlinearity assumptions of
Theorem~\ref{full}, with \eqref{explicit-dimension-range} in place of
\eqref{dimension-range}. Then \eqref{full-bound} holds for some
$\alpha\in(0,\min\{1,2s\})$ and $C>0$ depending only on $n,s$.
If $n\ge2$ and the inequality in \eqref{explicit-dimension-range} is
strict, the estimate holds for every
\begin{equation}\label{explicit-alpha}
 0<\alpha<\min\left\{1,\,2s,\,
             s+1-\frac n2+\sqrt{n-1}\right\},
\end{equation}
with $C$ depending only on $n,s,\alpha$. In dimension one it holds
for every $0<\alpha<\min\{1,2s\}$.
The constants are independent of $f$ and the initial local bounds of $u$.
\end{theorem}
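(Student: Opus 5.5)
The plan is to reduce everything to Theorem~\ref{pde-interface}. That theorem only needs the weighted comparison \eqref{pde-input} for some admissible $\alpha\in(0,\min\{1,2s\})$ and some $\kappa>0$ depending on $n,s$. The one-dimensional case is handled separately by lifting.

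\emph{Case $n\ge2$.} By the remark after \eqref{delta-definition}, \eqref{explicit-dimension-range} is equivalent to $\delta\ge0$, with $\delta=n-1-d^2$ and $d=\beta-1$.

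First take the endpoint $\delta=0$. Corollary~\ref{endpoint} gives the unweighted comparison \eqref{gap-hypothesis} with $\kappa=\kappa_{n,s}$. Lemma~\ref{shift-gap} then produces some $\alpha\in(0,\min\{1,2s\})$ satisfying \eqref{shifted-gap}, which is \eqref{pde-input} with $\kappa/2$. This $\alpha$ lies in the admissible interval \eqref{weight-interval}, since $2s-n<0<\alpha$. Theorem~\ref{pde-interface} then yields \eqref{full-bound}, with constants depending only on $n,s$.

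Next take the strict case $\delta>0$. Fix $\alpha$ in \eqref{explicit-alpha}. Note that
\[
 s+1-\frac n2+\sqrt{n-1}=\sqrt{n-1}-d,
\]
so this condition is exactly the range $0\le\alpha<\min\{1,2s,\sqrt{n-1}-d\}$ of Proposition~\ref{analytic-gap}. That proposition gives $\delta_\alpha>0$ and the bound \eqref{analytic-extension-gap} with $\vartheta=\alpha$ for all $j\ge0$. Since $\delta_\alpha+2j\ge\delta_\alpha$, this is \eqref{pde-input} with $\kappa=\kappa_{n,s,\alpha}\delta_\alpha>0$. Again Theorem~\ref{pde-interface} applies, now with $C=C(n,s,\alpha)$. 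In both cases the independence from $f$ and from the qualitative local bounds is inherited from Theorem~\ref{pde-interface}.

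\emph{Case $n=1$.} Set $U(x_1,x_2)=u(x_1)$. By Lemma~\ref{prelim-lifting}, $U$ is a stable distributional solution of $(-\Delta)^sU=f(U)$ in the strip $(-2,2)\times\R\supset B_2\subset\R^2$, and $\|U\|_{L_s^1(\R^2)}\le C_s\|u\|_{L_s^1(\R)}$. Local boundedness is clearly preserved.

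For $n=2$ we have $d=-s$ and $\delta=1-s^2>0$. Moreover $\sqrt{1}-d=1+s>\min\{1,2s\}$, so every $0<\alpha<\min\{1,2s\}$ is allowed in \eqref{explicit-alpha}. The two-dimensional estimate therefore gives $\|U\|_{C^\alpha(B_1)}+[U]_{H^s(B_1)}\le C\|U\|_{L_s^1(\R^2)}$.

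Restricting to the segment $(-1,1)\times\{0\}$ gives the $C^\alpha$ bound for $u$ on $(-1,1)$ immediately. The main subtlety is the energy seminorm, because restriction to a line does not directly control $H^s$. I would handle it as follows. Apply the two-dimensional bound on a slightly smaller interior ball covering, so that $U$ is bounded on, say, $(-3/2,3/2)\times\R$ by $C\|u\|_{L_s^1}$. Then apply Lemma~\ref{energy-amplitude} directly in dimension one, which the paper notes is valid there. Together with Lemma~\ref{bootstrap} for the local $H^s$ membership, this gives $[u]_{H^s(B_1)}^2\le C\|u\|_{L_s^1}\|u\|_{L^\infty(B_{15/8})}$. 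The remaining issue is the $L^\infty$ bound on $B_{15/8}$. It follows by applying the lifted estimate in rescaled balls centered on $(-15/8,15/8)\times\{0\}$, using the scaled form in Theorem~\ref{pde-interface} together with \eqref{rescaled-tail}.
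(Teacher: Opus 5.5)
Your proposal is correct and follows the paper's proof essentially step for step. For $n\ge2$, the case $\delta>0$ goes through Proposition~\ref{analytic-gap}, using the identity $s+1-n/2+\sqrt{n-1}=\sqrt{n-1}-d$, and the endpoint $\delta=0$ goes through Corollary~\ref{endpoint} and Lemma~\ref{shift-gap}; both feed into Theorem~\ref{pde-interface}. For $n=1$ you lift to $n=2$, where the last entry of the minimum is $1+s$, restrict for the H\"older bound, and use Lemma~\ref{bootstrap} with Lemma~\ref{energy-amplitude} for the energy. The only cosmetic difference is that you bound $\|u\|_{L^\infty(B_{15/8})}$ by covering with rescaled lifted estimates, whereas the paper phrases this step as applying Lemma~\ref{energy-amplitude} after fixed interior rescalings.
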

\begin{proof}
For $n\ge2$ and strict inequality in \eqref{explicit-dimension-range},
one has $\delta>0$. Given \eqref{explicit-alpha},
Proposition~\ref{explicit-weight-gap} supplies \eqref{pde-input}, and
Theorem~\ref{pde-interface} gives \eqref{full-bound} with that exponent.
At equality, $\delta=0$; Corollary~\ref{endpoint} and
Lemma~\ref{shift-gap} give \eqref{pde-input} for a small positive
$\alpha$. Applying
Theorem~\ref{pde-interface} gives the conclusion in this case as well.

For $n=1$, apply Lemma~\ref{prelim-lifting} to
$U(x_1,x_2)=u(x_1)$. The two-dimensional case just proved applies
to $U$ on interior balls of the strip $|x_1|<2$.
In dimension two, the last entry in the minimum in
\eqref{explicit-alpha} is $1+s$, so every
$0<\alpha<\min\{1,2s\}$ is available. Restriction to $x_2=0$
gives the H\"older estimate for $u$. Lemma~\ref{bootstrap} gives
local $H^s$ regularity, and Lemma~\ref{energy-amplitude}, after
fixed interior rescalings, gives the energy bound.
\end{proof}

\section{Optimal dimension and order conditions}\label{spectral-contract-section}\label{sp-appendix}
It remains to prove the unweighted comparison \eqref{gap-hypothesis}
under the optimal condition $A_{n,s}>H_{n,s}$. Lemma~\ref{shift-gap}
will then supply the positive weight required by
Theorem~\ref{pde-interface}. Throughout this section $n\ge2$ is an
integer and $0<s<1$. Write
\[
 N=\frac n2,\qquad \beta=N-s,\qquad d=\beta-1,\qquad
 \delta=n-1-d^2.
\]
Corollary~\ref{endpoint} already treats $\delta\ge0$. For the remaining
pairs with $A_{n,s}>H_{n,s}$ and $\delta<0$, the parameter bounds below
give $\delta>-1/2$. Then $\delta+2j>0$ for $j\ge1$, so the
positive-potential comparison treats the higher angular modes.
The remaining target is therefore the scalar bound
\[
 R_{0,s}(\tau)=\Ree t_{0,0}(\tau)-a_0(\tau)
 \ge\kappa_0(n,s)(1+\tau^2)^{s-1}.
\]
We first identify the exact coefficients and the possible dimensions
and orders. At low frequencies the bound follows directly from the
Gamma products. At larger frequencies we compare the loss in the real
extension energy with the gain from the imaginary drift, using the
positive spectral series of those same extension problems.

\subsection{Homogeneous coefficients and exact formulas}\label{shared-calculus}\label{prelim-special-functions}\label{operator-realization-details}\label{gamma-tools}
The homogeneous formula also identifies the logarithmic singular solution.

We use the Gamma recurrence $\Gamma(z+1)=z\Gamma(z)$ and Euler's
beta integral
\begin{equation}\label{prelim-beta-integral}
 \frac{\Gamma(a)\Gamma(b)}{\Gamma(a+b)}
 =\int_0^1t^{a-1}(1-t)^{b-1}\dd t,
 \qquad \Ree a,\Ree b>0;
\end{equation}
see \cite[5.5.1 and 5.12.1]{NIST}. We write
$\psi=(\log\Gamma)'$ and $\psi^{(k)}$ for its derivatives.

The fractional Laplacian acts diagonally on homogeneous angular modes.
In our Fourier normalization its coefficient is
\begin{equation}\label{power-multiplier}
 \begin{split}
 L(r^{-a}Y_{j,\ell})&=c_j(a)r^{-a-2s}Y_{j,\ell},\\
 c_j(a)&=2^{2s}
 \frac{\Gamma((j+n-a)/2)\Gamma((j+a+2s)/2)}
 {\Gamma((j+a)/2)\Gamma((j+n-a-2s)/2)}.
 \end{split}
\end{equation}
Initially $0<\Ree a<n-2s$. The relation between a harmonic
factor of degree $j$ and the effective radial dimension $n+2j$
is also given in \cite[Section~2.1, Proposition~3(ii), (44)]{DKK}.

\begin{lemma}[Homogeneous Fourier coefficients]\label{homogeneous-fourier}
Use the unitary Euclidean transform \eqref{prelim-euclidean-fourier}.
For $Y_j\in\mathcal H_j$, write
$\Phi_{j,a}(x)=|x|^{-a}Y_j(x/|x|)$ initially when
$\Ree a<n$. Then, for $0<\Ree a<n$,
\begin{equation}\label{homogeneous-transform}
 \widehat{\Phi_{j,a}}=\gamma_j(a)\Phi_{j,n-a},\qquad
 \gamma_j(a)=(-\ii)^j2^{n/2-a}
          \frac{\Gamma((n+j-a)/2)}{\Gamma((j+a)/2)}.
\end{equation}
The fractional-Laplacian formula \eqref{power-multiplier} holds for
$0<\Ree a<n-2s$. In degree zero it also holds at $a=-\ii\tau$,
$\tau\in\R$, for the ordinary distributional fractional Laplacian,
and it may be differentiated with respect to $a$ at $a=0$.
These degree-zero assertions for the radial powers $|x|^{-a}$ also
hold when $n=1$ and $n>2s$.
\end{lemma}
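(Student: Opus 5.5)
The plan is to prove \eqref{homogeneous-transform} by Gaussian subordination, deduce \eqref{power-multiplier} from it by a double application, and then pass to the degree-zero endpoint $a=-\ii\tau$ by analytic continuation.

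\textbf{Step 1: the transform identity \eqref{homogeneous-transform}.} First take real $a$ with $j<a<n$, or more simply $0<a<n$. Write
\[
 |x|^{-a}=\frac{1}{\Gamma(a/2)}\int_0^\infty \lambda^{a/2-1}e^{-\lambda|x|^2}\dd\lambda .
\]
Multiplying by the solid harmonic $P_j(x)=|x|^jY_j(x/|x|)$ gives $\Phi_{j,a}=|x|^{-(a+j)}P_j$. The Hecke--Bochner identity for the unitary transform states that $P_j(x)e^{-\lambda|x|^2}$ has transform $(-\ii)^j(2\lambda)^{-n/2-j}P_j(\xi)e^{-|\xi|^2/(4\lambda)}$. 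This is checked by applying the harmonic polynomial $P_j(\ii\partial_\xi)$ to the Gaussian transform; harmonicity kills all lower-order terms. Substituting $\mu=1/(4\lambda)$ turns the $\lambda$-integral into another Gamma integral, which produces exactly $\gamma_j(a)$ after the Gamma recurrence. Interchanging the transform with the $\lambda$-integral is justified in tempered distributions: both sides are locally integrable homogeneous functions when $0<a<n$, and the pairing with a Schwartz test converges absolutely. Both sides are holomorphic in $a$ on the strip $0<\Ree a<n$ as tempered distributions, so the identity extends to complex $a$ there.

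\textbf{Step 2: the fractional Laplacian coefficient \eqref{power-multiplier}.} For $0<\Ree a<n-2s$, the product $|\xi|^{2s}\widehat{\Phi_{j,a}}=\gamma_j(a)\Phi_{j,n-a-2s}$ is locally integrable and homogeneous. Applying the inverse transform, which is Step 1 with $n-a-2s$ in place of $a$ together with the conjugate sign $\ii^j$, gives
\[
 c_j(a)=\frac{\gamma_j(a)}{\overline{\gamma_j}(n-a-2s)} .
\]
The $(\mp\ii)^j$ factors cancel and the powers of $2$ combine to $2^{2s}$, which yields the displayed Gamma quotient. Here one needs $|\xi|^{2s}\widehat v$ to define $Lv$ distributionally for $v\in L^1_s$. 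For $\Ree a<n-2s$ one has $v\in L^1_s$, and the symbol product is a tempered distribution whose pairing with $\widehat\phi$ equals $\int v\,L\phi$, by approximating $v$ with $v\,e^{-\epsilon|x|^2}$.

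\textbf{Step 3: the degree-zero endpoint, and the main obstacle.} At $a=-\ii\tau$, the function $|x|^{\ii\tau}$ is bounded, so it lies in $L^1_s$ and $\int|x|^{\ii\tau}L\phi$ is well defined. The plan is analytic continuation in $a$. For a fixed test $\phi\in C_c^\infty$, the map $a\mapsto\int|x|^{-a}L\phi\,\dd x$ is holomorphic on $-2s<\Ree a<n$, using the decay $|L\phi(x)|\le C(1+|x|)^{-n-2s}$. The map $a\mapsto c_0(a)\int|x|^{-a-2s}\phi$ is holomorphic wherever $\Ree(a+2s)<n$ and $c_0$ is regular. At $a$ near $0$, the quantity $\Gamma(a/2)$ appears only in the denominator, and $\Gamma((a+2s)/2)$ together with $\Gamma((n-a)/2)$ and $\Gamma((n-a-2s)/2)$ are finite because $n>2s$. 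Hence both sides are holomorphic on a neighborhood of the imaginary axis. They agree on $0<\Ree a<n-2s$, so they agree at $a=-\ii\tau$. The same holomorphy allows differentiation at $a=0$, where $c_0(0)=0$; the derivative gives $L\log|x|$ times a Gamma constant.

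The main obstacle is verifying the uniform decay needed for holomorphy of the left side down to $\Ree a>-2s$; the tail bound on $L\phi$ provides exactly this. The one-dimensional radial case uses the same computation with $j=0$ and needs only $n>2s$.
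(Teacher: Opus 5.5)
Your proposal is correct and follows the paper's proof in all essentials: Gaussian subordination with the harmonic Gaussian transform for \eqref{homogeneous-transform}, multiplication by $|\xi|^{2s}$ for \eqref{power-multiplier}, and holomorphy in $a$ across $\Ree a=0$ (the paper reaches $a=-\ii\tau$ by the limit $\epsilon\downarrow0$ in $a=\epsilon-\ii\tau$, and uses the continuation only for the derivative at $a=0$). One slip in Step 2: the coefficient is the product $\gamma_j(a)\,\overline{\gamma_j}(n-a-2s)$, equivalently the paper's quotient $\gamma_j(a)/\gamma_j(a+2s)$, not $\gamma_j(a)/\overline{\gamma_j}(n-a-2s)$; only the product gives the cancellation of the phases and the factor $2^{2s}$ that you describe.
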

\begin{proof}
Let $p_j(x)=|x|^jY_j(x/|x|)$, a harmonic homogeneous polynomial.
For $t>0$, the Gaussian transform and differentiation give
\begin{equation}\label{harmonic-gaussian-transform}
 \widehat{p_j e^{-t|x|^2}}(\xi)
 =(-\ii)^j(2t)^{-n/2-j}p_j(\xi)e^{-|\xi|^2/(4t)}.
\end{equation}
For completeness, the differentiation identity used here is
$p_j(\partial_\xi)e^{-b|\xi|^2}
=(-2b)^jp_j(\xi)e^{-b|\xi|^2}$.
To check it, differentiate the generating formula
$e^{-b|\xi+z|^2}
=e^{-b|\xi|^2}e^{-2b\xi\cdot z}e^{-b|z|^2}$ by
$p_j(\partial_z)$ at $z=0$. Every term involving $|z|^2$
contains a contraction of two polynomial indices, hence a factor
$\Delta p_j$, and vanishes. The remaining term is
$p_j(-2b\xi)$.

The elementary Gamma integral gives, for
$\Ree(a+j)>0$ and $x\ne0$,
\[
 \Phi_{j,a}(x)=\frac{p_j(x)}{\Gamma((a+j)/2)}
          \int_0^\infty t^{(a+j)/2-1}e^{-t|x|^2}\dd t.
\]
Insert \eqref{harmonic-gaussian-transform} and then set
$z=|\xi|^2/(4t)$. Since
\[
 \int_0^\infty t^{(a-n-j)/2-1}e^{-|\xi|^2/(4t)}\dd t
 =\left(\frac{|\xi|^2}{4}\right)^{(a-n-j)/2}
             \Gamma\left(\frac{n+j-a}{2}\right),
\]
this yields \eqref{homogeneous-transform}, including its factor
$2^{n/2-a}$ and phase $(-\ii)^j$.

For $0<\Ree a<n$ all interchanges are justified after
pairing with a Schwartz function: the absolute integrals reduce
on the physical side to $|x|^{-\Ree a}$ times a Schwartz
function, and on the Fourier side to
$|\xi|^{\Ree a-n}$ times a Schwartz function.
Both are integrable at their respective origins.

When $0<\Ree a<n-2s$, multiplication of the Fourier
side by $|\xi|^{2s}$ and comparison with parameter $a+2s$ therefore
give
\[
 (-\Delta)^s\Phi_{j,a}
   =\frac{\gamma_j(a)}{\gamma_j(a+2s)}\Phi_{j,a+2s}
   =c_j(a)\Phi_{j,a+2s}.
\]

For degree zero at $a=-\ii\tau$, take $a=\epsilon-\ii\tau$
with $\epsilon\downarrow0$. The powers converge locally and in
$L_s^1(\R^n)$, and $c_0(a)\Phi_{0,a+2s}$ converges in distributions
since $2s<n$. Pairing with $(-\Delta)^s\phi$, for
$\phi\in C_c^\infty(\R^n)$, therefore proves the boundary
formula for the ordinary distributional fractional Laplacian.
At $a=0$ this gives $c_0(0)=0$ and $(-\Delta)^s1=0$.

Choose $0<\delta<\min\{2s,n-2s\}$. For $|\Ree a|<\delta$,
the functions $\Phi_{0,a}$ and $\partial_a\Phi_{0,a}$ are bounded
in absolute value by a constant times
$(|x|^{-\delta}+|x|^\delta)(1+|\log|x||)$.
This is integrable against $(-\Delta)^s\phi$: near zero this follows
from $\delta<n$, and at infinity from
$(-\Delta)^s\phi(x)=O(|x|^{-n-2s})$ and $\delta<2s$.
The right-hand side of the power formula and its parameter derivative
are locally integrable because $2s+\delta<n$. The dominated integrals define holomorphic distribution families
on this degree-zero neighborhood. Their equality for $\Ree a>0$
therefore extends across zero and may be differentiated there.
For $n=1$ and $j=0$,
$p_0$ is constant, so the Gaussian calculation and these domination
arguments apply without change to $|x|^{-a}$.
\end{proof}

\begin{lemma}[Exact cylinder coefficients]\label{uniform-gamma-symbols}
The symbols defined by the extension problem in \eqref{shared-symbols}
satisfy
\begin{equation}\label{gamma-cylinder-symbols}
 a_j(\tau)=c_j(\beta-\ii\tau),\qquad
 t_{j,\vartheta}(\tau)=c_{j+1}(1-\vartheta-\ii\tau).
\end{equation}
For $g_s(w)=\Gamma(w+s)/\Gamma(w)$ and $\zeta_m=(m+\beta)/2$,
the even meromorphic continuation of $a_m$ is
\begin{equation}\label{gamma-even-symbol}
 a_m(z)=2^{2s}g_s(\zeta_m+\ii z/2)g_s(\zeta_m-\ii z/2).
\end{equation}
Its poles are at $z=\pm\ii(m+\beta+2s+2k)$, $k=0,1,\ldots$.
Also $g_s(k+c)\asymp(1+k)^s$ for each fixed $c>0$.
The averaging symbol in \eqref{averaging-symbol-definition} is
\begin{equation}\label{gamma-averaging-symbol}
 m_s(z)=2^{s-1}\frac{\Gamma(z/2+s)}{\Gamma(z/2+1)}.
\end{equation}
\end{lemma}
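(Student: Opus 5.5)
The plan is to identify each extension-defined symbol with the coefficient of an explicit homogeneous solution of the corresponding weighted extension problem. Then Lemma~\ref{homogeneous-fourier} gives the Gamma expression, and uniqueness in Lemma~\ref{extension-symbol-bounds} closes the identification.

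\textbf{Step 1: the symbol $a_j$.} For $a_j$, I will use the realization \eqref{cylinder-laplacian-realization}: $(-\Delta)^s(r^{-\beta}v)=r^{-\beta-2s}\mathcal A_sv$. I take $v=e^{-\ii\tau t}Y_{j,\ell}$, which is not compactly supported, so that $r^{-\beta}v=r^{-\beta+\ii\tau}Y_{j,\ell}=\Phi_{j,a}$ with $a=\beta-\ii\tau$. Here $\Ree a=\beta\in(0,n-2s)$. Lemma~\ref{homogeneous-fourier} then gives $(-\Delta)^s\Phi_{j,a}=c_j(a)\Phi_{j,a+2s}$, so formally the symbol is $c_j(\beta-\ii\tau)$.

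To make this rigorous, I will pair with $r^{-\beta}\phi$ for $\phi\in C_c^\infty(\Cyl)$ and use selfadjointness of $L$. This gives $\langle r^{-\beta}\bar\phi, L\Phi_{j,a}\rangle$, which equals the Mellin transform of the $\phi$-side expression, namely $\int a_j(\tau')\widehat\phi\,\ldots$. Alternatively, I will approximate $v$ by $R^{-1/2}\chi(t/R)e^{-\ii\tau t}Y$ as in the proof of Lemma~\ref{cylinder-energy-forms} and use continuity of $a_j$ in $\tau$.

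A cleaner route is to note that the actual Caffarelli--Silvestre extension of $\Phi_{j,a}$ is homogeneous of degree $-a$. After the substitution $Z=R^\beta U$ it separates as $e^{-\ii\tau t}\Phi(\varphi)Y$, with $\Phi$ solving \eqref{angular-DN-equation} at $z=\beta^2+\tau^2$. Uniqueness of trace-one solutions then identifies the flux with $a_j(\tau)$. The finite-energy check near the axis is the same as in Proposition~\ref{cylinder-realizations}.

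\textbf{Step 2: the symbol $t_{j,\vartheta}$.} For $t_{j,\vartheta}$, I will use the cone realization \eqref{cone} together with \eqref{field-intertwining} and \eqref{cylinder-weight-conjugation}. Apply $D_{\mathrm{har}}$ to $\Phi_{j+1,b}$, and choose $b$ so that $r^{\vartheta}\cdot$(mode) matches. Since
\[
D_{\mathrm{har}}[r^{-b}Y_{j+1}]=(j+1+b)r^{-b}Y_{j+1},
\]
this is a nonzero multiple of the same homogeneous function. Then
\[
\T_{s,0}D_{\mathrm{har}}=r^{2s}D_{\mathrm{har}}L
\]
gives $\T_{s,0}$ on $r^{-b}Y$ equal to $c_{j+1}(b)\,\frac{j+1+b+2s}{j+1+b}\,r^{-b}Y$. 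This differs from the claim, so the intertwining with degree $j+1$ is not the right bookkeeping.

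Instead I will directly use $\mathcal L$ on $\R^{n+2}$. Degree-$j$ modes of $\mathcal L$ correspond to degree-$j$ harmonics in dimension $n+2$. By \eqref{power-multiplier} in dimension $n+2$, the operator $\mathcal L^s$ acts on $r^{-a}$ by $2^{2s}\Gamma((j+n+2-a)/2)\Gamma((j+a+2s)/2)/[\Gamma((j+a)/2)\Gamma((j+n+2-a-2s)/2)]$.

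From \eqref{cone}, $\T_{s,\vartheta}(r^{-\ii\tau}Y)=r^{2s+2-\vartheta}\mathcal L^s(r^{\vartheta-2-\ii\tau}Y)$, so $a=2-\vartheta+\ii\tau$. Substituting, and using the conjugation symmetry $c_j(a)$ versus $c_j(n-2s-a)$ together with the reflection $a\mapsto n+2-2s-a$ in dimension $n+2$, I expect this to reduce to $c_{j+1}(1-\vartheta-\ii\tau)$ in dimension $n$. Indeed, both products equal $\Gamma((j+1+n-a')/2)\Gamma((j+1+a'+2s)/2)/\ldots$ with $a'=1-\vartheta\mp\ii\tau$. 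This Gamma matching, and fixing the sign convention of $\tau$ (which is harmless by the conjugation law), is routine algebra. I also need $0<\Ree a<n+2-2s$, which is exactly the admissible interval \eqref{weight-interval}.

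\textbf{Step 3: the remaining formulas.} The even form \eqref{gamma-even-symbol} follows by writing
\[
c_m(\beta-\ii z)=2^{2s}\frac{\Gamma(\zeta_m+s+\ii z/2)\,\Gamma(\zeta_m+s-\ii z/2)}{\Gamma(\zeta_m-\ii z/2)\,\Gamma(\zeta_m+\ii z/2)}.
\]
This uses $(m+n-\beta)/2=\zeta_m+s$ and $(m+n-\beta-2s)/2=\zeta_m$ up to the sign of $\ii z$. Its poles come from the numerator Gamma functions, located at $\zeta_m+s\pm\ii z/2=-k$. The asymptotic $g_s(k+c)\asymp(1+k)^s$ is the standard Gamma ratio estimate. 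For $m_s$, substitute $u=e^{-2r}$ in \eqref{averaging-symbol-definition} to get $\tfrac12\int_0^1u^{z/2+s-1}(1-u)^{-s}\dd u$, and apply \eqref{prelim-beta-integral} with $\Gamma(1-s)$ cancelling.

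\textbf{Main obstacle.} The main difficulty is the rigorous identification in Steps 1 and 2: showing that the actual homogeneous Caffarelli--Silvestre extension, which is not compactly supported in $t$, is the unique $\mathcal V_m$ trace-one profile after separation. This requires integrability of the profile near the axis and near $\varphi=0$ on a single $t$-slice, which I will check from the explicit Poisson integral as in Proposition~\ref{cylinder-realizations}.
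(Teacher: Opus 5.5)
Your proposal is correct and follows essentially the paper's argument. Both symbols are read off from the realizations \eqref{cylinder-laplacian-realization} and \eqref{cone}, the latter taken as the degree-$j$ fractional Laplacian in dimension $n+2$, via Lemma~\ref{homogeneous-fourier} with exponents $\beta-\ii\tau$ and $2-\vartheta-\ii\tau$; direct substitution in the Gamma arguments and the beta integral then give the rest, and your pairing with compact tests is just the dual form of the paper's Fourier inversion with dominated convergence. Two small corrections: with the paper's transform the frequency-$\tau$ mode is $r^{\ii\tau}$, so your $r^{-\ii\tau}$ computation yields $t_{j,\vartheta}(-\tau)$, which is equivalent to the claim; and your abandoned intertwining detour does not actually contradict the claim, because in degree $j$ (rather than $j+1$) the Gamma recurrence gives $c_j(b)\frac{j+b+2s}{j+b}=c_{j+1}(b+1)$.
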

\begin{proof}
Take a smooth compact annular trace $v$ with finitely many angular
components and invert its Fourier transform in $t$.
In \eqref{cylinder-laplacian-realization}, each resulting homogeneous
mode has exponent $a=\beta-\ii\tau$, with $0<\beta<n-2s$.
Lemma~\ref{homogeneous-fourier} therefore identifies its coefficient
with $c_j(\beta-\ii\tau)$, proving the first identity in
\eqref{gamma-cylinder-symbols}.

For the cone identity \eqref{cone}, use its degree-$j$ realization
as the fractional Laplacian in dimension $d=n+2$. The exponent of
each homogeneous mode of $r^{\vartheta-2}v$ is
$a=2-\vartheta-\ii\tau$. Admissibility gives
$0<2-\vartheta<d-2s$, so the same homogeneous formula applies in
its initial strip. Denote that dimension-$d$ coefficient by
$c_j^{(d)}$. Substituting $d=n+2$ and this exponent in its four
Gamma arguments gives
\[
 c_j^{(n+2)}(2-\vartheta-\ii\tau)
      =c_{j+1}^{(n)}(1-\vartheta-\ii\tau).
\]
This proves the second identity in \eqref{gamma-cylinder-symbols}.

These inverse Fourier calculations hold in distributions. To justify
the interchange, pair first with a smooth compact physical test.
For each of the two fixed real exponents just used, the homogeneous
powers are uniformly integrable against that test and its fractional
Laplacian, at zero and infinity. Their Gamma coefficients have at
most polynomial growth in $\tau$ by Stirling's ratio formula
\cite[5.11.12--5.11.13]{NIST}, whereas the Fourier coefficients of
the compact annular trace decrease faster than every power.
Dominated convergence justifies integration of the homogeneous
identity. The bounded cylinder realizations and angular density then
identify the symbols almost everywhere; their continuity gives equality
at every real frequency.

Substitution into \eqref{power-multiplier} gives
\eqref{gamma-even-symbol}. The poles of $g_s$ are
$-s,-s-1,\ldots$, and its zeros are $0,-1,\ldots$; the other factor
has positive real argument at each listed pole, so none cancels.
Stirling's ratio formula \cite[5.11.12--5.11.13]{NIST} gives the
stated real-axis growth. Finally, Euler's beta integral with
parameters $z/2+s$ and $1-s$, followed by $a=e^{-2h}$, evaluates
\eqref{averaging-symbol-definition} and proves
\eqref{gamma-averaging-symbol}.
\end{proof}

\subsection{Parameters and low-frequency comparison}\label{sp-sec:parameters}\label{sp-sec:low}
We now locate the pairs for which the scalar comparison is still
needed. If $A_{n,s}>H_{n,s}$ and $\delta<0$, then $(n,s)$ lies in one
of the following rows:
\begin{equation}\label{sp-eq:boxes}
 \begin{array}{c|c|c}
 n&s\text{ interval}&C\\ \hline
 7&0\le s\le51/1000&11\\
 8&7/25\le s\le71/200&14\\
 9&63/100\le s\le84/125&37/2.
 \end{array}
\end{equation}
Only $s>0$ is used; the endpoint $s=0$ in the first row is used for
limiting bounds. On these rows $\beta>1$ and $\delta>-1/2$.
The parameter coverage is proved below. The constant $C$ assigned to
each row will be used later to quantify the drift contribution.

For the low-frequency comparison use
$\xi=\tau/2$ and $X=\xi^2=\tau^2/4$. Thus the interval
$X\le5/16$ corresponds to $\tau^2\le5/4$.
With $g_s$ from Lemma~\ref{uniform-gamma-symbols}, set
\begin{equation}\label{sp-eq:PQ}
 P(s,\xi)=\Ree[g_s(\beta-\ii\xi)g_s(1+\ii\xi)],\qquad
 Q(s,\xi)=|g_s(\beta/2+\ii\xi)|^2.
\end{equation}
Thus
\begin{equation}\label{sp-eq:normalization}
 R_{0,s}(2\xi)=4^s[P(s,\xi)-Q(s,\xi)],\quad
 A_{n,s}=4^sP(s,0),\quad H_{n,s}=4^sQ(s,0).
\end{equation}
When a lemma varies $\beta$ independently, these same formulas define
$P$ and $Q$.

\begin{lemma}[Dimensions and orders with $A_{n,s}>H_{n,s}$]
\label{sp-lem:integer-hardy-range}
Let $n\geq2$ be an integer and $0<s<1$.
Then $A_{n,s}>H_{n,s}$ holds for every $0<s<1$ when $2\leq n\leq7$,
and it never holds when $n\geq10$.

For $n=8,9$ the equation $A_{n,s}=H_{n,s}$ has a unique root in
$(0,1)$, and $A_{n,s}>H_{n,s}$ holds exactly above that root.
Moreover,
\[
 A_{8,s}>H_{8,s}\ \Longrightarrow\ s>\frac7{25},\qquad
 A_{9,s}>H_{9,s}\ \Longrightarrow\ s>\frac{63}{100}.
\]
Consequently the pairs $(n,s)$ with $A_{n,s}>H_{n,s}$ and $\delta<0$
are contained in the rows of \eqref{sp-eq:boxes}.
\end{lemma}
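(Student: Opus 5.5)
We will study the ratio $\Theta(n,s)=A_{n,s}/H_{n,s}$ directly from \eqref{hardy-constants}:
\[
 \Theta(n,s)=\frac{\Gamma(N)\Gamma(1+s)\Gamma((N-s)/2)^2}
 {\Gamma(N-s)\Gamma((N+s)/2)^2},\qquad N=\frac n2 .
\]
At $s=0$ both sides equal one. The comparison $A_{n,s}>H_{n,s}$ is then the positivity of
\[
 \ell(s)=\log\Gamma(N)-\log\Gamma(N-s)+\log\Gamma(1+s)
 +2\log\Gamma\Bigl(\tfrac{N-s}2\Bigr)-2\log\Gamma\Bigl(\tfrac{N+s}2\Bigr)
\]
on $(0,1)$. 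Differentiating twice gives
\[
 \ell''(s)=-\psi'(N-s)+\psi'(1+s)+\tfrac12\psi'\bigl(\tfrac{N-s}2\bigr)
 -\tfrac12\psi'\bigl(\tfrac{N+s}2\bigr).
\]
The plan is to control the sign of $\ell$ using $\ell(0)=0$, the derivative $\ell'(0)$, convexity or concavity information from $\ell''$, and the endpoint value $\ell(1)$.

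\textbf{Step 1: the endpoint values.} At $s=0$,
\[
 \ell'(0)=\psi(N)+\psi(1)-2\psi(N/2).
\]
Using the duplication formula $\psi(N)=\tfrac12\psi(N/2)+\tfrac12\psi(N/2+\tfrac12)+\log2$, this becomes
\[
 \ell'(0)=\log 2-\gamma_E+\tfrac12\bigl[\psi(\tfrac{N+1}2)-\psi(\tfrac N2)\bigr]-\bigl[\psi(\tfrac N2)-\psi(1)\bigr].
\]
This expression is decreasing in $N$ and can be evaluated in closed form for each integer $n$. It should be positive for $n\le9$ and negative for $n\ge10$. At $s=1$ the recurrence gives
\[
 \Theta(n,1)=\frac{N-1}{\bigl((N-1)/2\bigr)^2}=\frac{8}{n-2},
\]
so $\Theta(n,1)>1$ exactly when $n<10$. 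The value is exactly one for $n=10$, which is consistent with the classical Joseph--Lundgren threshold.

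\textbf{Step 2: the cases $2\le n\le7$ and $n\ge10$.} For $2\le n\le7$ we will show $\ell>0$ on $(0,1)$. The plan is to prove that $\ell$ is concave there (or has one inflection), using monotonicity of $\psi'$. The terms $\psi'(1+s)-\psi'(N-s)$ and $\tfrac12[\psi'(\tfrac{N-s}2)-\psi'(\tfrac{N+s}2)]$ have opposite signs, so the combination must be estimated. We will bound it with the series $\psi'(x)=\sum_k(x+k)^{-2}$, or by a finite tabulated check on a grid using $\psi''$ bounds.

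Concavity together with $\ell(0)=0$, $\ell'(0)>0$, and $\ell(1)\ge0$ gives $\ell>0$ on $(0,1)$. Alternatively, we can use the explicit numerical minimum per $n$: for $n=2,3$ one may also note $A\ge H$ from $n\le 2(s+2+\sqrt{2s+2})$ via Corollary~\ref{endpoint}, since the stated inequality is elementary there.

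For $n\ge10$ we have $\ell'(0)<0$ and $\ell(1)\le0$. We want convexity of $\ell$ on $[0,1]$ to conclude $\ell<0$ inside; at $n=10$, convexity with $\ell(1)=0$ still gives $\ell<0$ on $(0,1)$. For large $N$, Stirling gives $\ell(s)\approx s\log(4/N)+O(1)$, which handles all $N$ beyond a fixed size. Finitely many remaining $n$ are then checked by the same $\psi'$ comparison.

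\textbf{Step 3: the cases $n=8,9$.} Here $\ell(0)=0$, $\ell'(0)<0$ (this must be verified; the sign flips between 7 and 8), and $\ell(1)>0$. If $\ell$ is convex on $[0,1]$, there is a unique root $s_n$ and $\ell>0$ exactly on $(s_n,1)$. The quantitative thresholds then follow from interval evaluation, namely $\ell(7/25)<0$ for $n=8$ and $\ell(63/100)<0$ for $n=9$, each with a certified error bound for $\log\Gamma$ via Stirling with explicit remainder.

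\textbf{Step 4: consequence for the rows.} We intersect with $\delta<0$, that is, $|d|>\sqrt{n-1}$ with $d=n/2-s-1$. Solving this gives $s<n/2-1-\sqrt{n-1}$. For $n=7$ this is $s<2.5-\sqrt6\approx0.0505<51/1000$. For $n=8$ it is $s<3-\sqrt7\approx0.354<71/200$. For $n=9$ it is $s<3.5-\sqrt8\approx0.672<84/125$. For $n\le6$ the bound is nonpositive, so $\delta<0$ is impossible. Combined with Steps 2--3, this yields exactly the rows in \eqref{sp-eq:boxes}.

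The main obstacle is the sign analysis of $\ell''$ (convexity or concavity) uniformly on $[0,1]$ for each relevant $n$. I expect it to require a careful but finite certified numerical estimate of trigamma differences, rather than a clean identity.
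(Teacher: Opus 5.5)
Your framework is the paper's: study $\ell=\log(A_{n,s}/H_{n,s})$ through $\ell(0)=0$, the sign of $\ell'(0)$, $\ell(1)=\log\frac{8}{n-2}$, and convexity. But two load-bearing claims are wrong.

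\emph{The sign of $\ell'(0)$.} It changes sign between $n=7$ and $n=8$, not between $9$ and $10$. For $n=8$ it equals $\psi(4)+\psi(1)-2\psi(2)=-\tfrac16$. Since it decreases in $N$, it is negative for every $n\ge8$; for $n=7$ it is $4\log2-\pi+\tfrac25>0$. Your Step~3 quietly uses the correct sign. Step~1 as written, combined with the convexity below, would leave no root at all for $n=8,9$. Your duplication-formula rewrite also carries a spurious $-\gamma_E$.

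\emph{Concavity.} This is the more serious error: the concavity you plan to prove in Step~2 is false. For $n\ge6$ one has $1+s\le N-s$ on $[0,1]$, so $\psi'(1+s)\ge\psi'(N-s)$, while $\psi'(\tfrac{N-s}2)>\psi'(\tfrac{N+s}2)$. Hence $\ell''>0$, and $\ell_6,\ell_7$ are strictly convex. The argument you describe for $2\le n\le7$ therefore cannot be carried out. What you call the main obstacle is a one-line consequence of the monotonicity of $\psi'$, not a certified-numerics problem.

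With strict convexity for $n\ge6$, everything except small $n$ is immediate:
\begin{itemize}
\item For $n=6,7$, $\ell'(0)>0$ makes $\ell$ increasing, hence positive.
\item For $n\ge10$, the chord bound $\ell(s)<s\,\ell(1)\le0$ finishes at once. No Stirling asymptotics are needed, and an $O(1)$ error term would say nothing near $s=0$ anyway.
\item For $n=8,9$, your Step~3 is correct; uniqueness holds because $\ell(s)/s$ is strictly increasing.
\end{itemize}

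What is genuinely missing is $2\le n\le5$, where the one-line convexity argument is unavailable. The paper's idea there is monotonicity in the dimension. Treat $n$ as real and put $a=(n-2s)/4$. Then $\partial_n\ell=\tfrac12\int_0^s[\psi'(2a+\eta)-\psi'(a+\eta)]\,d\eta<0$, so $\ell_n\ge\ell_7>0$ for $n\le7$, and likewise $\ell_n\le\ell_{10}<0$ for $n\ge10$.

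Your appeal to Corollary~\ref{endpoint} is a valid, non-circular alternative. It covers all $2\le n\le6$, not only $n=2,3$, since $\delta>0$ there and $R_{0,s}(0)=A_{n,s}-H_{n,s}$ by Lemma~\ref{uniform-gamma-symbols}. It just spends the whole extension machinery on an elementary Gamma inequality.

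Certified evaluation of $\ell_8(7/25)<0$ and $\ell_9(63/100)<0$ is legitimate in principle, but the margins are small: the paper's upper bounds are of order $10^{-5}$ and $10^{-4}$. That is why the paper bounds $\ell_8(7/25)$ through the convergent expansion of $\log\Gamma(1+z)$, and $\ell_9(63/100)$ through a Taylor bound at $s=\tfrac12$ using $\ell_9'''<0$. Step~4 is fine.
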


\begin{proof}
Set
\begin{align*}
 L_n(s)&=\log\frac{A_{n,s}}{H_{n,s}}\\
 &=\log\Gamma(n/2)+\log\Gamma(1+s)-\log\Gamma(n/2-s)
   +2\log\Gamma(n/4-s/2)-2\log\Gamma(n/4+s/2),\\
 \delta_n(s)&=n-1-\bigl((n-2)/2-s\bigr)^2.
\end{align*}
Recall $\psi=(\log\Gamma)'$. We use primes for its first two
derivatives and $\psi^{(k)}$ for its higher derivatives.
The series \cite[5.15.1]{NIST}
\[
 \psi'(x)=\sum_{k=0}^{\infty}\frac1{(x+k)^2},\qquad x>0,
\]
shows that $\psi'$ is strictly decreasing. Allowing $n$ to be real
temporarily and putting $a=(n-2s)/4>0$, differentiation gives
\[
 \partial_n L_n(s)
 =\frac12\int_0^s
    \bigl[\psi'(2a+\eta)-\psi'(a+\eta)\bigr]\dd \eta<0.
\]
For $n\geq6$ one also has
\begin{align*}
 L_n''(s)
 &=\psi'(1+s)-\psi'(n/2-s)\\
 &\quad+\frac12\bigl[\psi'(n/4-s/2)-\psi'(n/4+s/2)\bigr]>0,
\end{align*}
since $1+s\leq n/2-s$. Thus $L_n$ is strictly convex in $s$.

Its endpoint values are
\[
 L_n(0)=0,\qquad
 L_n(1)=\log\frac8{n-2}\quad(n>2).
\]
In particular $L_{10}(0)=L_{10}(1)=0$, so $L_{10}(s)<0$ for
$0<s<1$. Monotonicity in $n$ excludes all $n\geq10$.

The digamma values at integers, half-integers, and quarter-integers
\cite[5.4.12--5.4.15 and 5.4.19]{NIST}, together with the recurrence
$\psi(x+1)=\psi(x)+1/x$, give
\[
 L_7'(0)=4\log2-\pi+\frac25>0,\qquad
 L_8'(0)=-\frac16<0,\qquad
 L_9'(0)=\pi+4\log2-\frac{656}{105}<0.
\]
For the first and last signs, the expansion
$\log2=2\sum_{k\geq0}3^{-(2k+1)}/(2k+1)$ gives
$56/81<\log2<25/36$; together with $\pi<22/7$ this yields
\[
 L_7'(0)>\frac{64}{2835}>0,\qquad
 L_9'(0)<-\frac{103}{315}<0.
\]
Convexity now shows $L_7(s)>0$ for $s>0$, and monotonicity in $n$
gives the assertion for all $2\leq n\leq7$.

For $n=8,9$, the negative derivative at zero and the positive value at one
give a root in $(0,1)$. Strict convexity and $L_n(0)=0$ imply that
$s\mapsto L_n(s)/s$ is strictly increasing, so that root is unique and
has the asserted sign characterization.

We next prove the two rational lower bounds on the roots. For $n=8$,
the recurrence for $\Gamma$ and the convergent expansion of
$\log\Gamma(1+z)$ for $|z|<1$ \cite[5.7.3]{NIST} give
\[
 L_8(s)=\log r(s)
 -\sum_{\substack{k\geq3\\ k\ \mathrm{odd}}}
   \frac{(2-2^{2-k})\zeta(k)}{k}s^k,
 \qquad
 r(s)=\frac{3(1-s/2)}{(1-s)(3-s)(1+s/2)^2}.
\]
Every summand following the minus sign is positive. Since
\[
 \zeta(3)>\sum_{k=1}^{6}k^{-3}
 =\frac{119}{100}+\frac7{24000},\qquad
 \zeta(5)>1+2^{-5}=\frac{33}{32},
\]
retaining the cubic and quintic terms and using $\log r\leq r-1$ yields
\begin{align*}
 L_8(7/25)
 &<\frac{9079}{662796}
    -\frac{119}{200}\left(\frac7{25}\right)^3
    -\frac{99}{256}\left(\frac7{25}\right)^5\\
 &=-\frac{11999272607}{414247500000000}<0.
\end{align*}
Hence the unique root for $n=8$ is greater than $7/25$.

For $n=9$, direct use of the same Gamma recurrences gives
\[
 L_9(1/2)=\log\frac{35}{36},\qquad
 L_9'(1/2)=\frac16,\qquad
 L_9''(1/2)=\frac{\pi^2}{6}-\frac{11}{12}<\frac34.
\]
Furthermore $L_9'''(s)<0$ for $0<s\leq5/6$.
Indeed, $g=-\psi''$ is positive and strictly decreasing, and with
$a=9/4-s/2$, $b=9/4+s/2$ one has
\[
 L_9'''(s)=-g(1+s)-g(9/2-s)+\frac{g(a)+g(b)}4<0,
\]
because $1+s\leq a<b$, so the positive part is at most $g(1+s)/2$.

Taylor's formula at $s=1/2$, with increment $13/100$, therefore gives
\begin{align*}
 L_9(63/100)
 &<\log\frac{35}{36}+\frac{13}{600}+\frac{507}{80000}\\
 &\leq-\frac1{36}-\frac1{2\cdot36^2}
       +\frac{13}{600}+\frac{507}{80000}
 =-\frac{1033}{6480000}<0.
\end{align*}
This proves that the unique root for $n=9$ is greater than $63/100$.

Finally $\delta_n(s)>0$ throughout $0<s<1$ for $2\leq n\leq6$.
For $n=7,8,9$, $\delta_n$ is increasing on $(0,1)$ and vanishes at
\[
 s_{\mathrm{rad}}(n)=\frac{n-2}{2}-\sqrt{n-1}.
\]
The upper endpoints of \eqref{sp-eq:boxes} are rational numbers just above
these three values. Indeed,
\[
 \delta_7(51/1000)=\frac{2399}{10^6}>0,\quad
 \delta_8(71/200)=\frac{159}{40000}>0,\quad
 \delta_9(84/125)=\frac{151}{62500}>0
\]
so each box reaches the region already covered by $\delta_n(s)\geq0$.
The lower endpoints for $n=8,9$ lie below the respective roots of
$A_{n,s}=H_{n,s}$, as just proved; for $n=7$ the lower endpoint is zero.
Thus every pair with $A_{n,s}>H_{n,s}$ and $\delta_n(s)<0$ lies in one
of the three boxes.
\end{proof}

To retain the lower bound $A_{n,s}-H_{n,s}$ at low frequencies,
we compare the two symbols directly. We keep $X=\xi^2=\tau^2/4$ throughout this comparison.

\begin{lemma}[Finite-product comparison]\label{sp-lem:rational-direct}
Let $0<s<1$, $\beta>1$, and put
\[
 p(X)=P(s,\sqrt X),\qquad q(X)=Q(s,\sqrt X),\qquad
 p_0=p(0),\quad q_0=q(0),\quad \lambda=(1+s)^2.
\]
Then $p_0,q_0>0$, both functions are differentiable at zero, and
\begin{equation}\label{sp-eq:direct-rational}
 p(X)\ge p_0+\frac{\lambda X}{\lambda+X}p'(0),\qquad
 q(X)\le q_0+Xq'(0),\qquad X\ge0.
\end{equation}
Moreover $p(X)\ge p_0>0$ for every $X\ge0$.
\end{lemma}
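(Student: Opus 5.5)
The plan is to work entirely with Weierstrass products for the Gamma ratio $g_s(w)=\Gamma(w+s)/\Gamma(w)$, so that both $p$ and $q$ become explicit products over $k\ge0$. Positivity of $p_0=g_s(\beta)g_s(1)$ and $q_0=g_s(\beta/2)^2$ is immediate, since all arguments are positive. For real $a>0$, the product formula for $|\Gamma(a+\ii\xi)|$ \cite[5.8.3]{NIST} gives
\[
 \frac{|g_s(a+\ii\xi)|^2}{g_s(a)^2}
 =\prod_{k\ge0}\frac{1+X/(a+k)^2}{1+X/(a+s+k)^2},
 \qquad
 \arg g_s(a+\ii\xi)=\sum_{k\ge0}\Bigl[\arctan\frac{\xi}{a+k}
   -\arctan\frac{\xi}{a+s+k}\Bigr].
\]
Both series converge absolutely and locally uniformly, and their terms are smooth in $X=\xi^2$ (the argument is odd in $\xi$). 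Hence $p$ and $q$ are differentiable at $X=0$, and $p'(0),q'(0)$ are explicit trigamma-type sums.

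\emph{Upper bound for $q$.} I take $a=\beta/2$ and write $q=q_0\prod_k h_k$ with
\[
 h_k(X)=\frac{B_k(A_k+X)}{A_k(B_k+X)},\qquad A_k=(a+k)^2<B_k=(a+s+k)^2 .
\]
The target $q(X)\le q_0+Xq'(0)$ follows from concavity of $q$ in $X$. Put $e_k=1/(A_k+X)-1/(B_k+X)\ge0$; this is the logarithmic derivative of $h_k$. Differentiating twice gives
\[
 \frac{q''}{q}=\Bigl(\sum_k e_k\Bigr)^2-\sum_k\Bigl[\frac1{(A_k+X)^2}-\frac1{(B_k+X)^2}\Bigr].
\]
So concavity is the inequality $(\sum_k e_k)^2\le\sum_k\bigl[(A_k+X)^{-2}-(B_k+X)^{-2}\bigr]$. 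I would try to prove it as follows. Factor each bracket as $e_k\bigl[(A_k+X)^{-1}+(B_k+X)^{-1}\bigr]$. Then bound $\sum_{k'}e_{k'}$ by comparing the disjoint intervals $[a+k',a+s+k']$, each of length $s<1$, with the integral
\[
 \int_{a+k}^\infty\frac{2u\,\dd u}{(u^2+X)^2}=\frac1{A_k+X}.
\]
The aim is the pointwise bound $\sum_{k'} e_{k'}\le (A_k+X)^{-1}+(B_k+X)^{-1}$ for a suitable splitting at each $k$. This combinatorial/integral comparison is the step I expect to be the main obstacle. If it fails uniformly, the fallback is to prove only the weaker secant inequality directly. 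That amounts to showing that $\sum_k\log h_k(X)\le\log(1+Xq'(0)/q_0)$, comparing each $\log h_k$ against its share $\log(1+e_k(0)X)$ of the right-hand side.

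\emph{Lower bound for $p$.} Write $p=|g_s(\beta+\ii\xi)|\,|g_s(1+\ii\xi)|\cos\theta$, where
\[
 \theta(\xi)=\arg g_s(1+\ii\xi)-\arg g_s(\beta+\ii\xi).
\]
By the product formula each modulus is an increasing function of $X$. Since $\beta>1$ and $\arg g_s(a+\ii\xi)$ is decreasing in $a$ for $\xi>0$, we have $0\le\theta$. Termwise bounds give
\[
 \theta\le C\,\xi/(1+\xi),\qquad
 \theta^2\le\theta_1^2\,\frac{\lambda X}{\lambda+X},
\]
where $\theta_1$ is the linear coefficient of $\theta$ at $\xi=0$. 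Here $\lambda=(1+s)^2$ is the natural scale, because the dominant term of the phase is $\arctan\xi-\arctan(\xi/(1+s))$. The modulus product is bounded below by the first-order expression
\[
 p_0\Bigl(1+\mu\,\frac{\lambda X}{\lambda+X}\Bigr),
\]
with $\mu$ its logarithmic derivative at zero, using $\log(1+y)\ge y/(1+y)$ term by term; the slowest term has $A=1$, $B=\lambda$. Combining this with $\cos\theta\ge1-\theta^2/2$ yields $p\ge p_0+\frac{\lambda X}{\lambda+X}\,p'(0)$, since $p'(0)/p_0=\mu-\theta_1^2/2$ exactly.

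Finally, $p\ge p_0$ follows from this inequality once $p'(0)\ge0$, that is $\mu\ge\theta_1^2/2$. I would check this from the explicit trigamma sums: $\mu$ is a sum of differences of $(a+k)^{-2}$ terms, and $\theta_1$ is a sum of differences of $(a+k)^{-1}$ terms. The inequality follows by Cauchy--Schwarz over the same index sets, using $\beta>1$.
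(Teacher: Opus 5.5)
There is a genuine gap in your lower bound for $p$; the rest is fine or fixable. Your treatment of $p_0,q_0>0$ and of differentiability is fine. The $p$ argument fails at three concrete steps.

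First, the modulus bound $M(X):=|g_s(\beta+\ii\xi)|\,|g_s(1+\ii\xi)|\ge p_0(1+\mu Y)$, with $Y=\lambda X/(\lambda+X)$, cannot be proved factor by factor. Applying $\log(1+y)\ge y/(1+y)$ to a factor $B(A+X)/(A(B+X))$ gives $(A^{-1}-B^{-1})\,AX/(A+X)$. Its scale is the zero $A$, not the pole $B$, and this equals $1<\lambda$ for the $k=0$ factor of $g_s(1+\ii\xi)$. In fact that factor is exactly $\sqrt{1+y}$ with $y=(\lambda-1)X/(\lambda+X)$, while its share of your target is $1+y/2>\sqrt{1+y}$. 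So no termwise argument can produce the scale $\lambda$: the square root in $|g_s(1+\ii\xi)|$ destroys the interlacing structure.

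Second, even granting both of your bounds, the combination does not close:
\[
(1+\mu Y)\bigl(1-\tfrac12\theta_1^2Y\bigr)=1+\bigl(\mu-\tfrac12\theta_1^2\bigr)Y-\tfrac12\mu\theta_1^2Y^2 .
\]
This leaves a negative $Y^2$ term, so $p\ge p_0+p'(0)Y$ does not follow. You would need an upper bound for $M(1-\cos\theta)$, where $M$ grows like $X^s$, rather than one for $\theta^2$.

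Third, $\theta^2\le\theta_1^2Y$ is not termwise either. For large $k$ the $k$th term of $\theta$ is essentially linear in $\xi$ for $\xi\lesssim k$.

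Also, $p'(0)\ge0$ is not a plain Cauchy--Schwarz consequence. With $\alpha_k=\frac1{k+1}-\frac1{k+1+s}$ one has $\frac1{(k+1)^2}-\frac1{(k+1+s)^2}=\alpha_k\bigl(\frac1{k+1}+\frac1{k+1+s}\bigr)$. Cauchy--Schwarz then leaves the divergent factor $\sum_k s/(2k+2+s)$.

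The missing idea is to keep the complex product intact instead of splitting it into modulus and phase. With $\xi=\sqrt X$, $d=\beta-1$ and $w=X+\ii d\xi$, one has
\[
(\beta+k-\ii\xi)(1+k+\ii\xi)=c_k+w,\qquad (\beta+s+k-\ii\xi)(1+s+k+\ii\xi)=\rho_k+w,
\]
where $c_k=(k+\beta)(k+1)$ and $\rho_k=(k+\beta+s)(k+1+s)$. The Euler product then exhibits $g_s(\beta-\ii\xi)g_s(1+\ii\xi)$ as the limit of $G_K(w)=(K+1)^{2s}\prod_{k\le K}(w+c_k)/(w+\rho_k)$. The interlacing $c_k<\rho_k<c_{k+1}$ makes every residue negative, so $G_K(w)=G_K(0)+\sum_k m_{k,K}\,w/(w+\rho_k)$ with $m_{k,K}>0$. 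The factorization $w+\rho_k=(\xi-\ii a_1)(\xi+\ii a_2)$, with $a_1=k+1+s$ and $a_2=k+\beta+s$, gives
\[
 \Ree\frac{w}{w+\rho_k}=\frac{a_2}{a_1+a_2}\,\frac{X}{X+a_1^2}
 +\frac{a_1}{a_1+a_2}\,\frac{X}{X+a_2^2},\qquad a_1^2,\ a_2^2\ge\lambda .
\]
Then $X/(X+\mu)\ge\mu^{-1}\lambda X/(\lambda+X)$ for $\mu\ge\lambda$ yields the inequality, $p'(0)\ge0$ and $p\ge p_0$ all at once, and one lets $K\to\infty$. This is the paper's argument; $\lambda=(1+s)^2$ is simply the smallest pole parameter of the combined product.

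For $q$, your concavity route is a valid alternative to the paper's partial fractions, but not with the bound as you wrote it. The quantity $\sum_{k'}e_{k'}$ does not depend on $k$, while $(A_k+X)^{-1}+(B_k+X)^{-1}\to0$. Use instead
\[
\Bigl(\sum_ke_k\Bigr)^2=\sum_ke_k\Bigl(e_k+2\sum_{k'>k}e_{k'}\Bigr),
\]
together with $B_{k'-1}<A_{k'}$, which telescopes to $\sum_{k'>k}e_{k'}\le(B_k+X)^{-1}$. Your fallback secant argument runs the wrong way, because $\log\bigl(1+\sum_ka_k\bigr)\le\sum_k\log(1+a_k)$ for $a_k\ge0$.
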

\begin{proof}
Set $d=\beta-1$ and
\[
 c_k=(k+\beta)(k+1),\quad \rho_k=(k+\beta+s)(k+1+s),\quad
 G_K(w)=(K+1)^{2s}\prod_{k=0}^K\frac{w+c_k}{w+\rho_k}.
\]
Since $c_k<\rho_k<c_{k+1}$, the residues of $G_K$ are negative:
in the residue at $-\rho_k$, the factor $c_k-\rho_k$ is negative and
all ratios $(c_j-\rho_k)/(\rho_j-\rho_k)$ for $j\ne k$ are positive.
Consequently
\[
 G_K(w)=G_K(0)+\sum_{k=0}^K m_{k,K}\frac{w}{w+\rho_k},
 \qquad m_{k,K}>0.
\]
For $r>0$, set $u_r=(r+d^2/4)^{1/2}$,
$\alpha_r=(u_r-d/2)^2$, and $\zeta_r=(u_r+d/2)^2$. Direct calculation gives
\[
 \Ree\frac{X+\ii d\sqrt X}{X+r+\ii d\sqrt X}
 =\left(\frac12+\frac d{4u_r}\right)\frac{X}{X+\alpha_r}
 +\left(\frac12-\frac d{4u_r}\right)\frac{X}{X+\zeta_r}.
\]
Both coefficients are positive. For $r=\rho_k$ the pole parameters are
$(k+1+s)^2$ and $(k+\beta+s)^2$, both at least $\lambda$.
Thus $p_K(X)=\Ree G_K(X+\ii d\sqrt X)$ is a positive constant plus
positive multiples of $X/(X+\mu)$ with $\mu\ge\lambda$. Applying
\[
 \frac{X}{X+\mu}\ge\frac{\lambda X}{\lambda+X}\frac1\mu
\]
to each term proves the first inequality in
\eqref{sp-eq:direct-rational} for $p_K$.

The Euler product
\cite[5.8.1]{NIST} gives locally uniform convergence
$G_K(\xi ^2+\ii d\xi )\to g_s(\beta-\ii\xi )g_s(1+\ii\xi )$ near the real axis.
The even analytic functions
$\tfrac12[G_K(\xi ^2+\ii d\xi )+G_K(\xi ^2-\ii d\xi )]$ converge locally uniformly near
$\xi =0$, so their constant and quadratic coefficients converge as well.
This justifies passage to $p$ and $p'(0)$.

For $a=\beta/2$, the finite products
\[
 q_K(X)=(K+1)^{2s}\prod_{k=0}^K
            \frac{X+(a+k)^2}{X+(a+s+k)^2}
\]
have interlacing zeros and poles. The same residue argument expresses
each as a positive constant plus positive multiples of $X/(X+\mu)$.
They are concave, and hence so is their limit $q$. Its tangent at zero
proves the second inequality in \eqref{sp-eq:direct-rational}.
\end{proof}

\begin{proposition}[Comparison at low frequencies]\label{sp-prop:low-direct}
On the boxes \eqref{sp-eq:boxes}, for $s>0$ and $A_{n,s}\ge H_{n,s}$,
\begin{equation}\label{sp-eq:low-direct}
 R_{0,s}(\tau)\ge A_{n,s}-H_{n,s},
 \qquad \tau^2\le5/4.
\end{equation}
\end{proposition}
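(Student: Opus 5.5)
By \eqref{sp-eq:normalization}, $R_{0,s}(2\xi)=4^s[p(X)-q(X)]$ with $X=\xi^2$, and $A_{n,s}-H_{n,s}=4^s(p_0-q_0)$. The claim is therefore equivalent to
\[
 p(X)-q(X)\ge p_0-q_0\qquad(0\le X\le5/16).
\]
The plan is to insert the two one-sided bounds of Lemma~\ref{sp-lem:rational-direct}; the rows of \eqref{sp-eq:boxes} satisfy $\beta>1$, so that lemma applies. It gives
\[
 p(X)-q(X)-(p_0-q_0)\ge X\Bigl(\frac{\lambda}{\lambda+X}p'(0)-q'(0)\Bigr),
\]
with $\lambda=(1+s)^2$. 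On $X\le5/16$ we have $\lambda/(\lambda+X)\ge\lambda/(\lambda+5/16)=:\theta_s$. It thus suffices to show $\theta_sp'(0)\ge q'(0)$ when $p'(0)\ge0$. If $p'(0)<0$, the coefficient is at least $p'(0)-q'(0)$ and we need that instead.

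Both derivatives come from the even expansions in $\xi$. Write $g_s(w)=\Gamma(w+s)/\Gamma(w)$ and $\Psi_s(w)=\psi(w+s)-\psi(w)$. Logarithmic differentiation gives
\[
 q'(0)=-\tfrac14 g_s(\beta/2)^2\bigl[\psi'(\beta/2+s)-\psi'(\beta/2)\bigr]\cdot 2
 =\tfrac12 q_0\bigl[\psi'(\beta/2)-\psi'(\beta/2+s)\bigr]>0 .
\]
For $p$, write $g_s(\beta-\ii\xi)g_s(1+\ii\xi)=p_0\exp(\ii\xi\,\Delta_1-\tfrac12\xi^2\Delta_2+\dots)$, where
\[
 \Delta_1=\Psi_s(1)-\Psi_s(\beta),\qquad
 \Delta_2=\Psi_s'(\beta)+\Psi_s'(1).
\]
Taking the real part gives
\[
 p'(0)=p_0\bigl[-\tfrac12\Delta_2-\tfrac12\Delta_1^2\bigr],
\]
up to sign conventions that must be checked carefully. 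Hence the target reduces to the explicit digamma inequality
\[
 \theta_s\,p_0\bigl[\psi'(1)-\psi'(1+s)+\psi'(\beta)-\psi'(\beta+s)-\Delta_1^2\bigr]
 \ \ge\ q_0\bigl[\psi'(\beta/2)-\psi'(\beta/2+s)\bigr]
\]
(modified as above when $p'(0)<0$), to be checked on each of the three rows.

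To verify it on a row, use $p_0\ge q_0$, which is the hypothesis $A\ge H$, and divide by $q_0$. This leaves an inequality among $\psi'$-differences and $\Delta_1$ only. Each such difference can be written as an integral $\int_0^s\psi''$ or $\int_0^s\psi'$, and the resulting expressions are monotone in $s$ on the short row intervals. So evaluating at interval endpoints with rational bounds from the series $\psi'(x)=\sum(x+k)^{-2}$ should close it, exactly as in Lemma~\ref{sp-lem:integer-hardy-range}.

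The main obstacle is this last numerical verification, especially row $n=9$, where $\beta\approx 3.8$ and $\Delta_1^2$ is not small. If the crude endpoint bounds fail there, the fallback is to subdivide each $s$-interval and use monotonicity on the subintervals. Alternatively one can avoid the $\Delta_1^2$ loss by working directly with the partial-fraction representation from Lemma~\ref{sp-lem:rational-direct}. That representation shows $p-p_0$ is a positive combination of $X/(X+\mu)$ with $\mu\ge\lambda$, which can then be compared term by term against the concave positive series for $q-q_0$.
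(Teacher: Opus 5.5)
Your route is the paper's: insert the two one-sided bounds of Lemma~\ref{sp-lem:rational-direct}, use $p_0\ge q_0$ to reduce to a slope inequality at $X=5/16$, and verify that inequality row by row from the polygamma series. Your $p'(0)$ is correct. In the paper's notation $\Delta_1=sU$ and $-\Delta_2=sV$, so $p'(0)=sp_0\eta(s)$ with $\eta=\tfrac12(V-sU^2)$.

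\textbf{The gap is in $q'(0)$.} With $a=\beta/2$,
\[
 |g_s(a+\ii\xi)|^2=\exp\bigl(2\Ree\log g_s(a+\ii\xi)\bigr)
 =q_0\exp\bigl(-\xi^2[\psi'(a+s)-\psi'(a)]+O(\xi^4)\bigr).
\]
Since $X=\xi^2$, this gives
\[
 q'(0)=q_0[\psi'(\beta/2)-\psi'(\beta/2+s)]=:sq_0W(s),
\]
with no factor $\tfrac12$. So the inequality you propose to check is strictly weaker than what is required, and verifying it would not prove the proposition. The correct sufficient condition, after using $p_0\ge q_0$ and $W>0$, is
\[
 \theta_s\bigl[\psi'(1)-\psi'(1+s)+\psi'(\beta)-\psi'(\beta+s)-\Delta_1^2\bigr]
 \ge 2\bigl[\psi'(\beta/2)-\psi'(\beta/2+s)\bigr],
\]
that is, $\theta_s\eta(s)\ge W(s)$. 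The factor matters: the corrected inequality has far less room than your version suggests. On the row $n=9$, the paper's rational bounds clear it only by $3197/17675000$.

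\textbf{Two smaller points.} First, the branch $p'(0)<0$ is not a usable fallback. Since $q'(0)>0$, the condition $p'(0)-q'(0)\ge0$ can never hold there. You do not need that branch: the corrected inequality together with $W(s)>0$ forces $\eta(s)>0$, hence $p'(0)>0$, and then $\frac{\lambda}{\lambda+X}p'(0)\ge\theta_s p'(0)$ on $X\le 5/16$.

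Second, for the numerical check, do not rely on monotonicity in $s$ of the combined expression. The paper bounds each sum separately at its own unfavorable endpoint of the row:
\begin{itemize}
\item[] $U$ from above by telescoping, using $\beta\le4$;
\item[] $V$ from below and $W$ from above by integral and midpoint tail comparisons, using convexity of $v_s$.
\end{itemize}
One set of rational bounds per row then suffices, with no subdivision.
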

\begin{proof}
Define
\[
 u_s(y)=\frac1{y(y+s)},\qquad
 v_s(y)=\frac{2y+s}{y^2(y+s)^2}
       =\int_0^1\frac{2\dd v}{(y+sv)^3}.
\]
The absolutely convergent sums
\begin{align*}
 U(s)&=\sum_{k\ge0}\{u_s(k+1)-u_s(k+N-s)\},\\
 V(s)&=\sum_{k\ge0}\{v_s(k+1)+v_s(k+N-s)\},\\
 W(s)&=\sum_{k\ge0}v_s(k+N/2-s/2)
\end{align*}
give, by differentiating the Gamma product or the polygamma series
\cite[5.15.1]{NIST},
\begin{equation}\label{sp-eq:direct-slopes}
 p'(0)=sp_0\eta(s),\qquad q'(0)=sq_0W(s),\qquad
 \eta(s)=\tfrac12\{V(s)-sU(s)^2\}.
\end{equation}
Indeed, for $L=\log g_s$, one has
$L'(z)=s\sum_{k\ge0}u_s(z+k)$ and
$L''(z)=-s\sum_{k\ge0}v_s(z+k)$. The first two logarithmic
$\xi $ derivatives of $g_s(\beta-\ii\xi )g_s(1+\ii\xi )$ at zero are
$\ii sU(s)$ and $sV(s)$, respectively, which proves the formula for $p'(0)$.

With $p_0=p(0)$, $q_0=q(0)$ and $\lambda=(1+s)^2$, the finite-product
bounds of Lemma~\ref{sp-lem:rational-direct} give
\[
 p(X)-q(X)\ge p_0-q_0
 +sp_0X\left(\frac{\lambda}{\lambda+X}\eta(s)-W(s)\right).
\]
Since $p_0\ge q_0$, it is therefore enough to show
\[
 \frac{\lambda}{\lambda+X}\eta(s)>W(s),\qquad 0\le X\le5/16.
\]
We will bound $U,W$ from above and $V,\lambda$ from below, recalling
$\eta=(V-sU^2)/2$. Fix a row with $s$ interval $[\ell,h]$, and put
$\beta_{\max}=N-\ell$ and $a_-=(N-h)/2$. On all three boxes $1<\beta\le4$,
so a telescoping comparison gives
\[
 0<U(s)\le\sum_{k\ge0}\{u_s(k+1)-u_s(k+4)\}
          =\sum_{j=1}^3u_s(j)\le\overline U_n:=\sum_{j=1}^3u_\ell(j).
\]

The integral formula for $v_s$ shows that $v_s(k+1)$ decreases in $s$
and $v_s(k+N-s)$ increases in $s$. Also each summand in $W$ equals
\[
 \frac{2(k+N/2)}{((k+N/2)^2-s^2/4)^2},
\]
which increases in $s$. Since $v_s$ is positive, decreasing, and convex,
comparison with an integral gives a lower tail bound, while the
midpoint inequality gives an upper tail bound. Using
$\int_c^\infty v_s(y)\dd y=u_s(c)$, we obtain
\begin{align}
 V(s)&\ge\underline V_n:=v_h(1)+v_h(2)+u_h(3)+u_\ell(\beta_{\max}),
       \label{sp-eq:short-V}\\
 W(s)&\le\overline W_n:=v_h(a_-)+u_h(a_-+1/2).
       \label{sp-eq:short-W}
\end{align}
For the first inequality retain two terms of
$\sum_{k\ge0}v_h(k+1)$ and bound its remaining tail by
$\int_3^\infty v_h$; bound the whole other sum below by
$\int_{\beta_{\max}}^\infty v_\ell$. For the second, retain its first term and use
$\sum_{k\ge1}v_h(a_-+k)\le\int_{1/2}^\infty v_h(a_-+y)\dd y$.

All nine bounds above are finite rational expressions. Substitution
of the endpoints in \eqref{sp-eq:boxes} gives
\begin{equation}\label{sp-eq:short-rational-table}
\begin{array}{c|ccc}
 n&\overline U_n<u_n&\underline V_n>v_n&\overline W_n<w_n\\\hline
 7&3/2&2&3/5\\
 8&9/8&3/2&5/12\\
 9&9/10&63/50&8/25
\end{array}
\end{equation}
Multiplication by positive denominators verifies these bounds.
For example, in the closest row $n=9$,
the summands of $10^4\overline U_n$ are bounded above by
$6135,1902,919$; those of $10^4\underline V_n$ are bounded below by
$9557,1635,907,574$; and those of $10^4\overline W_n$ are bounded
above by $1837,1343$. Their sums already imply the stated bounds.

Set $\eta_n=(v_n-hu_n^2)/2$ and $\lambda_n=(1+\ell)^2$.
The final rational calculation is
\[
\begin{array}{c|cc}
 n&\eta_n&\lambda_n\eta_n/(\lambda_n+5/16)-w_n\\\hline
 7&7541/8000&1241/10500\\
 8&13449/25600&47861/1950900\\
 9&4473/12500&3197/17675000
\end{array}
\]
Every entry in the last column is positive. Hence, for $0\le X\le5/16$,
\[
 \frac{\lambda}{\lambda+X}\eta(s)-W(s)>0.
\]

The sufficient slope inequality is now established, so the finite-product
bound above gives $p(X)-q(X)\ge p_0-q_0$.
Since $X=\tau^2/4$ and
$R_{0,s}(\tau)=4^s[p(X)-q(X)]$, this is the claimed estimate.
\end{proof}

\subsection{Extension forms and positive spectral series}\label{ap:sec:real-forms}\label{ap:sec:positive-series}
The low-frequency bound is now proved on $\tau^2\le5/4$.
For the complementary interval we return to the extension problems.
Throughout this subsection fix $n\in\{7,8,9\}$ and $0<s<1$.
The spectral identities hold throughout these ranges; their remaining
application is the case $-1/2<\delta<0$.
We now use the squared cylinder frequency and the imaginary parameter
\[
 x=\tau^2=4X,\qquad b=2d\tau,
\]
and put $B=N+s$. The parameter $b$ is distinct from the homogeneity
shift in the cylinder equation.
On $J=(0,\pi/2)$, recall the weight
$p(\varphi)=\sin^{1-2s}\varphi\cos^{n-1}\varphi$ and set
$\mathcal H=L^2(J,p\dd\varphi)$, with inner product linear in the
second slot. The two real extension forms are
\[
 \begin{split}
 E_{A,x}[v]&=\int_Jp\bigl(|v'|^2+(\beta^2+x)|v|^2\bigr)\dd\varphi,\\
 E_{T,x}[v]&=\int_Jp\bigl(|v'|^2+
 [(n-1)\sec^2\varphi+\beta^2-d^2+x]|v|^2\bigr)\dd\varphi.
 \end{split}
\]
These are $E_{0,\beta^2+x}$ on $\mathcal V_0$ and
$E_{1,\beta^2-d^2+x}$ on $\mathcal V_1$, respectively; the spaces
include regularity at $\varphi=\pi/2$ and a free trace at zero, as in
the extension problem of Section~\ref{part:pde}. Thus, on their common domain,
\[
 E_{T,x}[v]-E_{A,x}[v]
 =\int_Jp\bigl((n-1)\sec^2\varphi-d^2\bigr)|v|^2\dd\varphi.
\]
With $d_s$ from \eqref{prelim-extension-normalization}, define
\[
 M_A(x)=d_s\min_{\operatorname{tr}v=1}E_{A,x}[v],\qquad
 M_T(x)=d_s\min_{\operatorname{tr}v=1}E_{T,x}[v],\qquad x\ge0.
\]

Define the positive numbers
\begin{align}
 \lambda_k&=(B+2k)^2,\nonumber\\
 r_k&=4^{s+1}\frac{\sin\pi s}{\pi}(B+2k)
             g_s(k+1)g_s(N+k),\label{ap:eq:Adata}\\
 \nu_k&=(B+1+2k)^2-d^2
       =4(k+1+s)(k+N),\nonumber\\
 t_k&=4^{s+1}\frac{\sin\pi s}{\pi}(B+1+2k)
             g_s(k+1)g_s(N+1+k).\label{ap:eq:Tdata}
\end{align}

\begin{proposition}[Positive spectral representations]\label{ap:prop:positive-series}
The minima defining $M_A$ and $M_T$ are attained uniquely.
These functions have the meromorphic continuations
\begin{equation}\label{ap:eq:real-DN-shifts}
 M_A(z)=a_0(\sqrt z),\qquad
 M_T(z)=a_1(\sqrt{z-d^2}).
\end{equation}
Here the even meromorphic functions $a_m$ are given by
\eqref{gamma-even-symbol}. In particular,
\[
 M_A(x)=c_0(\beta+i\sqrt x),\qquad
 M_T(0)=A_{n,s},\qquad M_A(0)=H_{n,s},
\]
and $M_T(x+ib)=c_1(1+i\tau)$ when $x=\tau^2$.
The spectral representations are
\begin{align}
 M_A(z)&=H_{n,s}+\sum_{k\ge0}\frac{r_kz}{\lambda_k(\lambda_k+z)},
 \label{ap:eq:Arep}\\
 M_T(z)&=A_{n,s}+\sum_{k\ge0}\frac{t_kz}{\nu_k(\nu_k+z)}.
 \label{ap:eq:Trep}
\end{align}
The series and their differentiated series converge locally uniformly
off the poles. The zero-trace A and T operators have eigenvalues
$\lambda_k$ and $\nu_k$, respectively, and the residues of $M_A$ and
$M_T$ at the corresponding poles are $-r_k$ and $-t_k$.

For $x\ge0$ and $b\in\R$, one has
\begin{equation}\label{ap:eq:positive-gain-series}
 \Ree M_T(x+ib)-M_T(x)
 =b^2\sum_{k\ge0}
 \frac{t_k}{(\nu_k+x)((\nu_k+x)^2+b^2)}.
\end{equation}
For $x=\tau^2$ and $b=2d\tau$, this gives
\begin{equation}\label{ap:eq:exact-R}
 R_{0,s}(\tau)=M_T(x)-M_A(x)
 +b^2\sum_{k\ge0}
 \frac{t_k}{(\nu_k+x)((\nu_k+x)^2+b^2)}.
\end{equation}
The spectral data satisfy the exact relations
\begin{align}
 \nu_k-\lambda_k&=4k+2(1+s)-\delta,\nonumber\\
 \frac{t_k}{r_k}&=
 \frac{B+1+2k}{B+2k}\frac{N+s+k}{N+k}>1
 \label{ap:eq:relations}
\end{align}

\end{proposition}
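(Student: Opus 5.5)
The approach is to treat $M_A$ and $M_T$ as Dirichlet-to-Neumann functions of one-dimensional Sturm--Liouville problems with a real spectral parameter $z$. For each one we identify the meromorphic continuation from the Gamma formulas already proved, and then read off a Mittag-Leffler expansion.

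\emph{Existence and identification.} Existence and uniqueness of the minimizers follow from Lemma~\ref{extension-symbol-bounds}, applied with $m=0$, $x=\beta^2+z$ for $A$ and $m=1$, $x=\beta^2-d^2+z$ for $T$; coercivity holds since $q_1+x>0$ when $z\ge0$. The same lemma gives $M_A(z)=F_0(\beta^2+z)$ and $M_T(z)=F_1(\beta^2-d^2+z)$. Setting $z=\tau^2$ and using \eqref{shared-symbols} and Lemma~\ref{uniform-gamma-symbols}, we get $M_A(\tau^2)=a_0(\tau)$ and $M_T(\tau^2)=a_1(\sqrt{\tau^2-d^2})$, in the sense of the even continuation \eqref{gamma-even-symbol}; indeed $F_1(\beta^2+\tau'^2)=a_1(\tau')$ with $\tau'^2=\tau^2-d^2$. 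By holomorphy of $F_m$ this extends to the half-plane and then meromorphically, which proves \eqref{ap:eq:real-DN-shifts}. The values at zero are direct Gamma evaluations: $a_0(0)=c_0(\beta)=H_{n,s}$, and $a_1(\ii d)=c_1(\beta-d)=c_1(1)=A_{n,s}$ from \eqref{power-multiplier}. The identity $t_{0,0}(\tau)=F_1(\beta^2+\tau^2-d^2-2\ii d\tau)$ gives $M_T(x-\ii b)=t_{0,0}(\tau)=c_1(1-\ii\tau)$, and conjugation gives the stated formula.

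\emph{Spectral series.} The form $E_{A,0}$ restricted to $\mathcal V_0^0$ is a closed positive form with compact resolvent on $\mathcal H$, since $p$ is integrable and the trace space is one-dimensional. For a trace-one lift $\ell$ write $\Phi_z=\ell-(\mathcal A_0+\beta^2+z)^{-1}(\cdots)$. Then $F(z)-F(0)$ is a quadratic form in the resolvent, $z\,\langle \phi_0,(H_0+z)^{-1}\phi_0\rangle$-type with $\phi_0=\Phi_0$. Differentiating \eqref{dn-flux-energy} gives $F'(z)=d_s\|\Phi_z\|_p^2>0$ (bilinear), and this shows that $F$ is a Stieltjes/Pick function. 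Expanding in the zero-trace eigenfunctions, we get $M(z)=M(0)+\sum_k c_k z/(\mu_k(\mu_k+z))$ with $c_k=d_s\mu_k^2|\langle\Phi_0,e_k\rangle|^2\ge0$, and local uniform convergence follows from the trace bound. The poles are exactly the eigenvalues, and the residues are $-c_k$. Matching this with the poles of \eqref{gamma-even-symbol} identifies the eigenvalues. For $A$, the poles $\sqrt z=\pm\ii(\beta+2s+2k)$ give $\lambda_k=(B+2k)^2$, since $\beta+2s=B$. For $T$, $z-d^2=-(1+\beta+2s+2k)^2$ gives $\nu_k$. The residues $r_k,t_k$ are computed from the Gamma poles of $g_s(\zeta_m-\ii\sqrt{\cdot}/2)$: the residue of $\Gamma$ at $-k$ is $(-1)^k/k!$, and the reflection formula produces $\sin\pi s/\pi$. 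This residue computation is the step where the most care with constants is needed.

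\emph{Drift gain and relations.} From \eqref{ap:eq:Trep}, $\Ree\frac{(x+\ii b)}{\nu(\nu+x+\ii b)}-\frac{x}{\nu(\nu+x)}$ simplifies to $\frac{b^2}{(\nu+x)((\nu+x)^2+b^2)}$. Summing this with weights $t_k$ gives \eqref{ap:eq:positive-gain-series}, and \eqref{ap:eq:exact-R} follows by subtracting $M_A(x)=a_0(\tau)$. For the relations, $\nu_k-\lambda_k=2(B+2k)+1-d^2$ and $2B+1-d^2=2(1+s)+n-1-d^2$; this gives the stated identity. The ratio $t_k/r_k$ follows from $g_s(N+1+k)/g_s(N+k)=(N+k+s)/(N+k)$. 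The main obstacle is the rigorous identification of the residue constants, since the sign and positivity structure is automatic from the Pick property.
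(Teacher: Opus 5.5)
Your outline is sound. Apart from how the spectrum is found, it follows the paper: existence from Lemma~\ref{extension-symbol-bounds}, continuation from the real Gamma formulas by the identity theorem, and a resolvent expansion of $M(z)-M(0)$. Make that expansion precise: $\Phi_z=\Phi_0-z(H_0+z)^{-1}\Phi_0$, so $M(z)-M(0)=d_sz\langle\Phi_0,H_0(H_0+z)^{-1}\Phi_0\rangle_p$, which is what your final series encodes.

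The two routes differ in where the eigenvalues come from. The paper writes down the zero-trace eigenfunctions $(\sin\varphi)^{2s}(\cos\varphi)^mP_k^{(s,N+m-1)}(\cos2\varphi)$ and gets completeness from Jacobi orthogonality and polynomial density. It then uses the Gamma residues only to fix the weights. You instead expand in an abstract eigenbasis and read the eigenvalues off the poles of the known Gamma expression. That avoids the Jacobi calculus and the domain checks for explicit functions, but two facts you only assert must then be proved.

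\emph{Compact resolvent.} This does not follow from integrability of $p$ or from the trace space being one-dimensional. Use a weighted Rellich argument instead. Near $\varphi=0$, the zero-trace bound $|v(\varphi)|^2\le C\varphi^{2s}\int p|v'|^2$ gives $\int_0^\epsilon p|v|^2\le C\epsilon^2\int p|v'|^2$. Near $\pi/2$, the weight is the radial $H^1$ weight of $\R^n$, where Rellich applies.

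\emph{Poles versus eigenvalues.} The claim ``the poles are exactly the eigenvalues'' requires $\langle\Phi_0,e_k\rangle_p\ne0$ for every $k$. Otherwise pole matching only shows that each $\lambda_k$ (resp.\ $\nu_k$) is \emph{an} eigenvalue. Green's identity and minimality give $\mu_k\langle\Phi_0,e_k\rangle_p=\lim_{\varphi\downarrow0}pe_k'$. This is nonzero because the indicial exponents at $\varphi=0$ are $0$ and $2s$. The representations \eqref{ap:eq:Arep}--\eqref{ap:eq:Trep} themselves do not need this fact.

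Two smaller corrections. Local uniform convergence comes from $\sum_kc_k/\mu_k^2=d_s\|\Phi_0\|_p^2$ by Parseval, not from the trace bound. And $F'>0$ on the real axis does not by itself make $F$ a Pick function.

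The residue constants, which the statement asserts explicitly, still have to be computed; it takes one line. At $z=-\lambda_k$ take $\sqrt z=-\ii(B+2k)$. Then $u=\zeta_0-\ii\sqrt z/2$ has $u+s=-k$ and $du/dz=1/(4(B+2k))$. The companion factor is $g_s(\zeta_0+(B+2k)/2)=g_s(N+k)$. Also $(-1)^k/(k!\,\Gamma(-s-k))=-\pi^{-1}\sin(\pi s)\,g_s(k+1)$, so with the prefactor $4^s$ the residue is $-r_k$. For $T$, replace $\zeta_0$, $B+2k$, $N+k$ by $\zeta_1$, $B+1+2k$, $N+1+k$.

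Finally, your computation of $\nu_k-\lambda_k$ is correct but does not give the displayed identity. You get $2(B+2k)+1-d^2=4k+2(1+s)+\delta$, whereas \eqref{ap:eq:relations} has $-\delta$; you can confirm your value against $\nu_k=4(k+1+s)(k+N)$. This is a sign slip in the statement, asserted equally in the paper's proof, so you should not claim to have derived the stated form. It is harmless downstream. Proposition~\ref{ap:prop:high-frequency} uses only $\nu_k-\lambda_k\le4k+2(1+s)+e$ with $e=-\delta>0$, and the corrected identity implies this.
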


\begin{proof}
Existence, uniqueness and trace continuity follow from the common
form construction in Lemma~\ref{extension-symbol-bounds}.
The relevant constant potentials are positive here because
$\beta>1$ and $\beta^2-d^2=2\beta-1>0$.

The common boundary problem gives
$M_A(z)=F_0(\beta^2+z)$ and $M_T(z)=F_1(\beta^2-d^2+z)$.
For real $z>d^2$, the exact coefficients in
\eqref{gamma-cylinder-symbols} give \eqref{ap:eq:real-DN-shifts}.
Both sides are analytic for $\Ree z>0$, so the identity theorem
extends these formulas to that half-plane, and continuity includes
$z=0$. The T potential differs from the
degree-one potential by the constant $-d^2$. Since the functions
$a_m$ are even, the displayed expressions do not depend on a choice
of square-root branch; they extend the real minima meromorphically.
The evaluations at zero and the identity at $x+ib$ follow from
$\beta-d=1$ and $x+ib-d^2=(\tau+id)^2$.

For angular degree $m\in\{0,1\}$, consider the functions
\[
 (\sin\varphi)^{2s}(\cos\varphi)^m
 P_k^{(s,N+m-1)}(\cos 2\varphi).
\]
Set $y=\cos2\varphi$. For any polynomial $P$, the change of variables gives
\begin{align*}
 &\| (\sin\varphi)^{2s}(\cos\varphi)^mP(\cos2\varphi)\|_{\mathcal H}^2\\
 &\qquad=2^{-(N+m+s+1)}\int_{-1}^1
       |P(y)|^2(1-y)^s(1+y)^{N+m-1}\dd y.
\end{align*}
The Jacobi orthogonality weight is given in
\cite[Table~18.3.1]{NIST}; its parameters here satisfy
$s>-1$ and $N+m-1>-1$. Polynomial density gives completeness in
this weighted $L^2$ space. Thus the displayed eigenfunctions are
complete in $\mathcal H$ after normalization. Substituting the
Jacobi differential equation \cite[Table~18.8.1]{NIST} into
\eqref{angular-DN-equation} gives the listed eigenvalues. More explicitly, the angular eigenvalue
before adding the constant $\beta^2$ is
$(m+2s+2k)(m+n+2k)$, and
\[
 (m+2s+2k)(m+n+2k)+\beta^2=(N+m+s+2k)^2.
\]
The A operator uses $m=0$; the T operator uses $m=1$ and subtracts
$d^2$. This gives $\lambda_k$ and $\nu_k$, respectively, and agrees
with \eqref{ap:eq:real-DN-shifts}.

They have trace zero at $\varphi=0$: their value is
$O(\varphi^{2s})$ and their derivative is $O(\varphi^{2s-1})$ there.
Thus their derivative energy near zero is bounded by a constant times
$\int_0^1\varphi^{2s-1}\dd\varphi<\infty$.
At the other endpoint they are $(\cos\varphi)^m$ times a smooth
function of $\cos^2\varphi$, so they satisfy the regular finite-energy
condition used to define the degree-$m$ form domain. To check the
core approximation explicitly, cutting off within distance $\epsilon$
of zero costs $O(\epsilon^{2s})$ in derivative energy, whereas cutting
off near $\pi/2$ costs $O(\epsilon^{n+2m-2})$. Both tend to zero.
Integration by parts against compact interior profiles has no
boundary term; these cutoffs and form-density extend the identity
to all zero-trace form tests. The differential
equation therefore identifies these functions as elements of the
associated selfadjoint operator domain.

For angular degree $m$ the numerator poles occur at
$z=-(N+m+s+2k)^2$ before the real constant shift.
At such a pole, one Gamma factor has argument $-k$ and its derivative
with respect to $z$ is $1/[4(N+m+s+2k)]$.
The Gamma residue formula $\operatorname{Res}_{w=-k}\Gamma(w)=(-1)^k/k!$
and the reflection formula \cite[\S5.2(i) and 5.5.3]{NIST} give
\[
 \frac{(-1)^k}{\Gamma(-s-k)}
 =-\frac{\sin\pi s}{\pi}\Gamma(1+s+k)
\]
and yield \eqref{ap:eq:Adata} and \eqref{ap:eq:Tdata}.

Here is a direct justification that the spectral series exhausts the
function. For either real form let $B_0$ be its zero-trace operator,
let $w_0$ be the trace-one minimizer, and let $(e_k)$ be its complete
orthonormal eigenbasis, with eigenvalues $\ell_k>0$. The trace-one
solution at parameter $z$ is
\[
 w_z=w_0-z(B_0+z)^{-1}w_0.
\]
The correction $w_z-w_0$ has zero trace. Writing
$E_0(\cdot,\cdot)$ for the polarized real-potential form at parameter
zero, the minimizing property of $w_0$ gives
\[
 E_0(w_0,w_z-w_0)=0.
\]
The boundary identity tested against $w_0$ is
$M(z)=d_s\bigl(E_0(w_0,w_z)+z\ip{w_0}{w_z}\bigr)$.
Subtracting $M(0)=d_sE_0(w_0,w_0)$ therefore gives
\begin{align*}
 M(z)-M(0)
 &=d_s z\ip{w_0}{w_z}\\
 &=d_s z\|w_0\|_{\mathcal H}^2
 -d_s z^2\ip{w_0}{(B_0+z)^{-1}w_0}\\
 &=d_s\sum_{k\ge0}
 \frac{\ell_k z}{\ell_k+z}|\ip{w_0}{e_k}|^2.
\end{align*}
The expansion is justified by $w_0\in\mathcal H$ and Parseval's
identity. Its residues are $-d_s\ell_k^2|\ip{w_0}{e_k}|^2$.
Comparison with the Gamma residues above gives exactly
\eqref{ap:eq:Arep} and \eqref{ap:eq:Trep}.

The Gamma-ratio estimates give $\lambda_k,\nu_k\asymp k^2$ and
$r_k,t_k\asymp k^{1+2s}$. On any compact set avoiding the poles,
the summands in \eqref{ap:eq:Arep}--\eqref{ap:eq:Trep} are
$O(k^{2s-3})$; their differentiated summands have at least the same
summable decay. This proves the asserted local uniform convergence
and permits termwise differentiation.

Taking real parts term by term in \eqref{ap:eq:Trep} gives
\eqref{ap:eq:positive-gain-series}. This is the spectral expansion of
the correction energy in \eqref{dn-positive-correction}: if $v_x$ is
the real trace-one minimizer for $E_{T,x}$ and $\Phi$ solves the same
equation with $ib$ added to the potential, then
\[
 \Ree M_T(x+ib)-M_T(x)=d_sE_{T,x}[\Phi-v_x].
\]
In the series each summand is positive when $b\ne0$. For $x=\tau^2$ and $b=2d\tau$,
\eqref{gamma-cylinder-symbols} and complex conjugation give
\[
 a_0(\tau)=M_A(x),\qquad
 t_{0,0}(\tau)=\overline{M_T(x+ib)}.
\]
Taking real parts proves \eqref{ap:eq:exact-R}.
Finally, subtracting the displayed eigenvalues and applying
$g_s(z+1)=(z+s)g_s(z)/z$ gives \eqref{ap:eq:relations}.
\end{proof}

Formula~\eqref{ap:eq:exact-R} isolates the remaining difficulty.
The difference $M_T(x)-M_A(x)$ can be negative when $\delta<0$,
whereas the series contributed by $b=2d\tau$ is nonnegative.
We next show that the latter compensates for the former on
$x\ge5/4$, with the order needed in the scalar target.

\subsection{The complex drift comparison}\label{ap:sec:drift-gate}
We retain the spectral data \eqref{ap:eq:Adata}--\eqref{ap:eq:Tdata}
and the constant $C$ assigned to each row of \eqref{sp-eq:boxes}.
The drift estimate will use the bound
\[
 -\frac{M_A''(x)}{M_A'(x)}\ge\frac{1-s}{x+C},\qquad x\ge5/4.
\]
We obtain it by averaging a rational inequality along the positive
spectral series. We prove that rational inequality first. Write
\[
 c=1+s,\qquad m=N-1,\qquad B=m+c.
\]
For $y\ge B$, $x\ge5/4$, and $a=y^2+x$, define
\[
 A_\pm(y,x)=
 \frac{((y\pm c)^2-m^2)(y\pm1)}{y}
 \frac{2a\pm4y+4}{(a\pm4y+4)^2},\qquad
 A_{\mathrm{av}}(y,x)=\frac{A_+(y,x)+A_-(y,x)}2.
\]

\begin{lemma}[A rational comparison]\label{ap:lem:pointwise}
For each row of \eqref{sp-eq:boxes}, with $s>0$,
\begin{equation}\label{ap:eq:pointwise}
 A_{\mathrm{av}}(y,x)\ge\frac{2(y^2-C)}{y^2+x},
 \qquad y\ge B,\quad x\ge5/4.
\end{equation}
\end{lemma}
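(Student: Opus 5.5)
This is an elementary rational inequality in $(y,x)$ with the row parameters $n$ and $s$. I would clear denominators and reduce it to polynomial positivity on the region $y\ge B$, $x\ge5/4$, $s\in[\ell,h]$.

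First simplify the factors. Since $B=m+c$, we have $(y-c)^2-m^2=(y-c-m)(y-c+m)=(y-B)(y-c+m)\ge0$ for $y\ge B$. So both $A_\pm$ are nonnegative and $A_-$ vanishes at $y=B$. Write $a=y^2+x$. Then $a\pm4y+4=(y\pm2)^2+x$, and for $y\ge B>1+c$ both are positive. The target is
\[
 (A_++A_-)(y^2+x)\ge4(y^2-C).
\]
Multiplying by $y\,((y+2)^2+x)^2((y-2)^2+x)^2$ turns this into a polynomial inequality $\mathcal P(y,x,s)\ge0$. It has degree two in $x$ through each squared denominator, with coefficients polynomial in $y$ and $s$, and $m=N-1$ is fixed per row.

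Next, the asymptotic check. As $y\to\infty$ with $x$ fixed, each $A_\pm\approx 2y^2/a$, since the first factor is about $y^2$ and the second is about $2a/a^2$. So the leading orders on the two sides agree, and the content of the inequality lies in the subleading terms. The constant $C$ (values $11$, $14$, $37/2$) is presumably chosen so that the $O(1)$ correction is dominated. I would expand $A_{\mathrm{av}}(y^2+x)-2y^2$ in powers of $1/y$ uniformly in $x$, using $t=x/y^2$ as an auxiliary variable, and verify that it exceeds $-2C$.

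For the rigorous step, I would substitute $y=B+Y$ with $Y\ge0$ and $x=5/4+X$ with $X\ge0$. I would then bound the $s$-dependence monotonically on each row, replacing $c$ and $m$-related coefficients by the row endpoints in the direction that decreases each term. The aim is to show that all coefficients of $\mathcal P$, as a polynomial in $(Y,X)$, are nonnegative. That would mirror the rational verifications in Proposition~\ref{sp-prop:low-direct}. If some coefficients are negative, I would group them with dominant positive neighbours using AM–GM-type inequalities such as $Y^aX^b\le Y^{a+1}X^b+Y^{a-1}X^b$.

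I expect the main obstacle to be the region near $y=B$ with large $x$. There $A_-$ nearly vanishes, $A_+$ must carry the bound, and the margin is small because $C$ is tuned. If the coefficient test fails, I would fall back on a split: first prove $A_+\ge 2(y^2-C)/(y^2+x)\cdot\theta(y)$ plus an explicit lower bound for $A_-$ on $y\ge B+1$, and handle the strip $B\le y\le B+1$ separately by interval estimates at the row endpoints.
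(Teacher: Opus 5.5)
Your reduction coincides with the paper's. Because $A_-(y,x)=A_+(-y,x)$, the average is even in $y$, and your cleared polynomial is exactly $\mathcal P=4y\,D(y^2,x,c)$, where $D$ is the quartic in $Y=y^2$, $a=Y+x$ that the paper works with. The monotonicity part of your plan is also what the paper verifies. Every coefficient of $x^k$, $k\ge1$, in $D$ is positive on $Y\ge B^2$, so $D$ increases in $x$, and $Q(Y)=D(Y,5/4,c)$ increases for $Y\ge Y_0$.

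This shows, however, that you have put the difficulty in the wrong place. Large $x$ is harmless: at $y=B$ and $x\to\infty$ one needs only $C\ge B^2-2c(B+1)$, which is about $6$ in the $n=9$ row. Nor is $C$ tuned to the $O(1)$ term of your $1/y$ expansion. The only binding point is the corner $y=B$, $x=5/4$, at the upper end of the $s$-interval.

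\textbf{The gap.} The constant coefficient of $\mathcal P(B+Y',5/4+X)$ is $4B\,Q(B^2)$. For $n=9$ and $s=84/125$ one finds $Q(B^2)\approx5.3\cdot10^4$, while single terms $q_iB^{2i}$ are as large as $7\cdot10^6$. Moreover the shift point $B=N+s$ moves with $s$: already the term $q_3B^6$ varies by about $3\cdot10^5$ across the row. Substituting row endpoints for $c$ term by term, as you propose, therefore cannot certify this coefficient. Neither AM--GM regrouping nor interval estimates on a strip $B\le y\le B+1$ help, since the problem is the corner value itself.

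\textbf{The missing idea} is to compute the corner exactly. At $y=B$ one has $A_-(B,x)=0$ and $(B+c)^2-m^2=4cB$. So \eqref{ap:eq:pointwise} at $x=5/4$ is equivalent to $C\ge B^2-2c(B+1)f(B)$, where $f(B)=a(a+2B+2)/(a+4B+4)^2$ and $a=B^2+5/4$. With $c=B-m$ this is a condition on $B$ alone, and $B^2$ must be kept together with $(B-m)(B+1)$. Evaluating $B^2$ at the upper endpoint and $2c(B+1)f(B)$ at the lower one gives about $18.66>37/2$, although the true worst value is about $18.35$.

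The paper writes $f=z(z+2)/(z+4)^2$ with $z=B-1+9/(4(B+1))$. Hence $f$ increases in $B$, and $f\ge f_0$ with $f_0$ taken at the lower endpoint. The quadratic $B^2-2f_0(B-m)(B+1)$ is then increasing in $B$. Its value at $B_{\max}$ stays below $C$ by only about $0.006$ for $n=9$. With this corner argument supplied, your remaining coefficient checks are consistent with the paper's monotonicity in $x$ and in $Y$.
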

\begin{proof}
Put $Y=y^2$, $L=c^2-m^2$, and $a=Y+x$. Direct expansion gives
\[
A_{\mathrm{av}}-\frac{2(Y-C)}a
=\frac{2D(Y,x,c)}{a\bigl((a+4)^2-16Y\bigr)^2},
\]
where
\begin{align*}
D={}&(C+L+2c)a^4+
[-12(1+c)Y+4L+20c+16C]a^3\\
&+32[Y^2-(C+3+2c)Y+3C+2c]a^2\\
&+64(1-Y)[c+4C-(c+4)Y]a
-256(Y-C)(1-Y)^2.
\end{align*}
The denominator is positive: its unsquared factors are
$(y+2)^2+x$ and $(y-2)^2+x$.
We prove $D\ge0$ on $Y\ge B^2$, $x\ge5/4$ by three successive
reductions. First, coefficient bounds show that $D$ is increasing in
$x$. At $x=5/4$, a cubic lower bound for its $Y$ derivative shows
that it is increasing in $Y$. It then suffices to check the three
rational endpoint inequalities at $Y=B^2$. The tables below give
the exact bounds for these steps.

The coefficients of $x^4,x^3,x^2$ are respectively
\begin{align*}
&C+L+2c,\\
&4(C+L-c-3)Y+4L+20c+16C,\\
&[6(C+L-4c)-4]Y^2+
4(3L-c+4C-24)Y+96C+64c.
\end{align*}
They are positive on all three parameter rows. This is checked by
substituting the endpoints into their quadratic polynomials in $c$:
the displayed coefficient polynomials have fixed derivative sign
on each row, except those already positive termwise.
For example, their coefficients are bounded below, in the displayed
order, by $7$, $7Y+175$, and $4Y^2+13Y+1120$.
The coefficient of $x$ is $4J(Y)$, where
\[
J(Y)=(C+L-7c+7)Y^3+(3L-c-4C+16)Y^2
-16(C+4)Y+16(c+4C).
\]
The following lower polynomials suffice:
\[
\begin{array}{c|c|c|c|c|c}
n&J(Y)\text{ lower bound }j(Y)&Y_0&j(Y_0)&j'(Y_0)&j''(Y_0)\\ \hline
7&5Y^3-45Y^2-240Y+720&12&0&840&270\\
8&\frac{43}{10}Y^3-64Y^2-288Y+912&18&\frac{348}{5}&\frac{7938}{5}&\frac{1682}{5}\\
9&4Y^3-89Y^2-360Y+1200&26&1980&3124&446
\end{array}
\]
The coefficient bounds follow from endpoint substitution. For the
coefficient of $Y^3$ its derivative $2c-7$ is negative, and for the
coefficient of $Y^2$ its derivative $6c-1$ is positive.
For example, in the $n=9$ row, $C=37/2$, $m=7/2$, and
$163/100\le c\le209/125$. Thus
\begin{align*}
 \frac{d}{dc}(C+L-7c+7)&=2c-7<0,\\
 C+L-7c+7
 &\ge\frac{37}{2}+\left(\frac{209}{125}\right)^2
       -\frac{49}{4}-7\frac{209}{125}+7
   =\frac{271349}{62500}>4.
\end{align*}
The other coefficient bounds are obtained in the same way.

To read the last three columns of the table, use the exact Taylor
identity for any cubic polynomial $P$:
\[
 P(Y_0+t)=P(Y_0)+P'(Y_0)t+\frac{P''(Y_0)}2t^2
                          +\frac{P^{(3)}(Y_0)}6t^3,\qquad t\ge0.
\]
For each lower polynomial $j$, the displayed values are nonnegative,
and $j'(Y_0)$ and $j^{(3)}$ are positive. Hence $j(Y)>0$ for $Y>Y_0$.
Since $B^2>Y_0$ in every row, this gives $J(Y)>0$ on the required
domain, so $D$ is increasing in $x\ge0$ there.

It remains to consider $Q(Y)=D(Y,5/4,c)=\sum_{i=0}^4q_iY^i$.
Its coefficients are
\begin{align*}
q_4&=C+L-10c+20,&
q_3&=9L-15c-11C-61,\\
q_2&=(683C+195L-852c+2638)/8,\\
q_1&=(-4851C+425L+799c-11991)/16,\\
q_0&=(194481C+2625L+57330c)/256.
\end{align*}
The following rational lower bounds follow again by endpoint
substitution (all four coefficient polynomials are monotone on
each row):
\[
\begin{array}{c|rrrr|rrr}
n&q_4\ge&q_3\ge&q_2\ge&q_1\ge&h(Y_0)&h'(Y_0)&h''(Y_0)\\ \hline
7&15&-245&1031&-4175&18409&10342&2850\\
8&13&-301&1206&-5126&48982&20448&3810\\
9&12&-376&1500&-6532&152588&41688&5232
\end{array}
\]
Here $Y_0$ is as in the preceding table and
$h(Y)=4q_4^{\rm lo}Y^3+3q_3^{\rm lo}Y^2+2q_2^{\rm lo}Y+q_1^{\rm lo}$.
Apply the same cubic Taylor identity to $h$: its leading coefficient
and all three values $h(Y_0),h'(Y_0),h''(Y_0)$ are positive.
Therefore $h(Y)>0$ for $Y\ge Y_0$, and the coefficient bounds give
$Q'(Y)\ge h(Y)>0$ there. It is enough to check $Q(B^2)>0$.

At $y=B=m+c$ the backward term is zero. Inequality~\eqref{ap:eq:pointwise}
at $x=5/4$ is therefore equivalent to
\[
C\ge B^2-2c(B+1)f(B),\qquad
f(B)=\frac{(B^2+5/4)(B^2+5/4+2(B+1))}
{(B^2+5/4+4(B+1))^2}.
\]
This last check can be made with small rational bounds.
Set $z=B-1+9/[4(B+1)]$; then $f=z(z+2)/(z+4)^2$.
Both $z$ and this function of $z$ are increasing for $B\ge7/2$.
At the lower endpoints $B=7/2,107/25,513/100$ respectively,
one finds $f\ge3/10,7/20,2/5$. For each of these three choices
of $f_0$, the quadratic $B^2-2f_0(B-m)(B+1)$ is increasing in $B$.
At the upper endpoint it has the following value:
\[
\begin{array}{c|c|c|c}
n& f_0&B_{\max}&B_{\max}^2-2f_0(B_{\max}-m)(B_{\max}+1)\\ \hline
7&3/10&3551/1000&24349351/2500000<11\\
8&7/20&871/200&5554723/400000<14\\
9&2/5&1293/250&5779349/312500<37/2
\end{array}
\]
This proves strict positivity of $Q(B^2)$, hence $D>0$ throughout
the required domain. The displayed rational identity gives
\eqref{ap:eq:pointwise}.
\end{proof}

\begin{proposition}[A lower bound for the derivative quotient]\label{ap:prop:quotient}
For each parameter row and every $x\ge5/4$,
\[
\frac{-M_A''(x)}{M_A'(x)}\ge\frac{1-s}{x+C}.
\]
If $0<e<1/2$, then also
\[
\frac{-M_A''(x)}{M_A'(x-e)}\ge
\frac{1-s}{x+C}
\left(\frac{B^2+x-e}{B^2+x}\right)^2.
\]
\end{proposition}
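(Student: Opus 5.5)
The approach is to expand both derivatives through the positive spectral series of Proposition~\ref{ap:prop:positive-series}, and to compare them by shifting the summation index rather than term by term. Differentiating \eqref{ap:eq:Arep} termwise, which that proposition permits, gives
\[
 M_A'(x)=\sum_{k\ge0}\frac{r_k}{(\lambda_k+x)^2},\qquad
 -M_A''(x)=2\sum_{k\ge0}\frac{r_k}{(\lambda_k+x)^3}.
\]
A termwise comparison cannot work, because $2(x+C)\ge(1-s)(\lambda_k+x)$ fails for large $k$. The plan is instead to compare the weights $r_k$ with their neighbours and to feed the result into Lemma~\ref{ap:lem:pointwise}.

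\textbf{Step 1: index shifts.} Put $y_k=B+2k$, so that $\lambda_k=y_k^2$ and $\lambda_{k\pm1}+x=a\pm4y+4$ with $a=y^2+x$. From \eqref{ap:eq:Adata} and $g_s(z+1)=(z+s)g_s(z)/z$,
\[
 \frac{r_{k+1}}{r_k}=\frac{y+2}{y}\,\frac{(k+1+s)(N+k+s)}{(k+1)(N+k)} .
\]
Since $(y+c)^2-m^2=4(k+1+s)(N+k+s)$ and $(y-c)^2-m^2=4k(N+k-1)$, the factors $((y\pm c)^2-m^2)(y\pm1)/y$ in $A_\pm$ encode exactly these forward and backward weight ratios, up to the normalising factors $(y\pm1)/y$, whose exact role I still have to fix.

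For any decaying $F$ one has $\sum_kr_kF(\lambda_k)=\sum_kr_{k+1}F(\lambda_{k+1})=\sum_k r_{k-1}F(\lambda_{k-1})$. The backward sum starts at $k=1$; there is no boundary term at $k=0$, because $(y_0-c)^2-m^2=0$ at $y_0=B$. Averaging the two shifted expressions therefore writes a chosen series of the form $\sum r_k G(\lambda_k+x)$ as $\sum_k r_k\,\rho_k(x)\,A_{\mathrm{av}}(y_k,x)$, where $\rho_k(x)$ is a positive factor.

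The test function $G$ is to be chosen so that this happens, guided by the factor $(2a'+\ldots)/(a')^2$. Since $(2\mu+\ldots)/\mu^2$ resembles $-\partial_\mu\mu^{-2}$ up to lower order, I would take $G(\mu)$ built from $\mu^{-2}$ and $\mu^{-3}$, arranged so that the left side is a combination of $M_A'$ and $-M_A''$.

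\textbf{Step 2: apply the rational lemma.} Next I apply \eqref{ap:eq:pointwise} termwise, which is legitimate since all weights are positive. This yields
\[
 \sum_k r_k\frac{2(\lambda_k-C)}{(\lambda_k+x)^{3}}\lesssim\sum_kr_kG(\lambda_k+x).
\]
Writing $\lambda_k-C=(\lambda_k+x)-(x+C)$ then converts this inequality into
\[
 2(x+C)\sum_k\frac{r_k}{(\lambda_k+x)^3}\ge(1-s)\sum_k\frac{r_k}{(\lambda_k+x)^2},
\]
which is the first claim.

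\textbf{Main obstacle.} The hard part is Step~1: identifying the precise $G$, and checking that the exponent $1-s$ comes out exactly. The $k$-dependent factor $(k+1+s)/(k+1)\approx1+s/k$ must combine with the $y^{2s}$-type growth of $r_k$ to leave precisely $(1-s)$. I would first test this identification on the large-$k$ asymptotics, $r_k\sim k^{1+2s}$ and $\lambda_k\sim4k^2$, where the inequality should reduce to an integral comparison of $\int y^{1+2s}(y^2+x)^{-3}\,dy$ against $\int y^{1+2s}(y^2+x)^{-2}\,dy$, and only then transfer this to the exact discrete identity.

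\textbf{Step 3: the shifted inequality.} The second claim follows from the first by monotonicity. For each $k$,
\[
 \frac{1}{(\lambda_k+x-e)^2}=\left(\frac{\lambda_k+x}{\lambda_k+x-e}\right)^2\frac1{(\lambda_k+x)^2},
\]
and the ratio $(\lambda_k+x)/(\lambda_k+x-e)$ decreases in $\lambda_k\ge\lambda_0=B^2$; here $\lambda_0+x-e>0$ because $e<1/2$. Hence $M_A'(x-e)\le\bigl((B^2+x)/(B^2+x-e)\bigr)^2M_A'(x)$, and inserting this into the first bound gives the stated inequality.
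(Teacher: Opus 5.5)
Your Step~3 is correct and is exactly how the second claim follows. Your overall direction is also right: termwise comparison fails, so you pass to neighbouring indices and apply Lemma~\ref{ap:lem:pointwise}. But the first claim is not proved. Everything hinges on an exact identity that you leave open (``the test function $G$ is to be chosen so that this happens''), and an inequality with $\lesssim$ in Step~2 cannot produce the sharp factor $1-s$. What is needed is precisely
\[
 (1+s)\,M_A'(x)=\sum_{k\ge0}\frac{r_k\,A_+(y_k,x)}{(\lambda_k+x)^2}
 =\sum_{k\ge0}\frac{r_k\,A_-(y_k,x)}{(\lambda_k+x)^2},
\]
that is, $G(\mu)=(1+s)\mu^{-2}$ and $\rho_k=(\lambda_k+x)^{-2}$, with no $\mu^{-3}$ component. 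Averaging, applying the lemma, and writing $\lambda_k-C=(\lambda_k+x)-(x+C)$ then give $(1+s)S_2\ge2S_2-2(x+C)S_3$ for $S_\ell=\sum_kr_k(\lambda_k+x)^{-\ell}$. Since $2-(1+s)=1-s$, this is the claim. The large-$k$ integral heuristic you propose could at best confirm the exponent; it cannot give the inequality for every $x\ge5/4$.

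This identity does not come from reindexing $\sum_kr_kF(\lambda_k)$; your forward reindexing also drops the term $r_0F(\lambda_0)$. The ratio $r_{k+1}/r_k$ contains the denominator $(k+1)(N+k)$, which does not occur in $A_+$. The missing idea is a discrete antiderivative of the weights. Put $D_k=g_s(k+1)g_s(k+N)$ and $F_k=k(k+N-1)D_k$. The Gamma recurrence gives $F_{k+1}-F_k=(1+s)(B+2k)D_k$, which is $(1+s)r_k$ up to the common positive factor in \eqref{ap:eq:Adata}, and $F_0=0$. Abel summation with $a_k=\lambda_k+x$ reads
\[
 \sum_{k\ge0}(F_{k+1}-F_k)a_k^{-2}
 =\sum_{k\ge0}F_{k+1}\bigl(a_k^{-2}-a_{k+1}^{-2}\bigr)
 =\sum_{k\ge1}F_k\bigl(a_{k-1}^{-2}-a_k^{-2}\bigr).
\]
This yields $A_\pm$ exactly, because
\[
 F_{k+1}=\tfrac14\bigl((y+c)^2-m^2\bigr)D_k,\qquad
 F_k=\tfrac14\bigl((y-c)^2-m^2\bigr)D_k,
\]
\[
 a_{k\pm1}-a_k=\pm4(y\pm1),\qquad
 a_{k\pm1}+a_k=2a\pm4y+4.
\]
The remaining $1/y$ comes from $r_k\propto yD_k$. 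These are the normalising factors $(y\pm1)/y$ you could not place. The terms at infinity vanish since $F_k=O(k^{2+2s})$ and $a_k^{-2}=O(k^{-4})$. The backward sum has no $k=0$ term because $F_0=0$ and $A_-(B,x)=0$.
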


\phantomsection\label{ap:sec:quotient-proof}
\begin{proof}
Differentiate the already proved series \eqref{ap:eq:Arep} and write
\[
 S_\ell(x)=\sum_{k\ge0}\frac{r_k}{(\lambda_k+x)^\ell},
 \quad \ell=2,3,\qquad M_A'(x)=S_2(x),\quad -M_A''(x)=2S_3(x).
\]
Put $D_k=g_s(k+1)g_s(k+N)$ and $F_k=k(k+N-1)D_k$.
The Gamma recurrence gives
\[
 F_{k+1}-F_k=c(B+2k)D_k.
\]
Set $a_k=\lambda_k+x$. Summation by parts, once using
\[
 \sum_{k\ge0}(F_{k+1}-F_k)a_k^{-2}
 =\sum_{k\ge0}F_{k+1}(a_k^{-2}-a_{k+1}^{-2})
\]
and once using $\sum_{k\ge1}F_k(a_{k-1}^{-2}-a_k^{-2})$, gives
\begin{equation}\label{ap:eq:residue-averages}
 cS_2(x)
 =\sum_{k\ge0}\frac{r_kA_+(B+2k,x)}{(\lambda_k+x)^2}
 =\sum_{k\ge0}\frac{r_kA_-(B+2k,x)}{(\lambda_k+x)^2}.
\end{equation}
The common positive factor in $r_k$ cancels in these identities.
The boundary terms vanish because $F_k=O(k^{2+2s})$ and
$a_k^{-2}=O(k^{-4})$, with $s<1$; the backward term at $k=0$
is zero since $F_0=0$ and $A_-(B,x)=0$.

Average the two identities in \eqref{ap:eq:residue-averages} and
apply Lemma~\ref{ap:lem:pointwise}. Since $y^2=\lambda_k$,
\[
 cS_2(x)\ge2S_2(x)-2(x+C)S_3(x).
\]
As $2-c=1-s$, division by the positive $S_2(x)$ gives
$-M_A''(x)/M_A'(x)\ge(1-s)/(x+C)$.
Finally, $\lambda_k\ge\lambda_0=B^2$ implies term by term
\[
 M_A'(x-e)\le
 \left(\frac{B^2+x}{B^2+x-e}\right)^2M_A'(x).
\]
Combining this with the first inequality proves the second claim.
\end{proof}

The quotient estimate is now available for the comparison of the two
extension energies. We keep the possible real energy loss and the
positive drift contribution in a single bound for $R_{0,s}$.

\begin{proposition}[The high-frequency zero-mode gap]\label{ap:prop:high-frequency}
For a row of \eqref{sp-eq:boxes}, put $e=-\delta=d^2-(n-1)$. On the part of that row where
$0<e<1/2$, there exists $\varepsilon_n>0$, independent of $s$,
such that
\[
 R_{0,s}(\sqrt x)\ge\varepsilon_neM_A'(x-e)>0,
 \qquad x\ge5/4.
\]
\end{proposition}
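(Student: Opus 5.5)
The plan is to start from the exact representation \eqref{ap:eq:exact-R} and split $R_{0,s}(\sqrt x)$ into a real energy loss $M_T(x)-M_A(x)$ and the nonnegative drift series. I will bound the loss below by $-eM_A'(x-e)$, up to a nonnegative remainder. I will then show that the drift series dominates $(1+\varepsilon_n)\,eM_A'(x-e)$ for $x\ge5/4$ and $0<e<1/2$.

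\emph{Step 1: the real loss.} Let $v_T$ be the real trace-one minimizer for $E_{T,x}$, and recall $\mathcal V_1\subset\mathcal V_0$. On the common domain,
\[
 (n-1)\sec^2\varphi-d^2=(n-1)\tan^2\varphi-e .
\]
Using $v_T$ as a competitor for $E_{A,x-e}$ then gives
\[
 M_A(x-e)\le d_sE_{A,x-e}[v_T]
 =M_T(x)-d_s(n-1)\int_Jp\tan^2\varphi\,v_T^2\dd\varphi .
\]
By \eqref{ap:eq:Arep}, $M_A'=S_2$ is positive and decreasing. The mean value theorem therefore gives $M_A(x)-M_A(x-e)\le eM_A'(x-e)$. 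Combining the two,
\[
 M_T(x)-M_A(x)\ge -eM_A'(x-e)+G_{\tan},
\]
where the gain $G_{\tan}\ge0$ can be kept as a reserve; the boundary-layer bound \eqref{dn-boundary-layer} shows $G_{\tan}\ge c\,x^{s-2}$.

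\emph{Step 2: the drift series against $-M_A''$.} With $b^2=4d^2x$, write the series in \eqref{ap:eq:exact-R} as a sum over $k$. Term by term I will compare it with $2S_3(x)=-M_A''(x)=2\sum_k r_k(\lambda_k+x)^{-3}$, using three facts.
\begin{itemize}
\item[(i)] $t_k\ge r_k$, by \eqref{ap:eq:relations}.
\item[(ii)] $\nu_k-\lambda_k=4k+2c+e\le 2\sqrt{\lambda_k}+C'$. Since $\lambda_k\ge B^2>12$ on the rows, $(\lambda_k+x)/(\nu_k+x)\ge\theta_n$ for an explicit $\theta_n>0$.
\item[(iii)] $(\nu_k+x)^2+4d^2x\le(1+d^2/\nu_0)(\nu_k+x)^2$.
\end{itemize}
Together these give
\[
 \text{drift}\ge c_n\,d^2x\,(-M_A''(x)).
\]
Proposition~\ref{ap:prop:quotient}, second form, then yields
\[
 \text{drift}\ge c_n d^2(1-s)\frac{x}{x+C}\Bigl(\frac{B^2+x-e}{B^2+x}\Bigr)^2 M_A'(x-e)\ge K_n M_A'(x-e),
\]
with $K_n$ independent of $s$ on the row, since $x\ge5/4$, $e<1/2$, and $1-s$ is bounded below on each box.

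\emph{Step 3: conclusion and main obstacle.} If $K_n\ge(1+\varepsilon_n)\cdot\frac12$, then $R_{0,s}\ge(K_n-e)M_A'(x-e)\ge\varepsilon_n eM_A'(x-e)$, since $e<1/2$.

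The hard step is making $K_n$ large enough. The crude constants $\theta_n^3$ and $(1+d^2/\nu_0)^{-1}$ lose a lot, and $x/(x+C)$ is small near $x=5/4$ when $C=37/2$. The first thing I would try is avoiding termwise crude bounds. Instead, I would expand
\[
 \frac{t_k}{(\nu_k+x)((\nu_k+x)^2+b^2)}
\]
exactly via the relations \eqref{ap:eq:relations}, with the shift $\nu_k=\lambda_k+4k+2c+e$ written as $(y+1)^2-(y)^2$-type increments with $y=B+2k$. This is precisely the structure for which the summation-by-parts identity \eqref{ap:eq:residue-averages} and Lemma~\ref{ap:lem:pointwise} were designed, which suggests the intended comparison is sharp rather than crude.

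If a deficit remains near $x=5/4$ on the $n=9$ row, I would spend the reserve $G_{\tan}$ from Step 1 there. For large $x$, $x/(x+C)\to1$ and $d^2(1-s)\ge 6\cdot\frac13$ give ample room, so only a compact $x$-range needs this sharper bookkeeping.
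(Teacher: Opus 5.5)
\textbf{Where you agree with the paper.} Your Step~1 is the paper's argument: the paper discards $G_{\tan}$ and uses concavity to get $M_T(x)-M_A(x)\ge-eM_A'(x-e)$. Step~2 uses the paper's ingredients: $t_k\ge r_k$, a uniform ratio $\nu_k+x\le\rho(\lambda_k+x)$ (the sharp choice is $\rho=1+(n-2-e)^{-1}$), and the second form of Proposition~\ref{ap:prop:quotient}.

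\textbf{The gap is in Step~3.} You make the drift coefficient $K_n$ uniform in $s$ and then require $K_n\ge(1+\varepsilon_n)/2$, using only $e<1/2$. Near $x=5/4$ this inequality is false, not merely hard to reach.
\begin{itemize}
\item With the paper's own sharpest termwise bounds, the coefficient of $M_A'(x-e)$ at $x=5/4$ is only about $0.44$--$0.47$ on the $n=8$ row.
\item On the $n=9$ row it is about $0.21$--$0.24$.
\item A numerical evaluation of the exact series in \eqref{ap:eq:exact-R} at $n=9$, $e\downarrow0$, $x=5/4$ gives only about $0.38\,M_A'(x-e)$.
\end{itemize}
So no exact bookkeeping of the drift series, including summation by parts, can give $K_9\ge1/2$.

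\textbf{The missing idea.} You must keep $e=e(s)$ coupled with $d^2(1-s)$ at the same $s$. Normalize by $e$, that is, work with $\mathfrak g(s,x)$ from \eqref{ap:eq:gate}; it blows up as $e\downarrow0$. Then:
\begin{itemize}
\item show $\mathfrak g$ increases in $s$, which is equivalent to $2(n-1)(1-s)>de$;
\item show $\mathfrak g$ increases in $x$, using $2C\ge\nu_0$;
\item check only $x=5/4$ at the lower endpoint of $s$, where the large value of $d^2(1-s)$ compensates for the largest $e$.
\end{itemize}

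\textbf{Two further problems.} The endpoint check has very thin margins: the paper gets $\mathfrak g>6189876/6171875\approx1.003$ at $n=9$, $s=63/100$.
\begin{itemize}
\item Your bound (iii), with the constant factor $1+d^2/\nu_0$, is too lossy at that point. It is about $1.28$ there, against the paper's $x$-dependent $1+4d^2x/(\nu_0+x)^2\approx1.04$. Inside the same chain this change alone lowers $\mathfrak g$ from about $1.01$ to about $0.83$.
\item The reserve $G_{\tan}\ge c\,x^{s-2}$ cannot fill such a deficit as you propose. Its constant comes from the non-explicit boundary-layer width $\epsilon$ in \eqref{dn-boundary-layer}. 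Using it would need a separate quantitative estimate that your proposal does not supply.
\end{itemize}
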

\begin{proof}
Let $e=-\delta\in(0,1/2)$ and $b=2d\sqrt x$. The difference of the
two real potentials is
\[
 W_0=(n-1)\sec^2\varphi-d^2=-e+(n-1)\tan^2\varphi\ge-e.
\]
The T form domain is contained in the A form domain. For $x\ge1$,
comparison of their trace-one minima therefore gives
$M_T(x)\ge M_A(x-e)$. The positive series \eqref{ap:eq:Arep} shows that
$M_A$ is increasing and concave, so
\[
 M_T(x)-M_A(x)\ge-eM_A'(x-e).
\]
Set $\rho=1+(n-2-e)^{-1}$. The coefficient relations \eqref{ap:eq:relations}
imply $t_k\ge r_k$ and $\nu_k+x\le\rho(\lambda_k+x)$, since
\[
 \frac{4k+2(1+s)+e}{(2k+N+s)^2}\le\frac1{n-2-e}.
\]
Indeed, after multiplication the difference is the square
$[2k+N+s-(n-2-e)]^2$; use
$2(N+s)-2(1+s)-e=n-2-e$.
Thus \eqref{ap:eq:positive-gain-series} gives
\[
 \Ree M_T(x+ib)-M_T(x)
 \ge\frac{b^2[-M_A''(x)]}
 {2\rho^3[1+b^2/(\nu_0+x)^2]}.
\]
Here we bounded the extra denominator factor by
$1+b^2/(\nu_0+x)^2$ and used
$-M_A''(x)=2\sum r_k/(\lambda_k+x)^3$. Consequently
\begin{equation}\label{ap:eq:master-lower}
 R_{0,s}(\sqrt x)\ge-eM_A'(x-e)+
 \frac{4d^2x[-M_A''(x)]}
 {2\rho^3[1+4d^2x/(\nu_0+x)^2]},\qquad x\ge1.
\end{equation}

Apply Proposition~\ref{ap:prop:quotient} to the second term and
factor out the positive $eM_A'(x-e)$. With $\nu_0=4(1+s)N$, set
\begin{equation}\label{ap:eq:gate}
 \mathfrak g(s,x)=\frac{2d^2(1-s)}e\,\rho^{-3}\,
 \frac{x}{x+C}\,
 \left(1+\frac{4d^2x}{(\nu_0+x)^2}\right)^{-1}
 \left(\frac{B^2+x-e}{B^2+x}\right)^2.
\end{equation}
The resulting scalar estimate is
\begin{equation}\label{ap:eq:normalized-drift-lower}
 R_{0,s}(\sqrt x)\ge
 eM_A'(x-e)\bigl(\mathfrak g(s,x)-1\bigr),\qquad x\ge5/4.
\end{equation}
It remains to show a uniform margin $\mathfrak g(s,x)>1$ on each row.
This function increases in both $x\ge5/4$ and $s$ on the part of a
parameter row where $e>0$.
For $s$, the derivative of the logarithm of $(1-s)d^2/e$ is
\[
-\frac1{1-s}-\frac2d+\frac{2d}e>0
\quad\Longleftrightarrow\quad
2(n-1)(1-s)>de.
\]
On the displayed rows, $d\le3$, $e<1/2$, $1-s\ge41/125$, and
$n-1\ge6$, which give this strict inequality.
The factors $\rho^{-3}$ and the final square also increase with $s$;
the drift denominator decreases because $d$ decreases and $\nu_0$
increases. For $x$, with $a_{\mathrm d}=4d^2$ and $\nu=\nu_0$, the derivative
of
\[
\frac{x}{x+C}\left(1+\frac{a_{\mathrm d}x}{(\nu+x)^2}\right)^{-1}
\]
has the sign of
$C(\nu+x)^3+a_{\mathrm d}x^2(x+2C-\nu)$.
It is positive because $2C\ge\nu_0$ on each row.
The final square in~\eqref{ap:eq:gate} increases with $x$ as well.
Thus only $x=5/4$ and the lower endpoint of $s$ require checking
(with the limit $s\downarrow0$ in the first row).

At $x=5/4$ and the lower endpoints of the first two rows, the five
factors in \eqref{ap:eq:gate}, in their displayed order, give
\begin{align*}
 n=7:\quad &50\cdot\frac12\cdot\frac5{49}
                  \cdot\frac45\cdot\frac9{10}=\frac{90}{49}>1,\\
 n=8:\quad &26\cdot\frac35\cdot\frac5{61}
                  \cdot\frac9{10}\cdot\frac{19}{20}
                     =\frac{6669}{6100}>1.
\end{align*}
Both products are strict lower bounds for $\mathfrak g$, with
$s\downarrow0$ in the first row.
The closest case, $n=9$ and $s=63/100$, has the small rational bounds
\[
\begin{gathered}
\frac{2d^2(1-s)}e>\frac{128}5,\qquad
\rho^{-3}>\frac{33}{50},\qquad \frac{x}{x+C}=\frac5{79},\\
\left(1+\frac{4d^2x}{(\nu_0+x)^2}\right)^{-1}>\frac{957}{1000},\qquad
\left(\frac{B^2+x-e}{B^2+x}\right)^2>\frac{49}{50}.
\end{gathered}
\]
Their product is $6189876/6171875>1$.
These three strict margins give $\varepsilon_n>0$ such that
$\mathfrak g(s,x)\ge1+\varepsilon_n$ throughout each row where $e>0$
and $x\ge5/4$. Returning to \eqref{ap:eq:normalized-drift-lower}, we obtain
\begin{equation}\label{ap:eq:strict-drift-lower}
 R_{0,s}(\sqrt x)\ge eM_A'(x-e)\bigl(\mathfrak g(s,x)-1\bigr)
       \ge\varepsilon_n eM_A'(x-e),\qquad x\ge5/4.
\end{equation}
\end{proof}

\subsection{Coercivity and interior estimates}\label{full-hardy-comparison}\label{assembly-section}\label{interior-completion}
\begin{proposition}[Zero-mode comparison]\label{sp-prop:scalar}
For $(n,s)$ in \eqref{sp-eq:boxes}, with $s>0$ and $A_{n,s}>H_{n,s}$,
there exists $\kappa_0(n,s)>0$ such that
\begin{equation}\label{sp-eq:explicit-zero-gap}
 R_{0,s}(\tau)\ge\kappa_0(n,s)(1+\tau^2)^{s-1},\qquad \tau\in\R.
\end{equation}
\end{proposition}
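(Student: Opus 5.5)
The plan is to split the frequency line at $\tau^2=5/4$ and combine the two comparisons already available, Proposition~\ref{sp-prop:low-direct} at low frequencies and Proposition~\ref{ap:prop:high-frequency} at high frequencies. Since $R_{0,s}$ is even in $\tau$, because $a_0$ and $t_{0,0}$ have conjugate-symmetric symbols and we take real parts, it suffices to treat $\tau\ge0$.

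On $\tau^2\le5/4$, Proposition~\ref{sp-prop:low-direct} gives $R_{0,s}(\tau)\ge A_{n,s}-H_{n,s}>0$, using the strict hypothesis. Because $(1+\tau^2)^{s-1}\le1$, this interval contributes the constant $\kappa_0\le A_{n,s}-H_{n,s}$.

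For $\tau^2\ge5/4$ the argument depends on the sign of $\delta$.

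\emph{Case $\delta<0$.} On the rows of \eqref{sp-eq:boxes} we have $e=-\delta\in(0,1/2)$, by Lemma~\ref{sp-lem:integer-hardy-range}. Proposition~\ref{ap:prop:high-frequency} then gives $R_{0,s}(\sqrt x)\ge\varepsilon_n eM_A'(x-e)$ with $x=\tau^2$. It remains to bound $M_A'(x-e)$ below by $c(1+x)^{s-1}$. From \eqref{ap:eq:Arep},
\[
M_A'(y)=\sum_k r_k(\lambda_k+y)^{-2}.
\]
The data satisfy $r_k\asymp(1+k)^{1+2s}$ and $\lambda_k\asymp(1+k)^2$, the first by the Gamma-ratio bound $g_s(k+c)\asymp(1+k)^s$ from Lemma~\ref{uniform-gamma-symbols}. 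Summing only over $k\ge\sqrt y$ gives
\[
M_A'(y)\ge c\sum_{k\ge\sqrt y}k^{1+2s}k^{-4}\ge c\,y^{s-1}
\]
for $y\ge1$, and $y=x-e\ge3/4$ is comparable to $1+x$. Alternatively, one can note that $M_A'$ is positive and continuous and compare it with the symbol asymptotics. Either way this yields $\kappa_0$ on the high-frequency range.

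\emph{Case $\delta\ge0$.} Here the row already lies in the region covered by Corollary~\ref{endpoint}, which gives \eqref{general-gap} directly for $j=0$, so this case is immediate.

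The main care point is the lower bound $M_A'(y)\gtrsim y^{s-1}$, namely making the spectral-series estimate uniform down to $y=3/4$. Positivity of each term handles bounded $y$, and the tail sum handles large $y$. Taking the minimum of the two constants gives $\kappa_0(n,s)>0$.
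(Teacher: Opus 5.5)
Your proposal is correct and follows the paper's proof essentially step for step. Corollary~\ref{endpoint} covers $\delta\ge0$, Proposition~\ref{sp-prop:low-direct} covers $\tau^2\le5/4$, and Proposition~\ref{ap:prop:high-frequency} covers the rest together with the spectral-series bound $M_A'(x-e)\ge c(1+x)^{s-1}$. The only differences are minor: the paper retains the block $K\le k\le2K$ where you retain the whole tail $k\ge\sqrt y$ (both use the $k=0$ term for bounded $x$), and you state explicitly that $R_{0,s}$ is even, which the paper leaves implicit even though Proposition~\ref{ap:prop:high-frequency} is only stated at $\tau=\sqrt x$.
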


\begin{proof}
If $\delta\ge0$, Corollary~\ref{endpoint} gives the conclusion.
Suppose $\delta<0$, and put $e=-\delta\in(0,1/2)$.
On $\tau^2\le5/4$, Proposition~\ref{sp-prop:low-direct} gives
\[
 R_{0,s}(\tau)\ge A_{n,s}-H_{n,s}>0.
\]
On the complementary interval, Proposition~\ref{ap:prop:high-frequency}
gives $R_{0,s}(\sqrt x)\ge\varepsilon_n eM_A'(x-e)$.
To recover the required order, differentiate \eqref{ap:eq:Arep}:
\[
 M_A'(x-e)=\sum_{k\ge0}\frac{r_k}{(\lambda_k+x-e)^2}.
\]
For fixed $n,s$, the real-axis Gamma-ratio growth in
Lemma~\ref{uniform-gamma-symbols} gives
\[
 r_k\ge c_{n,s}(k+1)^{1+2s},\qquad
 \lambda_k\le C_{n,s}(k+1)^2,\qquad k\ge0,
\]
with the finitely many initial terms absorbed in the positive constants.
For $x\ge4$, put $K=\lceil\sqrt x\rceil$ and retain only
$K\le k\le2K$. There are at least $\sqrt x$ terms; each numerator
is at least $c_{n,s}x^{s+1/2}$ and each squared denominator is at most
$C_{n,s}x^2$. Consequently
\[
 M_A'(x-e)\ge c_{n,s}x^{s-1},\qquad x\ge4.
\]
For $5/4\le x\le4$, the positive $k=0$ term gives a lower bound, so
\[
 M_A'(x-e)\ge c_{n,s}(1+x)^{s-1},\qquad x\ge5/4.
\]
Thus the high-frequency estimate has the required coercive order.
On the low-frequency interval $(1+\tau^2)^{s-1}\le1$; taking the
smaller constant proves the scalar bound for all $\tau\in\R$.
\end{proof}

We now combine the zero mode with the higher angular modes.

\begin{theorem}[Coercivity for all angular degrees]\label{sp-thm:main}
For integer $n\ge2$ and $0<s<1$, the following are equivalent:
\begin{enumerate}
 \item $A_{n,s}>H_{n,s}$;
 \item there exists $\kappa_{n,s}>0$ such that
 \begin{equation}\label{sp-eq:full-gap}
 R_{j,s}(\tau)\ge\kappa_{n,s}(1+j^2+\tau^2)^{s-1}
 \quad\text{for every }j\ge0\text{ and }\tau\in\R.
 \end{equation}
\end{enumerate}
Moreover, under these conditions there exist
$0<\alpha<\min\{1,2s\}$ and $\kappa>0$, depending only on $n,s$, such that
\begin{equation}\label{positive-radial-gap}
 \Ree t_{j,\alpha}(\tau)-a_j(\tau)
 \ge\kappa(1+j^2+\tau^2)^{s-1}
 \qquad(j\ge0,\ \tau\in\R).
\end{equation}
\end{theorem}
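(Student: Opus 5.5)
The plan is to prove the two implications separately and then obtain \eqref{positive-radial-gap} from Lemma~\ref{shift-gap}.

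\emph{(1)$\Rightarrow$(2).} Assume $A_{n,s}>H_{n,s}$. If $\delta\ge0$, Corollary~\ref{endpoint} gives \eqref{sp-eq:full-gap} directly. Otherwise Lemma~\ref{sp-lem:integer-hardy-range} places $(n,s)$ in one of the rows of \eqref{sp-eq:boxes}, where $\delta>-1/2$.
\begin{itemize}
\item For every $j\ge1$ we then have $\delta+2j\ge 2-1/2>0$. Proposition~\ref{analytic-gap} with $\vartheta=0$ gives $R_{j,s}(\tau)\ge\kappa_{n,s,0}(\delta+2j)(1+j^2+\tau^2)^{s-1}$, and $\delta+2j\ge 3/2$ gives a uniform constant.
\item For $j=0$, Proposition~\ref{sp-prop:scalar} supplies the bound.
\end{itemize}
Taking the minimum of the two constants proves (2).

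\emph{(2)$\Rightarrow$(1).} Evaluate \eqref{sp-eq:full-gap} at $j=0$, $\tau=0$. By \eqref{sp-eq:normalization}, $R_{0,s}(0)=4^s[P(s,0)-Q(s,0)]=A_{n,s}-H_{n,s}$, so (2) gives $A_{n,s}-H_{n,s}\ge\kappa_{n,s}>0$. One should confirm that the real part of $t_{0,0}(0)=c_1(1)$ is indeed $A_{n,s}$ and that $a_0(0)=c_0(\beta)=H_{n,s}$. Both follow from Lemma~\ref{uniform-gamma-symbols}, or from Proposition~\ref{ap:prop:positive-series} with $M_T(0)=A_{n,s}$ and $M_A(0)=H_{n,s}$; that proposition is stated for $n\in\{7,8,9\}$, but the Gamma formula \eqref{gamma-cylinder-symbols} gives the same values in every dimension. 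This direction needs only this one evaluation, since the pointwise bound at a single frequency already contradicts $A_{n,s}\le H_{n,s}$.

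\emph{The weighted bound.} Given (2), which is exactly \eqref{gap-hypothesis}, Lemma~\ref{shift-gap} yields $\alpha\in(0,\min\{1,2s\})$ with \eqref{shifted-gap}, that is, \eqref{positive-radial-gap} with $\kappa=\kappa_{n,s}/2$. Both constants depend only on $n,s$.

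\emph{Main obstacle.} The analytic difficulty is entirely contained in the zero-mode Proposition~\ref{sp-prop:scalar}, which is already available. The only delicate point left is bookkeeping: the box classification must cover every pair with $\delta<0$ and $A_{n,s}>H_{n,s}$, which is exactly Lemma~\ref{sp-lem:integer-hardy-range}, and the uniformity of $\kappa_{n,s,0}$ must hold in $j$.
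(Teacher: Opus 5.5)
Your proposal is correct and follows the paper's proof essentially step for step. Necessity comes from $R_{0,s}(0)=A_{n,s}-H_{n,s}$. For sufficiency, Corollary~\ref{endpoint} handles $\delta\ge0$; otherwise Lemma~\ref{sp-lem:integer-hardy-range} places $(n,s)$ in a box, Proposition~\ref{sp-prop:scalar} treats $j=0$, and Proposition~\ref{analytic-gap} at $\vartheta=0$ with $\delta+2j\ge\delta+2>0$ treats $j\ge1$. Lemma~\ref{shift-gap} then gives the weighted bound. Your additional check that $c_1(1)=A_{n,s}$ and $c_0(\beta)=H_{n,s}$ hold in every dimension via \eqref{gamma-cylinder-symbols} is correct and harmless.
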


\begin{proof}[Proof of Theorem~\ref{sp-thm:main}]
Necessity follows by setting $j=0$, $\tau=0$ in \eqref{sp-eq:full-gap},
since $R_{0,s}(0)=A_{n,s}-H_{n,s}$.

For sufficiency, Corollary~\ref{endpoint} settles $\delta\ge0$.
Otherwise Subsection~\ref{sp-sec:parameters} places $(n,s)$ in
\eqref{sp-eq:boxes}. Proposition~\ref{sp-prop:scalar} supplies the zero-mode
lower bound.

For the higher modes, $\delta>-1/2$ gives
$\delta+2j\ge\delta+2>0$ for every $j\ge1$.
Proposition~\ref{analytic-gap}, applied directly with $\vartheta=0$,
therefore gives
\begin{equation}\label{sp-eq:existing-higher}
 R_{j,s}(\tau)\ge \kappa^{\mathrm{ang}}_{n,s}(\delta+2j)(1+j^2+\tau^2)^{s-1},
 \qquad j\ge1,\quad \kappa^{\mathrm{ang}}_{n,s}>0.
\end{equation}
Since $\delta>-1/2$, \eqref{sp-eq:existing-higher} gives the lower bound for
all $j\ge1$, with constant at least $\kappa^{\mathrm{ang}}_{n,s}(\delta+2)>0$.
Taking the smaller of these two constants proves \eqref{sp-eq:full-gap}.
Finally, the weight-continuity estimate of Lemma~\ref{shift-gap} gives
\eqref{positive-radial-gap}: choose a positive admissible $\alpha$ for
which the perturbation is at most half the unweighted lower bound.
\end{proof}

As $s\uparrow1$, the difference of the two symbols tends to
\[
 R_{j,1}(\tau)=2j+\frac{(n-2)(10-n)}4.
\]
Thus the limiting critical dimension is ten, as for the classical Laplacian.

% MODULE assembly
\begin{proof}[Proof of Theorem~\ref{full}]
For $n\ge2$, the positive-weight conclusion
\eqref{positive-radial-gap} of Theorem~\ref{sp-thm:main} supplies
\eqref{pde-input} throughout
\eqref{dimension-range}, with constants depending only on $n,s$.
Theorem~\ref{pde-interface} then gives \eqref{full-bound}.

For $n=1$, Theorem~\ref{explicit-exponents} gives the conclusion.

For any exponent supplied by Theorem~\ref{full} or
Theorem~\ref{explicit-exponents}, the corresponding estimate for a
solution in $B_{2R}(x_0)$ is
\begin{equation}\label{scaled-interior-bound}
 \|u\|_{L^\infty(B_R(x_0))}
 +R^\alpha[u]_{C^\alpha(B_R(x_0))}
 +R^{s-n/2}[u]_{H^s(B_R(x_0))}
 \le C \|u\|_{L_s^1(x_0,R)},
\end{equation}
where the rescaled weighted norm is defined in
\eqref{rescaled-tail-definition}. Indeed, put $v(x)=u(x_0+Rx)$.
Its nonlinearity is $R^{2s}f$, and stability follows by changing
variables in \eqref{stability}. Directly,
\[
 \|v\|_{L_s^1(\R^n)}=\|u\|_{L_s^1(x_0,R)},\qquad
 [v]_{C^\alpha(B_1)}=R^\alpha[u]_{C^\alpha(B_R(x_0))},\qquad
 [v]_{H^s(B_1)}=R^{s-n/2}[u]_{H^s(B_R(x_0))}.
\]
Applying the normalized estimate to $v$ proves
\eqref{scaled-interior-bound}.
\end{proof}

\section{Weak solutions and optimality}\label{part:weak}\label{weak-section}

We now extend Theorem~\ref{full} to finite-energy weak solutions of
the Dirichlet problem. The solution class will also contain the
classical logarithmic singular solution used to prove optimality.

\subsection{The weak Dirichlet class}\label{weak-preliminaries}

Let $\Omega\subset\R^n$
be smooth and bounded, $0<s<1$, and put
\[
 Q_\Omega=\R^{2n}\setminus(\Omega^c\times\Omega^c),\qquad
 \mathcal E_\Omega(w)=\frac{c_{n,s}}2\iint_{Q_\Omega}
       \frac{|w(x)-w(y)|^2}{|x-y|^{n+2s}}\dd x\dd y.
\]
Write $\mathcal E_\Omega(v,w)$ for its polarization, so
$\mathcal E_\Omega(w)=\mathcal E_\Omega(w,w)$. Following \cite{SPapprox}, define
\[
 H^s_\Omega=\{w:w|_\Omega\in L^2(\Omega),\ \mathcal E_\Omega(w)<\infty\},\qquad
 H^s_{\Omega,0}=\{w\in H^s_\Omega:w=0\text{ a.e.\ on }\Omega^c\}.
\]
The norm on $H^s_\Omega$ is
$\|w\|_{H^s_\Omega}^2=\|w\|_{L^2(\Omega)}^2+\mathcal E_\Omega(w)$.
This space requires square integrability only in the domain, rather
than membership in $L^2(\R^n)$. Its energy includes the interactions
between interior and exterior points, but omits those between two
exterior points. In contrast, $H^s(\Omega)$ uses only the interaction
over $\Omega\times\Omega$. For $w=0$ on $\Omega^c$, the two energy
forms satisfy $\mathcal E_\Omega(w)=\mathcal E_s(w,w)$.

The exterior datum $g$ satisfies the condition used in
\cite[Definition~1.1]{SPapprox}: for some $\alpha_g$ and $C_g<\infty$,
\begin{equation}\label{we-eq:g}
 \max\{0,2s-1\}<\alpha_g<s,\qquad
 |g(x)-g(z)|\le C_g|x-z|^{\alpha_g}
 \quad(x\in\Omega^c,\ z\in\partial\Omega).
\end{equation}
This condition allows $g$ to be unbounded on $\Omega^c$.
Recall the boundary distance $d_\Omega$, the weighted norm, and
the space $L_s^1(\R^n)$
from Section~\ref{prelim-notation}.

An \emph{$L^1$-weak Dirichlet solution}, in the sense of
\cite[Definition~1.1]{SPapprox}, satisfies
\begin{equation}\label{we-eq:weakclass}
 u\in L^1(\Omega),\qquad u=g\text{ on }\Omega^c,\qquad
 f(u)\in L^1(\Omega,d_\Omega^s\dd x),
\end{equation}
and
\begin{equation}\label{we-eq:transposition}
 \int_\Omega u(-\Delta)^s\varphi
 +\int_{\Omega^c}g\,\mathcal N_s\varphi
 =\int_\Omega f(u)\varphi
\end{equation}
for the Dirichlet test functions $\varphi=0$ on $\Omega^c$
such that $\varphi$ and $(-\Delta)^s\varphi$ are bounded in $\Omega$.
Here our sign convention is
\[
 \mathcal N_s\varphi(x)=(-\Delta)^s\varphi(x)
 =-c_{n,s}\int_\Omega\frac{\varphi(y)}{|x-y|^{n+2s}}\dd y
 \qquad\text{for a.e. }x\in\Omega^c.
\]
In particular, $\mathcal N_s\varphi\le0$ for nonnegative $\varphi$.

\begin{lemma}[Dirichlet test functions]\label{we-lem:dirichlet-tests}
Every test function $\varphi$ in \eqref{we-eq:transposition} satisfies
$|\varphi(x)|\le C_\varphi d_\Omega(x)^s$ in $\Omega$ and belongs to
$H^s_{\Omega,0}$.
\end{lemma}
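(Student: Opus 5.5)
The plan is to treat $\varphi$ as the Dirichlet solution of its own equation, $\psi:=(-\Delta)^s\varphi\in L^\infty(\Omega)$. The boundary decay will come from comparison with a barrier, and the energy membership from testing against $\varphi$ itself.

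\emph{Step 1 (boundary decay).} Since $\Omega$ is smooth and bounded, there is a nonnegative barrier $\Psi\in H^s_{\Omega,0}$ with $(-\Delta)^s\Psi\ge 1$ in $\Omega$ and $\Psi\le C_\Omega d_\Omega^s$. One choice is the torsion function of a large ball containing $\Omega$, $(R^2-|x-x_0|^2)_+^s$ up to a constant. This only gives a bound by $C$, not by $d_\Omega^s$. At each boundary point I would therefore use instead the exterior-ball barrier built from the Kelvin transform of the explicit ball solution, i.e.\ the standard $C^{1,1}$-domain construction, together with the uniform interior and exterior ball condition.

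The functions $\pm\varphi-\|\psi\|_\infty\Psi$ vanish on $\Omega^c$ and have nonpositive fractional Laplacian in $\Omega$. The comparison principle for the fractional Laplacian recalled in the preliminaries, in the $L^1$-weak Dirichlet form of \cite[Propositions~C.1--C.2]{SPapprox}, then gives $|\varphi|\le\|\psi\|_\infty\Psi\le C_\varphi d_\Omega^s$. To invoke comparison one needs $\varphi$ to be a solution in that weak sense. This holds because $\varphi$ and $(-\Delta)^s\varphi$ are bounded and $\varphi=0$ outside $\Omega$, so pairing against smooth Dirichlet tests is justified by symmetry of the operator.

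\emph{Step 2 (energy).} For $w$ vanishing on $\Omega^c$ one has $\mathcal E_\Omega(w)=\mathcal E_s(w,w)$, so it suffices to show $[\varphi]_{H^s(\R^n)}^2=\int_\Omega\varphi\,\psi<\infty$. The right side is finite since both factors are bounded on the bounded set $\Omega$. The identity needs justification, and I would obtain it by approximation. Take the mollifications $\varphi_\epsilon=\rho_\epsilon*\varphi$. These are smooth with compact support, and $(-\Delta)^s\varphi_\epsilon=\rho_\epsilon*(-\Delta)^s\varphi$, where $(-\Delta)^s\varphi$ is bounded in $\Omega$. Outside $\Omega$ it equals $\mathcal N_s\varphi$, and Step~1 shows $|\mathcal N_s\varphi(x)|\le C d_\Omega(x)^{-s}$ near $\partial\Omega$, since $\int_\Omega d_\Omega(y)^s|x-y|^{-n-2s}\dd y\lesssim d_\Omega(x)^{-s}$. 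This is locally integrable and has decay $|x|^{-n-2s}$ at infinity. Hence $\int\varphi_\epsilon(-\Delta)^s\varphi_\epsilon$ stays bounded, using the $L^1\times L^\infty$ pairing with $\varphi$ bounded. By Fatou and Plancherel, $[\varphi]_{H^s(\R^n)}^2\le\liminf[\varphi_\epsilon]^2<\infty$. Boundedness gives $\varphi\in L^2(\Omega)$, so $\varphi\in H^s_{\Omega,0}$.

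\emph{Main obstacle.} The delicate point is Step~1: producing a barrier with the sharp $d_\Omega^s$ profile in a general smooth domain, and making sure comparison applies to $\varphi$, which is not a priori known to lie in the energy space. Step~2 is avoided for the comparison by using the $L^1$-weak comparison of \cite{SPapprox}, whose uniqueness class includes bounded functions with bounded fractional Laplacian. If that step proves awkward, I would instead cite the boundary regularity $\varphi/d_\Omega^s\in L^\infty$ for bounded right-hand sides in $C^{1,1}$ domains (Ros-Oton--Serra), which gives the same estimate directly.
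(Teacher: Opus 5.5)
Step~1 has a genuine gap: the reason you give for applying comparison to $\varphi$ is circular. The comparison results of \cite[Propositions~C.1--C.2]{SPapprox} are for $L^1$-weak solutions, that is, functions satisfying \eqref{we-eq:transposition} against \emph{every} test in the class (bounded, zero outside $\Omega$, bounded fractional Laplacian in $\Omega$). That class contains functions which are not smooth across $\partial\Omega$, such as the torsion function, which behaves like $d_\Omega^s$. Pairing against $C_c^\infty(\Omega)$ only gives the distributional equation $(-\Delta)^s\varphi=\psi$ in $\Omega$, and this says nothing about $\varphi$ at $\partial\Omega$.

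For two nonsmooth members $\varphi,\phi$ of the class, the identity $\int_\Omega\varphi(-\Delta)^s\phi=\int_\Omega\phi(-\Delta)^s\varphi$ is not a formal consequence of symmetry. A boundary contribution can only be excluded by knowing energy membership or boundary decay of one factor, and that is what the lemma asserts. So your claim that the uniqueness class of \cite{SPapprox} ``includes bounded functions with bounded fractional Laplacian'' is exactly the point that needs proof.

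Your fallback is circular too. The estimate of \cite{RSboundary} is proved for variational solutions, and knowing that $\varphi$ is one is the second half of the lemma. The paper applies \cite{RSboundary} only to the auxiliary solution $z$. For the decay of $\varphi$ it simply cites \cite[Definition~1.1, footnote~4]{SPapprox}, and it invokes the maximum principle only after $\varphi$ and $z$ are both known to vanish continuously at $\partial\Omega$.

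The missing idea is to put the boundary analysis on the test side. For bounded $\theta$, the variational solution $\phi=G\theta$ is $C^s(\R^n)$ with $|\phi|\le Cd_\Omega^s$ by \cite{RSboundary}. Test the distributional equation for $\varphi$ with mollifications of $\phi\chi_k$, where $\chi_k\in C_c^\infty(\Omega)$ cuts off at scale $1/k$. The product rule and the $C^s$ bound give $|(-\Delta)^s(\phi(1-\chi_k))|\lesssim k^s$ on $\Omega\cap\{d_\Omega<4/k\}$. The separated kernel gives $\lesssim k^{-1-s}d_\Omega^{-1-2s}$ deeper in $\Omega$. Hence the boundary-layer error is $O(\|\varphi\|_\infty k^{s-1})$, and it uses only the boundedness of $\varphi$. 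This yields $\int_\Omega\varphi\theta=\int_\Omega\psi\,G\theta=\int_\Omega(G\psi)\theta$ for every bounded $\theta$, so $\varphi=G\psi$, and both assertions then follow from \cite{RSboundary}. The same duality argument proves the maximum principle your barrier comparison needs, so the barrier becomes unnecessary.

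Granting the decay, your Step~2 is a valid alternative to the paper's identification of $\varphi$ with $z$. It is more elementary, since it needs neither the auxiliary problem nor boundary regularity theory. It does omit one check. You identify $(-\Delta)^s\varphi$ only on $\Omega$ and on $\R^n\setminus\overline\Omega$, so before bounding $\int\varphi_\epsilon(-\Delta)^s\varphi_\epsilon$ you must exclude a distributional part supported on $\partial\Omega$. This follows from $|\varphi|\le Cd_\Omega^s$: test against cutoffs concentrated in a $\delta$-neighborhood of $\partial\Omega$, whose pairing with $\varphi$ is $O(\delta^{1-s})$.
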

\begin{proof}
For the bounded zero-exterior test class in
\eqref{we-eq:transposition}, the boundary regularity stated in
\cite[Definition~1.1, footnote~4]{SPapprox} gives
$|\varphi(x)|\le C_\varphi d_\Omega(x)^s$.
We establish energy membership by comparison with a variational
solution. Let $z\in H^s_{\Omega,0}$ solve
\[
 (-\Delta)^sz=(-\Delta)^s\varphi\quad\text{in }\Omega,
 \qquad z=0\quad\text{on }\Omega^c.
\]
Existence follows from the coercive Dirichlet form, since the source
is bounded. Proposition~1.1 of \cite{RSboundary}, applied to this
variational solution, gives $z\in C^s(\R^n)$ and
$|z|\le Cd_\Omega^s$. The bounded difference $\varphi-z$ is
distributionally harmonic, hence has a smooth interior representative
by Lemma~\ref{harmonic-weyl}. Both boundary estimates make this
representative tend to zero at every boundary point. The maximum
principle gives $\varphi=z\in H^s_{\Omega,0}$.
\end{proof}

Stability means $f'(u)\in L^1_{\mathrm{loc}}(\Omega)$ and
\begin{equation}\label{we-eq:stability}
 \int_\Omega f'(u)\xi^2\le\mathcal E_\Omega(\xi)
 \qquad(\xi\in C_c^\infty(\Omega)).
\end{equation}
Throughout this section we additionally require $u\in H^s_\Omega$.
Thus we consider $L^1$-weak solutions with finite energy. We keep
this terminology distinct from \emph{energy solution} in
\cite[Definition~1.2]{SPapprox}, which also requires
$f(u)\in L^2(\Omega)$.
Membership in $L_s^1(\R^n)$ is automatic here: $u\in L^2(\Omega)$,
and \eqref{we-eq:g} gives $|g(x)|\le C(1+|x|^{\alpha_g})$ outside,
with $\alpha_g<2s$. We retain the tail norm in the statements because
it is the quantity controlling the estimates.

Theorem~\ref{full} gives a uniform interior estimate for every
nonnegative convex $C^1_{\mathrm{loc}}(\R)$ reaction, including positive scalar multiples
arising under dilation. To apply it to a weak solution, we first
construct bounded stable approximations converging to $v\le u$.
The uniform estimate makes $v$ locally bounded. Stability and
convexity then imply a linear equation for $u-v$, which transfers
local boundedness to $u$.
We use the following Green estimates to construct a bounded harmonic
extension of the exterior datum and to prove the linear boundedness
statement needed for this transfer.

For a smooth bounded domain, let $z$ be the zero-exterior solution
of $Lz=g$ given by the Green operator. If $n>2s$, then
\begin{equation}\label{prelim-linear-Lp}
 \|z\|_{L^q(\Omega)}\le C\|g\|_{L^p(\Omega)},\qquad
 1<p<\frac n{2s},\qquad \frac1q=\frac1p-\frac{2s}n.
\end{equation}
If $p>n/(2s)$, the solution is bounded. At equality, applying
\eqref{prelim-linear-Lp} with slightly smaller $p$ yields every
finite $q$.

When $n\le2s$, $g\in L^p(\Omega)$ with $p>1$ already
gives boundedness. These are \cite[Proposition~C.5 and
Remark~C.6]{SPapprox}; the subcritical estimate follows by comparison
with $I_{2s}(|g|\mathbf1_\Omega)$ and the Riesz-potential embedding.

\begin{lemma}[Bounded harmonic extension]\label{we-lem:harmonic-lift}
Let $g$ satisfy \eqref{we-eq:g}, and suppose some $w_0\in H^s_\Omega$
equals $g$ on $\Omega^c$. Minimizing $\mathcal E_\Omega$ in
$w_0+H^s_{\Omega,0}$ gives an $s$-harmonic function $h$ with
\[
 h=g\text{ on }\Omega^c,\qquad
 h\in H^s_\Omega\cap L^\infty(\Omega)\cap L_s^1(\R^n).
\]
\end{lemma}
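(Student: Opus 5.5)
The plan has four steps: the direct method for existence, the Euler--Lagrange equation for $s$-harmonicity, a maximum principle for the interior bound, and a comparison with the exterior datum for the tail.

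\emph{Existence and uniqueness.} The affine class $w_0+H^s_{\Omega,0}$ is nonempty by hypothesis. On $H^s_{\Omega,0}$ the form $\mathcal E_\Omega$ coincides with $[\cdot]_{H^s(\R^n)}^2$. Since $\Omega$ is bounded, the fractional Poincaré inequality for zero-exterior functions makes it coercive there. Writing $w=w_0+\phi$, the functional
\[
 \mathcal E_\Omega(w_0+\phi)=\mathcal E_\Omega(w_0)+2\mathcal E_\Omega(w_0,\phi)+\mathcal E_\Omega(\phi)
\]
is a coercive, strictly convex quadratic in $\phi$. A unique minimizer $h$ therefore exists by the direct method, or equivalently by Riesz representation for the linear term.

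\emph{Harmonicity.} The Euler--Lagrange equation is $\mathcal E_\Omega(h,\phi)=0$ for all $\phi\in H^s_{\Omega,0}$. Take $\phi\in C_c^\infty(\Omega)$. The exterior--exterior interactions omitted from $Q_\Omega$ vanish because $\phi=0$ there, so $\mathcal E_\Omega(h,\phi)$ equals the full bilinear pairing of $h$ against $\phi$. Since $h\in L^2(\Omega)$ and $h=g$ has the tail bound $|g(x)|\le C(1+|x|^{\alpha_g})$ with $\alpha_g<2s$, we have $h\in L_s^1(\R^n)$. The pairing then equals $\int h\,(-\Delta)^s\phi$, which gives $(-\Delta)^sh=0$ distributionally in $\Omega$. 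Lemma~\ref{harmonic-weyl} then provides a smooth interior representative.

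\emph{Boundedness in $\Omega$.} This is the main step, because $g$ may be unbounded on $\Omega^c$. I would use the Poisson representation together with comparison. By \eqref{we-eq:g}, $g$ is continuous up to $\partial\Omega$ from outside, so $g$ is bounded on a neighborhood $N=\{x\in\Omega^c:d(x,\Omega)<1\}$, with bound $\sup_{\partial\Omega}|g|+C_g$.

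I then split $g=g_1+g_2$, where $g_1=g\mathbf 1_N$ and $g_2$ is supported at distance at least $1$ from $\Omega$. Let $h_i$ be the minimizers for the data $g_i$, so that $h=h_1+h_2$ by linearity and uniqueness; I would also need to check that $g_1$ admits a finite-energy extension, for instance by cutting off $w_0$ smoothly. For $h_1$, the truncation argument shows $|h_1|\le\|g_1\|_\infty$: replace $h_1$ by its truncation at level $\pm\|g_1\|_\infty$, which keeps the exterior data unchanged and does not increase $\mathcal E_\Omega$, and then invoke uniqueness of the minimizer. For $h_2$, write $h_2=g_2+\zeta$. Here $\zeta\in H^s_{\Omega,0}$ solves $(-\Delta)^s\zeta=-(-\Delta)^sg_2$ in $\Omega$, and the right side is bounded: it equals $c_{n,s}\int g_2(y)|x-y|^{-n-2s}\dd y$ with separated support, which is bounded by $C\|g\|_{L_s^1}$. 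The Green estimate quoted above (bounded source gives a bounded solution) then yields $\zeta\in L^\infty$, so $h_2=\zeta$ is bounded in $\Omega$.

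\emph{Tail.} Finally, $h\in L^2(\Omega)$, and $h=g$ has at most $|x|^{\alpha_g}$ growth outside. Together these give $h\in L_s^1(\R^n)$, completing the proof.
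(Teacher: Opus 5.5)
Your proof is correct and follows essentially the paper's route. The paper also removes the far exterior data: it sets $\bar h=\chi h$ with a smooth cutoff $\chi$, turns the removed part into the bounded interior source $b$ through the separated kernel, and bounds the remaining problem (bounded data, bounded source) in one step by comparison with $\pm(\|\chi g\|_{L^\infty(\Omega^c)}+\|b\|_\infty\tau)$, where $\tau$ is the torsion function. You instead split that last step into a truncation/uniqueness argument for $h_1$ and the Green $L^\infty$ bound for $\zeta$. (With your sharp cutoff $g\mathbf 1_N$, the extension $w_0\mathbf 1_{\Omega\cup N}$ already lies in $H^s_\Omega$: exterior--exterior interactions are omitted and the others are either unchanged or separated. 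So no smooth cutoff of $w_0$ is needed, and a smooth one would produce the datum $\chi g$ rather than $g\mathbf 1_N$.)
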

\begin{proof}
The direct method and the fractional Poincar\'e inequality give the
minimizer and its variational equation. This also gives the
$L^1$-weak formulation, as in \cite[proof of Proposition~C.1]{SPapprox}.

We verify boundedness directly. Condition \eqref{we-eq:g} implies
that $g$ is bounded on bounded subsets of $\Omega^c$ and
$|g(x)|\le C(1+|x|^{\alpha_g})$ there, with $\alpha_g<s$.
Take $\chi\in C_c^\infty(\R^n)$, with $0\le\chi\le1$, equal to one on a neighborhood
of $\overline\Omega$. Then $\bar h=\chi h$ has bounded exterior
data $\chi g$ and satisfies
\[
 (-\Delta)^s\bar h=b(x)
 =c_{n,s}\int_{\Omega^c}
       \frac{(1-\chi(y))g(y)}{|x-y|^{n+2s}}\dd y
 \quad\text{in }\Omega.
\]
The source $b$ is bounded by separation from $\Omega$ and the weighted
tail of $g$. The same separation and $\alpha_g<s$ show
$\bar h\in H^s_\Omega$, since
$\int_{\Omega^c}|g(y)|^2(1+|y|^{n+2s})^{-1}\dd y<\infty$.

Let $\tau$ be the bounded zero-exterior solution of
$(-\Delta)^s\tau=1$ in $\Omega$; its boundedness follows from
\eqref{prelim-linear-Lp} and its stated boundedness alternatives,
also for $n\le2s$. Comparison with
$\pm(\|\chi g\|_{L^\infty(\Omega^c)}+\|b\|_\infty\tau)$ gives
\[
 |\bar h|\le\|\chi g\|_{L^\infty(\Omega^c)}
                    +\|b\|_\infty\tau\quad\text{in }\Omega.
\]
Thus $h=\bar h$ is bounded in $\Omega$. Its interior boundedness
and the growth of $g$ also give $h\in L_s^1(\R^n)$.
\end{proof}

\begin{lemma}[Linear boundedness]\label{we-lem:linear-boundedness}
The following qualitative assertions hold.

\textup{(i)} Let $h\in H^s_\Omega\cap L^\infty(\Omega)$ be
$s$-harmonic in $\Omega$. Suppose $U-h\in H^s_{\Omega,0}$ and $U$
is an $L^1$-weak solution of $LU=G$ in $\Omega$, where
$G\in L^2(\Omega)$ and $|G|\le C(1+|U|)$ almost everywhere.
Then $U\in L^\infty(\Omega)$, with a bound that may depend on
$C$, $h$, and $\|U\|_{L^2(\Omega)}$.

\textup{(ii)} Let $w\in L_s^1(\R^n)\cap L^2_{\mathrm{loc}}(\Omega)$
satisfy $Lw=a(x)w$ distributionally in $\Omega$, with
$a\in L^\infty_{\mathrm{loc}}(\Omega)$. Then
$w\in L^\infty_{\mathrm{loc}}(\Omega)$.
\end{lemma}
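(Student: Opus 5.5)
For (i) we plan a Moser/Stampacchia iteration on the zero-exterior part $\zeta=U-h\in H^s_{\Omega,0}$, which solves $L\zeta=G$ in $\Omega$ with $\zeta=0$ outside. The function $h$ is bounded in $\Omega$ by hypothesis, so $|G|\le C(1+|h|+|\zeta|)\le C'(1+|\zeta|)$. The first point is that the $L^1$-weak formulation upgrades to a variational one. Since $U-h\in H^s_{\Omega,0}$ and $G\in L^2(\Omega)$, the uniqueness and comparison results \cite[Propositions~C.1--C.2]{SPapprox} identify $\zeta$ with the variational solution of $\mathcal E_\Omega(\zeta,\varphi)=\int_\Omega G\varphi$ for $\varphi\in H^s_{\Omega,0}$. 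From there we use the Green-operator estimate \eqref{prelim-linear-Lp} as a bootstrap. Writing $\zeta=\mathcal G[G]$ and using $|G|\le C'(1+|\zeta|)$ with $\Omega$ bounded, $\zeta\in L^p$ implies $G\in L^p$, and hence $\zeta\in L^q$ with $1/q=1/p-2s/n$. Start at $p=2$, where $\zeta\in L^2$ because $U\in L^2(\Omega)$ and $h$ is bounded. After finitely many steps $p$ exceeds $n/(2s)$; the equality case is avoided by lowering $p$ slightly. The boundedness alternative then gives $\zeta\in L^\infty$, and so $U=\zeta+h$ is bounded. For $n\le2s$ a single step with $p=2>1$ suffices. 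To bound the solution we use comparison with $\mathcal G[|G|]$, since $|\zeta|\le\mathcal G[|G|]$ by positivity of the Green function. The bound depends only on $C$, $\|h\|_{L^\infty(\Omega)}$ and $\|U\|_{L^2}$.

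For (ii) we localize and apply the same bootstrap. Fix $B_{2r}(x_0)\Subset\Omega$ and a cutoff $\chi$ equal to one on $B_{3r/2}(x_0)$ with support in $B_{7r/4}(x_0)$. Put $\bar w=\chi w$. As in \eqref{tail-error}, in $B_r(x_0)$ we have
\[
 L\bar w=a\bar w+\mathfrak k,\qquad \|\mathfrak k\|_{L^\infty(B_r(x_0))}\le C\|w\|_{L_s^1(\R^n)},
\]
because $(1-\chi)w$ is supported away from the ball. Take a second cutoff $\eta$ supported in $B_r(x_0)$ and let $G=\eta(a\bar w+\mathfrak k)$. The Riesz potential $v=I_{2s}G$ satisfies $Lv=G$. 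The difference $\bar w-v$ is $s$-harmonic where $\eta=1$, so by Lemma~\ref{harmonic-weyl} it is smooth there, with bounds in terms of its $L_s^1$ norm. The $L_s^1$ norm of $v$ is finite by the Fubini bound in Lemma~\ref{prelim-morrey-potential}. By the Hardy--Littlewood--Sobolev inequality, $G\in L^p$ gives $v\in L^q$ with $1/q=1/p-2s/n$, while $p>n/(2s)$ gives $v\in L^\infty$. Thus the integrability of $w$ on a smaller ball improves from $L^p$ to $L^q$. Starting from $L^2_{\mathrm{loc}}$ and shrinking the balls finitely many times reaches $L^\infty$.

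When $n\le2s$, which means $n=1$, we lift $w$ constantly in a second variable via Lemma~\ref{prelim-lifting} and then restrict back. Alternatively, the one-step boundedness alternative can be used directly.

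We expect the main obstacle to be the justification in (ii). We must check that the distributional equation, paired against the singular kernel, legitimately gives $\bar w=v+(\text{$s$-harmonic})$. This requires $a\bar w\in L^1_{\mathrm{loc}}$, which holds since $w\in L^2_{\mathrm{loc}}$. It also requires the mollification argument of Lemma~\ref{harmonic-weyl} to apply to the difference, which lies only in $L_s^1$. In (i) the delicate point is the identification of the $L^1$-weak solution with the energy solution, so that the Green representation holds. We would get this from uniqueness in the weak class.
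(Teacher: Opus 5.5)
Your proposal is correct and follows essentially the paper's argument. In (i) you identify $U-h$ with the variational zero-exterior solution and bootstrap its integrability with the Green estimate \eqref{prelim-linear-Lp}; in (ii) you split $w$ into a particular solution plus an $s$-harmonic remainder controlled by Lemma~\ref{harmonic-weyl}.

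The only difference is the choice of particular solution in (ii). You take the Riesz potential of a cut-off source, which means $n=1\le2s$ needs the lifting; the extra cutoff $\chi$ and tail $\mathfrak k$ are harmless but unnecessary, since $w-I_{2s}(\eta aw)$ is already $s$-harmonic where $\eta=1$. The paper instead uses the zero-exterior Green solution in the ball, whose boundedness alternative handles $n\le2s$ directly.
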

\begin{proof}
For \textup{(i)}, set $z=U-h$. Uniqueness for the linear
$L^1$-weak Dirichlet problem \cite[Proposition~C.1]{SPapprox}
identifies $z$ with the variational zero-exterior solution with source
$G$. Since $h$ is bounded in $\Omega$,
$|G|\le C_h(1+|z|)$.

For \textup{(ii)}, choose nested balls compactly contained in
$\Omega$. On a current ball $B$, suppose $w\in L^p(B)$, initially
with $p=2$. Let $z_B$ be the zero-exterior Green solution in $B$
with source $aw\in L^p(B)$. The remainder $h_B=w-z_B$ belongs to
$L_s^1(\R^n)$ and is distributionally $s$-harmonic in $B$.
Lemma~\ref{harmonic-weyl} makes it bounded on a smaller ball
$B'\Subset B$.

Both assertions now use the same increase in integrability. The
source inherits each available $L^p$ bound from the unknown:
from $|G|\le C_h(1+|z|)$ in \textup{(i)}, and from the boundedness
of $a$ on the current ball in \textup{(ii)}. If
$1<p<n/(2s)$, \eqref{prelim-linear-Lp} gives the next exponent
\[
 \frac1q=\frac1p-\frac{2s}{n}.
\]
This is an $L^q(\Omega)$ bound for $z$ in \textup{(i)} and an
$L^q(B')$ bound for $w=z_B+h_B$ in \textup{(ii)}.
If $p>n/(2s)$, the boundedness alternative gives the desired bound.
At $p=n/(2s)$, use a slightly smaller exponent to reach a finite
$q>n/(2s)$, and apply that alternative at the next step.

For $n>2s$, starting at $p_0=2$, the subcritical exponents satisfy
\[
 \frac1{p_{j+1}}=\frac1{p_j}-\frac{2s}{n}.
\]
Thus the threshold is reached or crossed in finitely many steps.
For \textup{(ii)}, take enough nested balls for these steps and
finish with a finite cover of any compact subset. When $n\le2s$,
the first boundedness alternative already applies with $p=2$.
\end{proof}

We next use stability and convexity to obtain a linear equation
for the difference of two ordered solutions.

\begin{lemma}\label{we-lem:ordered-boundedness}
Let $0<s<1$, let $\Omega\subset\R^n$ be smooth and bounded, and
let $g$ satisfy \eqref{we-eq:g}. Suppose that $f:\R\to[0,\infty)$
is convex and belongs to $C^1_{\mathrm{loc}}(\R)$, and that
$u,v\in H^s_\Omega\cap L_s^1(\R^n)$
are $L^1$-weak solutions of $(-\Delta)^sz=f(z)$ in $\Omega$,
with $z=g$ on $\Omega^c$, for $z=u,v$.
Assume that $u$ is stable and that
\[
 m\le v\le u\quad\text{a.e. in }\Omega
\]
for some $m\in\R$. Then, with $w=u-v$,
\[
 \mathcal E_\Omega(w)
 =\int_\Omega(f(u)-f(v))w
 =\int_\Omega f'(u)w^2,
\]
and $f$ is affine on $[v(x),u(x)]$ for almost every $x$ with
$u(x)>v(x)$. In particular,
\begin{equation}\label{we-eq:linearized-difference}
 (-\Delta)^s w=f'(v)w\quad\text{in }\mathcal D'(\Omega),
 \qquad w=0\quad\text{on }\Omega^c.
\end{equation}
Consequently, local boundedness of $v$ in $\Omega$
implies local boundedness of $u$, and boundedness of $v$ on all of
$\Omega$ implies boundedness of $u$ on all of $\Omega$.
If $f$ is strictly convex, then $u=v$.
\end{lemma}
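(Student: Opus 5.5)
The plan is to test the difference equation with $w=u-v\ge0$ itself, and to compare the resulting identity with the stability inequality applied to $w$. Since $u-v\in H^s_{\Omega,0}$ (both have exterior datum $g$ and finite $\mathcal E_\Omega$), $w$ is a zero-exterior energy function. I would first justify the weak formulation of $(-\Delta)^sw=f(u)-f(v)$ tested against $w$:
\[
 \mathcal E_\Omega(w)=\int_\Omega(f(u)-f(v))w .
\]
The right side is a priori only in $[0,\infty]$, because $f(u)-f(v)\ge0$ by monotonicity issues need care. Convexity with $f\ge0$ and the lower bound $v\ge m$ help here: $f$ is bounded below on $[m,\infty)$, and the difference quotient $(f(u)-f(v))/(u-v)$ is bounded below by $f'_-(m)$-type constants. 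Concretely, I would test the transposition identity \eqref{we-eq:transposition} with truncations $T_k(w)$ mollified into admissible Dirichlet tests, or better, use the variational characterization (uniqueness in \cite[Proposition~C.1]{SPapprox}) to identify $w$ with the energy solution. Then I would pass $k\to\infty$ by Fatou/monotone convergence on the part where $f(u)-f(v)\ge0$, with the negative part controlled by the Lipschitz bound of $f$ on $[m,m+1]$ times $w$.

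Next I would apply stability \eqref{we-eq:stability} to $w$. This is admissible by density of $C_c^\infty(\Omega)$ in $H^s_{\Omega,0}$ for smooth $\Omega$, together with Fatou on the left since $f'(u)w^2$ may be only bounded below. This gives
\[
 \int_\Omega f'(u)w^2\le\mathcal E_\Omega(w).
\]
Convexity gives pointwise $f(u)-f(v)\le f'(u)(u-v)$, so
\[
 (f(u)-f(v))w\le f'(u)w^2 .
\]
Chaining these yields equality throughout, and hence the pointwise equality $f(u)-f(v)=f'(u)(u-v)$ almost everywhere on $\{w>0\}$. For a convex $C^1$ function, equality in the tangent inequality at $u$ forces $f$ to be affine on $[v,u]$. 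Hence $f'(v)=f'(u)$ and $f(u)-f(v)=f'(v)w$ there, while on $\{w=0\}$ both sides vanish. This gives \eqref{we-eq:linearized-difference} distributionally. Strict convexity rules out nondegenerate affine intervals, so $w=0$.

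For the boundedness transfer, if $v\in L^\infty_{\rm loc}$ then $a=f'(v)\in L^\infty_{\rm loc}$ since $f\in C^1_{\rm loc}$. Also $w\in L_s^1\cap L^2_{\rm loc}$, because $u,v\in H^s_\Omega$. Lemma~\ref{we-lem:linear-boundedness}(ii) then gives $w\in L^\infty_{\rm loc}$, so $u=v+w$ is locally bounded. If $v$ is globally bounded, $a$ is bounded on $\Omega$, and $U=u$ satisfies part (i) with $h$ from Lemma~\ref{we-lem:harmonic-lift} and source $G=f(v)+a w$, where $|G|\le C(1+|u|)$. This gives global boundedness.

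The main obstacle I expect is the first step: justifying that $w$ (unbounded, with $f(u)$ only in $L^1(d_\Omega^s)$) is an admissible test and that the energy identity holds rather than just an inequality. I would handle it via truncations $T_k(w)=\min\{w,k\}$, where the Dirichlet tests of Lemma~\ref{we-lem:dirichlet-tests} approximate $T_k(w)$. Actually only the inequality $\mathcal E_\Omega(w)\le\int(f(u)-f(v))w$ is needed for the chain, and this direction should follow from $\mathcal E_\Omega(w,T_k w)\ge\mathcal E_\Omega(T_kw)$ and monotone limits.
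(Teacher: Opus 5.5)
Your outline is correct. Its second half matches the paper: stability is extended to $H^s_{\Omega,0}$ by density and Fatou, then come the convexity chain, the affine-interval argument, the linearized equation, and the boundedness transfer. The real difference is how you obtain $\mathcal E_\Omega(w)=\int_\Omega Gw$, where $G=f(u)-f(v)$. Note that $G$ has no sign; one only has $f'(m)w\le G\le f'(u)w$.

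The paper never tests the $L^1$-weak equations with a nonsmooth function. It proceeds as follows:
\begin{itemize}
\item it sets $\mu=G+\lambda w\ge0$ with $\lambda=\max\{0,-f'(m)\}$ and solves $((-\Delta)^s+\lambda)z_j=\min\{\mu,j\}$ variationally with zero exterior data;
\item the $L^1$-weak comparison principle gives $0\le z_j\le z_{j+1}\le w$, and testing gives $\mathcal E_\Omega(z_j)\le\int\mu w$;
\item uniqueness identifies the limit with $w$, and testing the bounded-source equations for $z_j$ against $w$ gives the identity.
\end{itemize}
You truncate the unknown instead and use $\mathcal E_\Omega(w,T_kw)\ge\mathcal E_\Omega(T_kw)$. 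This needs no comparison or uniqueness theory, and you are right that only $\mathcal E_\Omega(w)\le\int Gw$ is needed for the chain. For global boundedness you apply part~(i) to $U=u$ with the harmonic lift; the paper applies it to $U=w$ with $h=0$, which is simpler but equivalent.

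What your sketch does not yet supply is $\mathcal E_\Omega(w,T_kw)=\int_\Omega G\,T_kw$, and neither of your suggestions gives it as stated.
\begin{itemize}
\item Mollifying $T_kw$ does not produce admissible tests with a domination that passes $\int G\varphi_j\to\int G\,T_kw$. The function $G$ is only in $L^1(\Omega,d_\Omega^s\dd x)$, and $T_kw$ need not be $O(d_\Omega^s)$ near $\partial\Omega$.
\item Identifying $w$ with the variational solution of source $G$ presupposes that $G$ lies in the dual of $H^s_{\Omega,0}$. That is essentially what the paper's truncated-source construction proves, so this route is circular.
\end{itemize}

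A repair within your route goes in three steps.
\begin{itemize}
\item Since $w\in H^s(\R^n)$, the identity holds for $\varphi\in C_c^\infty(\Omega)$ by Plancherel. By mollification it then holds for bounded energy functions compactly supported in $\Omega$, where $G\in L^1_{\rm loc}$.
\item Take $\varphi_j\in C_c^\infty(\Omega)$ converging to $T_kw$ in $H^s_{\Omega,0}$, and set $\eta_j=\min\{T_kw,\varphi_j^+\}$. These are bounded and compactly supported, converge weakly to $T_kw$, and a subsequence converges a.e.
\item The domination is $|G|\eta_j\le|G|w$. This is integrable once the stability step has given $f'(u)w^2\in L^1$, because $f'(m)w\le G\le f'(u)w$. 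So perform the stability extension before this limit.
\end{itemize}
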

\begin{proof}
Step~1. Set $w=u-v\ge0$. Then $w\in H^s_{\Omega,0}$. Since $f'(u)\ge f'(m)$,
the negative part of $f'(u)$ is bounded. Approximation of
$\xi\in H^s_{\Omega,0}$ by smooth compactly supported functions and Fatou's
lemma extend \eqref{we-eq:stability} to $H^s_{\Omega,0}$; in particular,
\begin{equation}\label{we-eq:stabilityextended}
 f'(u)w^2\in L^1(\Omega),\qquad
 \int_\Omega f'(u)w^2\le\mathcal E_\Omega(w).
\end{equation}
This density argument is also recorded in
\cite[Remark~2.1]{SPapprox}.

Convexity also gives
\begin{equation}\label{we-eq:integrability}
 f'(m)w^2
 \le (f(u)-f(v))w
 \le f'(u)w^2.
\end{equation}
The middle expression is therefore integrable.

Step~2. To test the difference equation with $w$, put
\[
 G=f(u)-f(v),\qquad \lambda=\max\{0,-f'(m)\},\qquad
 \mu=G+\lambda w\ge0.
\]
Then $\mu\in L^1(\Omega,d_\Omega^s\dd x)$
and $\mu w\in L^1(\Omega)$ by \eqref{we-eq:integrability}.
Let $\mu_j=\min\{\mu,j\}$ and solve, variationally,
\[
 ((-\Delta)^s+\lambda)z_j=\mu_j\quad\text{in }\Omega,
 \qquad z_j=0\quad\text{on }\Omega^c.
\]
The $L^1$-weak comparison principle
\cite[Proposition~C.2]{SPapprox}, with the nonnegative zeroth-order
coefficient $\lambda$, gives $0\le z_j\le z_{j+1}\le w$. Testing with $z_j$ gives
\[
 \mathcal E_\Omega(z_j)+\lambda\|z_j\|_2^2
   =\int_\Omega\mu_jz_j\le\int_\Omega\mu w.
\]
Thus $z_j$ is bounded in $H^s_{\Omega,0}$, converges in $L^2$ to its
monotone limit, and converges weakly in $H^s_{\Omega,0}$.
Passing to the $L^1$-weak equation, with the weighted
dominated convergence for $\mu_j$, identifies this limit as
$w$ by uniqueness for $(-\Delta)^s+\lambda$: apply the same
comparison principle to both signs of the difference of two solutions
with the same source and zero exterior data.

Now test the bounded-source equation for $z_j$ with the
fixed energy function $w$. Weak convergence on the left and
monotone convergence on the right give
\[
 \mathcal E_\Omega(w)+\lambda\|w\|_2^2=\int_\Omega\mu w,
 \quad\text{hence}\quad \mathcal E_\Omega(w)=\int_\Omega G w.
\]

Step~3. Consequently,
\begin{equation}\label{we-eq:equality}
 \mathcal E_\Omega(w)=\int_\Omega(f(u)-f(v))w
       \le\int_\Omega f'(u)w^2\le\mathcal E_\Omega(w).
\end{equation}
Both inequalities must therefore be equalities. At almost every point where
$w(x)>0$,
\[
 f(u(x))-f(v(x))=f'(u(x))(u(x)-v(x)).
\]
For a differentiable convex function, this equality implies that
$f$ is affine on $[v(x),u(x)]$: the nonnegative continuous
function $f'(u(x))-f'(t)$ has integral zero on that interval.

Step~4. On every affine interval found above, continuity of $f'$ gives
$f'(u)=f'(v)$. Where $w=0$, both source differences vanish.
Thus $f(u)-f(v)=f'(v)w$ almost everywhere. Subtracting the original
transposition identities cancels the common exterior term and gives
\[
 \int_\Omega w(-\Delta)^s\varphi
       =\int_\Omega f'(v)w\varphi
\]
for every Dirichlet test in \eqref{we-eq:transposition}. In particular it proves
\eqref{we-eq:linearized-difference}.
If $v$ is locally bounded, the coefficient $f'(v)$ is locally
bounded, and Lemma~\ref{we-lem:linear-boundedness}\textup{(ii)} gives local boundedness
of $w$, hence of $u=v+w$.
If $v$ is bounded on all of $\Omega$, then $f'(v)$ is bounded
there and $f'(v)w\in L^2(\Omega)$. Apply
Lemma~\ref{we-lem:linear-boundedness}\textup{(i)} to $U=w$ and $h=0$ to obtain
global boundedness of $w$ and then of $u$.

Finally, strict convexity excludes a nontrivial affine interval,
and hence gives $w=0$ almost everywhere.
\end{proof}

\subsection{Approximation and interior regularity}\label{weak-approximation}

We combine bounded stable approximations with the preceding comparison lemma
to extend the interior estimate to finite-energy weak solutions.

\begin{theorem}[Interior regularity of finite-energy weak solutions]\label{we-thm:upgrade}
Let $\Omega\subset\R^n$ be a smooth bounded domain, let $0<s<1$,
and assume either $n=1$ or $n\ge2$ and
$A_{n,s}>H_{n,s}$.
Let $f:\R\to[0,\infty)$ be convex and $C^1_{\mathrm{loc}}(\R)$,
let $g$ satisfy
\eqref{we-eq:g}, and let $u\in H^s_\Omega\cap L_s^1(\R^n)$ be a stable
$L^1$-weak solution of
\[
 (-\Delta)^su=f(u)\quad\text{in }\Omega,\qquad u=g\quad\text{on }\Omega^c.
\]
Then $u$ is locally bounded and has a $C^\alpha_{\mathrm{loc}}(\Omega)$
representative. For every $B_{2\rho}(a)\subset\Omega$, setting
$u_{a,\rho}(x)=u(a+\rho x)$ gives
\begin{equation}\label{we-eq:scaled}
 \|u_{a,\rho}\|_{C^\alpha(B_1)}
       +[u_{a,\rho}]_{H^s(B_1)}
 \le C \|u_{a,\rho}\|_{L_s^1(\R^n)},
\end{equation}
with the constants $\alpha,C$ from Theorem~\ref{full}, depending
only on $n,s$.
Consequently, for every $\Omega'\Subset\Omega$,
\begin{equation}\label{we-eq:interior}
 \|u\|_{C^\alpha(\Omega')}+[u]_{H^s(\Omega')}
 \le C_{\Omega,\Omega',n,s}\|u\|_{L_s^1(\R^n)}.
\end{equation}
\end{theorem}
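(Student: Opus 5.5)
The argument is the one announced in the introduction. We build bounded stable approximations $u_k$ lying below $u$ and pass to a monotone limit $v\le u$. Theorem~\ref{full} (in its scaled form \eqref{scaled-interior-bound}) makes $v$ locally bounded. Lemma~\ref{we-lem:ordered-boundedness} then transfers local boundedness to $u$, after which Theorem~\ref{full} applies to $u$ itself.

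\textbf{Construction of the approximations.} Let $h$ be the bounded $s$-harmonic extension of $g$ from Lemma~\ref{we-lem:harmonic-lift}; note $u$ itself is an admissible $w_0$ since $u\in H^s_\Omega$. For a truncation parameter $k$ put $f_k(t)=f(t)$ for $t\le k$ and $f_k(t)=f(k)+f'(k)(t-k)$ for $t>k$. Then $f_k$ is nonnegative, convex, $C^1$, globally Lipschitz above, with $f_k\le f$ and $f_k\uparrow f$. Starting from $z_0=h$, which is a subsolution because $f_k\ge0$, we iterate monotonically, $Lz_{i+1}=f_k(z_i)$ with exterior datum $g$, using the $L^1$-weak comparison principle \cite[Proposition~C.2]{SPapprox}. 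Since $u$ is a supersolution for $f_k$, we get $h\le z_i\le u$, and the iterates increase to the minimal solution $u_k$ of the $f_k$ problem between $h$ and $u$. Because $f_k$ has linear growth above and $u_k\ge h\ge -\|h\|_\infty$, we have $|f_k(u_k)|\le C(1+|u_k|)$, so Lemma~\ref{we-lem:linear-boundedness}(i) gives $u_k\in L^\infty(\Omega)$.

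\textbf{Stability of the approximations.} This is the point needing care. Since $u_k\le u$ and $f'$ is nondecreasing, $f_k'(u_k)\le f'(u_k)\le f'(u)$ pointwise. Stability of $u$ for $f$ therefore yields $\int f_k'(u_k)\xi^2\le\mathcal E_\Omega(\xi)$. Here the indicator of $\{u_k>k\}$ only lowers the derivative, since $f_k'=f'(k)\le f'(t)$ there. So each $u_k$ is a bounded stable solution. The functions $u_k$ increase in $k$ by minimality: $u_{k+1}$ is a supersolution for $f_k$ lying above $h$. Their energies are bounded by testing with $u_k-h$ and using $u_k\le u$ together with $f_k(u_k)(u_k-h)\le f(u)\,|u-h|+C$. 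We need $f(u)(u-h)$ integrable, which I expect to follow as in Step~2 of Lemma~\ref{we-lem:ordered-boundedness}, or by monotone testing. Hence $u_k\to v$ monotonically, weakly in $H^s_\Omega$, with $h\le v\le u$. Monotone convergence of $f_k(u_k)$, dominated by $f(u)\in L^1(d_\Omega^s)$, shows that $v$ is an $L^1$-weak solution for $f$ with datum $g$.

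\textbf{Interior bound and transfer to $u$.} Each $u_k$ is bounded, hence locally bounded in every ball $B_{2\rho}(a)\subset\Omega$. It lies in $L^1_s$ with norm at most $\|h\|_{L^1_s}+\|u\|_{L^1_s}$, since $h\le u_k\le u$, and it is stable for the admissible nonlinearity $f_k$. Theorem~\ref{full} applied to $(u_k)_{a,\rho}$ gives uniform $C^\alpha$ bounds on each interior ball, so $v$ is locally bounded. Stability of $v$ is not needed, only that of $u$. Lemma~\ref{we-lem:ordered-boundedness}, with $m=-\|h\|_{L^\infty(\Omega)}$, then gives local boundedness of $u$. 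Finally $u$ is a stable distributional solution in each $B_{2\rho}(a)$ with $u\in L^\infty_{\rm loc}\cap L^1_s$. Applying Theorem~\ref{full} to $u_{a,\rho}$ gives \eqref{we-eq:scaled} with $\alpha,C$ depending only on $n,s$. A finite cover of $\Omega'$, together with \eqref{rescaled-tail} to compare the rescaled tails with $\|u\|_{L^1_s}$ (at the cost of powers of $\rho^{-1}$ fixed by $\Omega'$), yields \eqref{we-eq:interior}.

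\textbf{Main obstacle.} The delicate step is the existence and energy control of the minimal approximations in the weak class. Specifically, one must justify the monotone iteration and identify its limit within the $L^1$-weak framework, where $f(u)$ is only in $L^1(d_\Omega^s)$, and obtain the uniform $H^s_\Omega$ bound needed for $v\in H^s_\Omega$. I would first try the truncation-and-comparison machinery of Step~2 of Lemma~\ref{we-lem:ordered-boundedness}.
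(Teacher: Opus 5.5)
Your outline matches the paper's proof: harmonic lift $h\le u$, truncated nonlinearities, minimal solutions $u_k$ between $h$ and $u$, and stability from $f_k'(u_k)\le f'(u)$. Then a monotone limit $v\le u$ is made locally bounded by Theorem~\ref{full}, Lemma~\ref{we-lem:ordered-boundedness} transfers this to $u$, and Theorem~\ref{full} is applied to $u$. However, several steps silently assume that $f$ is nondecreasing on $[m,\infty)$. Convexity and $f\ge0$ do not give this; take $f(t)=e^{-t}$.
\begin{itemize}
\item Your tangent-line truncation is not nonnegative when $f'(k)<0$. For $f(t)=e^{-t}$ it gives $f_k(t)=e^{-k}(1+k-t)<0$ for $t>k+1$, so Theorem~\ref{full} cannot be applied to $u_k$.
\item The iteration $Lz_{i+1}=f_k(z_i)$ increases, and the induction $z_i\le u$ works, only if $f_k$ is nondecreasing on the range of the iterates. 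For decreasing $f$ the map $z\mapsto f_k(z)$ reverses order, so both claims fail.
\item The domination $f_k(u_k)\le f(u)$ fails, and $f_k(u_k)$ need not be monotone in $k$.
\end{itemize}

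The paper's construction avoids all three problems. It caps the derivative rather than replacing $f$ by a tangent line: $f_k=f$ on $t\le m$ and $f_k(t)=f(m)+\int_m^t\min\{f',k\}$ for $t>m$, with $k>\max\{0,f'(m)\}$. This $f_k$ is nonnegative, convex and $C^1_{\mathrm{loc}}$, with $f_k\uparrow f$, $f_k'\le f'$, and $f'(m)\le f_k'\le k$ on $[m,\infty)$.
\begin{itemize}
\item Run the iteration for $(-\Delta)^s+\ell$ with $\ell=\max\{0,-f'(m)\}$ and source $f_k(z)+\ell z$, which is nondecreasing on $[m,\infty)$; this is \cite[Proposition~C.7 and Remark~C.8]{SPapprox}.
\item For the limit, convexity on $[m,u(x)]$ gives $0\le f_k(u_k)\le f(u_k)\le f(m)+f(u)$. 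The cap is inactive on $[m,u(x)]$ for large $k$, so dominated convergence applies.
\end{itemize}

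Two smaller points. Before invoking Lemma~\ref{we-lem:linear-boundedness}(i) you need $u_k\in H^s_\Omega$. This follows from $u_k,f_k(u_k)\in L^2(\Omega)$ and uniqueness for the linear $L^1$-weak problem, which identifies $u_k$ with the variational solution. For the energy bound, the paper tests the $u_k$-equation with $u-u_k\in H^s_{\Omega,0}$. This gives $\mathcal E_\Omega(u_k)\le\mathcal E_\Omega(u_k,u)$ directly and avoids needing $f(u)(u-h)\in L^1$. Your route also closes: truncate $f(u)$, compare, and use Cauchy--Schwarz to get $\int f(u)(u-h)\le\mathcal E_\Omega(u-h)$.
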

\begin{proof}
Step~1. Apply Lemma~\ref{we-lem:harmonic-lift} with $w_0=u$ to obtain
$h\in H^s_\Omega\cap L^\infty(\Omega)\cap L_s^1(\R^n)$ satisfying
\[
 (-\Delta)^sh=0\quad\text{in }\Omega,\qquad h=g\quad\text{on }\Omega^c.
\]
Since $f(u)\ge0$, the comparison principle
\cite[Proposition~C.2]{SPapprox} yields $h\le u$ in $\Omega$.
Put $m=\operatorname*{ess\,inf}_\Omega h>-\infty$.

Step~2. Choose integers $k\to\infty$ with
$k>\max\{0,f'(m)\}$ and cap the derivative above $m$:
\begin{equation}\label{we-eq:trunc}
 f_k(t)=
 \begin{cases}
 f(t),&t\le m,\\
 f(m)+\displaystyle\int_m^t\min\{f'(r),k\}\dd r,&t>m.
 \end{cases}
\end{equation}
The derivative agrees with $f'$ below $m$ and equals
$\min\{f',k\}$ above $m$. It is continuous and nondecreasing,
including at $m$ because $k>f'(m)$. Thus $f_k$ is convex and
$C^1_{\mathrm{loc}}(\R)$, and it is Lipschitz on $[m,\infty)$, with
\[
 f'(m)\le f_k'(t)\le k\quad(t\ge m),\qquad
 f_k\le f_{k+1}\le f,\qquad f_k'\le f'.
\]
The function order follows by integrating the ordered derivative
caps from $m$; below $m$ the functions agree. It is also nonnegative.
Until $f'$ reaches the positive value $k$,
the function $f_k$ equals $f$; after a first such point $a$,
$f_k(t)=f(a)+k(t-a)\ge0$ for $t\ge a$.
If $f'$ never reaches $k$, then $f_k=f$. This construction therefore
also covers the case in which $f'\le0$ on $[m,\infty)$.

Use $h$ as a subsolution and $u$ as a supersolution for $f_k$.
The iteration in \cite[Proposition~C.7 and Remark~C.8]{SPapprox}
gives the solution $u_k$ minimal above $h$, with
\begin{equation}\label{we-eq:order}
 h\le u_k\le u_{k+1}\le u\quad\text{in }\Omega,\qquad
 u_k=g\quad\text{on }\Omega^c.
\end{equation}
For the fixed shift $\ell=\max\{0,-f'(m)\}$, the function
$f_k(t)+\ell t$ is nondecreasing on $[m,\infty)$ by the derivative
bound above. Thus the same monotone iteration applies with operator
$(-\Delta)^s+\ell$. The source terms are integrable because $f_k$
has at most linear growth on that interval.

The bounds in \eqref{we-eq:order} give $u_k\in L^2(\Omega)$ and
$f_k(u_k)\in L^2(\Omega)$. The variational solution with this source
and exterior datum lies in $h+H^s_{\Omega,0}$ and agrees with $u_k$
by uniqueness of the linear $L^1$-weak problem
\cite[Proposition~C.1]{SPapprox}. Hence $u_k\in H^s_\Omega$, and
Lemma~\ref{we-lem:linear-boundedness}\textup{(i)} gives
$u_k\in L^\infty(\Omega)$.
Finally,
\[
 f_k'(u_k)\le f'(u_k)\le f'(u)
\]
and the original stability inequality show that every $u_k$ is
stable for its own equation; its derivative is bounded on $[m,\infty)$.
This follows the monotone approximation method in the proof of
\cite[Theorem~1.9]{SPapprox}, using derivative caps and the harmonic
lift in place of a constant subsolution.

Step~3. By monotonicity let $u_k\uparrow v$ a.e.\ in $\Omega$ and set
$v=g$ outside. Dominated convergence gives $u_k\to v$ in
$L^2(\Omega)$, and $h\le v\le u$. Test the equation for $u_k$
with $u-u_k\in H^s_{\Omega,0}$. This is legitimate because
$f_k(u_k)\in L^2(\Omega)$. Nonnegativity of $f_k$ gives
\begin{equation}\label{we-eq:energybound}
 \mathcal E_\Omega(u_k)
 =\mathcal E_\Omega(u_k,u)-\int_\Omega f_k(u_k)(u-u_k)
 \le\mathcal E_\Omega(u_k,u)\le\sqrt{\mathcal E_\Omega(u_k)\mathcal E_\Omega(u)}.
\end{equation}
Thus $u_k$ is bounded in $H^s_\Omega$ and its weak limit is $v$.
In particular $v\in H^s_\Omega$ with the prescribed exterior datum.

To pass to the limit in the equation, we use the weighted
integrability required by the weak formulation. By convexity,
for $m\le u_k(x)\le u(x)$,
\begin{equation}\label{we-eq:sourcebound}
 0\le f_k(u_k(x))\le f(u_k(x))
       \le \max\{f(m),f(u(x))\}\le f(m)+f(u(x)).
\end{equation}
For almost every $x$, $u(x)$ is finite and $f'$ is bounded on
$[m,u(x)]$. For all sufficiently large $k$, the cap in
\eqref{we-eq:trunc} is therefore inactive on that whole interval.
Since $u_k(x)\in[m,u(x)]$ and $u_k(x)\to v(x)$, it follows that
$f_k(u_k(x))=f(u_k(x))\to f(v(x))$.
Dominated convergence in $L^1(\Omega,d_\Omega^s\dd x)$ permits
passage to the limit in \eqref{we-eq:transposition}. Therefore $v$
solves the original equation.

It is stable, since
\[
 f'(m)\le f'(v)\le f'(u).
\]

Step~4. To apply the interior estimate uniformly, observe that the pointwise bound
\[
 |u_k|\le |u|+|h|\quad\text{on }\R^n
\]
gives a uniform bound for the weighted $L^1$ norm on every fixed
rescaled interior ball.
Apply Theorem~\ref{full} to $u_k(a+\rho\,\cdot)$, whose
nonlinearity is $\rho^{2s}f_k$. The resulting uniform bounds and
the a.e.\ convergence show that $v\in L^\infty_{\mathrm{loc}}(\Omega)$
(indeed $v\in C^\alpha_{\mathrm{loc}}(\Omega)$).
The auxiliary bound involving $\|h\|_{L_s^1(\R^n)}$ is needed only for this
qualitative boundedness conclusion.

Step~5. The solutions $v\le u$ have the same exterior datum and satisfy
$m\le v\le u$ in $\Omega$. Since $v$ is locally bounded,
Lemma~\ref{we-lem:ordered-boundedness} gives local boundedness of $u$.

Apply Theorem~\ref{full} directly to $u(a+\rho\,\cdot)$ to obtain
the asserted rescaled estimate; its right-hand side involves only
the original rescaled solution.
A finite interior cover gives the corresponding estimate on each
compact subset. For the local
energy, close pairs are controlled on the covering balls and
pairs with separation bounded away from zero are controlled
by the amplitude estimate.
\end{proof}

The same argument transfers all exponents in
Theorem~\ref{explicit-exponents} to this weak class.

\subsection{The logarithmic profile and optimality}\label{weak-optimality}

\begin{proposition}[The logarithmic singular solution]\label{we-prop:log}
Let $n>2s$ and $\Omega=B_1$. Set
\[
 u_*(x)=-2s\log|x|\quad(x\ne0),\qquad f(t)=A_{n,s}e^t,
\]
where $A_{n,s}$ and $H_{n,s}$ are the constants from the introduction
and the value at $0$ is immaterial. Then
\[
 u_*\in L_s^1(\R^n)\cap H^s_{B_1},\qquad
 (-\Delta)^su_*=A_{n,s}|x|^{-2s}
 \quad\text{in }\mathcal D'(\R^n).
\]
The exterior datum $g=u_*|_{B_1^c}$ satisfies \eqref{we-eq:g},
and $u_*$ is an $L^1$-weak solution in the finite-energy class
defined above for every $n>2s$. Its source belongs to $L^2(B_1)$
exactly when $n>4s$, in which case it also satisfies the narrower
energy-solution definition in \cite[Definition~1.2]{SPapprox}.
It is stable in $B_1$ exactly when $A_{n,s}\le H_{n,s}$,
including equality, and it is unbounded near the origin.
\end{proposition}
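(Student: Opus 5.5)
The plan is to verify each assertion of Proposition~\ref{we-prop:log} in turn. The equation comes from Lemma~\ref{homogeneous-fourier}, differentiated in the exponent at $a=0$. The integrability and energy claims are direct computations with the logarithmic profile. Stability reduces to the sharp Hardy inequality \eqref{prelim-hardy-inequality} together with an optimizing sequence.

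\textbf{The equation.} By Lemma~\ref{homogeneous-fourier}, in degree zero the distributional identity $L(|x|^{-a})=c_0(a)|x|^{-a-2s}$ extends holomorphically across $a=0$ and may be differentiated there. Since $c_0(0)=0$ and $\partial_a|x|^{-a}\big|_{a=0}=-\log|x|$, we get $L(-\log|x|)=c_0'(0)|x|^{-2s}$. Differentiating \eqref{power-multiplier} at $a=0$ hits only the factor $1/\Gamma(a/2)$, which vanishes there with $a$-derivative $1/2$. This gives
\[
 c_0'(0)=2^{2s}\frac{\Gamma(n/2)\Gamma(s)}{2\Gamma(n/2-s)},
 \qquad 2s\,c_0'(0)=4^s\frac{\Gamma(n/2)\Gamma(1+s)}{\Gamma(n/2-s)}=A_{n,s}.
\]
Hence $Lu_*=A_{n,s}|x|^{-2s}=A_{n,s}e^{u_*}$ in $\mathcal D'(\R^n)$. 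The source $|x|^{-2s}$ is locally integrable because $n>2s$.

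\textbf{Membership in the class.} Logarithmic growth gives $u_*\in L_s^1(\R^n)$ and $u_*\in L^2(B_1)$. For \eqref{we-eq:g}, $g=-2s\log|x|$ on $|x|\ge1$ is Lipschitz near $\partial B_1$ and grows logarithmically. Since $|x-z|\ge|x|-1$ for $z\in\partial B_1$, the increment $|g(x)-g(z)|$ is bounded by $C|x-z|^{\alpha_g}$ for any $\alpha_g$ in $(\max\{0,2s-1\},s)$.

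Next, $f(u_*)=A_{n,s}|x|^{-2s}$ lies in $L^1(B_1,d^s\dd x)$, and it lies in $L^2(B_1)$ exactly when $4s<n$.

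For $\mathcal E_{B_1}(u_*)<\infty$, split pairs with both points in $B_2$ from the rest. Locally, $\log|x|$ lies in $H^s_{\mathrm{loc}}$: by homogeneity, its dyadic annular energies at scale $2^{-k}$ are of order $2^{-k(n-2s)}$, which is summable, with a similar bound for the cross-scale interactions. The far pairs are controlled by the tail.

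For the transposition identity \eqref{we-eq:transposition}, pair the distributional equation with Dirichlet tests. These are not compactly supported, so I would approximate them by cutoffs inside $B_1$, use Lemma~\ref{we-lem:dirichlet-tests} for the decay $|\varphi|\le Cd^s$, and let the exterior term $\int g\,\mathcal N_s\varphi$ absorb the complement. An alternative is to argue with the variational formulation, using $u_*\in H^s_{B_1}$ and uniqueness, as in \cite{SPapprox}.

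\textbf{Stability and unboundedness.} We have $f'(u_*)=A_{n,s}|x|^{-2s}$. If $A_{n,s}\le H_{n,s}$, then \eqref{prelim-hardy-inequality} gives \eqref{we-eq:stability} directly for $\xi\in C_c^\infty(B_1)$, including at equality. If $A_{n,s}>H_{n,s}$, Hardy's constant is sharp even for tests supported in $B_1$, by the dilation remark after \eqref{prelim-hardy-inequality}. So some $\xi$ has Rayleigh quotient below $A_{n,s}$, and stability fails. Finally, $u_*\to\infty$ at the origin, so $u_*$ is unbounded.

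\textbf{Main obstacle.} I expect the hardest step to be the rigorous verification of \eqref{we-eq:transposition} with general Dirichlet tests: the source is singular and the tests are not compactly supported. The energy computation near the origin is the secondary check.
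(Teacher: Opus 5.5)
The parts of your proposal other than the transposition step agree with the paper and are correct. The equation comes from differentiating the degree-zero power formula at $a=0$, which is what the paper's difference quotient $2s(|x|^{-\varepsilon}-1)/\varepsilon\to u_*$ in $L^1_s$ does. The exterior datum bound is the same. Stability follows from the dilation-invariant sharp Hardy constant. For the energy, your dyadic count replaces the paper's exact scaling identity for $\int|u_*(x)-u_*(y)|^2|x-y|^{-n-2s}\dd y$, and it is fine.

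\textbf{The gap.} The gap is the step you flag yourself: the transposition identity \eqref{we-eq:transposition}, which is what makes $u_*$ an $L^1$-weak solution, is not proved. Your variational alternative also omits the ingredient that makes it work. ``Uniqueness'' alone gives nothing; you first need $F=A_{n,s}|x|^{-2s}$ to be a bounded functional on $V=H^s_{B_1,0}$. The paper gets this from Cauchy--Schwarz and \eqref{prelim-hardy-inequality}:
\[
 \Bigl|\int_{B_1}F\psi\Bigr|
 \le A_{n,s}\Bigl(\int_{B_1}|x|^{-2s}\dd x\Bigr)^{1/2}
 \Bigl(\int_{B_1}|x|^{-2s}\psi^2\dd x\Bigr)^{1/2}
 \le C\,\mathcal E_{B_1}(\psi)^{1/2}.
\]
This is essential when $n\le4s$, where $F\notin L^2(B_1)$. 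It extends $\mathcal E_{B_1}(u_*,\psi)=\int F\psi$ from $C_c^\infty(B_1)$, where it is the distributional identity, to all $\psi\in V$; this is also what underlies the energy-solution claim for $n>4s$.

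You then compare $u_*$ with functions already known to satisfy \eqref{we-eq:transposition}. These are the variational solutions $U_j$ with $U_j-h\in V$ and source $F_j=\min\{F,j\}$, where $h$ is the bounded harmonic lift of $g$; the lift exists because $u_*\in H^s_{B_1}$. The same Hardy bound gives $\|F-F_j\|_{V^*}\to0$. Testing with $u_*-U_j\in V$ yields $\mathcal E_{B_1}(u_*-U_j)\to0$, so $U_j\to u_*$ in $L^2(B_1)$, and you pass to the limit in the identities for $U_j$. You must also check that $\int_{B_1^c}g\,\mathcal N_s\varphi$ converges absolutely although $g$ is unbounded. The bound $|\varphi|\le Cd_{B_1}^s$ gives $|\mathcal N_s\varphi(y)|\le Cd_{B_1}(y)^{-s}$ near the sphere and $C|y|^{-n-2s}$ for $|y|\ge2$, both integrable against the logarithmic $g$.

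\textbf{Your cutoff route.} This route is not automatic either. Dirichlet tests are only bounded with bounded $(-\Delta)^s\varphi$ in $B_1$ (and $C^s$ up to the boundary), so $\eta_k\varphi\notin C_c^\infty(B_1)$. The distributional identity cannot be paired with it without a further mollification argument that controls $(-\Delta)^s(\eta_k\varphi)$. Letting $k\to\infty$ then needs a boundary-layer estimate, because $(-\Delta)^s(\eta_k\varphi)-(-\Delta)^s\varphi$ is not small near $\partial B_1$. It can be of size $k^s$ on $\{d_{B_1}<2/k\}$, and further inside it has a nonlocal tail of order $k^{-1-s}d_{B_1}^{-1-2s}$. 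These contributions can be shown to be $O(k^{s-1})$, using regularity of $\varphi$ at scale $1/k$, but that is exactly the work your proposal leaves undone.
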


\begin{proof}
Direct integration of the logarithmic profile gives $u_*\in L^2(B_1)$
and $u_*\in L_s^1(\R^n)$. For a fixed unit vector $e$, the number
\[
 C_{n,s}^{\log}=\int_{\R^n}
       \frac{(\log|z|)^2}{|e-z|^{n+2s}}\dd z
\]
is finite. Near $0$ logarithms are integrable; near $e$ one has
$|\log|z||\le C|z-e|$, so $s<1$ suffices; at infinity the
integrand is controlled by $|z|^{-n-2s}\log^2|z|$.
Scaling and rotation yield the exact identity
\begin{equation}\label{we-eq:logenergy}
 \int_{\R^n}\frac{|u_*(x)-u_*(y)|^2}{|x-y|^{n+2s}}\dd y
 =4s^2 C_{n,s}^{\log}|x|^{-2s}\qquad(x\ne0).
\end{equation}
Integration over $x\in B_1$ is finite since $n>2s$.
The unnormalized double integral over $Q_{B_1}$ is at most
twice this integral, so the normalized energy is finite as well.
This proves finite energy in the stated Dirichlet space, including
the interactions with the unbounded logarithmic exterior datum.

It remains to verify the equation across the origin. We use the distributional
homogeneous-power identity \eqref{power-multiplier}, for $0<\varepsilon<n-2s$,
\begin{align}
 (-\Delta)^s|x|^{-\varepsilon}
   &=c_0(\varepsilon)|x|^{-\varepsilon-2s},\label{we-eq:power}\\
 c_0(\varepsilon)
   &=4^s\frac{\Gamma((n-\varepsilon)/2)
                    \Gamma((\varepsilon+2s)/2)}
                   {\Gamma(\varepsilon/2)
                    \Gamma((n-\varepsilon-2s)/2)}.\nonumber
\end{align}
Subtract $(-\Delta)^s1=0$ and multiply by $2s/\varepsilon$.
For any fixed $0<\varepsilon_0<n-2s$, the difference quotient
is dominated near $0$ by
$C|x|^{-\varepsilon_0}|\log|x||$, and for $|x|\ge1$ by
$2s\log|x|$. Thus
\[
 \frac{2s(|x|^{-\varepsilon}-1)}{\varepsilon}
       \longrightarrow u_*\quad\text{in }L_s^1(\R^n).
\]

Meanwhile $2s c_0(\varepsilon)/\varepsilon\to A_{n,s}$ and
$|x|^{-\varepsilon-2s}\to|x|^{-2s}$ in $L^1_{\mathrm{loc}}$.
Testing against $C_c^\infty(\R^n)$ proves the claimed
distributional identity. In particular, no additional point mass is present at the origin.

For $x\in B_1^c$ and $z\in\partial B_1$, and any $0<a<1$,
\[
 |g(x)-g(z)|=2s\log|x|
 \le\frac{2s}{a}(|x|-1)^a
 \le\frac{2s}{a}|x-z|^a.
\]
Here $\log(1+t)\le t^a/a$ follows by integrating
$(1+t)^{-1}\le t^{a-1}$. Choose
$a\in(\max\{0,2s-1\},s)$ to obtain \eqref{we-eq:g}.

Put $F(x)=A_{n,s}|x|^{-2s}$ and $V=H^s_{B_1,0}$.
For $\psi\in C_c^\infty(B_1)$, Cauchy--Schwarz and the Hardy
inequality \eqref{prelim-hardy-inequality} give
\begin{equation}\label{we-eq:logduality}
 \begin{split}
 \left|\int_{B_1}F\psi\right|
 &\le A_{n,s}\left(\int_{B_1}|x|^{-2s}\dd x\right)^{1/2}
              \left(\int_{B_1}|x|^{-2s}\psi^2\dd x\right)^{1/2}\\
 &\le C_{n,s}\mathcal E_{B_1}(\psi)^{1/2}.
 \end{split}
\end{equation}
The first integral is finite because $n>2s$.
Both sides of the distributional identity are therefore continuous
on $V$, and density yields
\begin{equation}\label{we-eq:logvariational}
 \mathcal E_{B_1}(u_*,\psi)=\int_{B_1}F\psi
 \qquad(\psi\in V).
\end{equation}

We pass from this identity to the exact transposition formulation
with the logarithmic exterior datum. Let $h$ be its harmonic lift
from Lemma~\ref{we-lem:harmonic-lift}, put $F_j=\min\{F,j\}$, and let
$U_j-h\in V$ solve $LU_j=F_j$ with exterior value $g$.
These bounded-source variational solutions also satisfy
\eqref{we-eq:transposition}, by the linear Dirichlet theory.
The same inequality gives, for $\psi\in V$,
\[
 \begin{split}
 \left|\int_{B_1}(F-F_j)\psi\right|
 &\le \varepsilon_j\mathcal E_{B_1}(\psi)^{1/2},\\
 \varepsilon_j&=
 \frac{A_{n,s}}{\sqrt{H_{n,s}}}
 \left(\int_{\{x\in B_1:F(x)>j\}}|x|^{-2s}\dd x\right)^{1/2}
 \longrightarrow0.
 \end{split}
\]
Subtract the variational equations and test with
$u_*-U_j\in V$. Then
$\mathcal E_{B_1}(u_*-U_j)^{1/2}\le\varepsilon_j$,
and fractional Poincar\'e gives $U_j\to u_*$ in $L^2(B_1)$.

For each admissible transposition test $\varphi$, we may now pass to
the limit in the identity for $U_j$: its interior term converges
because $L\varphi$ is bounded, and its source term because
$F_j\to F$ in $L^1(B_1)$ and $\varphi$ is bounded.

The exterior term is unchanged. To check its absolute integrability,
use $|\varphi(x)|\le C d_{B_1}(x)^s$ and
$d_{B_1}(x)\le|x-y|$ for $x\in B_1$, $y\in B_1^c$.
The formula for $\mathcal N_s\varphi$ then gives
\begin{align*}
 |\mathcal N_s\varphi(y)|
 &\le C\int_{B_1}\frac{d_{B_1}(x)^s}{|x-y|^{n+2s}}\dd x\\
 &\le C\int_{|z|\ge d_{B_1}(y)}|z|^{-n-s}\dd z
 \le C d_{B_1}(y)^{-s}.
\end{align*}
For $|y|\ge2$, the same kernel is bounded by $C|y|^{-n-2s}$.
Since $g$ is bounded near the sphere and grows logarithmically at infinity,
\begin{align*}
 \int_{1<|y|<2}|g\,\mathcal N_s\varphi|\dd y
 &\le C\int_0^1t^{-s}\dd t<\infty,\\
 \int_{|y|\ge2}|g\,\mathcal N_s\varphi|\dd y
 &\le C\int_2^\infty (\log r)r^{-1-2s}\dd r<\infty.
\end{align*}
Thus
\[
 \int_{B_1}u_*L\varphi+\int_{B_1^c}g\,\mathcal N_s\varphi
       =\int_{B_1}F\varphi.
\]
This proves the $L^1$-weak formulation for every $n>2s$.

Finally $f'(u_*)=A_{n,s}|x|^{-2s}$.
The sharp fractional Hardy inequality \eqref{prelim-hardy-inequality}
proves stability for
$A_{n,s}\le H_{n,s}$. Its optimal constant is still $H_{n,s}$
when the competitors are restricted to $C_c^\infty(B_1)$:
any compactly supported competitor in $\R^n$ can be dilated
into $B_1$, and the quotient in this inequality is invariant under dilation.
Thus stability fails when $A_{n,s}>H_{n,s}$.
\end{proof}

For every $n\ge2$ and $0<s<1$, one has $n>2s$.
Hence, whenever $A_{n,s}\le H_{n,s}$,
Proposition~\ref{we-prop:log} gives an unbounded stable solution
in exactly the finite-energy weak class of
Theorem~\ref{we-thm:upgrade}. Therefore $A_{n,s}>H_{n,s}$ is the
optimal condition for local boundedness and interior H\"older regularity
in this class: both fail whenever $A_{n,s}\le H_{n,s}$.

% MODULE applications

\section{Global estimates and a Liouville theorem}\label{applications-section}

We first consider the Dirichlet problem
\begin{equation}\label{zero-exterior-equation}
 (-\Delta)^su=f(u)\quad\hbox{in }\Omega,\qquad
 u=0\quad\hbox{in }\R^n\setminus\Omega,
\end{equation}
on a smooth bounded domain $\Omega$. We then use the interior estimate
at large scales to obtain a Liouville theorem for entire solutions.
Here $f:\R\to[0,\infty)$ is convex and belongs to
$C^1_{\mathrm{loc}}(\R)$; the constants remain independent of the nonlinearity.
The bounded-solution, extremal, and Liouville conclusions use
Theorem~\ref{full}; the finite-energy corollary also uses
Theorem~\ref{we-thm:upgrade}.

We first specify the notions of Dirichlet solution used below. For zero
exterior data, a classical solution is a function
$u\in C(\overline\Omega)$, extended by zero to $\Omega^c$, whose
fractional Laplacian is defined by the principal-value integral and
equals $f(u)$ at every point of $\Omega$; see
\cite[Definition~1.4]{SPapprox}.

The $L^1$-weak formulation and the finite-energy spaces are those
of Section~\ref{weak-section}, specialized here to $g=0$.

\begin{corollary}[A global bound in convex domains]\label{convex-global}
Let $0<s<1$, let $n$ satisfy \eqref{dimension-range}, and let
$\Omega\subset\R^n$ be a smooth bounded convex domain. Suppose that
$f:\R\to[0,\infty)$ is convex and belongs to $C^1_{\mathrm{loc}}(\R)$.
Every bounded stable solution of
\eqref{zero-exterior-equation}
understood in either of the preceding Dirichlet senses, satisfies
\begin{equation}\label{convex-global-bound}
 \|u\|_{L^\infty(\Omega)}
 \le C_{\Omega,n,s}\|u\|_{L^1(\Omega)}.
\end{equation}
\end{corollary}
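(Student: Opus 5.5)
The plan is to combine the universal interior estimate of Theorem~\ref{full} with a boundary estimate that comes from convexity through a moving-plane argument. The scheme is the one used for the Laplacian by Cabr\'e and by Cabr\'e--Figalli--Ros-Oton--Serra.

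\textbf{Step 1: reduction to a classical solution.} Let $u$ be a bounded stable solution in either Dirichlet sense. Lemma~\ref{bootstrap} gives interior regularity. For $L^1$-weak solutions with bounded $u$, the source $f(u)$ is bounded. The linear Dirichlet theory (uniqueness, Proposition~C.1 of \cite{SPapprox}, together with the boundary regularity of \cite{RSboundary}) then gives $u\in C^s(\R^n)$ with $|u|\le Cd_\Omega^s$. Thus $u$ is a classical solution vanishing continuously on $\partial\Omega$. Since $f\ge0$, the maximum principle gives $u\ge0$.

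\textbf{Step 2: boundary monotonicity.} Apply the moving-plane method for the fractional Laplacian, in the form used by Birkner--L\'opez-Mimbela--Wakolbinger, Ros-Oton--Serra and Jarohs--Weth. The nonlinearity is only locally Lipschitz and convex, but $u$ is bounded, so $f$ is Lipschitz on the range of $u$, and antisymmetric maximum principles in narrow domains apply. For each boundary point $x_0$ with outer normal $\nu$, convexity of $\Omega$ lets us reflect across hyperplanes $\{x\cdot\nu=\lambda\}$ from the supporting plane inward, up to a fixed depth. We would try to conclude that $u$ is nondecreasing along every direction $e$ in a cone around $-\nu$ of fixed aperture, in a neighborhood of $\partial\Omega$ of width $\rho_0=\rho_0(\Omega)$.

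This step is the main obstacle. The moving plane needs a symmetry-free version with cone directions, uniform in $x_0$, valid for nonlocal equations in convex but not symmetric domains. The fractional moving plane normally yields monotonicity only up to the point where the reflected cap leaves $\Omega$. If the cone version is delicate, I would first try to prove the weaker conclusion below directly by reflections:
\[
 \sup_{\Omega_{\rho_0}}u\le\sup_{\Omega\setminus\Omega_{\rho_0}}u,
 \qquad \Omega_{\rho_0}=\{x\in\Omega:d_\Omega(x)<\rho_0\}.
\]
A second consequence of the cone monotonicity is needed as well. Each point $x\in\Omega_{\rho_0/2}$ is dominated by the values of $u$ on a truncated cone $\Gamma_x$ pointing inward. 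This cone has measure $\ge c(\Omega)$ and lies in $\{d_\Omega\ge\rho_0/4\}$, so $u(x)\le|\Gamma_x|^{-1}\int_{\Gamma_x}u$.

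\textbf{Step 3: interior bound.} Cover $\Omega^{\rho}=\{d_\Omega\ge\rho_0/4\}$ by finitely many balls $B_R(x_i)$ with $B_{2R}(x_i)\subset\Omega$ and $R\asymp\rho_0$. Apply the scaled estimate \eqref{scaled-interior-bound} from the proof of Theorem~\ref{full}. Since $u=0$ outside $\Omega$, the rescaled tail satisfies
\[
 \|u\|_{L_s^1(x_i,R)}\le C_{\Omega}\|u\|_{L^1(\Omega)}.
\]
Hence $\|u\|_{L^\infty(\Omega^\rho)}\le C\|u\|_{L^1(\Omega)}$.

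\textbf{Step 4: conclusion.} Combine Step~3 with the averaging bound of Step~2. For $x\in\Omega_{\rho_0/2}$,
\[
 u(x)\le c^{-1}\|u\|_{L^1(\Omega)}.
\]
Points with $d_\Omega(x)\ge\rho_0/2$ are covered by Step~3. This gives \eqref{convex-global-bound}, with constants depending only on $\Omega,n,s$ and independent of $f$.
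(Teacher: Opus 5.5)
Your proposal is correct and follows the paper's argument. It obtains interior control from Theorem~\ref{full} on balls with $B_{2\rho}(a)\subset\Omega$, using $u=0$ off $\Omega$ to bound the rescaled tail by $\rho^{-n}\|u\|_{L^1(\Omega)}$, and combines this with a moving-plane $L^\infty$--$L^1$ estimate in a boundary strip. The paper does not prove the step you flag as the main obstacle; it quotes it as \cite[Proposition~1.8]{RSext}, after using the strong maximum principle to reduce to $u>0$. That result is Ros-Oton--Serra's boundary estimate for convex domains, proved by exactly this moving-plane/de Figueiredo--Lions--Nussbaum argument with a small-domain maximum principle, so the strip width depends only on $\Omega$.
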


\begin{proof}[Proof of Corollary~\ref{convex-global}]
For bounded solutions these two notions of solution agree; see
\cite[Remark 1.5]{SPapprox}. The maximum principle gives $u\ge0$, and
the strong maximum principle gives either $u\equiv0$ or $u>0$ in
$\Omega$. The first case is immediate. In the second case, the boundary
estimate of Ros-Oton and Serra \cite[Proposition 1.8]{RSext} supplies
$\delta>0$ and $C$ depending only on the domain (allowing also dependence
on $n,s$ here), such that
\[
 \|u\|_{L^\infty(\{x\in\Omega:
                  \operatorname{dist}(x,\partial\Omega)<\delta\})}
 \le C\|u\|_{L^1(\Omega)}.
\]
This boundary estimate requires only a locally Lipschitz nonlinearity
and a positive bounded solution.

It therefore remains to bound the
solution on a compact subset of the domain. Choose
$\rho>0$ depending only on $\Omega$ with
$2\rho<\delta$ and cover that set by balls $B_\rho(a)$ with
$B_{2\rho}(a)\subset\Omega$. The rescaled function
$v(x)=u(a+\rho x)$ solves the stable equation
$(-\Delta)^sv=\rho^{2s}f(v)$ in $B_2$. Since $u$ vanishes outside
$\Omega$,
\[
 \|v\|_{L_s^1(\R^n)}=\rho^{-n}\int_\Omega
 \frac{|u(y)|}{1+|(y-a)/\rho|^{n+2s}}\dd y
 \le\rho^{-n}\|u\|_{L^1(\Omega)}.
\]
Applying Theorem~\ref{full} on each ball bounds $u$ on $B_\rho(a)$
by the same right-hand side. Combining the interior and boundary estimates proves the result.
\end{proof}

\begin{corollary}[Finite-energy solutions with zero exterior data]\label{we-cor:zero}
Under the assumptions of Theorem~\ref{we-thm:upgrade}, suppose that
$g=0$. Then $u\ge0$ and, for every $\Omega'\Subset\Omega$,
\[
 \|u\|_{C^\alpha(\Omega')}+[u]_{H^s(\Omega')}
 \le C_{\Omega,\Omega',n,s}\|u\|_{L^1(\Omega)}.
\]
If $\Omega$ is convex, then
\[
 \|u\|_{L^\infty(\Omega)}
 \le C_{\Omega,n,s}\|u\|_{L^1(\Omega)}.
\]
\end{corollary}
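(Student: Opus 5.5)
The plan is to deduce both assertions directly from Theorem~\ref{we-thm:upgrade} and Corollary~\ref{convex-global}. The only extra work is relating $\|u\|_{L_s^1(\R^n)}$ to $\|u\|_{L^1(\Omega)}$ when $g=0$, and, in the convex case, checking that $u$ is bounded so the corollary applies.

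First, the sign. With $g=0$ the harmonic lift of Lemma~\ref{we-lem:harmonic-lift} is $h\equiv0$. Step~1 of the proof of Theorem~\ref{we-thm:upgrade}, i.e. the comparison principle \cite[Proposition~C.2]{SPapprox} applied with $f(u)\ge0$, gives $u\ge h=0$ in $\Omega$.

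Next, the interior estimate. Since $u=0$ outside $\Omega$ and $1+|x|^{n+2s}\ge1$,
\[
 \|u\|_{L_s^1(\R^n)}=\int_\Omega\frac{|u|}{1+|x|^{n+2s}}\dd x\le\|u\|_{L^1(\Omega)} .
\]
Inserting this into \eqref{we-eq:interior} yields the first displayed bound. Theorem~\ref{we-thm:upgrade} already supplies local boundedness and the $C^\alpha_{\mathrm{loc}}$ representative, so nothing more is needed here.

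For the convex case, the main obstacle is showing that $u$ is bounded on all of $\Omega$, not only locally. I would proceed as follows.
\begin{enumerate}
 \item Run the approximation of Theorem~\ref{we-thm:upgrade} with $h=0$ and $m=0$. The approximants $u_k$ are bounded stable solutions with zero exterior data and nonlinearities $f_k$, which are convex, nonnegative and $C^1_{\mathrm{loc}}$.
 \item Apply Corollary~\ref{convex-global} to each $u_k$. Since $0\le u_k\le u$, this gives $\|u_k\|_{L^\infty(\Omega)}\le C\|u_k\|_{L^1(\Omega)}\le C\|u\|_{L^1(\Omega)}$, uniformly in $k$.
 \item Pass to the monotone limit: the limit $v$ is bounded on all of $\Omega$ with the same bound.
 \item The global alternative of Lemma~\ref{we-lem:ordered-boundedness} then shows that $u$ is bounded on all of $\Omega$.
\end{enumerate}
Once $u$ is bounded, a bounded finite-energy $L^1$-weak solution with zero exterior data is classical, by the equivalence cited in the proof of Corollary~\ref{convex-global}. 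That corollary then applies to $u$ itself and gives $\|u\|_{L^\infty(\Omega)}\le C_{\Omega,n,s}\|u\|_{L^1(\Omega)}$, with a constant independent of the qualitative bound.
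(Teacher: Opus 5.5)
Your proposal is correct and follows the paper's proof step for step. The sign comes from comparison with the zero harmonic lift, and the interior bound from \eqref{we-eq:interior} together with $\|u\|_{L_s^1(\R^n)}\le\|u\|_{L^1(\Omega)}$. In the convex case, Corollary~\ref{convex-global} gives uniform bounds for the approximants $u_k$, hence global boundedness of their limit $v$. The global alternative of Lemma~\ref{we-lem:ordered-boundedness} with $m=0$ then bounds $u$, and Corollary~\ref{convex-global} applied to $u$ itself gives the estimate. Your appeal to the equivalence of the two Dirichlet notions in the last step is harmless but not needed, since Corollary~\ref{convex-global} is already stated for bounded stable solutions in either sense.
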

\begin{proof}
The maximum principle gives $u\ge0$, and the interior estimate
follows from Theorem~\ref{we-thm:upgrade} since
$\|u\|_{L_s^1(\R^n)}\le\|u\|_{L^1(\Omega)}$.

For a convex domain, apply Corollary~\ref{convex-global} to the
bounded approximants constructed in the proof of
Theorem~\ref{we-thm:upgrade}. Here $h=0$, so $0\le u_k\le u$ and
\[
 \|u_k\|_{L^\infty(\Omega)}
 \le C_{\Omega,n,s}\|u\|_{L^1(\Omega)}.
\]
Their limit $v$ is therefore bounded on the whole domain.
Lemma~\ref{we-lem:ordered-boundedness}, applied with $m=0$, gives
$u\in L^\infty(\Omega)$. Once boundedness is established,
Corollary~\ref{convex-global}, applied directly to $u$, gives the
asserted estimate.
\end{proof}

\begin{corollary}[Extremal solutions]\label{extremal}
Let $\Omega$ and $n,s$ be as in Corollary~\ref{convex-global}. Let
$f:\R\to[0,\infty)$ be convex and $C^1_{\mathrm{loc}}(\R)$,
and nondecreasing on $[0,\infty)$. Assume
\[
 f(0)>0,\qquad \lim_{t\to\infty}\frac{f(t)}t=\infty.
\]
Let $u_\lambda$ be the minimal solutions of
\[
 (-\Delta)^su_\lambda=\lambda f(u_\lambda)\quad\hbox{in }\Omega,
 \qquad u_\lambda=0\quad\hbox{in }\R^n\setminus\Omega,
 \qquad 0<\lambda<\lambda^*,
\]
and let $u^*=\lim_{\lambda\uparrow\lambda^*}u_\lambda$ be their extremal
limit. Then
\[
 \|u^*\|_{L^\infty(\Omega)}
 \le C_{\Omega,n,s}\|u^*\|_{L^1(\Omega)}<\infty.
\]
In particular, for $s=1/2$ this conclusion holds through dimension eight.
\end{corollary}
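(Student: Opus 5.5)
The plan is to follow the classical route from bounded stable minimal solutions to the extremal limit, with the universal estimate of Corollary~\ref{convex-global} as the only quantitative input. First I recall the standard facts for $0<\lambda<\lambda^*$, relying on the monotone iteration and comparison results of \cite{SPapprox} that are used in Section~\ref{weak-section}. Since $f(0)>0$ and $f$ is nondecreasing on $[0,\infty)$, iteration from the subsolution $0$ produces a minimal solution $u_\lambda$. This solution is nonnegative and bounded, and it increases in $\lambda$. Minimality implies stability: the principal eigenvalue of $(-\Delta)^s-\lambda f'(u_\lambda)$ on $\Omega$ is nonnegative. The superlinearity $f(t)/t\to\infty$ gives $\lambda^*<\infty$, by testing the equation against the first Dirichlet eigenfunction. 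For each such $\lambda$, $u_\lambda$ is a bounded classical stable solution with nonlinearity $\lambda f$, and $\lambda f$ is again nonnegative, convex and $C^1_{\mathrm{loc}}(\R)$.

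Next I apply Corollary~\ref{convex-global} to $u_\lambda$. The constant there is independent of the nonlinearity, so it covers $\lambda f$ uniformly in $\lambda$:
\[
 \|u_\lambda\|_{L^\infty(\Omega)}\le C_{\Omega,n,s}\|u_\lambda\|_{L^1(\Omega)}.
\]
Because $u_\lambda\uparrow u^*$ pointwise, monotone convergence on the right and lower semicontinuity of the sup norm on the left give
\[
 \|u^*\|_{L^\infty(\Omega)}\le C\|u^*\|_{L^1(\Omega)},
\]
provided $\|u^*\|_{L^1(\Omega)}<\infty$. The final claim for $s=1/2$ follows from Lemma~\ref{sp-lem:integer-hardy-range}. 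That lemma gives $A_{n,1/2}>H_{n,1/2}$ for $n\le7$. For $n=8$ it says the inequality holds exactly above a root lying in $(7/25,71/200)$, so it holds at $s=1/2$. For $n=9$ the condition fails at $s=1/2$, since $1/2<63/100$.

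The main obstacle is the a priori bound $\|u^*\|_{L^1(\Omega)}<\infty$, and more precisely a uniform bound on $\|u_\lambda\|_{L^1}$ as $\lambda\uparrow\lambda^*$. I would first try the standard eigenfunction argument. Let $\varphi_1>0$ be the first Dirichlet eigenfunction, with eigenvalue $\mu_1$. Testing the equation gives
\[
 \mu_1\int u_\lambda\varphi_1=\lambda\int f(u_\lambda)\varphi_1.
\]
Superlinearity gives $f(t)\ge 2\mu_1 t/\lambda^*-C$ for all $t\ge0$ and $\lambda\ge\lambda^*/2$, and substituting this bounds $\int u_\lambda\varphi_1$ uniformly. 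Since $\varphi_1\asymp d_\Omega^s$, this controls only a weighted $L^1$ norm. To obtain the unweighted norm I would use stability. Testing stability with suitable functions of $u_\lambda$, as in the weak $L^1$ theory, should give $f(u_\lambda)d_\Omega^s$ uniformly in $L^1$. Alternatively, I would apply the boundary estimate of \cite{RSext} used in Corollary~\ref{convex-global}, which bounds $u_\lambda$ near $\partial\Omega$ by an interior $L^1$ norm. Combined with the interior universal estimate and $d_\Omega^s\ge c$ on compact subsets, this closes the bound. If the boundary estimate proves awkward, the weighted bound already makes $u^*$ an $L^1$-weak solution, and Corollary~\ref{we-cor:zero} then yields the conclusion.
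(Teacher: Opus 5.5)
Your core argument is the paper's proof: apply Corollary~\ref{convex-global} to $u_\lambda$ with nonlinearity $\lambda f$, use that its constant does not depend on the nonlinearity, and pass to the monotone limit. The difference lies in the step you call the main obstacle. The paper does not prove $u^*\in L^1(\Omega)$; it imports this fact, together with existence, boundedness and stability of the minimal branch, from \cite[Proposition~1.2]{RSext}. Your own attempt at this step does not close.

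The eigenfunction step is fine. With your constants it gives a bound only for $\lambda$ bounded away from $\lambda^*/2$, but monotonicity of $u_\lambda$ in $\lambda$ covers the rest. Through the same identity it also gives a uniform bound for $\lambda\int f(u_\lambda)\varphi_1$, so stability is not needed for the weighted source bound in your first suggestion. What is missing is the passage from the weighted to the unweighted $L^1$ norm, and neither fallback supplies it.

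\emph{The boundary estimate.} As recorded in the proof of Corollary~\ref{convex-global}, it has $\|u\|_{L^1(\Omega)}$ on its right-hand side. Using it to bound $\|u_\lambda\|_{L^1(\Omega)}$ is therefore circular, unless you extract from the moving-plane proof a sharper form in which near-boundary values are dominated by averages over a fixed set $\{d_\Omega>\delta/2\}$. Adding the interior estimate does not help, since its right side is again the full $L^1$ norm.

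\emph{The appeal to Corollary~\ref{we-cor:zero}.} This is also circular, because membership in the class \eqref{we-eq:weakclass} already requires $u\in L^1(\Omega)$. Moreover, Theorem~\ref{we-thm:upgrade} requires $u^*\in H^s_\Omega$, which you have not established.

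The missing idea is one line. Test the equation with the torsion function $\tau$, defined by $(-\Delta)^s\tau=1$ in $\Omega$ and $\tau=0$ outside. It satisfies $\tau\le Cd_\Omega^s\le C\varphi_1$, so
\[
 \|u_\lambda\|_{L^1(\Omega)}=\lambda\int_\Omega f(u_\lambda)\tau
 \le C\lambda\int_\Omega f(u_\lambda)\varphi_1
 =C\mu_1\int_\Omega u_\lambda\varphi_1 ,
\]
which is uniformly bounded. Monotone convergence then gives $u^*\in L^1(\Omega)$; this uses neither convexity of $\Omega$ nor stability. Alternatively, simply cite \cite[Proposition~1.2]{RSext}, as the paper does.

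Your check of the $s=1/2$ claim is correct. More directly, $n=8$ lies in the radical range \eqref{explicit-dimension-range} when $s=1/2$, since there $\delta=3/4>0$.
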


\begin{proof}[Proof of Corollary~\ref{extremal}]
Existence of the minimal branch, its stability, and the facts that
$0\le u_\lambda\le u^*\in L^1(\Omega)$ follow from
\cite[Proposition 1.2]{RSext}.

Apply Corollary~\ref{convex-global} to $u_\lambda$ with
nonlinearity $\lambda f$. Its constant is independent of $\lambda$, so
\[
 \|u_\lambda\|_{L^\infty(\Omega)}
 \le C_{\Omega,n,s}\|u_\lambda\|_{L^1(\Omega)}
 \le C_{\Omega,n,s}\|u^*\|_{L^1(\Omega)}.
\]
Passing to the monotone limit gives the assertion.
\end{proof}

At large scales, the interior estimate gives a Liouville theorem
under a one-sided growth condition.

\begin{corollary}\label{liouville}
Let $n,s$ satisfy \eqref{dimension-range}, let $\alpha$ be any
exponent available in Theorem~\ref{full} or Theorem~\ref{explicit-exponents},
and let $u\in L^\infty_{\rm loc}(\R^n)\cap L_s^1(\R^n)$ be an entire stable
distributional solution of $(-\Delta)^su=f(u)$, with the nonlinearity
assumptions of Theorem~\ref{full}.
If
\begin{equation}\label{liouville-growth}
 \|u_-(R\,\cdot)\|_{L_s^1(\R^n)}=o(R^\alpha)\qquad(R\to\infty),
\end{equation}
then $u$ is constant. In particular, $u$ is constant under either
one-sided lower bound
\begin{equation}\label{liouville-lower-log}
 u(x)\ge-C_0\log(2+|x|)\qquad(x\in\R^n)
\end{equation}
or
\[
 u(x)\ge-C_1(1+|x|)^p\qquad(x\in\R^n),\qquad 0\le p<\alpha,
\]
where $C_0,C_1\ge0$.
\end{corollary}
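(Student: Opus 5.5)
The plan is to apply the scaled interior estimate \eqref{scaled-interior-bound} at scale $R\to\infty$, with the constant $C_R=\inf_{B_{4R}}u$, or rather a suitable additive constant, subtracted so that the weighted tail is controlled by $u_-$ alone. Since $L$ kills constants, $u-c$ is again a stable solution, with nonlinearity $f(\cdot+c)$, which is still nonnegative, convex and $C^1_{\mathrm{loc}}$. Its H\"older seminorm equals that of $u$. So \eqref{scaled-interior-bound} at center $0$ gives
\[
 R^\alpha[u]_{C^\alpha(B_R)}\le C\|u-c\|_{L_s^1(0,R)}
\]
for every real $c$. The whole task is therefore to find a constant $c=c_R$ with $\|u-c_R\|_{L_s^1(0,R)}=o(R^\alpha)$. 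Once that is done, $[u]_{C^\alpha(B_R)}\to0$ for each fixed ball, which is monotone in $R$, and so $u$ is constant.

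To control the positive part of the tail I will use that $f\ge0$ makes $u$ a supersolution. Testing the equation against a fixed rescaled bump $\zeta_R=\zeta(\cdot/R)$, as in Lemma~\ref{source-mass}, gives
\[
 0\le\int u\,L\zeta_R .
\]
Since $L\zeta_R(x)=R^{-2s}(L\zeta)(x/R)$, and $L\zeta$ is positive and bounded on a neighborhood $B_{r_0}$ of the support of $\zeta$ only after a suitable choice (for instance $\zeta$ with $L\zeta<0$ outside its support and $L\zeta\approx$ positive near the origin), this yields a weighted mean-value inequality: a local average of $u$ is bounded by a tail average. In the simplest form I would derive from the Poisson and Green representations on $B_R$ that
\[
 \int \frac{u_+(Ry)}{1+|y|^{n+2s}}dy\le C\int\frac{u_-(Ry)}{1+|y|^{n+2s}}dy+C\,\bigl|c\text{-part}\bigr|.
\]
Concretely, I take $c_R=0$ after first showing the supersolution inequality $\int u_+\,w_R\le C\int u_-\,w_R$ plus a term involving $u(0)$-type averages, with $w_R$ the weight of $\|\cdot\|_{L_s^1(0,R)}$. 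The clean way to arrange this is to apply the fractional mean value property of superharmonic functions, $u(x)\ge\int\Pi_R(x,y)u(y)\,dy$ (comparison with the $s$-harmonic replacement, which is valid because $Lu\ge0$). Taking $x=0$, the Poisson kernel at the center is comparable to $R^{2s}|y|^{-n}(|y|^2-R^2)^{-s}$. Averaging over $R'\in(R,2R)$ gives
\[
 \int w_R\,u\le C\,u(0)\ \text{(up to averaging)},
\]
and hence $\|u_+\|_{L_s^1(0,R)}\le C|u(0)|+C\|u_-\|_{L_s^1(0,R)}$. The term $C|u(0)|$ is fixed, hence $o(R^\alpha)$. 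Together with \eqref{liouville-growth}, and since $\|u_-\|_{L_s^1(0,R)}$ is comparable to $\|u_-(R\cdot)\|_{L_s^1}$, this gives $\|u\|_{L_s^1(0,R)}=o(R^\alpha)$ with $c=0$.

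The main obstacle is the rigorous super-mean-value step. The function $u$ is only in $L^\infty_{\rm loc}\cap L^1_s$, so I would first upgrade it to $C^{1+2s-\varepsilon}_{\rm loc}$ via Lemma~\ref{bootstrap}. I would then compare $u$ with its $s$-harmonic replacement in $B_{R'}$ using the maximum principle; exterior data in $L^1_s$ is handled by truncating the data and using the monotone convergence of the Poisson integrals, as in Lemma~\ref{harmonic-weyl}. Finally I would average over $R'\in(R,2R)$, which removes the $(|y|^2-R'^2)^{-s}$ boundary singularity and makes the kernel comparable to $w_R$ on $|y|>2R$ and bounded below there.

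The two particular growth conditions follow by direct computation. Under \eqref{liouville-lower-log}, $\|u_-(R\cdot)\|_{L_s^1}\le C\log R=o(R^\alpha)$. Under the power lower bound, the norm is $\le CR^p$ with $p<\alpha<2s$; the integrability at infinity needs $p<2s$, which holds since $\alpha<2s$.
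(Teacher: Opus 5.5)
Your overall route is the paper's. You use the super-mean-value inequality $u(0)\ge\int\Pi_a(0,y)u(y)\,dy$ from comparison with the $s$-harmonic replacement. You then average over radii to get a kernel comparable to the tail weight $w_R(y)=R^{2s}(R^{n+2s}+|y|^{n+2s})^{-1}$, deduce $\|u\|_{L_s^1(0,R)}\le C(|u(0)|+\|u_-(R\,\cdot)\|_{L_s^1(\R^n)})$, and conclude with \eqref{scaled-interior-bound}. The constant-subtraction and bump-testing detours are unnecessary but harmless.

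\textbf{The step that fails is the average over $R'\in(R,2R)$.} Since $\Pi_{R'}(0,\cdot)$ is supported in $\{|y|>R'\}$, any such average vanishes identically on $B_R$. On $R<|y|<2R$ it also tends to zero as $|y|\downarrow R$, like $R^{-n}((|y|-R)/R)^{1-s}$ for the uniform average.

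To pass from $\int\overline\Pi u_+\le u(0)+\int\overline\Pi u_-$ to your bound on $\|u_+\|_{L_s^1(0,R)}$, you need two-sided comparability on all of $\R^n$. You need $\overline\Pi\ge c\,w_R$ to dominate the $u_+$ term, and $\overline\Pi\le C\,w_R$ to control the $u_-$ term. On $B_R$ the weight $w_R$ has size $R^{-n}$. So the local mass $R^{-n}\int_{B_R}u_+$, which is part of the norm fed into \eqref{scaled-interior-bound}, is not controlled by your inequality. You only checked comparability on $|y|>2R$.

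\textbf{The fix is to average over all small radii with the volume density, as the paper does:} $\overline\Pi_R(y)=nR^{-n}\int_0^R a^{n-1}\Pi_a(0,y)\,da$. This kernel is comparable to $w_R$ on all three regions.
\begin{itemize}
\item For $|y|\le R$, the substitution $a=|y|b$ gives exactly $\overline\Pi_R(y)=np_{n,s}R^{-n}\int_0^1 b^{n+2s-1}(1-b^2)^{-s}\,db$.
\item For $R<|y|<2R$, one has $\overline\Pi_R\asymp R^{-n}$. Extend the integral to $a=|y|$ for the upper bound, and restrict to $R/4<a<R/2$ for the lower bound.
\item For $|y|\ge 2R$, one has $\overline\Pi_R\asymp R^{2s}|y|^{-n-2s}$.
\end{itemize}
The factor $a^{n-1}$ is essential. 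A uniform average over $(0,R)$ gives a kernel $\asymp R^{-1}|y|^{1-n}$ on $B_R$, which is not bounded by $CR^{-n}$ when $n\ge2$. Then the $u_-$ term would no longer be controlled by $\|u_-(R\,\cdot)\|_{L_s^1(\R^n)}$. With the volume-averaged kernel, the rest of your argument, including the two particular growth conditions, goes through as written.
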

\begin{proof}
We first use the nonnegativity of the source to control the weighted
$L^1$ norm of the solution by that of its negative part. Use the centered ball Poisson kernel
$\Pi_a(0,y)$ from \eqref{prelim-fractional-poisson}.
Since $(-\Delta)^su\ge0$, comparison with the $s$-harmonic replacement
in $B_a$ gives
\[
 u(0)\ge\int_{\R^n}\Pi_a(0,y)u(y)\dd y.
\]
Write $h_a$ for this replacement. It is well defined because $u$ is locally
continuous and has finite weighted $L^1$ norm. The function $u-h_a$
is continuous, zero outside the ball, and a supersolution, so the
maximum principle gives $u-h_a\ge0$.

To remove the boundary singularity of the Poisson kernel, average
these inequalities using the probability density $na^{n-1}/R^n$ on $(0,R)$ and set
\[
 \overline\Pi_R(y)=\frac n{R^n}\int_0^R a^{n-1}\Pi_a(0,y)\dd a.
\]
The averaged kernel is positive and has mass one. If $r=|y|\le R$, the change
$a=rb$ gives exactly
\[
 \overline\Pi_R(y)=\frac{np_{n,s}}{R^n}
       \int_0^1 b^{n+2s-1}(1-b^2)^{-s}\dd b.
\]

For $r\ge2R$, direct bounds in the same integral give
$\overline\Pi_R(y)\asymp R^{2s}r^{-n-2s}$. On $R<r<2R$, its upper bound follows
by extending the integral to $a=r$, and its lower bound by restricting
to $R/4<a<R/2$. Consequently
\begin{equation}\label{poisson-average-tail}
 \overline\Pi_R(y)\asymp
 \frac{R^{-n}}{1+(|y|/R)^{n+2s}},
 \qquad \int_{\R^n}u(y)\overline\Pi_R(y)\dd y\le u(0),
\end{equation}
with constants depending only on $n,s$. The comparison of kernels gives absolute integrability and justifies
averaging the Poisson inequalities in the second assertion.

Writing $u_R(x)=u(Rx)$ and using $|u|=u+2u_-$, we obtain
\[
 \begin{split}
 \|u_R\|_{L_s^1(\R^n)}&\le C\int_{\R^n}|u(y)|\overline\Pi_R(y)\dd y\\
 &\le C|u(0)|+2C\int_{\R^n}u_-(y)\overline\Pi_R(y)\dd y\\
 &\le C\bigl(|u(0)|+\|u_-(R\,\cdot)\|_{L_s^1(\R^n)}\bigr).
 \end{split}
\]
Thus \eqref{liouville-growth} implies $\|u_R\|_{L_s^1(\R^n)}=o(R^\alpha)$.

The equation for $u_R$ has nonlinearity $R^{2s}f$ and remains stable.
The scaled estimate \eqref{scaled-interior-bound} gives, for fixed
$x,y$ and sufficiently large $R$,
\[
 |u(x)-u(y)|\le C|x-y|^\alpha R^{-\alpha}\|u_R\|_{L_s^1(\R^n)}
 \longrightarrow0.
\]
Hence $u$ is constant.

Finally, \eqref{liouville-lower-log} gives
$\|u_-(R\,\cdot)\|_{L_s^1(\R^n)}=O(1+\log R)$ for $R\ge1$, because the weighted $L^1$ measure has a finite
logarithmic moment.

The polynomial lower bound
gives $\|u_-(R\,\cdot)\|_{L_s^1(\R^n)}=O(R^p)$, since $p<\alpha<2s$.
Both conditions imply \eqref{liouville-growth}.
\end{proof}

\begingroup
\renewcommand{\thesection}{}
\pretitlemark*{section}{Acknowledgments}
\section*{Acknowledgments}
\sectionmark{Acknowledgments}
Generative AI tools were used to assist with the development of proof strategies, mathematical calculations, the formulation and checking the arguments, and the preparation of the exposition. The authors take full responsibility for all mathematical claims, proofs, and references in this paper.
\endgroup

% MODULE bibliography
\begingroup\footnotesize
\renewcommand{\thesection}{}
\pretitlemark*{section}{\refname}

\endgroup
\end{document}